\algnewcommand\algorithmicinput{\textbf{INPUT:}}
\algnewcommand\INPUT{\item[\algorithmicinput]}
\algnewcommand\algorithmicoutput{\textbf{OUTPUT:}}
\algnewcommand\OUTPUT{\item[\algorithmicoutput]}
\newtheorem{theorem}{Theorem}
\newtheorem{lemma}[theorem]{Lemma}
\newtheorem{proposition}[theorem]{Proposition}
\newtheorem{corollary}[theorem]{Corollary}
\newtheorem{definition}{Definition}
\newtheorem{remark}{Remark}
\newtheorem{assumption}{Assumption}
\DeclareMathOperator*{\argmin}{arg\,min}
\newcommand{\dint}{\,\mathrm{d}}
\title{Optimal Cox regression under federated differential privacy: \\coefficients and cumulative hazards}
\author{Elly K.~H.~Hung}
\author{Yi Yu}
\affil{Department of Statistics, University of Warwick}
\date{}
\begin{document}

\maketitle

\begin{abstract}
We study two foundational problems in distributed survival analysis under federated differential privacy (FDP): estimation of the Cox regression coefficients and of the cumulative baseline hazard functions, allowing for heterogeneous per-sever sample sizes and privacy budgets.  To quantify the fundamental cost of privacy, we derive minimax lower bounds together with upper bounds that match up to poly-logarithmic factors for the regression coefficients, thereby revealing server-level phase transitions between private and non-private regimes. We also consider a relaxed differential privacy framework with partially public information.  Our analysis shows that the role of public covariates depends strongly on the privacy model. 

For cumulative hazard estimation, we propose a private tree-based version of the Breslow estimator for nonparametric integral estimation under FDP. As a by-product, this leads to a private survival function estimator that attains a nearly minimax optimal rate. 

Numerical experiments, including a real-data application, support the theoretical findings. The proposed methods are implemented in an accompanying \texttt{R} package \texttt{FDPCox}.
\end{abstract}
\addtocontents{toc}{\protect\setcounter{tocdepth}{0}}
\section{Introduction}
Privacy concerns have become increasingly prominent in recent years, fuelled by the widespread collection of personal data and the growing number of incidents involving unauthorised access, misuse and re-identification.  These concerns are particularly acute in high-stakes domains such as healthcare, biomedical research, social sciences and revenue management, as datasets in these areas often contain highly sensitive information about individuals.  In such settings, ensuring data confidentiality is not only an ethical imperative but also a regulatory requirement \citep[e.g.][]{GDPR2016a, hipaa}.  

A common characteristic of data in the aforementioned domains is the presence of censoring, which typically occurs when follow-up on study subjects is discontinued before the event of interest is observed.  The backbone of statistical analysis to address censoring is survival analysis, a well-established area with decades of development, much of which has traditionally assumed an environment where privacy was not a central concern. 

A variety of techniques have been explored for preserving data privacy while enabling meaningful statistical analysis.  Classical methods such as data anonymisation or restricting access to summary statistics have been shown to be vulnerable to re-identification attacks \citep[e.g.][]{narayanan2008deanonymization, rocher2019reidentification, abowd20232010}. Differential privacy \citep[DP,][]{dwork2006differential, dwork2014algorithmic} has been gaining increasing attention for offering not only mathematically rigorous guarantees but also practical feasibility. Notably, DP was adopted in the 2020 US Census \citep[e.g.][]{us_census_2021_key_parameters} and has been deployed by other leading organisations to safeguard sensitive data \citep[e.g.][]{apple2021_dp}.  

Statistical analysis on DP has been soaring in the last few years, spanning a wide spectrum of topics and approaches from theoretical and applied perspectives.  As for private survival analysis, despite the broad range of potential applications, it seems to have received little rigorous statistical attention. In this paper, we study two canonical problems: estimating the regression coefficients and the cumulative hazard function of the Cox proportional hazards model, in a distributed data setting and subject to a federated DP constraint which will be explicitly defined in \Cref{sec-fdp}, to reflect the decentralised nature of contemporary data analysis in healthcare \citep[e.g.][]{choi2024medicalsurvey}, financial risk management \citep[e.g.][]{dhanawat2024financial}, and many other areas. 

The rest of the paper is organised as follows.  \Cref{subsec-contributions} lists a summary of contributions, followed by a further literature review on DP and survival analysis in \Cref{subsec-lit} and a collection of notation in \Cref{subsec-notation}.  \Cref{sec-background} presents the problem formulation, including an introduction to the necessary background.  We provide minimax analysis of the Cox model's regression coefficients and cumulative baseline hazard functions in Sections~\ref{sec-beta} and \ref{sec-cumulative-hazard}, respectively.  Theoretical analysis is accompanied with numerical evidence in \Cref{sec-numerical}.  This paper is concluded with discussions on potential future research directions in \Cref{sec-conclusions}.  All technical details, including more discussions and numerical experiments, are deferred to the Appendices.

\subsection{List of contributions}\label{subsec-contributions}

We summarise the contributions of this paper.

\begin{itemize}
    \item To the best of our knowledge, this is the first systematic minimax analysis of the Cox model under federated DP.  We study estimation of the regression coefficients, the cumulative baseline hazard function and the survival function, under heterogeneous server-level sample sizes and privacy budgets.  We derive minimax lower bounds and matching upper bounds up to poly-logarithmic factors for the regression coefficients, and we further obtain a private survival function estimator with minimax optimal rate up to poly-logarithmic factors.  These results quantify the fundamental cost of privacy and reveal server-level transitions between private and non-private regimes. 

    \item We propose computationally-efficient algorithms for estimating the regression coefficients and the cumulative baseline hazard function. For the regression coefficients, we propose a private stochastic gradient descent-based algorithm that handles the additional dependence induced by \emph{at-risk} sets normalisation in the Cox partial likelihood. For the cumulative baseline hazard function, we adapt binary tree mechanisms to construct a distributed Breslow estimator that satisfies federated DP while only requiring a \emph{logarithmic} number of privacy compositions. 
    
    \item Additionally, we study partially public information through a label DP framework.  To the best of our knowledge, these are the first such results for any problems in the label federated DP settings, not only for the Cox model.  A detailed discussion of the linear regression case is presented in \Cref{sec-linear-regression-label}, of independent interest. 

    \item Lastly, we provide extensive numerical experiments to support our theory. Our experiments study both the regression and cumulative hazard estimators, examine how closely the empirical behaviours match the theoretical rates, and explore further practical variants. We include a real data application to show the practicality of our algorithms. In addition, we release an accompanying \texttt{R} package \texttt{FDPCox}\footnote{Available at \url{https://github.com/EKHung/FDPCox}}, implementing the proposed methods. 

\end{itemize}

\subsection{Related works}\label{subsec-lit}
We briefly review the literature most closely related to this paper: differential privacy in distributed learning, survival analysis for distributed data and private survival analysis.

Differential privacy (DP) is a standard framework for rigorous privacy protection in statistical inference \citep{dwork2006calibrating, dwork2006differential}.  Beyond the classical central model, several variants have been developed for different settings, including local DP \citep[e.g.][]{duchi2018minimax, acharya2019contextawareLDP}, user-level DP \citep[e.g.][]{levy2021userDP,  kent2024rateoptimalityphasetransition} and label DP \citep[e.g.][]{chaudhuri2011sample, zhao2025labelDP}.  Our main focus is on federated DP, where data are distributed across multiple servers and privacy constraints may vary across them \citep[e.g.][]{wei2019federated, li2024federated, cai2024optimal}. 

Survival analysis provides the main statistical framework for analysing censored time-to-event data, with the Cox proportional hazards model \citep{cox1972regression, cox1975partial} being one of its central tools.  Motivated by large-scale and multi-institutional applications, there is also a growing literature on distributed Cox regression and communication-efficient survival analysis \citep[e.g.][]{lu2015webdisco, li2023distributed, bayle2023communication, zhang2025dsubcoxfastsubsamplingalgorithm}.  However, the privacy guarantees of these distributed procedures are often unclear, especially when the released summaries depend on small risk sets.

Compared with the large volume of literature on DP, rigorous work on private survival analysis remains limited.
\cite{Nguyen2017DPsurvival} study discrete-time Cox regression problems and propose several central DP mechanisms based on methods from \cite{chaudhuri2011DPERM} and \cite{li2016preconditioned}. \cite{rahimian2022practical} consider a federated setting with a trusted central server injecting noise to satisfy DP constraints. \cite{niu2025adaptive} develop a method for client-level DP updates for estimating Cox model parameters with neural networks.  As for cumulative hazard estimation, \cite{gondora2020DPsurvival} present a central DP Kaplan--Meier estimator for discrete-time data, and derive related statistics using the post-processing property of DP. \cite{veeraragavan2024differentiallyprivatekaplanmeierestimator} propose a central DP Kaplan--Meier estimator that relies on clipping the survival probabilities and smoothing. These previous works have provided empirical results for their methods. \cite{EgeaEscobar2025survival} focus on the case where only the censoring indicators need to satisfy local DP, and propose a minimax optimal estimator for the cumulative hazard function based on the Beran estimator.  In view of these existing results, characterising the fundamental cost of privacy in survival analysis under central or federated DP constraints remains uncharted.

\subsection{Notation}\label{subsec-notation}
For any positive integer $a$, denote $[a] := \{1, \dots, a\}$, and for any $x \in \mathbb{R}$, let $\lfloor x \rfloor$ be the largest integer smaller than $x$. For any vector $v$, denote $v ^{\otimes 0} := 1$, $v^{\otimes 1} := v$, $v^{\otimes 2} := vv^\top$ and denote $\|v\|_p$ as its $\ell_p$-norm.  For any square matrix $M$, let $\|M\|$, $\lambda_{\min}(M)$ and $\lambda_{\max}(M)$ be its $2\to2$-operator norm, smallest and largest eigenvalues respectively.  For any two Hermitian matrices of the same dimension, we write $M \preceq N$ if $N-M$ is positive semi-definite. We use $I_d$ to denote the $d$-dimensional identity matrix. For any $x \in \mathbb{R}^d$ and $r > 0$, we write $B_r(x)$ as the Euclidean ball of radius $r$ centred at $x$, and $\Pi_r(\cdot)$ as the projection onto $B_r(0)$.  For any two sequences of positive numbers $\{a_n\}$ and $\{b_n\}$, if there exists $n_0 \in \mathbb{N}$ and $0 < c < \infty$ such that $a_n \leq c b_n$ for $n \geq n_0$, we write $a_n \lesssim b_n$, and we write $a_n \asymp b_n$ if $a_n \lesssim b_n$ and $a_n \gtrsim b_n$.  

\section{Problem formulation}\label{sec-background}

In short, we consider estimating the regression coefficients and the cumulative hazard function in the Cox proportional hazards model, with a distributed dataset and subject to the federated DP constraint.  In this section, we will formalise the problem, provide the necessary background and state the required assumptions.

\subsection{Distributed Cox regression models}\label{subsec-cox}

The Cox proportional hazards model \citep{cox1972regression} assumes that, at time $t \in [0, 1]$, conditional on the covariate $Z(t) \in \mathbb{R}^d$, the conditional hazard rate of the survival time $\widetilde{T}$ is of the form
\begin{equation}\label{eq-hazard}
    \lambda(t) := \lambda_0(t) \exp \{\beta_0^{\top} Z(t)\}, 
\end{equation}
where $\beta_0 \in \mathbb{R}^d$ is an unknown regression coefficient vector, and $\lambda_0(\cdot)$ is a non-negative, unknown baseline hazard function that is Lebesgue integrable on [0, 1]. We denote the \textit{cumulative baseline hazard} function by $\Lambda_0: [0,1] \rightarrow [0, \infty)$, with $\Lambda_0(t) := \int_0^{t} \lambda_0(s)\,\mathrm{d}s$. In the right-censored setting, we further consider a censoring time $C$, and let $T := \min\{\widetilde{T}, \, C\}$ and $\Delta := \mathbbm{1}\{\widetilde{T} \leq C\}$. The observed data $\{(T_{s, i}, \Delta_{s, i}, \{Z_{s, i}(t):\, t\in [0, 1]\})\}_{s, i = 1}^{S, n_s}$ are independent and identically distributed (i.i.d.) copies of the generic triple $(T, \Delta, \{Z(t):\, t \in [0, 1]\})$. In the distributed setting, we assume that each server $s \in [S]$ holds $n_s$ triples.  The statistical inference tasks lie in the estimation of $\beta_0 \in \mathbb{R}^d$ and $\{\Lambda_0(t):\, t \in [0, 1]\}$.  

Following the counting process representation from \cite{andersen1982}, for $s \in [S]$, $i \in [n_s]$ and $t \in [0, 1]$, define $N_{s, i}(t) := \mathbbm{1}\{T_{s, i} < t, \, \Delta_{s, i} = 1\}$, $Y_{s, i}(t) := \mathbbm{1}\{T_{s, i} \geq t\}$ and $\mathcal{F}_t := \sigma(\{N_{s, i}(u), Y_{s, i}(u), Z_{s, i}(u)\}_{i=1}^{n_s}, u \in [0, t])$ as the natural filtration up to time $t$.  With the counting process notation in hand, for $s \in [S]$, the partial likelihood of the Cox model \citep{cox1975partial} based on the dataset in server $s$ can be written as
\begin{align}
    & \ell_s(\beta) := \ell(\beta; \{T_{s, i}, \Delta_{s, i}, \{Z_{s, i}(u), \, u \in [0, 1]\}\}_{i=1}^{n_s}) \nonumber \\
    = & \frac{1}{n_s} \sum_{i=1}^{n_s} \int_0^{1} \beta^\top Z_{s, i}(t) \dint N_{s, i}(t) -\frac{1}{n_s} \sum_{i=1}^{n_s} \int_0^{1} \log \left[ \sum_{j=1}^{n_s} Y_{s, j}(t) \exp \left\{ \beta^\top Z_{s, j}(t) \right\} \right] \dint N_{s, i}(t).
\label{eq:Cox_partial_likelihood}
\end{align}
The gradient and the Hessian of $\ell(\beta)$ are subsequently written as
\begin{equation}\label{eq:cox_gradient}
    \dot{\ell}_s(\beta) := \frac{\partial \ell_s(\beta)}{\partial \beta} = \frac{1}{n_s}  \sum_{i=1}^{n_s} \int_0^{1} \left\{ Z_{s, i}(t) - \bar{Z}_s(t, \beta) \right\} \dint N_{s, i}(t)
\end{equation}
and 
\begin{equation} \label{eq:Cox_hessian}
    \ddot{\ell}_s(\beta) := \frac{\partial^2 \ell_s(\beta)}{\partial \beta \partial \beta^\top} = -\frac{1}{n_s} \sum_{i=1}^{n_s} \int_0^{1}  V_s(t, \beta)\dint N_{s, i}(t),    
\end{equation}
where
\begin{equation*}
    S^{(k)}_s(t, \beta) := \frac{1}{n_s} \sum_{i=1}^{n_s} Z_{s, i}(t)^{\otimes k} Y_{s, i}(t) \exp\{\beta^{\top} Z_{s, i}(t)\}, \,\, k \in \{0, 1, 2\}, \quad \bar{Z}_s(t, \beta) := \frac{S^{(1)}_s(t, \beta)}{S^{(0)}_s(t, \beta)}
    \end{equation*}
    and
    \begin{equation*}
    V_s(t, \beta) := \frac{1}{n_s S^{(0)}_s(t, \beta)}\sum_{i=1}^{n_s} Y_{s, i}(t) \exp\{\beta^{\top} Z_{s, i}(t)\} \left\{Z_{s, i}(t) - \bar{Z}_s(t, \beta) \right\}^{\otimes 2}= \frac{S^{(2)}_s(t, \beta)}{S^{(0)}_s(t, \beta)} - \bar{Z}_s(t, \beta)^{\otimes 2}.
    \label{eq:var_notation}
    \end{equation*}

\begin{remark}
Throughout this paper, we assume the time horizon to be $[0, 1]$ for simplicity; extending to general compact time horizon requires more careful analysis. 
The compact time horizon reflects the finite interval of real-world studies; in this case, the actual observations would be $(T_i, \Delta_i) := (\min(\tilde{T}_i, \Delta_i, 1), \mathbbm{1}\{\tilde{T}_i \leq \min(1, C_i)\})$, but the formulation detailed in \eqref{eq:Cox_partial_likelihood}, \eqref{eq:cox_gradient} and \eqref{eq:Cox_hessian} is unchanged since the integration is on $[0, 1]$. 
\label{remark:time_horizon}
\end{remark}

\subsection{Federated differential privacy}\label{sec-fdp}

At a high level, federated DP is a version of DP tailored towards distributed data, where central DP is considered within each server and only private information is allowed to be communicated across servers.  For completeness, we define  central and federated DP in Definitions~\ref{def-cdp} and \ref{def:FDP}, respectively.  

\begin{definition}[Central differential privacy, CDP] \label{def-cdp} 
For $\epsilon, \delta > 0$, a privacy mechanism $M$ is an $(\epsilon, \delta)$-CDP mechanism, if it is a conditional distribution on space $\mathcal{R}$ that satisfies 
\begin{equation}
    M(R \in A \mid D) \leq e ^\epsilon M(R \in A \mid D') + \delta,
    \label{eq:DP_def}
\end{equation}
for all measurable set $A \in \sigma(\mathcal{R})$, and all pairs of datasets $(D, D')$ differing by at most one data point, denoted as $D \sim D'$.  
\end{definition}

\Cref{def-cdp} was originally proposed in \cite{dwork2006calibrating} and implicitly assumes that there is a trusted central server which aggregates the information to release a privatised output.  Since its debut, other variants have been proposed for other types of privacy constraints and communication architecture, including a version of federated DP defined below.  A very similar form has been used in \cite{li2024federated}, \cite{cai2024optimal}, \cite{xue2024optimal} and \cite{cai2025FDPfunctional}.

\begin{definition}[Federated differential privacy, FDP] \label{def:FDP} 
For $S \in \mathbb{N}_+$, let $\epsilon_s, \delta_s > 0$, $s \in [S]$, be privacy parameters.  For $K \in \mathbb{N}_+$, we say that a privacy mechanism $Q := \{Q_s^{(k)}\}_{s, k=1}^{S, K}$ satisfies $(\{\epsilon_s, \delta_s\}_{s=1}^S, K)$-FDP, if for any $s \in [S]$ and $k \in [K]$, the data $R_s^{(k)}$ shared by the server $s$ satisfies $(\epsilon_s, \delta_s)$-CDP, i.e.
\[
    Q_s^{(k)}(R_s^{(k)} \in A_s^{(k)} \mid M^{(k-1)}, D_s^{(k)} ) \leq e^{\epsilon_s}Q_s^{(k)}(R_s^{(k)} \in A_s^{(k)} \mid M^{(k-1)}, (D_s^{(k)})') + \delta_s,
\]
for any measurable set $A_s^{(k)}$, $M^{(k-1)} := \cup_{l=1}^{k-1} \cup_{s=1}^S R_s^{(l)}$, and pairs of datasets $D_s^{(k)} \sim (D_s^{(k)})'$, where for each $s \in [S]$, $\cup_{k=1}^K D_s^{(k)}$ forms a partition of the dataset at server $s$.  
\end{definition}

\Cref{def:FDP} specified a $K$-iteration mechanism, which involves independent batches of data at each iteration.  At the end of each iteration, private data $M^{(k-1)}$ obtained from all servers are allowed to communicate across servers for the next iteration.  

In both of the aforementioned definitions, the privacy constraints are quantified by the parameters $\epsilon$ and $\delta$.  The parameter $\epsilon$ controls how much the two conditional distributions can change when a single data point in the dataset is modified.  The parameter $\delta$ allows for a small probability event where the privacy constraint does not hold, usually taken to be polynomial functions of the sample sizes.  The smaller the privacy budgets are ($\epsilon$ and/or $\delta$), the more stringent the privacy constraint is, and therefore the harder the statistical inference tasks are.  

\begin{remark}
    While $\{(\epsilon_s, \delta_s)\}_{s=1}^S$ are pre-specified as the privacy constraints, $K$ serves as a tuning parameter of an algorithm and reflects communication constraints across servers.  The case $K = 1$ corresponds to non-interactive mechanisms, where no between-server interaction is allowed.  General $K$ cases correspond to a broad class of popular interactive mechanisms, though they do not extend to fully interactive ones.
\end{remark}

One common way to quantify the hardness of a statistical inference task is through the minimax risk.  For a private statistical inference task, the private minimax risk \citep[e.g.][]{duchi2018minimax} is formalised via $\mathcal{P}$, the family of data generating distributions, and $\mathcal{Q}$, the collection of all mechanisms satisfying the required DP constraints.  The private minimax risk is defined as
\[
    \inf_{Q \in \mathcal{Q}} \inf_{\widehat{\theta}} \sup_{P \in \mathcal{P}} \mathbb{E}_{P, Q} \big\{L\big(\theta(P), \widehat{\theta}\big)\big\},
\]
where $\theta(P)$ is the quantity of interest, $L(\cdot, \cdot)$ is a loss function, the inner infimum is taken over all private estimators, the outer infimum is over all possible privacy mechanisms and the expectation is with respect to both the data generating distributions and the conditional distributions generating the privatised data.

Unless otherwise specified, we focus on the challenging, high-privacy regime where $\epsilon$ is upper bounded by an absolute constant, say $\epsilon < 1$.  While the theoretical guarantees derived in this paper for our proposed methods hold for general $\epsilon$, the minimax lower bounds require this upper bound condition on $\epsilon$.  

\subsection{Assumptions}

In this subsection, we collect all the assumptions needed in the sequel.

\begin{assumption}[Model] \label{assump-1}
Assume that $\{(\widetilde{T}_{s, i}, C_{s, i}, \{Z_{s, i}(t): \, t \in [0, 1]\})\}_{s, i = 1}^{S, n_s}$ are mutually independent copies of $(\widetilde{T}, C, \{Z(t):\, t \in [0, 1]\})$, where the conditional hazard of $\widetilde{T}$ given the covariate process $\{Z(t):\, t \in [0, 1]\}$ is specified in \eqref{eq-hazard}.  
    \begin{enumerate}[label=\textbf{\alph*}]
        \item \label{assump-ind} (Conditional independence).  The event time $\widetilde{T}$ and the censoring time $C$ are conditionally independent given the covariate process $\{Z(t):\, t \in [0, 1]\}$.  
        \item \label{assp:covariate_bound} (Bounded covariate processes).  The process $\{Z(t):\, t \in [0, 1]\}$ is predictable and there exists an absolute constant $C_Z >0$ such that 
        \[
            \mathbb{P}\left(\sup_{t \in [0, 1]} \|Z(t)\|_2 \leq C_Z \right) = 1.
        \]
        \item \label{assp:covariate_Lipschitz} (Lipschitz covariate process). There exists an absolute constant $L_Z > 0$ such that 
        \[
            \mathbb{P} \left( \sup_{0 \leq s \leq t \leq 1} \|Z(s) - Z(t)\|_2 \leq L_Z|s- t| \right) = 1.
        \]
        \item \label{assp:coefficient_bound} (Bounded coefficients).  There exists an absolute constant $C_{\beta} > 0$ such that $\|\beta_0\|_2 \leq C_{\beta}$.
        \item \label{assp:eigenvalues} (Eigenvalues of the Hessian).  With the counting process notation introduced in \Cref{subsec-cox}, for any $t \in [0, 1]$ and $\beta \in \mathbb{R}^d$, let
        \[
            G(t, \beta) := Y(t) \exp\{\beta^\top Z(t)\}\{Z(t) - \mu(t, \beta)\}^{\otimes 2}, \quad \text{with } \mu(t, \beta) := \frac{\mathbb{E}[Z(t) Y(t) \exp\{\beta^\top Z(t)\}]} {\mathbb{E}[Y(t) \exp\{\beta^\top Z(t)\}]}.        
        \]
        We assume that a population version of $-\ddot{\ell}(\beta_0)$ satisfies
        \[
            \lambda_{\min} \left(\mathbb{E} \left[\int_0^1 G(s, \beta_0) \dint \Lambda_0(s) \right] \right)= \frac{\rho_-}{d} \,\text{ and } \,\lambda_{\max} \left(\mathbb{E} \left[\int_0^1 G(s, \beta_0) \dint \Lambda_0(s) \right] \right) = \frac{\rho_+}{d},
        \]
        for some absolute constants $\rho_+ \geq \rho_- > 0$. 
    \end{enumerate}
\end{assumption}

\Cref{assump-1} collects the assumptions on the covariate processes and the regression coefficient vector.  \Cref{assump-1}\ref{assump-ind} is standard in survival analysis to avoid bias from informative censoring. 

Assumptions~\ref{assump-1}\ref{assp:covariate_bound}~and \ref{assp:covariate_Lipschitz}~regulate the behaviours of the covariate processes.  Similar boundedness and smoothness conditions are ubiquitous in the literature on survival analysis \citep[e.g.][]{liu1978cox_mle, fleming1991counting, yu2018cox}. Although \Cref{assump-1}\ref{assp:covariate_bound} can be weakened in survival analysis, we impose a bounded covariate condition to achieve the same effect as requiring bounded derivatives of the contrast function. This assumption can be commonly found in the DP literature \citep[e.g.][]{cai2021cost, avella2023noisyopt}, where a bounded $\ell_2$ norm is considered when applying the Gaussian mechanism for optimisation.   \Cref{assump-1}\ref{assp:coefficient_bound}~imposes an upper bound on the $\ell_2$-norm of the regression coefficients.   Similar assumptions are used in the DP literature \citep[e.g.][]{bassily2014private, cai2021cost} to scale the noise added in the privacy mechanism.  We remark that one can relax Assumptions~\ref{assump-1}\ref{assp:covariate_bound} and \ref{assp:coefficient_bound} by adopting adaptive clipping methods \citep[e.g.][]{varshney2022adaptive} to control the sensitivity of gradients. 

\Cref{assump-1}\ref{assp:eigenvalues} introduces the de-facto population Hessian matrix for $-\ell(\beta)$ as
\begin{equation}
    \mathbb{E} \left[\int_0^1 G(s, \beta_0) \dint \Lambda_0(s) \right]
    \label{eq:population_hessian}
\end{equation}
and requires that the eigenvalues of \eqref{eq:population_hessian} are all of order $1/d$. \cite{huang2013oracle} and \cite{yu2018cox} also impose regularity conditions on \eqref{eq:population_hessian}, but tailored for a high-dimensional setting. \Cref{assump-1}\ref{assp:covariate_bound} implies an upper bound on the eigenvalues of $\eqref{eq:population_hessian}$ and a positive lower bound is needed for convex optimisation.  The choice of $1/d$ normalisation is also used in \cite{cai2021cost}.  Relaxing the homogeneity across all eigenvalues will lead to an extra factor in the rates depending on the condition number.  

We consider the following condition to ensure that there exists positive at-risk probability for the whole time horizon.  This has been frequently used in the survival analysis literature, e.g.~\cite{andersen1982} and \cite{kong2014coxlasso}.

\begin{assumption}[Positive at-risk probability] \label{assp:baseline}
We assume that there exists an absolute constant $p_0 > 0$ such that $\mathbb{P}(Y(1)=1) \geq p_0$. 
\end{assumption}

\section{Estimation of the regression coefficients}\label{sec-beta}  

A primary objective in the Cox model is to estimate the regression coefficients $\beta_0 \in \mathbb{R}^d$.  Under the federated DP constraint, defined in \Cref{def:FDP}, we establish the minimax rate of the estimation error, up to poly-logarithmic factors, shown in \Cref{thm-fdp-cox} below.

\begin{theorem}\label{thm-fdp-cox}
Denote by $\mathcal{P}$ the class of distributions satisfying Assumptions~\ref{assump-1} and \ref{assp:baseline}, and denote $\mathcal{Q}$ the class of $(\{(\epsilon_s, \delta_s)\}_{s\in [S]}, K)$-FDP mechanisms.  We have that
\begin{align*}
    & \frac{d^2}{\sum_{s=1}^S \min(n_s, n_s^2\epsilon_s^2/d)} \lesssim \inf_{Q \in \mathcal{Q}} \inf_{\widehat{\beta}} \sup_{P \in \mathcal{P}} \mathbb{E}_{Q, P} \bigl\{\|\widehat{\beta} - \beta_0\|_2^2\bigr\} \\
    &\hspace{3cm} \lesssim \frac{d^2}{\sum_{s=1}^S \min(n_s, n_s^2\epsilon_s^2/d)} \cdot \mathrm{polylog}\left(\{n_s, \epsilon_s, \delta_s\}_{s \in [S]}, d\right),
\end{align*}
where the lower bound holds provided that $\delta_s \log(1/\delta_s) \lesssim \epsilon_s^2/d$.
\end{theorem}

\Cref{thm-fdp-cox} characterises the minimax rate for estimating the regression coefficients $\beta$ in the Cox model under federated DP, with the rate measured in squared-$\ell_2$ loss.  Note that the infimum over the privacy mechanisms is taken not only over the class of conditional distributions satisfying $(\{(\epsilon_s, \delta_s)\}_{s\in [S]}, K)$-FDP, but also over all choices of $K \in \mathbb{N}_+$.

We show in \Cref{prop:fdp_upper} that the upper bound can be attained by a private stochastic gradient descent algorithm and by $K$ being a logarithmic factor of the sample sizes. The lower bound is proven separately in \Cref{app:Cox_lower_proof} using an application of the van Trees inequality \citep[e.g.][]{gill1995applications} and the score attack arguments from \cite{cai2024optimal}, with a construction where $K = 1$.  To the best of our knowledge, this is the first time that a minimax rate has been proven for Cox regression under DP constraints.  The minimax rate in the non-private setting is considered folklore, and can be attained by lower bounds for generalised linear regression (e.g.~Exercise 15.17 in \citealt{wainwright2019high}). A rigorous functional version was derived in \cite{qu2016optimal}.  We recover the central DP (\Cref{def-cdp}) rate as the case where $S=1$.

To better understand this result, we first recall \Cref{assump-1}\ref{assp:eigenvalues}, which imposes that all the eigenvalues of the Hessian matrix are of order $1/d$.  Compared to the analyses based on different normalisations, this $1/d$ eigenvalue rate leads to an extra $d$ in the squared-$\ell_2$ rate but is fundamentally equivalent.  

Secondly, consider a homogeneous setting where $n_s = n$ and $\epsilon_s = \epsilon$, for all $s \in [S]$.  \Cref{thm-fdp-cox} leads to the minimax rate of
\[
    \frac{d^2}{\min\{Sn, \, Sn^2 \epsilon^2/d\}} = \max\left\{\frac{d^2}{Sn}, \, \frac{d^3}{Sn^2  \epsilon^2}\right\},
\]
which can be seen as the maximum of the non-private and federated DP rates, echoing the general observations from the existing privacy literature.  The non-private rate characterises the inherent difficulty of estimating a $d$-dimensional regression problem, and the private rate quantifies the cost incurred by a federated DP constraint. This form of the `cost of privacy' has also appeared in other settings under federated DP, including multivariate mean estimation and low-dimensional linear regression \citep{li2024federated}.  When there is only one server, i.e.~$S = 1$, the federated DP constraint degenerates to the central DP constraint and the rate becomes $\max\left\{d^2/n, \, d^3/n^2 \epsilon^2 \right\}$, 
which aligns with known results for other generalised linear models under central DP \citep[e.g.][]{cai2023score}, given the normalisation in \Cref{assump-1}\ref{assp:eigenvalues}. The censoring and unspecified hazard leading to the optimisation problem in \eqref{eq:Cox_partial_likelihood} presents additional challenges, making \Cref{thm-fdp-cox} a genuinely new contribution not reducible to the existing literature.  

Finally, the denominator in the rate of \Cref{thm-fdp-cox} is the sum of per-server contributions, where each server's effective contribution is the minimum of its non-private sample size $n_s$ and its central DP effective sample size $n_s^2 \epsilon_s^2/d$, reflecting that the central DP constraint is imposed within each server.  Allowing for a fully heterogeneous setting, depending on the local sample size and privacy budget, we see that each server exhibits its own phase transition between the private and non-private rates.

\subsection{Federated differentially private Cox regression}\label{subsec-cox-upperss}
Without the presence of privacy concerns, it is well-studied that the maximum partial likelihood estimator \citep{cox1975partial} enjoys desirable statistical properties including consistency and asymptotic normality \citep[e.g.][]{andersen1982}, among others.  To adhere to the federated DP constraint, we propose the FDP-Cox algorithm in \Cref{alg:FDP-SGD}. 

\begin{algorithm}
\caption{FDP-Cox}
\label{alg:FDP-SGD}
\textbf{Input}: Datasets $\{(T_{s, i}, \Delta_{s, i}, \{Z_{s, i}(t):\, t \in [0, T_{s, i}]\})\}_{s, i = 1}^{S, n_s}$, number of iterations $K$, step size $\eta$, privacy parameters $\{(\epsilon_s, \delta_s)\}_{s \in [S]}$, parameter truncation level $C_\beta > 0$, covariate bound $C_Z > 0$. 

\begin{algorithmic}[1]
\State Set $\beta^{(0)} := 0$ and $d := \mathrm{dimension}(Z_{1, 1})$.
\For{$s \in [S]$}
    \State Set batch size $b_s: = \lfloor n_s / K \rfloor$ and weight 
        $v_s := \min(b_s, b_s^2 \epsilon_s^2 / d)/\{\sum_{u=1}^S \min(b_u, b_u^2 \epsilon_u^2/d)\}$.
\EndFor
\For{$k \in [K]$}
    \For{$s \in [S]$}
    \State Sample independently 
    \begin{equation*}
    W^{(k)}_s \sim \mathcal{N}\left(0, \frac{(4 C_Z + \exp(2C_Z \|\beta^{(k-1)}\|_2)(2C_Z + C_Z^2) \log(b_s + 1))^2}{\epsilon_s^2 b_s^2} I_d\right).
    \end{equation*}
    \EndFor
    \State Set $\beta^{(k)} := \Pi_{C_\beta} \left( \beta^{(k-1)} + \eta \sum_{s=1}^S v_s \left[\dot{\ell}(\beta^{(k-1)}; \{T_{s, i}, \Delta_{s, i}, Z_{s, i}\}_{i=(k-1)b_s}^{kb_s}) + W_s^{(k)} \right] \right)$.
\EndFor
\State \textbf{Output}: $\beta^{(K)}$.
\end{algorithmic}
\end{algorithm}

\Cref{alg:FDP-SGD} is a private stochastic gradient descent algorithm \citep[e.g.][]{bassily2014private, bassily2019private, avella2023noisyopt, li2024federated}. The privacy guarantee is achieved through the use of the Gaussian mechanism \citep[e.g.~Theorem 3.22 in][]{dwork2014algorithmic}: at each iteration, every server adds noise to its gradient evaluated at the current parameter estimate so that the update it sends satisfies $(\epsilon_s, \delta_s)$-CDP. The magnitude of noise needed to preserve privacy is determined by the $\ell_2$-\textit{sensitivity} (\Cref{def-sensitivity}) of the gradient in \eqref{eq:cox_gradient} -- the proof of the bound can be found in \Cref{lemma:grad_sensitivity2} in \Cref{app:beta_sensitivity}. 
The aforementioned references consider objective functions in the form of sample averages over i.i.d.~terms. The key complication in our setting is that the summands in \eqref{eq:cox_gradient} depend on the full dataset through the weighted sum of at risk subjects, i.e.~$Y_{s, i}(t) = 1$, which are both time-dependent and stochastic.
This is partially reflected in \Cref{lemma:grad_sensitivity2} in \Cref{app:beta_sensitivity}, which shows that under \Cref{assump-1}\ref{assp:covariate_bound} and \ref{assp:coefficient_bound}, the $\ell_2$ sensitivity of \eqref{eq:cox_gradient} is of order $\log(n)/n$.

To satisfy the federated DP constraint, each iteration of \Cref{alg:FDP-SGD} uses an independent batch of data from each server. To optimise for a heterogeneous distributed setting, we use a choice of weights $\{v_s\}_{s=1}^S $ that are proportional to the effective sample size at each server. 
\begin{proposition} \label{prop:fdp_upper}
    Suppose that the data $\{T_{s, i}, \Delta_{s, i}, \{Z_{s, i}(t): \, t \in [0, T_{s, i}]\}_{}\}_{s, i=1}^{S, n_s}$ are independent and identically distributed (i.i.d.)~from a distribution satisfying Assumptions~\ref{assump-1} and \ref{assp:baseline}, and the input parameters $C_Z, C_\beta$ are the same as those in Assumptions~\ref{assump-1}\ref{assp:covariate_bound} and \ref{assp:coefficient_bound}. We then have that the output of \Cref{alg:FDP-SGD} $\widehat{\beta}$ satisfies $(\{\epsilon_s, \delta_s\}_{s \in [S]}, K)$-FDP. 
    
    Set the tuning parameter inputs as $K \asymp \log(\sum_{s=1}^S n_s / d^2)$ and $\eta \asymp d$.  Assume in addition that (a) $\min_{s \in [S]}n_s \gtrsim K$ for all $s \in [S]$, and (b) $d \sum_{s=1}^S\min\{1, b_s \epsilon_s^2/d\} \lesssim \sum_{s=1}^S \min\{b_s, b_s^2 \epsilon_s^2 / d\}$.  It then holds that
    \begin{align} 
    \mathbb{E}\|\widehat{\beta} - \beta_0\|_2^2 & \lesssim \frac{d^2 \log(\sum_{s=1}^S n_s / d^2)^2 \max_{s \in [S]} \{ \log(1/\delta_s) \log(n_s/\log(\sum_{s=1}^S n_s / d^2))^2\}}{\sum_{s=1}^S \min\{ n_s, n_s^2 \epsilon_s^2/d\}} \nonumber\\
    + & 2d \exp \left( \frac{-(\rho_-/8)^2}{32 \sum_{s=1}^S \min(n_s^{4/3}, n_s^{10/3} \epsilon_s^4 / d^2)/\{\sum_{u=1}^S \min(n_u, n_u^2 \epsilon_u^2/d)\}^2 }\right) \label{eq:fdp_upper-1}\\
    &\asymp \frac{d^2}{\sum_{s=1}^S \min(n_s, n_s^2 \epsilon_s^2/d)} \cdot \mathrm{polylog}\left(\{n_s, \epsilon_s, \delta_s\}_{s \in [S]}, d\right). \label{eq:fdp_upper}
    \end{align}
\end{proposition}

\Cref{prop:fdp_upper} provides the theoretical guarantees for the convergence of \Cref{alg:FDP-SGD}, and provides an upper bound on the minimax rate in \Cref{thm-fdp-cox} (up to a poly-logarithmic factor). The proof of \Cref{prop:fdp_upper} can be found in \Cref{app:Cox_upper_proof}. The first term in the right-hand side of \eqref{eq:fdp_upper-1} is from the variances of the gradient of the likelihood function at each server and the noise added to preserve privacy. 

The second term in the right-hand side of \eqref{eq:fdp_upper-1} is of the form 
\[
    2d \exp \left(\frac{-(\rho_-/8)^2}{32 \sum_{s=1}^S v_s^2 / (b_s^{2/3})}\right),
\]
and is from controlling the condition number of the distributed likelihood in a high probability event in order to obtain the convergence rates in gradient descent. In the homogeneous setting where $n_s = n$, and $\epsilon_s= \epsilon$ for all $s \in [S]$, the second term can be bounded above by $2d \exp(-c Sn^{2/3})$.  This is where the sample size conditions (a) and (b) are used. Condition (a) ensures that each server has enough independent data points for each of the $K$ rounds. Condition (b) is from controlling the bias incurred when approximating $\mu(t, \beta_0)$ by $\bar{Z}(t, \beta_0)$ at each server; more details can be found in \Cref{lemma:distributed_hessian} in \Cref{app:beta_auxiliary}. We conjecture that it may be possible to relax this condition by first obtaining private versions of $S^{(1)}(t, \beta_0)$ and $S^{(0)}(t, \beta_0)$ that average the local means at each server over a sufficiently fine grid of $[0, 1]$, in order to reduce the bias in approximating $\mu(t, \beta_0)$.

The convergence result in \Cref{prop:fdp_upper} is also conditional on appropriately chosen tuning parameters.  Overestimating $C_{\beta}$ and $C_Z$ leads to a constant order of inflation in the error bound, while the consequence of under estimating is more severe.  The latter may lead to privacy leakage and prevent convergence to $\beta_0$.  We therefore recommend a conservatively large truncation level in practice. As in non-private gradient descent, the step size needs to be sufficiently small, yet a smaller step size means that a larger number of iterations are required, which increases the magnitude of noise needed to preserve privacy.  A sensitivity analysis regarding the tuning parameters is provided in \Cref{subsec-sensitivity_analysis}.  

\subsection{Label differential privacy}\label{subsec-gradient-sensitivity}
So far, we have treated a survival data point as the full triple $(T, \Delta, Z)$, upon which the privacy constraints are imposed.  However, in some applications, such as healthcare datasets, covariates such as demographic information may already be publicly available \citep[e.g.][]{bussone2020trust}, and only the time and censoring indicators require privacy protection.  This motivates a relaxed privacy model in which the covariates are public and only other information are privatised.

A natural formalisation is the label DP framework \citep{chaudhuri2011sample}.  In our survival setting, to make the definition well posed, we include \Cref{def:labelCDP} below and consider time-invariant covariates only for simplicity.

\begin{definition}[Label-CDP] \label{def:labelCDP}
For $\epsilon > 0$ and $\delta \geq 0$, a privacy mechanism $M$ is an $(\epsilon, \delta)$-label-CDP mechanism for survival data, if it is a conditional distribution on space $\mathcal{R}$ that satisfies 
\begin{align*}
    & \mathbb{P}\{M(\{(T_i, \Delta_i, Z_i)\}_{i \in [n]}\}) \in A \mid \{(T_i, \Delta_i, Z_i)\}_{i \in [n]}\}) \\
    \leq & e ^\epsilon \mathbb{P}\{M(\{(T_i', \Delta_i', Z_i)\}_{i \in [n]}\}) \in A \mid \{(T_i', \Delta_i', Z_i)\}_{i \in [n]}\}) + \delta,
\end{align*}
for all measurable set $A \in \sigma(\mathcal{R})$, all possible $\{Z_i\}_{i\in[n]}$ and all possible $\{(T_i, \Delta_i, T_i', \Delta_i')\}_{i \in [n]}$ such that $\sum_{i = 1}^n\mathbbm{1}\{(T_i, \Delta_i) \neq (T_i', \Delta_i')\} \leq 1$. 
\end{definition}

Intuitively, when the covariates are public, one only needs to privatise univariate outcomes, so the private rate may not incur an extra cost of the dimension.  This intuition is verified in some classical statistical problems including classification and linear regression with bounded noise \citep[e.g.][]{zhao2025labelDP, duchi2013local} for label \emph{local} DP, but verified to be untrue for label \emph{central} DP \citep[e.g][]{zhao2025labelDP}. We consider Cox regression, starting with a minimax analysis under the label CDP constraint.

\begin{theorem}\label{prop:CDP_labelDP}
Denote by $\mathcal{P}$ the class of distributions satisfying Assumptions~\ref{assump-1} and \ref{assp:baseline}, and $\mathcal{Q}$ the class of $(\epsilon, \delta)$-label-CDP mechanisms. We have that
\begin{align*}
    \frac{d^2}{n} + \frac{d^3}{n^2 \epsilon^2} \lesssim \inf_{Q \in \mathcal{Q}} \inf_{\widehat{\beta}} \sup_{P \in \mathcal{P}} \mathbb{E}_{Q, P} \big\{\|\widehat{\beta} - \beta_0\|^2_2\big\} \lesssim \left\{\frac{d^2}{n} + \frac{d^3}{n^2 \epsilon^2}\right\} \mathrm{polylog}(n, d, \delta),
\end{align*}
where the lower bound holds under the assumption that $0 \leq \delta \lesssim \epsilon e^{-cd}$, with $c > 0$ being an absolute constant.
\end{theorem} 

The proof of \Cref{prop:CDP_labelDP} can be found in \Cref{app-sec-labelDP}. The lower bound is achieved by a novel extension of the proof of a differentially private version of Fano's lemma from \cite{acharya2021differentially}.  

\Cref{prop:CDP_labelDP} sends mixed messages.  Firstly, ignoring the poly-logarithmic factors, it shows that despite having a relaxed privacy constraints, the minimax rate for the regression coefficient estimation in the Cox model remains the same. This is consistent with the findings in \cite{zhao2025labelDP}, who show that the minimax rates for label and full central DP nonparametric regression are the same (up to constants), as well as for classification.  Secondly, note that the lower bound holds for the pure DP ($\delta = 0$) and the approximate DP with $\delta \lesssim \epsilon e^{-cd}$.  Translating this to the upper bound results in a factor of $\log(1/\delta) \gtrsim d$, which leads to a significant gap between the upper and lower bounds.  Another way to understand this gap is that the upper bound holds for approximate DP, while as the lower bound for pure DP.

Moving on to label federated DP, the message is more nuanced.  The detailed definition for label FDP is deferred to \Cref{def:label-FDP} in \Cref{app-sec-labelDP}, as a federated version of \Cref{def:labelCDP}.  We collect the results in \Cref{prop-label-FDP} below.

\begin{proposition} \label{prop-label-FDP}
Denote by $\mathcal{P}$ the class of distributions satisfying Assumptions~\ref{assump-1} and \ref{assp:baseline}, and denote $\mathcal{Q}$ the class of $(\epsilon, \delta)$-label-FDP mechanisms. We have that
\begin{align*}
    & \frac{d^2}{\sum_{s=1}^S \min(n_s, n_s^2 \epsilon_s^2)} + \frac{d^3}{(\sum_{s=1}^S n_s)^2 (\max_{s \in [S]} \epsilon_s)^2} \lesssim \inf_{Q \in \mathcal{Q}} \inf_{\widehat{\beta}} \sup_{P \in \mathcal{P}} \mathbb{E}_{Q, P} \big\{\|\widehat{\beta} - \beta_0\|^2_2\big\} \\
    & \hspace{6cm}\lesssim \frac{d^2}{\sum_{s = 1}^S\min(n_s, \,n_s^2\epsilon^2_s/d)} \mathrm{polylog}(\{n_s, \epsilon_s, \delta_s\}_{s \in [S]}, d),
\end{align*}
where the lower bound holds provided that $0 \leq \delta_s \lesssim \epsilon_s e^{-cd}$, $s \in [S]$, with $c > 0$ being an absolute constant.
\end{proposition}

To understand \Cref{prop-label-FDP}, for a homogeneous setting we have that 
\begin{align*}
    \frac{d^2}{Sn} + \frac{d^2}{Sn^2 \epsilon^2} + \frac{d^3}{S^2 n^2 \epsilon^2} \lesssim \inf_{Q \in \mathcal{Q}} \inf_{\widehat{\beta}} \sup_{P \in \mathcal{P}} \mathbb{E}_{Q, P} \big\{\|\widehat{\beta} - \beta_0\|^2_2\big\} \lesssim \left\{\frac{d^2}{Sn} + \frac{d^3}{Sn^2\epsilon^2}\right\} \mathrm{polylog}(n, d, \epsilon, \delta).
\end{align*}
We note that we also inherit the same mismatch in upper and lower bounds from the label CDP case regarding $\delta$. However, to our knowledge, this is the first result establishing such bounds.  A more illustrative example on simple linear regression is deferred to \Cref{sec-linear-regression-label} and a numerical comparison can be found in \Cref{subsec-labelDP-sims}.

\section{Estimation of the cumulative baseline hazard function} \label{sec-cumulative-hazard}
While the regression coefficients $\beta_0$ describe how covariates affect survival, a full analysis of the event time distribution also requires estimation of the cumulative baseline hazard function. In this section, we provide a private version of the Breslow estimator \citep{breslow1972discussion} under federated DP, and study its statistical guarantees.

\subsection{Federated differentially private Breslow estimator}
Originally proposed in the discussion of \cite{cox1972regression}, the Breslow estimator is the nonparametric maximum likelihood estimator of the cumulative baseline hazard function. \cite{andersen1982} extended the Breslow estimator to the counting process framework as 
\begin{equation*} \label{eq:non_priv_Breslow}
    \tilde{\Lambda}_0(t) = \sum_{i = 1}^n \int_0^t \frac{\mathrm{d}N_i(s)}{\sum_{j = 1}^n Y_j(s) \exp \{\widehat{\beta}^{\top} Z_j(s)\}}, \quad t \in [0, 1],   
\end{equation*}
and showed that $\sqrt{n}(\tilde{\Lambda}_0 - \Lambda_0)$ converges to a zero-mean Gaussian process. To adhere to the federated DP constraints, we propose the FDP-Breslow algorithm below in \Cref{alg:FDP-Breslow}.

\begin{algorithm}
\caption{FDP-Breslow estimator}
\label{alg:FDP-Breslow}
\textbf{Input}: Datasets $\{(T_{s, i}, \Delta_{s, i}, \{Z_{s, i}(t):\, t \in [0, T_{s, i}]\})\}_{s, i = 1}^{S, n_s}$, privacy parameters $\{\epsilon_s, \delta_s\}_{s=1}^S$, regression coefficient estimate $\widehat{\beta}$, at-risk probability estimate $\hat{p}$, covariate $\ell_2$ bound $C_Z>0$.

\begin{algorithmic}[1]
\State Set 
\begin{equation*}
    \quad c := 0.9\exp(-C_Z \|\widehat{\beta}\|_2)\hat{p}, \quad\text{and }  h:=\left \lfloor \frac{1}{2} \log_2 \left(\sum_{s=1}^S \min \left\{ n_s, n_s^2 \epsilon_s^2 \right\}\right) \right \rfloor.
\end{equation*}
\For{$s = 1, \ldots, S$}
    \For{$m = 1, \ldots, 2^h$}
        \State Set 
            \begin{equation*}
                x_{s, h, m} := \int_{(m-1)/2^h}^{m/2^h} \sum_{i=1}^{n_s} \frac{\dint  N_{s, i}(t)}{n_s\max \left\{c, S^{(0)}\left(t, \widehat{\beta}; \{(T_{s, i}, \Delta_{s, i}, (Z_{s, i}(t))\}_{i=1}^{n_s} \right) \right\}}
            \end{equation*}
        \For{$l = h-1, h-2, \ldots, 1$}
            \For{$m = 1, \ldots, 2^l$}
                \State Set $x_{s, l, m} := x_{s, l+1, 2m-1} + x_{s, l+1, 2m}$
            \EndFor
        \EndFor
    \EndFor
    \For{$l = 1, \dots, h$}
        \For{$m = 1, \dots, 2^l$}
            \State Generate independently \begin{equation*}
                W_{s, l, m} \sim \mathcal{N}\left(0, \left[\frac{1}{c^4} + \frac{3}{c^2}\right]\frac{2 \log(1/\delta_s)/\epsilon_s + 1}{n_s^2 \epsilon_s/h} \right).
            \end{equation*}
            \State Set $x_{s, l, m} := x_{s, l, m} + W_{s, l, m}$.
        \EndFor
    \EndFor
\EndFor
\State Set weights $\{v_s\}_{s \in [S]}$ as $v_s := \min(n_s, n_s^2 \epsilon_s^2)/\left\{\sum_{u=1}^S \min(n_u, n_u^2 \epsilon_u^2)\right\}$. 
\State For any $t \in [0, 1]$, let $(b_1, \dots b_h)$ be the binary representation of $\lfloor 2^h t \rfloor$, and set 
    \begin{equation*}
        \widehat{\Lambda}(t) :=  \max \left\{0, \sum_{s=1}^S v_s \sum_{l=1}^h \mathbbm{1}\{b_l = 1\} x_{s, l, \, \sum_{k=1}^l 2^{l-k} b_k}\right\}.  
    \end{equation*} 
\State \textbf{Output}: $\widehat{\Lambda}(t)_{t \in [0, 1]}$.
\end{algorithmic}
\end{algorithm}

The key idea of \Cref{alg:FDP-Breslow} is to partition the time horizon $[0, 1]$ into equal-sized intervals and privatise the Breslow estimator of the cumulative hazard within each interval using the Gaussian mechanism. The interval width controls approximation bias, and is chosen to balance the non-private and private rates for estimating the integral. Since releasing cumulative estimates up to a time point $t \in [0, 1]$ would aggregate noise from privatising the preceding intervals, we instead consider publishing  $(\epsilon_s, \delta_s)$-CDP binary trees, which allow for any $\widehat{\Lambda}(t), \,t \in [0, 1]$, to be reconstructed from only a logarithmic number of privatised interval estimators. 

Tree-based private algorithms have been studied in \cite{chan2011private} and \cite{kulkarni2018answeringrangequerieslocal}, but in very different settings.  Specifically, \cite{chan2011private} consider counting-based queries under central DP and \cite{kulkarni2018answeringrangequerieslocal} analyse range queries under local DP. In both papers, altering a single data point affects at most one node per tree-level. Our setup has the additional complication where changing a data point can change multiple nodes per tree-level due to the involvement of at-risk sets in the Breslow estimator.  In addition, to accommodate for federated learning with heterogeneous per-servers sample sizes and privacy budgets, we adopt a weighted average of the cumulative hazard estimated across the servers, where the weights are based on the the variance of each server's estimator, similar to \Cref{alg:FDP-SGD}. 

Unlike the the FDP-Cox algorithm, \Cref{alg:FDP-Breslow} is a one-shot algorithm: private information from different servers are only aggregated at the final stage. \Cref{prop:hazard_upper} provides an upper bound on the supremum norm error of $\widehat{\Lambda}(\cdot)$ output by \Cref{alg:FDP-Breslow}. In addition to \Cref{assp:baseline}, we further assume the hazard function is uniformly upper bounded on $[0, 1]$; analogous requirements have been used for private density estimation \citep[e.g.][]{lalanne2023density}.

\begin{theorem}\label{prop:hazard_upper}
Suppose that the data $\{T_{s, i}, \Delta_{s, i}, \{Z_{s, i}(t): \, t\in [0, T_{s, i}]\}\}_{s, i = 1}^{S, n_s}$ are i.i.d.~from a distribution satisfying Assumptions \ref{assump-1}\ref{assump-ind}, \ref{assp:covariate_bound}, \ref{assp:covariate_Lipschitz} and \ref{assp:coefficient_bound} and \Cref{assp:baseline}.  In addition, we assume that there exists an absolute constant $C_{\lambda}$ such that $\lambda_0(t) < C_\lambda < \infty$ for all $t \in [0, 1]$.

Let $\widehat{\Lambda}(\cdot)$ be the output of \Cref{alg:FDP-Breslow}, with the input $\widehat{\beta}$ being an independent estimator of $\beta_0$, $\hat{p}$ being an independent estimator of $p_0 := \mathbb{P}(Y(1)=1)$ such that $\mathbb{P}(\hat{p} > 19p_0/18) \lesssim \exp(-c \min_{s \in [S]}n_s)$, and $C_Z$ being a constant that satisfies \Cref{assump-1}\ref{assp:covariate_bound}.  It holds that $\widehat{\Lambda}(\cdot)$ satisfies $(\{\epsilon_s, \delta_s\}_{s \in [S]}, 1)$-FDP and  
\begin{align}
        & \mathbb{E} \left[\sup_{t \in [0, 1]} |\widehat{\Lambda}(t) - \Lambda_0(t)|\right] \lesssim  \frac{\log(\sum_{s=1}^S n_s)^2 \max_{s \in [S]} \log(1/\delta_s)} {\sqrt{\sum_{s=1}^S \min(n_s, n_s^2 \epsilon_s^2)}}  \notag\\ 
        & \hspace{2cm} + \mathbb{E}[\|\widehat{\beta} - \beta_0\|_2] \frac{\sum_{s=1}^S \min(n_s, n_s^2 \epsilon_s^2) \log(n_s)}{\sum_{s=1}^S \min(n_s, n_s^2 \epsilon_s^2)} +\frac{\sum_{s=1}^S \min(n_s, n_s^2 \epsilon_s^2) \exp(-cn_s)}{\sum_{s=1}^S \min(n_s, n_s^2 \epsilon_s^2)},
        \label{eq:breslow_upper}
    \end{align}
for some absolute constant $c > 0$. 
\end{theorem}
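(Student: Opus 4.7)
The plan divides the analysis into a privacy argument, a statistical-error decomposition valid on a high-probability event, and a crude off-event bound. Since the mechanism in \Cref{alg:FDP-Breslow} is non-interactive, the $(\{\epsilon_s,\delta_s\}_{s\in[S]},1)$-FDP claim reduces by parallel composition to checking that each server independently satisfies $(\epsilon_s,\delta_s)$-CDP. For the statistical bound, I would write $\widehat{\Lambda}(t)=\sum_{s}v_s[\widetilde{\Lambda}_s(\tau_t)+\xi_s(t)]$, where $\tau_t:=2^{-h}\lceil 2^h t\rceil$ snaps $t$ to the dyadic grid, $\widetilde{\Lambda}_s$ denotes the noise-free truncated Breslow integral reconstructed from the tree, and $\xi_s(t)$ aggregates the at most $h$ independent Gaussian noises used along the reconstruction path.

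For privacy, I compute the $\ell_2$-sensitivity of the leaf vector $(x_{s,h,1},\ldots,x_{s,h,2^h})$. Altering one record $(T_{s,i^*},\Delta_{s,i^*},Z_{s,i^*}(\cdot))$ has two effects: (i) it moves at most one jump of $N_{s,i^*}$, contributing at most $2/(n_s c)$ to a single leaf; and (ii) it perturbs the denominator $\max\{c,S^{(0)}(t,\widehat{\beta})\}$ uniformly in $t$, with the pointwise bound $|1/\max(c,S')-1/\max(c,S)|\le|S'-S|/c^2\le 2\exp(C_Z C_\beta)/(c^2 n_s)$. Writing $a_m:=n_s^{-1}\int_{I_m}\sum_i\dint N_{s,i}$ and using $\sum_m a_m\le1$, effect (ii) contributes an $\ell_1$-sensitivity of $O(1/(c^2 n_s))$ summed across leaves, and combined with the deterministic bound $\max_m a_m\le 1$, an $\ell_2$-sensitivity of the same order. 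The Gaussian mechanism plus advanced (or zCDP) composition across the $h$ tree levels then show that the noise variance stated in \Cref{alg:FDP-Breslow} suffices for $(\epsilon_s,\delta_s)$-CDP.

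For the statistical analysis, decompose $\widehat{\Lambda}(t)-\Lambda_0(t)=[\Lambda_0(\tau_t)-\Lambda_0(t)]+[\sum_s v_s\widetilde{\Lambda}_s(\tau_t)-\Lambda_0(\tau_t)]+\sum_s v_s\xi_s(t)$. The discretisation term is at most $C_\lambda 2^{-h}$ by Lipschitzness of $\Lambda_0$ under \Cref{assp:baseline}\ref{assp:baseline-stronger}. The noise term has variance at any single $t$ of order $\sum_s v_s^2\cdot h\sigma_s^2\asymp h^2\max_s\log(1/\delta_s)/(c^4\sum_s\min(n_s,n_s^2\epsilon_s^2))$; since $\tau_t$ takes only $2^h$ distinct values, a union bound with a Gaussian maximal inequality upgrades this to the supremum, yielding the $\log^2(\sum_s n_s)$ factor. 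For the bias term, I work on the event $\mathcal{E}:=\{\inf_t S^{(0)}(t,\beta_0)>2c,\ \widehat{p}\le 3p_0/2,\ \|\widehat{\beta}-\beta_0\|\text{ sufficiently small}\}$, on which the truncation is inactive and $\widetilde{\Lambda}_s$ equals the plug-in Breslow estimator; standard martingale arguments control the Breslow error itself, and a first-order Taylor expansion in $\beta$ using $\|\partial_\beta S^{(0)}\|\le C_Z e^{C_Z C_\beta}$ combined with a uniform-in-$t$ empirical process bound on $\int \sum_i \dint N_i/(n_s S^{(0)})$, contributing the $\log(n_s)$ factor, controls the $\|\widehat{\beta}-\beta_0\|$ contribution. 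Off $\mathcal{E}$, the crude bound $|\widehat{\Lambda}(t)-\Lambda_0(t)|\le \Lambda_0(1)+|\xi|$ combined with $\mathbb{P}(\mathcal{E}^c)\lesssim\exp(-c\min_s n_s)$ supplies the final term.

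The main obstacle is the sensitivity calculation: in contrast to the tree-based mechanisms of \cite{chan2011private} and \cite{kulkarni2018answeringrangequerieslocal}, where a single record touches only one leaf per tree level, here it affects \emph{every} leaf through the time-dependent denominator $S^{(0)}(t,\widehat{\beta})$, so a naive per-leaf analysis would give an $\ell_2$-sensitivity scaling like $\sqrt{2^h}$. The truncation at $c$ together with the aggregate bound $\sum_m a_m\le 1$ is what restores summability across leaves, and choosing $c$ small enough for $\mathcal{E}$ to be high-probability while large enough to suppress the noise is the delicate balance that dictates the final rate. A secondary technical point is showing that the weights $\{v_s\}$ and the composition parameter $h$ remain approximately optimal under heterogeneous $(n_s,\epsilon_s)$, which is handled analogously to the argument underlying \Cref{prop:fdp_upper}.
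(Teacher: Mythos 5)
Your proposal is correct and follows essentially the same route as the paper: the same per-level $\ell_2$-sensitivity calculation exploiting the truncation at $c$ and the constraint $\sum_m a_m\le 1$ to defeat the "one record touches every leaf" issue, Gaussian-mechanism composition across the $h$ tree levels, and the same five-way error decomposition into discretisation, tree-noise (with a maximal inequality over the logarithmically many nodes on each reconstruction path), the Breslow martingale term via a BDG-type bound, the $\widehat{\beta}$ plug-in term via a first-order expansion of $1/S^{(0)}(\cdot,\beta)$, and the exponentially small truncation term. The only cosmetic differences are that the paper bounds the truncation contribution term-by-term (using that the truncated estimator is dominated by the untruncated one, hence $\lesssim\log n_s$) rather than via a single global good event, and takes the maximum of the noise over all tree nodes rather than over reconstruction paths; neither affects the stated rate.
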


\begin{remark}
We can choose $\widehat{\beta}$ as the output of \Cref{alg:FDP-SGD} from an i.i.d.~copy of data. Taking $\hat{p}$ to be a Gaussian perturbed sample mean would satisfy the federated DP constraint and the condition in \Cref{prop:hazard_upper}; details are provided in \Cref{app-hazard-atrisk}. Following suit with the existing literature, we focus on the supremum norm loss of $\widehat{\Lambda}$, but we also consider other losses in \Cref{app:lambda_equiv_optimality}.
\end{remark}

The proof of \Cref{prop:hazard_upper} can be found in \Cref{subsec-proof-hzard-upper}. The upper bound contains three terms.  Ignoring the logarithmic factors, the first two terms contribute
\begin{equation} \label{eq-cumu-haza-simple}
    \frac{1}{\sqrt{\sum_{s = 1}^S \min(n_s, n_s^2\epsilon_s^2)}} + \mathbb{E}\big[\big\|\widehat{\beta} - \beta_0\big\|_2\big].
\end{equation}
The first term in \eqref{eq-cumu-haza-simple} matches the minimax lower bound in \Cref{prop:hazard_lower} in \Cref{app-cumulative-hazard}, and therefore captures the fundamental hardness of cumulative baseline hazard estimation itself.  In particular, when $d$ is treated as a constant (i.e.~does not scale with $n$ and $\epsilon$), \Cref{alg:FDP-Breslow} attains the minimax rate up to logarithmic factors. The second term in \eqref{eq-cumu-haza-simple} is a direct consequence of using $\widehat{\beta}$ as a plug-in estimator within $\widehat{\Lambda}$; a similar dependence can be seen in, for instance, Theorem 3.4 in \cite{andersen1982}.

Lastly, the third term in \eqref{eq:breslow_upper} -- the one involving $\exp(-cn_s)$ -- is due to the bias from truncating as part of the privacy mechanism.  A common practice in the privacy literature is to truncate statistics to obtain upper bounds on its sensitivity, where the truncation level is chosen such that the truncation does not occur with high probability. In the private Breslow estimator, we similarly apply a truncation to exclude the small probability events of the at-risk sets having too small cardinalities.

\subsection{Estimation of the survival function}

A natural downstream goal is prediction of survival probabilities.  Given estimators of $\beta_0$ and $\Lambda_0(\cdot)$, the conditional survival function of the event time $\widetilde{T}$ given a covariate trajectory $z$ is $S(t;z) = \exp\left\{- \int_0^t \exp\{\beta^{\top}_0 z(s)\} \,\mathrm{d}\Lambda_0(s) \right\}$.  We therefore study private estimation of $S(t; z)$, uniformly over both $t \in [0, 1]$ and covariate paths $z$ with $\|z\|_2\leq C_Z$.  \Cref{prop-optimality-survival} provides the corresponding minimax rates.

\begin{theorem}\label{prop-optimality-survival}
Denote by $\mathcal{P}$ the class of distributions satisfying Assumptions \ref{assump-1} and \ref{assp:baseline}.  Further suppose that for all distributions in $\mathcal{P}$, there exists an absolute constant $C_{\lambda}$ such that $\lambda_0(t) < C_\lambda < \infty$.  Denote by $\mathcal{Q}$ the class of $(\{(\epsilon_s, \delta_s)\}_{s\in [S]}, K)$-FDP mechanisms. If $\min_s n_s \gtrsim \log(\sum_{s=1}^S n_s, n_s^2 \epsilon_s^2)$, we have that 
\begin{align*}
    & \frac{d^2}{\sum_{s = 1}^S \min(n_s, \, n_s^2\epsilon_s^2/d)} \lesssim \inf_{Q \in \mathcal{Q}} \inf_{\widehat{S}} \sup_{P \in \mathcal{P}} \mathbb{E}\left[\sup_{t \in [0, 1]} \sup_{z: \|z\|_2 \leq C_Z} \left|\widehat{S}(t; z) - S(t; z) \right|^2\right] \\
    & \hspace{5cm} \lesssim \frac{d^2}{\sum_{s = 1}^S \min(n_s, \, n_s^2\epsilon_s^2/d)} \mathrm{polylog} \left(\{n_s, \epsilon_s, \delta_s\}_{s \in [S]}, d\right).
\end{align*}
\end{theorem}

The detailed version of the upper bound for general $\{(n_s, \epsilon_s, \delta_s)\}_{s\in [S]}$ can be found in the proof of \Cref{prop-optimality-survival}, which is deferred to \Cref{app-proof-prop-optimality-survival}. The rates in \Cref{prop-optimality-survival} coincide with those in \Cref{thm-fdp-cox} for the optimality of estimating the regression coefficients -- the proof of \Cref{prop-optimality-survival} shows that
\begin{align} \label{eq-surival-optimal-lean}
    & \inf_{Q \in \mathcal{Q}} \inf_{\widehat{\beta}} \sup_{P \in \mathcal{P}} \mathbb{E}\left[\|\widehat{\beta} - \beta_0\|^2 \right] \lesssim \inf_{Q \in \mathcal{Q}} \inf_{\widehat{S}} \sup_{P \in \mathcal{P}} \mathbb{E}\left[\sup_{t \in [0, 1]} \sup_{z: \|z\|_2\leq C_Z} \left|\widehat{S}(t; z) - S(t; z) \right|^2\right] \nonumber  \\
    \lesssim & \left\{\frac{1}{\sum_{s = 1}^S \min(n_s, \, n_s^2\epsilon_s^2)} + \frac{d^2}{\sum_{s = 1}^S \min(n_s, \, n_s^2\epsilon_s^2/d)}\right\} \mathrm{polylog} \left(\{n_s, \epsilon_s, \delta_s\}_{s \in [S]}, d\right).
\end{align}
The lower bound in \eqref{eq-surival-optimal-lean} shows that estimating the survival function is no easier than estimating the regression coefficients.  The upper bound in \eqref{eq-surival-optimal-lean} is obtained by sampling splitting: half of the data are used to obtain $\widehat{\beta}$ via \Cref{alg:FDP-SGD}, and the other half are used to obtain $\widehat{\Lambda}$ via \Cref{alg:FDP-Breslow} with $\widehat{\beta}$ as input.

\section{Numerical experiments}\label{sec-numerical}
In this section, we conduct numerical experiments to support the theoretical results for our proposed methods from Sections \ref{sec-beta} and \ref{sec-cumulative-hazard}. We first focus on central DP in \Cref{subsec-sim-cox}, followed by the federated DP results in \Cref{sec-simu-fdp} and an application to real data in \Cref{sec:real_data}. We provide some sensitivity analysis, further simulations exploring the effect of censoring, and comparisons to theoretical results in \Cref{app:simulations}. The code for reproducing our experiments and an \texttt{R} package implementing our methods can be found at \url{https://github.com/EKHung/FDPCox}.

\subsection{Central differential privacy}
\label{subsec-sim-cox}
To better investigate the effects of  privacy budgets, sample sizes and dimensions, we start with a simple case where there is only one server, i.e.~$S = 1$, which is equivalent to the central DP constraint. Due to the different types of constraints on the interactions allowed within the algorithm, we include a central DP version of estimating $\beta_0$ as \Cref{alg:cdp_sgd} of \Cref{app:cdp_Cox}, improving constant and logarithmic factors by tighter privacy composition analysis.  We can directly set $S = 1$ in \Cref{alg:FDP-Breslow} for a central DP estimator of $\Lambda_0(\cdot)$.  

\textbf{Simulation setup.} To investigate the effects of $n$ and $\epsilon$, we generate data from \eqref{eq-hazard}, with $\beta_0=(0, 0.5, 0.8)$, $\lambda_0(t) = 1$ and covariates generated as $Z_{ij} \overset{\textrm{i.i.d.}}{\sim} \textrm{Uniform}(-1/\sqrt{d}, 1/\sqrt{d})$ for $i \in [n]$ and $j \in [d]$. The censoring times are generated from an Exp(0.3) distribution, and we take $T_i := \min(\tilde{T}_i, C_i, 1)$, $\Delta_i := \mathbbm{1}\{\tilde{T}_i \leq \min(C_i, 1)\}$. In our numerical experiments, we vary the privacy budget as  $\epsilon\in \{0.6, 0.8, 1, 1.5, 3, 6\}$ and set $\delta = 0.001$.  For each setting, we vary the number of samples $n \in \{2000, 4000, \dots, 14000\}$. To investigate the effect of the dimension $d$, we fix $n=10000$, vary $d \in \{2, \dots, 8\}$ and generate the coefficients for the simulations by taking the $j$-th coordinate to be $\{\beta_0\}_j \overset{\mathrm{i.i.d.}}{\sim}N(0, 1)$ for $j \in [d]$ and then truncating $\beta_0$ to $B_1(0)$. 

The estimator $\widehat{\beta}$ obtained from an independent dataset of the same sample size and subject to the same privacy budget, is used as input into \Cref{alg:FDP-Breslow} to obtain an estimator of $\Lambda_0(\cdot)$.

\medskip
\noindent \textbf{Evaluation metrics.} We carry out 200 Monte Carlo experiments for each setting and report the mean and standard error of $\|\widehat{\beta} -\beta_0\|_2^2$ across the 200 repetitions, where $\widehat{\beta}$ is the privatised output from \Cref{alg:cdp_sgd}. We do the same for $\sup_{t \in [0, 1]} |\widehat{\Lambda} - \Lambda_0|$, where $\widehat{\Lambda}$ is the output from \Cref{alg:FDP-Breslow}. 

\medskip
\noindent \textbf{Tuning parameters.}  The two tuning parameters in \Cref{alg:cdp_sgd} are the step size $\eta$ and the constant $C$ in the number of iterations $C \log(n/d^2)$. For the experiments in \Cref{fig:cdp_cox} we set $\eta=0.5$ and $C=20$. The constants we use from \Cref{assump-1}\ref{assp:covariate_bound} and \ref{assp:coefficient_bound} are $C_Z=C_\beta=1$. Some sensitivity analysis for tuning parameters can be found in \Cref{subsec-sensitivity_analysis}. The two tuning parameters for \Cref{alg:FDP-Breslow} are the at-risk probability estimate $\hat{p}$ and an $\ell_2$ bound on the covariate which we set to 1, to determine the level of truncation on the risk sets. We estimate~$\hat{p}$ using \Cref{alg:FDP-probabilities} with an independent dataset consisting of 10\% of the number of samples that are in the experiment setting. 

\begin{figure}[htbp]
  \centering
\includegraphics[width=0.99\textwidth]{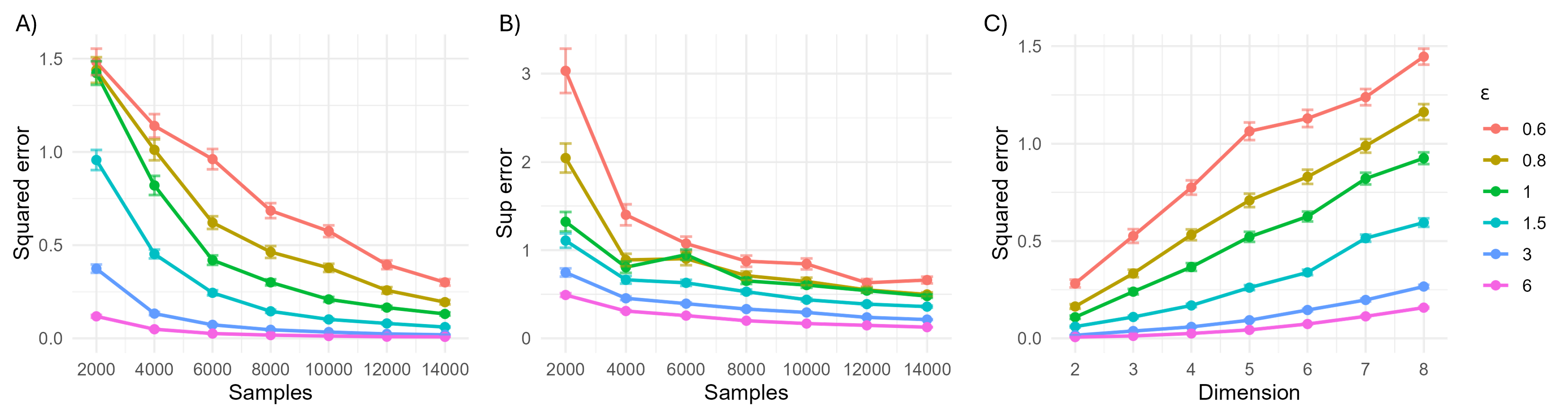}
  \caption{Simulation results for CDP Cox regression coefficients (panel A) and cumulative hazard (panel B) estimation, with varying sample sizes and privacy budgets; and for CDP Cox regression coefficients estimation with varying dimensions (panel C).}
  \label{fig:cdp_cox}
\end{figure}

\medskip
\noindent \textbf{Results.} The simulation results for estimating $\beta_0$ and $\Lambda_0(\cdot)$ are collected in \Cref{fig:cdp_cox}. As expected from the theoretical results, \Cref{fig:cdp_cox}(A) and (B) show that the estimation error of~$\beta_0$ and $\Lambda_0(\cdot)$ decreases with increasing sample size $n$ and privacy budget $\epsilon$.  \Cref{fig:cdp_cox}(C) depicts how the rate of estimating $\beta_0$ depends on the dimension $d$.  It shows that the estimation error increases with increasing $d$.  The gaps between different $\epsilon$ curves narrows as $\epsilon$ grows, reflecting roughly the squared dependence on the reciprocal of $\epsilon$ when $\epsilon$ is small, and the disappearance from the rate when $\epsilon$ is large.

\subsection{Federated differential privacy} \label{sec-simu-fdp}

\textbf{Simulation setup.} We consider a homogeneous setting and fix the sample size $n_s = n= 20000$, $\epsilon_s = \epsilon$, $s \in [S]$, and vary the number of servers $S \in \{2, 4, 8, 6, 10, 12\}$.  For each experiment, we use $S$ many $n$-sized datasets to obtain $\widehat{\beta}$, $S$ independent $n/10$-sized datasets to obtain $\hat{p}$, and plug in these two estimators' values along with $S$ independent $n$-sized datasets as the inputs into \Cref{alg:FDP-Breslow}.  The other simulation setups, tuning parameters, and reported evaluation metrics are the same as in \Cref{subsec-sim-cox}. 

\medskip
\noindent \textbf{Results.} In \Cref{fig:FDP_sims}, we report the results from running Algorithms~\ref{alg:FDP-SGD} and \ref{alg:FDP-Breslow}, with \Cref{fig:FDP_sims}(A) showing the squared-$\ell_2$ loss estimating $\beta_0$ and \Cref{fig:FDP_sims}(B) showing the supremum-norm loss estimating $\Lambda_0(\cdot)$.  We see the general trend of improving accuracy as the number of servers and privacy budgets increase, echoing our theoretical findings.

\begin{figure}[htbp]
  \centering
\includegraphics[width=0.95\textwidth]{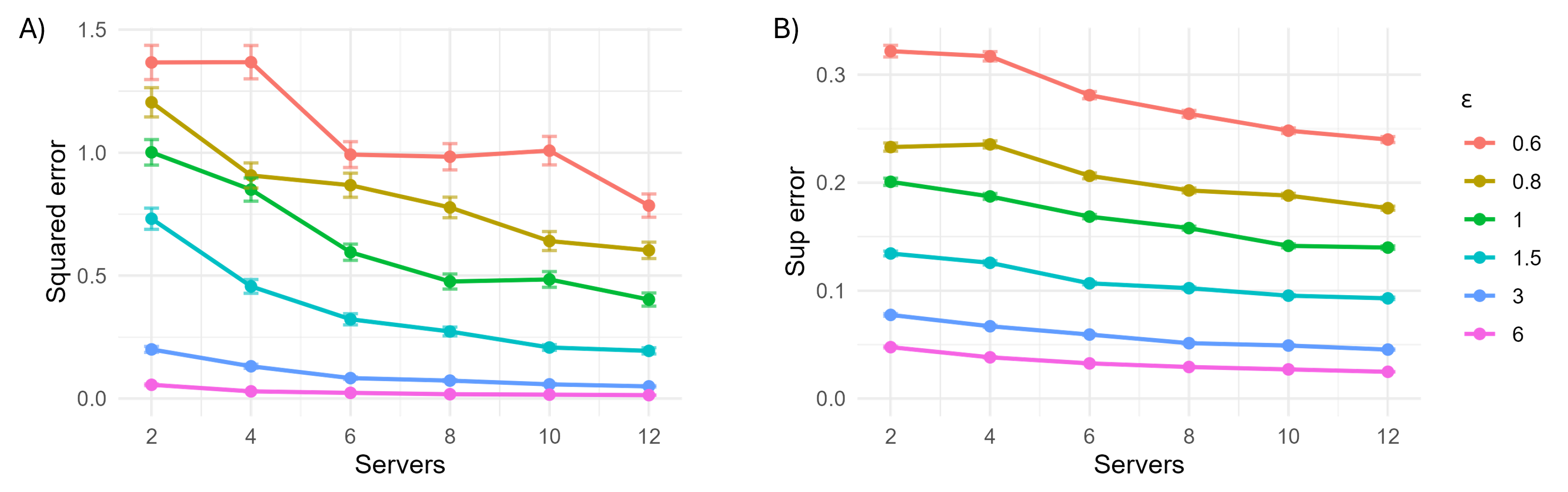}
  \caption{Simulations results from \Cref{alg:FDP-SGD} (panel A) and \Cref{alg:FDP-Breslow} (panel B), varying the number of servers and the $\epsilon$ privacy budget.}
  \label{fig:FDP_sims}
\end{figure}

\medskip
\noindent \textbf{Finite sample considerations.} We observe in \Cref{fig:FDP_sims} that, even with a much larger total number of samples, the estimation error of $\beta_0$ remains comparable to that in the left panel of \Cref{fig:cdp_cox} with $S = 1$ and smaller sample sizes.  This is because \Cref{alg:cdp_sgd} uses the entire dataset in each stochastic gradient descent iteration, and the privacy budget is allocated using a tighter privacy accounting argument; see \Cref{app:cdp_Cox} for details. However, under the FDP constraint in \Cref{def:FDP}, each server uses new data for each iteration's transcript; this interaction constraint plays a key role in the proof of the lower bound. 

Motivated by this observation, we propose a fully-interactive federated DP algorithm in \Cref{alg:FDP-SGD-interactive}, which ensures $(\epsilon, \delta)$-CDP guarantees within each server to maintain the privacy of the transcripts communicated across servers. To demonstrate the numerical improvement, we run experiments with a smaller per-server sample size of $n_s = n = 5000$.
The results of experiments using \Cref{alg:FDP-SGD-interactive} are collected in \Cref{fig:fdp_sims_interactive}, where we see a clear improvement in estimation even with a 75\% reduction in the number of samples. This leaves an important open problem about obtaining minimax lower bound for fully-interactive mechanisms in the federated DP setting, and we leave this as future work. 

To obtain the right-hand side plot of \Cref{fig:fdp_sims_interactive}, we consider adjusting the truncation level $c$ used in \Cref{alg:FDP-Breslow} to be $c:=0.8 \hat{p}$, remarking that the privacy guarantee holds for any $c$ chosen independently of the data. In our theoretical analysis, we set $c := 0.9 \exp(-C_Z \|\widehat{\beta}\|_2)\hat{p}$ for the convenience of showing that a truncation does not occur with high probability. However, truncating at a higher value reduces the magnitude of noise needed to preserve privacy; the choice of $c$ could be seen as a bias-variance trade-off for small sample sizes. 

\begin{figure}[htbp]
  \centering
\includegraphics[width=0.95\textwidth]{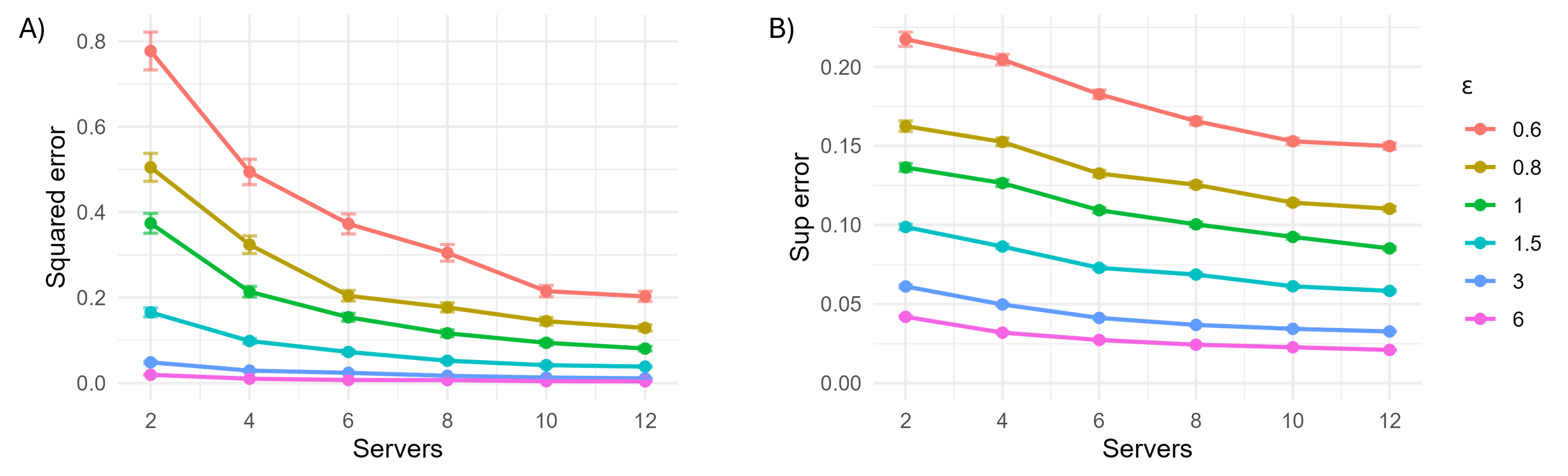}
  \caption{Simulations results from \Cref{alg:FDP-SGD-interactive} (panel A) and \Cref{alg:FDP-Breslow} (panel B), varying the number of servers and the $\epsilon$ privacy budget.} 
  \label{fig:fdp_sims_interactive}
\end{figure}

\subsection{Real data example} \label{sec:real_data}
We apply Algorithms~\ref{alg:FDP-Breslow} and \ref{alg:FDP-SGD-interactive} to the Rotterdam breast cancer dataset \citep{Royston_Altman_2013} from \texttt{R survival::rotterdam} \citep{survival-package}.  We split the dataset artificially to create multiple servers for the federated DP experiments. The dataset consists of 2982 primary breast cancer patients and 10 covariates, and the observed times range from 36 to 7043. By defining both deaths and cancer re-occurrences to be event times, 42.6\% of the observations are considered censored. We select the five categorical covariates for scaling convenience, two of which are significant at the 0.05 level when fitting a Cox regression model non-privately to the full set of covariates. We consider estimating the cumulative hazard on $[0, 3500]$, where 19\% of observations are still at risk after time $3500$.  We fix $\delta = 0.001$ and compare the effects of both the privacy budget $\epsilon \in \{3, 6, 9, 12, 15\}$ and the number of servers on the estimation accuracy. The full details of our experiments can be found in \Cref{app:real_data}.

We report the mean standardised square error $\|\widehat{\beta} - \tilde{\beta}\|_2^2/\|\tilde{\beta}\|_2^2$ and its standard error in \Cref{fig:real_data_experiments}(A), where $\tilde{\beta}$ is the coefficient vector from fitting a Cox regression model to the variables without the DP constraints. For cumulative baseline hazard estimation, we report $\sup_{t \in [0, 3500]} |\widehat{\Lambda}_0(t) - \tilde{\Lambda}_0(t)|$ in \Cref{fig:real_data_experiments}(B), where $\tilde{\Lambda}_0(t)$ is the non-private Breslow estimator obtained with $\tilde{\beta}$. We see that the estimation error decreases as the privacy budget $\epsilon$ or the number of servers~$S$ increases, as expected from our theoretical results. 
\begin{figure}[htbp]
  \centering
\includegraphics[width=0.95\textwidth]{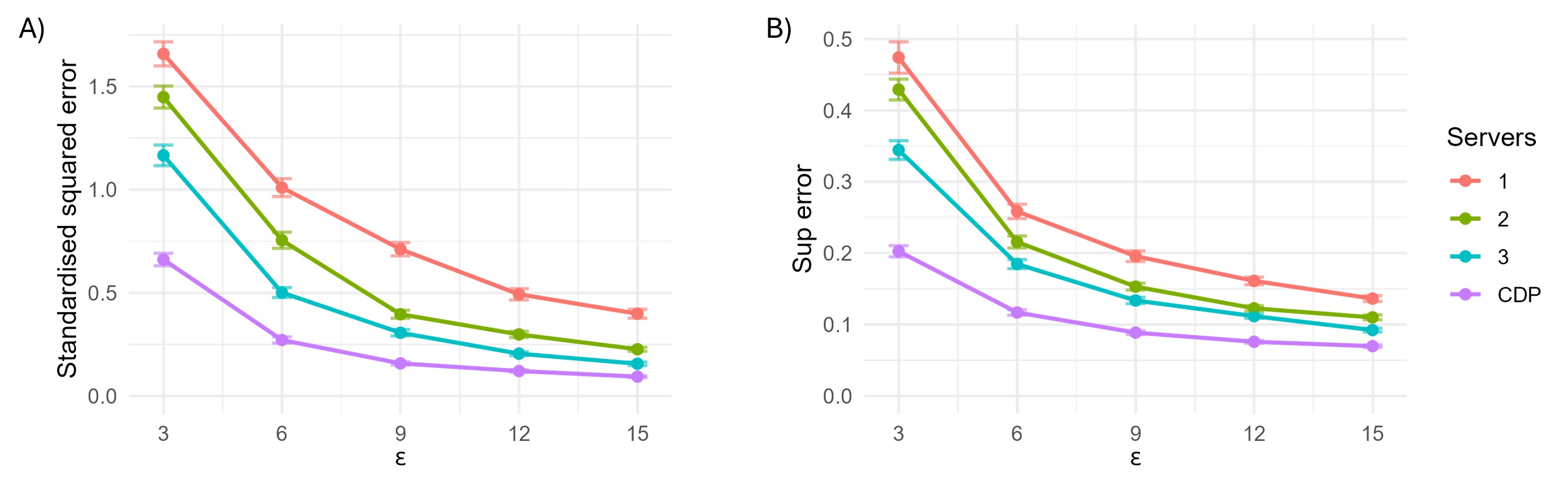}
  \caption{Estimating $\beta_0$ (panel A) and $\Lambda_0$ (panel B) for the Rotterdam breast cancer dataset. The number of servers refers to 1, 2, 3 servers with 994 observations each, and `CDP' means a single server with all 2982 observations.}
  \label{fig:real_data_experiments}
\end{figure}

\section{Conclusions}\label{sec-conclusions}

This paper is the first systematic minimax study of the Cox model under DP constraints, in particular, in a distributed data setting subject to federated DP constraints.  By laying the foundation for rigorous private survival analysis, our work also opens up several promising directions for future research.  We highlight a few below.

Firstly, in our distributed setting, we assume that the observations at each server are generated from the same distribution. While the stratified Cox partial likelihood allows for different baseline hazards across the different servers when estimating coefficients, more complicated methods would be needed to estimate heterogeneous regression coefficients. \cite{li2016transferCox} propose a transfer learning-based method that uses a $\ell_{2, 1}$-norm penalty to try to share the sparsity representation across the source and target. Both transfer learning and variable selection under DP constraints would be interesting directions for further investigation. 

Secondly, uncertainty quantification under privacy constraints is an important next step beyond point estimation, and that is natural to ask whether privacy imposes an even greater cost on inference than on estimation. Bootstrap-type methods \citep[e.g.][]{WangChengAwan2022DPBootstrap}, subsampling or resampling approaches \citep[e.g.][]{chadha2024resampling, WangChengAwan2022DPBootstrap} and covariance-based private inference methods \citep[e.g.][]{avella2023noisyopt} have been considered in the literature.  In our setting, the main technical difficulty is that the final private estimator is produced by an iterative noisy optimisation procedure, rather than by a single additive noise release.  As a result, both non-asymptotic bootstrap-type inference and deconvolution-based corrections are substantially more complicated here than in simpler private statistical problems. We provide more discussion on heterogeneity between servers and uncertainty quantification in \Cref{sec-additional-discussions}.

\section*{Acknowledgements}

Hung is supported by the Chancellors' Scholarship scheme and the Statistics Centre for Doctoral Training at the University of Warwick. Yu is partially supported by the Philip Leverhulme Prize and EPSRC programme grant EP/Z531327/1.

\newpage
\bibliographystyle{apalike}
\bibliography{bibliography}

\appendix

\newpage
\section*{Appendices}
In \Cref{app:cdp_Cox}, we present results on central DP, including an intermediate result of \Cref{sec-beta}, and algorithms for a fully-interactive federated DP Cox estimation procedure used in \Cref{sec-numerical}. \Cref{app:simulations} contains additional simulation results and the details of the real data experiments. \Cref{sec-additional-discussions} elaborates on the discussions in \Cref{{sec-conclusions}}.  The technical details and proofs of results in Sections~\ref{sec-beta} and \ref{sec-cumulative-hazard} are collected in Appendices~\ref{app-beta} and \ref{app-cumulative-hazard}, for the regression coefficient and cumulative baseline hazard function estimations respectively. In \Cref{app-sec-labelDP}, we investigate the label DP cases, providing proofs of the results in \Cref{subsec-gradient-sensitivity}, and further discussions on the simpler linear regression setting. Some additional definitions and lemmas can be found in \Cref{app-background}. We also introduce the additional notation $a_n \lesssim_{\log} b_n$ if $a_n \lesssim b_n$ up to poly-logarithmic factors.

\renewcommand{\contentsname}{Contents of Appendices}
\addtocontents{toc}{\protect\setcounter{tocdepth}{5}}
\tableofcontents

\section{Cox regression under central DP}\label{app:cdp_Cox}

We first present a direct corollary of \Cref{thm-fdp-cox} by setting $S = 1$ on the minimax lower bound on the estimation error of $\beta_0$ subject to central DP. 

\begin{corollary} \label{cor-beta-lowb-central}
Denote by $\mathcal{P}$ the class of distributions satisfying Assumptions~\ref{assump-1} and \ref{assp:baseline}, and denote by $\mathcal{Q}$ the class of $(\epsilon, \delta)$-CDP mechanisms, but allow the assumption on the upper bound for the eigenvalue of the Hessian in \Cref{assump-1}\ref{assp:eigenvalues} to be weakened from $\rho_+/d$ to $\rho_+$. If $d \delta \log (1/\delta) \lesssim  \epsilon^2$, then 
    \begin{equation*}
        \inf_{Q \in \mathcal{Q}} \inf_{\widehat{\beta}} \sup_{P \in \mathcal{P}} \mathbb{E}_{P,Q} \|\widehat{\beta} - \beta_0 \|_2^2 \gtrsim \max \left( \frac{d^2}{n}, \frac{d^3}{n^2 \epsilon^2} \right).
    \end{equation*}
    \label{prop:CDP_lowerbound}
\end{corollary}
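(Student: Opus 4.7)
The statement is, as the authors note, a direct specialization of \Cref{thm-fdp-cox} to $S=1$, so the cleanest plan is to deduce it from there. Setting $S=1$ collapses \Cref{def:FDP} to central $(\epsilon,\delta)$-CDP, the quantity $\min(n_s, n_s^2\epsilon_s^2/d)$ becomes $\min(n, n^2\epsilon^2/d)$, and $d^2/\min(n, n^2\epsilon^2/d) = \max(d^2/n,\, d^3/(n^2\epsilon^2))$. The hypothesis $\delta_s\log(1/\delta_s) \lesssim \epsilon_s^2/d$ of \Cref{thm-fdp-cox} is exactly the corollary's $d\delta\log(1/\delta) \lesssim \epsilon^2$. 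Weakening the upper eigenvalue bound from $\rho_+/d$ to $\rho_+$ only enlarges $\mathcal{P}$, so the supremum in the minimax risk can only increase and any lower bound from \Cref{thm-fdp-cox} transfers.

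If a standalone proof were required, I would derive the two rates $d^2/n$ and $d^3/(n^2\epsilon^2)$ separately and combine them via $\max\{a, b\} \geq (a+b)/2$. The non-private $d^2/n$ term is a textbook consequence of a multidimensional van Trees inequality: place a smooth prior on a parametric submodel indexed by $\beta$ in a ball of appropriate radius around a reference $\beta_\ast$, fix the baseline hazard and censoring distribution so that the resulting Fisher information is of order $1/d$ by \Cref{assump-1}\ref{assp:eigenvalues}, and conclude. The private $d^3/(n^2\epsilon^2)$ term follows from a score attack, as in \cite{cai2024optimal}: choose a mean-zero score with $\ell_2$-sensitivity of order $1/n$ (consistent with \Cref{lemma:grad_sensitivity}), use the standard $(\epsilon,\delta)$-DP inequality to control the cross term $\mathbb{E}\langle \widehat{\beta} - \beta_0, s(D;\beta_0)\rangle$ by a bound of the form $n\epsilon \cdot \mathrm{sens}_2 + n\delta \cdot \mathrm{diam}$, and invert via a Cram\'er--Rao-type step using the lower bound $\lambda_{\min} \gtrsim 1/d$.

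The main obstacle I would expect is the construction of the hard sub-family of Cox distributions itself: it must satisfy all parts of \Cref{assump-1} (bounded and Lipschitz covariate paths, bounded $\beta_0$, two-sided $\Theta(1/d)$ spectrum of the expected Hessian at $\beta_0$) as well as \Cref{assp:baseline}\ref{assp:baseline-weaker}, while being rich enough to saturate both the non-private and private rates. A natural workable construction takes covariates uniformly distributed on a scaled sphere of radius $O(1/\sqrt{d})$, a constant baseline hazard and an exponential censoring time, so that the population Hessian equals a fixed multiple of $I_d/d$; the coefficient perturbations then live in a $d$-dimensional ball of radius matched to the target rate. The regime $d\delta\log(1/\delta) \lesssim \epsilon^2$ is precisely what is needed so that the $\delta$ contribution in the group-privacy step of the score attack does not dominate the $\epsilon$ contribution, and the weakened upper-eigenvalue condition is harmless because the lower-bound argument only exploits the Fisher information lower bound $\rho_-/d$.
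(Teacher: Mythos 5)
Your proposal is correct and follows essentially the same route as the paper: the main text deduces the corollary by setting $S=1$ in \Cref{thm-fdp-cox} (noting, as you do, that weakening the upper eigenvalue bound only enlarges $\mathcal{P}$ and that the $\delta$-condition specializes correctly), and the appendix proof of \Cref{prop:FDP_lower_bound} handles the corollary jointly via exactly the van Trees plus score-attack argument you sketch, with a hard family built from bounded time-independent covariates, constant baseline hazard, and censoring supported beyond the time horizon. No gaps.
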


Comparing \Cref{prop:CDP_lowerbound} against \Cref{prop:FDP_lower_bound}, we see that in a homogeneous setting, i.e.~$n_s = n, \epsilon_s = \epsilon$, and $\delta_s = \delta$ for all $s \in [S]$, the federated DP rate loses a factor of $1/\sqrt{S}$ compared to the central DP rate from using $Sn$ samples.

An immediate consequence of \Cref{thm-fdp-cox}, in particular \Cref{prop:fdp_upper}, also leads to an upper bound for central DP.  As mentioned at the the start of \Cref{subsec-sim-cox}, rather than simply setting $S = 1$ in \Cref{alg:FDP-SGD}, we provide an alternative algorithm detailed in \Cref{alg:cdp_sgd}. Instead of splitting data in each iteration, \Cref{alg:cdp_sgd} uses the entire data at each iteration, which can lead to smaller absolute constant and log factors. The privacy guarantees hold due to the tighter privacy composition in R\'{e}nyi DP \citep[e.g.][]{mironov2017renyi} and the equivalence between $(\alpha, \epsilon)$-RDP to $(\epsilon, \delta)$-CDP guarantees in \Cref{lemma:RDP_to_CDP}. The convergence guarantee for \Cref{alg:cdp_sgd} is presented in \Cref{prop:cdp_sgd}.

\begin{algorithm}[htbp]
\caption{CDP-Cox}
\label{alg:cdp_sgd}
\textbf{Input}: Dataset $\{(T_i, \Delta_i, \{Z_i(t): t \in [0, T_i]\})\}_{i \in [n]}$,  number of iterations $K$, step size $\eta$, privacy parameters $\epsilon, \delta > 0$, parameter truncation level $C_\beta > 0$, covariate bound $C_Z > 0$

\begin{algorithmic}[1]
\State Set $\beta^{(0)} := 0$ and $d := \mathrm{dimension}(Z_{1})$
\For{$k \in [K]$}
    \State Sample independently 
    \begin{equation*}
    W^{(k)} \sim \mathcal{N}\left(0, \frac{(4 C_Z + \exp(2C_Z \|\beta^{(k-1)}\|_2)(2C_Z + C_Z^2) \log(n + 1))^2}{n^2} \frac{2 \log(1/\delta)/\epsilon + 1}{\epsilon/K} I_d \right).
    \end{equation*}
    \State Set $\beta^{(k)} := \Pi_{C_\beta} \left( \beta^{(k-1)} + \eta \left[\dot{\ell}(\beta^{(k-1)}; \{T_{i}, \Delta_{i}, Z_{i}\}_{i=1}^n) + W^{(k)} \right]\right)$.
\EndFor
\State \textbf{Output}: $\beta^{(K)}$.
\end{algorithmic}
\end{algorithm}

\begin{proposition}
\label{prop:cdp_sgd}
    Suppose that $\{T_i, \Delta_i, (Z_i(t))\}_{i=1}^n$ are i.i.d.~from a distribution that satisfies Assumptions \ref{assump-1} and \ref{assp:baseline}, but allow the assumption on the upper bound for the eigenvalue of the Hessian in \Cref{assump-1}\ref{assp:eigenvalues} to be weakened from $\rho_+/d$ to $\rho_+$. If $C_\beta, C_Z$ are the constants in \Cref{assump-1}, the output $\beta^{(K)}$ from \Cref{alg:cdp_sgd} is $(\epsilon, \delta)$-CDP. If in addition, we set $K \asymp \log(n)$,  then for correctly tuned $\eta$ and large enough sample size satisfying $n^{1/2} \gtrsim_{\log} d$, we have that  
    \begin{equation*}
        \mathbb{E}[\|\beta^{(K)} - \beta_0\|^2] \lesssim_{\log} \frac{d^2}{n} + \frac{d^3}{n^2 \epsilon^2}.
    \end{equation*}
\end{proposition}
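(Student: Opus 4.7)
I would prove the privacy and utility claims separately.

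For the privacy claim, each iteration of \Cref{alg:cdp_sgd} is a Gaussian mechanism applied to $\dot{\ell}(\beta^{(k)}; D)$ on the full dataset. By \Cref{lemma:grad_sensitivity}, this gradient has $\ell_2$-sensitivity at most $\Delta \lesssim \log(n)/n$, with constants matching those inside the algorithm's noise variance. The Gaussian mechanism with that variance is $(\alpha, \alpha\Delta^2/(2\sigma^2))$-R\'enyi DP for every $\alpha > 1$. Additive composition of RDP over $K$ iterations yields $(\alpha, K\alpha\Delta^2/(2\sigma^2))$-RDP; converting via \Cref{lemma:RDP_to_CDP} and optimising $\alpha = 1 + 2\log(1/\delta)/\epsilon$ then gives $(\epsilon,\delta)$-CDP. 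The explicit $K$-dependence inside the noise variance in \Cref{alg:cdp_sgd} is precisely what cancels the composition loss here, and this is where the CDP version gains tightness over naively applying \Cref{alg:FDP-SGD} with $S=1$. Post-processing finally delivers the CDP guarantee on $\beta^{(K)}$.

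For the utility claim, I would follow the noisy-gradient-descent analysis that underlies \Cref{prop:fdp_upper}, specialised to the full-batch, single-server case. The first step is to define a high-probability event $\mathcal{E}$ on which the empirical Hessian $\ddot{\ell}(\beta)$ has eigenvalues in $[c_-\rho_-/d,\, c_+\rho_+]$ uniformly over $\beta \in B_{C_\beta}(0)$, so that on $\mathcal{E}$ the loss is locally strongly convex and smooth. The condition $n^{-1/3} < \rho_-/d$ ensures $\mathbb{P}(\mathcal{E}^c)$ is exponentially small relative to the target rate. With $\eta$ correctly tuned against the strong-convexity and smoothness constants and $K \asymp \log(n)$, the standard contraction-plus-noise recursion for noisy gradient descent gives
\[
\mathbb{E}\bigl[\|\beta^{(K)} - \widehat{\beta}\|_2^2 \mathbbm{1}_{\mathcal{E}}\bigr] \lesssim (1-c_0)^K C_\beta^2 + \eta d^2\sigma^2/\rho_-,
\]
where $\widehat{\beta}$ denotes the non-private partial-likelihood maximiser. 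Substituting $\sigma^2 \asymp K\log^2(n)\log(1/\delta)/(n^2\epsilon^2)$ produces the private term of order $d^3/(n^2\epsilon^2)$ up to polylogarithmic factors, while the contraction term becomes negligible after $K \asymp \log(n)$ steps. Combining this with the classical Cox MLE rate $\mathbb{E}\|\widehat{\beta} - \beta_0\|_2^2 \lesssim d^2/n$ via the triangle inequality yields the bound on $\mathcal{E}$; off $\mathcal{E}$, the squared error is deterministically bounded by $4C_\beta^2$ and the exponentially small $\mathbb{P}(\mathcal{E}^c)$ absorbs this contribution.

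The main obstacle, as in the proof of \Cref{prop:fdp_upper}, is establishing the uniform Hessian concentration underpinning $\mathcal{E}$. The Cox Hessian involves the normalised at-risk-set average $\bar{Z}(t,\beta)$, which depends on all samples through the random quantities $Y_i(t)\exp\{\beta^\top Z_i(t)\}$ and is therefore not a simple empirical mean of independent terms. Controlling it uniformly over $(t,\beta) \in [0,1]\times B_{C_\beta}(0)$ requires entropy bounds on the covariate process via \Cref{assump-1}\ref{assp:covariate_Lipschitz}, a uniform lower bound on $S^{(0)}(t,\beta)$, and a chaining or bracketing argument, and it is precisely here that the sample-size condition $n^{-1/3} < \rho_-/d$ enters to make $\mathbb{P}(\mathcal{E}^c)$ sufficiently small for the probabilistic bound to pass through the expectation.
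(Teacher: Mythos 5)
Your proposal is correct in outline and matches the paper's proof in its main structure: the privacy argument via the sensitivity bound of \Cref{lemma:grad_sensitivity}, R\'enyi-DP composition of the Gaussian mechanism, and conversion back to $(\epsilon,\delta)$-CDP is exactly what the paper does (packaged as \Cref{lemma:Gaussian_composition}), and the utility argument via a good Hessian event, a contraction-plus-noise recursion for projected noisy gradient descent, and a final triangle inequality against the non-private maximiser with rate $d^2/n$ is also the paper's route. The one genuine divergence is in how uniform strong convexity over $B_{C_\beta}(0)$ is obtained. You propose concentrating the Hessian uniformly over $(t,\beta)$ via entropy/chaining arguments; the paper instead concentrates $\ddot{\ell}(\beta_0)$ only at the single point $\beta_0$ (\Cref{lemma:hessian_convergence}, whose covering argument is over $t$ alone) and then transfers the eigenvalue bounds to every other point of the ball deterministically through the self-concordance-type inequality $e^{-2\eta_b}\ddot{\ell}(\beta) \preceq \ddot{\ell}(\beta+b) \preceq e^{2\eta_b}\ddot{\ell}(\beta)$ of \Cref{lemma:3-2_huang}, paying only a constant factor $e^{-8C_ZC_\beta}$. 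This avoids a union bound or chaining over $\beta$ entirely and is the cleaner route, though yours would also work. Two small corrections: the condition $n^{-1/3} < \rho_-/d$ is not there to make $\mathbb{P}(\mathcal{E}^c)$ small; it ensures that on $\mathcal{E}$, where $\|\ddot{\ell}(\beta_0) - \mathbb{E}\int_0^1 G(s,\beta_0)\dint\Lambda_0(s)\| < n^{-1/3}$, the minimum eigenvalue remains of order $\rho_-/d$ so that strong convexity survives. And $\mathbb{P}(\mathcal{E}^c)$ in the paper is only $O(1/n)$, not exponentially small; this suffices because the projection step bounds the off-event squared error by a constant, so the contribution is $O(1/n) \lesssim d^2/n$.
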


The proof of \Cref{prop:cdp_sgd} is deferred to \Cref{app:Cox_upper_proof}.  The key advantage of considering RDP is its tighter composition results: it can be shown that an equivalent privacy guarantee can be achieved with a smaller logarithmic factor of noise, and in particular, it is possible to obtain matching minimax rates under relaxing the assumption on the upper bound for the eigenvalue of the Hessian in \Cref{assump-1}\ref{assp:eigenvalues} from $\rho_+/d$ to $\rho_+$. 

In light of the tighter composition results in RDP, in \Cref{sec-simu-fdp} we provided experiment results using a fully-interactive mechanism where the per-server privatised gradients at each iteration are computed in a similar way to \Cref{alg:cdp_sgd}. For completeness, we include this fully-interactive FDP mechanism as \Cref{alg:FDP-SGD-interactive}.

\begin{algorithm}[htbp]
\caption{FDP-Cox-fully-interactive}
\label{alg:FDP-SGD-interactive}
\textbf{Input}: Datasets $\{(T_{s, i}, \Delta_{s, i}, \{Z_{s, i}(t):\, t \in [0, T_{s, i}]\})\}_{s, i = 1}^{S, n_s}$, number of iterations $K$, step size $\eta$, privacy parameters $\{(\epsilon_s, \delta_s)\}_{s \in [S]}$, parameter truncation level $C_\beta > 0$, covariate bound $C_Z > 0$. 

\begin{algorithmic}[1]
\State Set $\beta^{(0)} := 0$ and $d := \mathrm{dimension}(Z_{1, 1})$.
\For{$s \in [S]$}
    \State Set weight 
        $v_s := \min(n_s, n_s^2 \epsilon_s^2 / d)/\{\sum_{u=1}^S \min(n_u, n_u^2 \epsilon_u^2/d)\}$.
\EndFor
\For{$k \in [K]$}
    \For{$s \in [S]$}
    \State Sample independently 
    \begin{equation*}
    W^{(k)}_s \sim \mathcal{N}\left(0, \frac{(4 C_Z + \exp(2C_Z \|\beta^{(k-1)}\|_2)(2C_Z + C_Z^2) \log(n + 1))^2}{n_s^2} \frac{2 \log(1/\delta_s)/\epsilon_s + 1}{\epsilon_s/K} I_d\right).
    \end{equation*}
    \EndFor
    \State Set $\beta^{(k)} := \Pi_{C_\beta} \left( \beta^{(k-1)} + \eta \sum_{s=1}^S v_s \left[\dot{\ell}(\beta^{(k-1)}; \{T_{s, i}, \Delta_{s, i}, Z_{s, i}\}_{i=1}^{n_s}) + W_s^{(k)} \right] \right)$.
\EndFor
\State \textbf{Output}: $\beta^{(K)}$.
\end{algorithmic}
\end{algorithm}

\section{Numerical experiments} \label{app:simulations}
This Appendix contains sensitivity analysis to the tuning parameters of \Cref{alg:cdp_sgd} (\Cref{subsec-sensitivity_analysis}), simulations under different censoring rates and discussion (\Cref{subsec-censoring-rates}), log-log plots to show the numerical dependence of the estimation error on sample size $n$ and privacy budget $\epsilon$ (\Cref{app:loglogplots}), and a more detailed description of the procedure for the real-data experiments (\Cref{app:real_data}).

\subsection{Sensitivity analysis}
\label{subsec-sensitivity_analysis}
\Cref{fig:cdp_sensitivityanalysis} contains a sensitivity analysis to two tuning parameters in \Cref{alg:cdp_sgd}: the magnitude of the noise used to preserve privacy and the step size used in the gradient descent. The sample size is fixed at $n=10000$ and all other setups are the same as \Cref{subsec-sim-cox} unless otherwise specified, 

The absolute constant in the bound for the sensitivity $C \log(n)/n$, detailed in \Cref{lemma:grad_sensitivity2}, is an increasing function of $C_Z$ from \Cref{assump-1}\ref{assp:covariate_bound}, which is a bound on the norm of the covariates. To see the impact of the noise scaling, we consider varying $C_Z \in \{0.8, 1, 1.2, 1.4, 1.6, 1.8, 2.0 \}.$  This is numerically shown in \Cref{fig:cdp_sensitivityanalysis}(A). Note that the true value of $C_Z$ is 1, so we do not have the required privacy guarantees for $C_Z < 1$. In \Cref{fig:cdp_sensitivityanalysis}(B), we vary the step size in \Cref{alg:cdp_sgd} as $\eta \in \{0.2, 0.3, \dots, 0.8\}$; in all of our other experiments, the step size was set to $\eta =0.5$.

\begin{figure}[htbp]
  \centering
\includegraphics[width=0.95\textwidth]{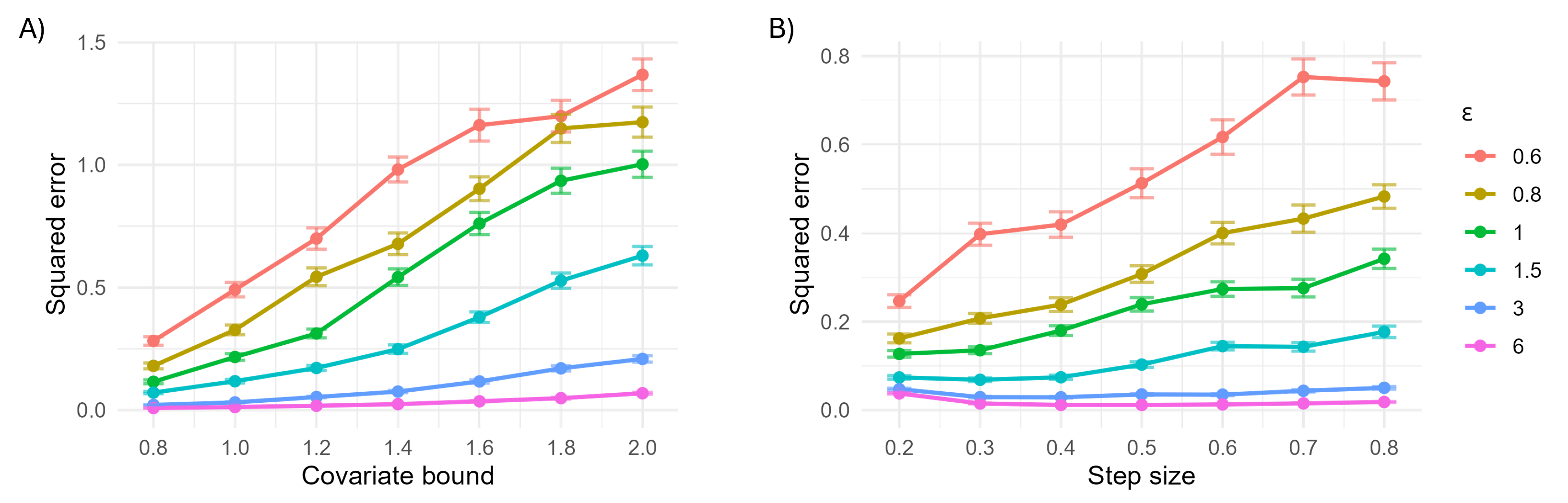}
  \caption{Sensitivity analysis of \Cref{alg:cdp_sgd} to privacy noise scaling (panel A) and gradient descent step size (panel B).}
  \label{fig:cdp_sensitivityanalysis}
\end{figure}

The increasing error seen in \Cref{fig:cdp_sensitivityanalysis}(A) is due to the increase in the variances of the Gaussian noise.  This is especially severe in the high privacy regime where the private rate dominates.  The dependence slows down in the low privacy regime where the non-private rate dominates.  \Cref{fig:cdp_sensitivityanalysis}(B) shows the relatively robust performances of our algorithm with respect to the step size, especially in the low privacy regimes.

\subsection{Censoring rates}
\label{subsec-censoring-rates}
We again fix the sample size $n=10000$ and keep all other setups the same as \Cref{subsec-sim-cox} unless otherwise specified. \Cref{fig:cdp_censoring} collects the results on investigations on the effect of censoring by varying the censoring distribution as Exp$(\alpha)$, with $\alpha \in \{0.1, 0.3, 0.5, 0.7, 0.9, 1.1, 1.3\}$. \Cref{tab:censoring_experiments} contains Monte Carlo estimates of $\mathbb{P}(\Delta =0 \mid T<1)$ and $\mathbb{P}(Y(1)=1)$ under these distributions. 
\begin{table}[htbp]
\centering
\begin{tabular}{crrrrrrr}
  \hline
 $\alpha$ & 0.1 & 0.3 & 0.5 & 0.7 & 0.9 & 1.1 & 1.3 \\
 \hline 
 $\mathbb{P}(\Delta=0 \mid T < 1)$ & 0.090 & 0.229 & 0.330 & 0.410 & 0.471 & 0.520 & 0.561\\
 $\mathbb{P}(Y(1) = 1)$ & 0.33 & 0.273 & 0.223 & 0.183 & 0.150 & 0.123 & 0.100 \\
   \hline
\end{tabular}
\caption{Monte Carlo estimates (from $10^6$ samples) of $\mathbb{P}(\Delta=0 \mid T< 1)$ and $\mathbb{P}(Y(1)=1)$ under different rates for the censoring distribution Exp$(\alpha)$.}
\label{tab:censoring_experiments}
\end{table}

\begin{figure}[htbp]
  \centering
\includegraphics[width=0.95\textwidth]{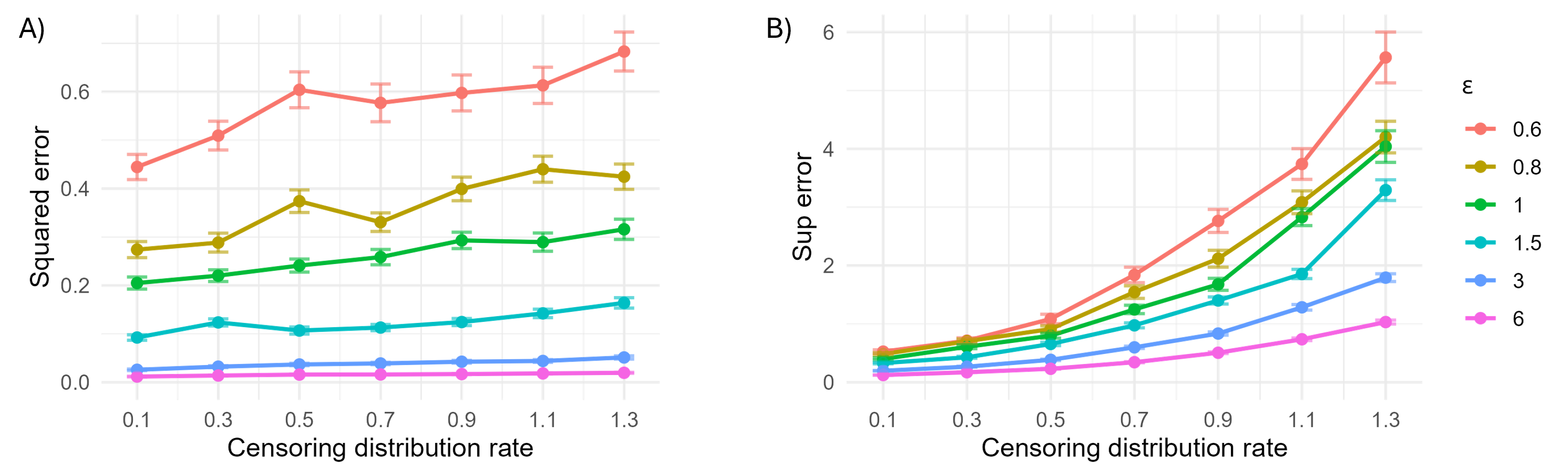}
  \caption{Effect of censoring on $\beta_0$ (panel A) and $\Lambda_0(\cdot)$ estimation (panel B), by varying the censoring distribution as $\mathrm{Exp}(\alpha)$, where $\alpha \in \{0.1, 0.3, \dots, 1.3\}$.}
  \label{fig:cdp_censoring}
\end{figure}

It can be seen from \Cref{fig:cdp_censoring} that as censoring rates grow, the estimation error grows.  We note that in the theoretical results in Sections~\ref{sec-beta} and \ref{sec-cumulative-hazard} are independent of the censoring rate.  This follows from  \Cref{assp:baseline}, which ensures that the censoring rate only appears as a constant factor and can therefore be ignored in results stated up to constants.  

To better understand the role of the censoring rate, we provide some theoretical explanations. The non-private Cox regression and Breslow estimators are functions of the integral of the counting processes on the whole time interval $[0, 1]$. At each time-point, the censoring can be seen as reducing effective information and therefore increasing the estimation error. Since the censoring rate varies across time and interacts with the decreasing sizes of the risk-sets, its effect on increasing estimation variance is difficult to quantify analytically. We provide some approximations in \Cref{tab:censoring_experiments}. As for the private rates in the upper bounds, in \Cref{alg:FDP-Breslow}, the magnitude of noise added to preserve privacy has a factor of $\{\mathbb{P}(Y(1)=1)\}^{-2}$. On the other hand, there is no such dependence on at-risk probabilities or censoring rates in the Gaussian mechanism used in \Cref{alg:FDP-SGD}.

Comparing the two panels in \Cref{fig:cdp_censoring}, we observe that the estimation error for $\beta_0$ increases with the censoring rate at a similar pace across different privacy budgets, whereas for $\Lambda_0(\cdot)$, this effect is more pronounced in the high-privacy regimes. We conjecture that this is due to differences between whether the non-private or private rate dominates in the upper bound after considering constant and poly-logarithmic factors.  

\subsection{Numerical demonstration of theoretical rates} \label{app:loglogplots}
To further demonstrate the phase transition phenomenon numerically, we depict  log(error) against $\log(n)$ in \Cref{fig:CDP_log_samples} and log(error) against $\log(\epsilon)$ in \Cref{fig:CDP_log_epsilon}. Ignoring iterated logarithmic factors, from the theoretical results in \Cref{prop:cdp_sgd} for $n$ large enough, we expect the output of \Cref{alg:cdp_sgd} to scale as 
:lo\[
    \log(\|\widehat{\beta} - \beta_0\|^2) \approx -2 \log(n) - 2 \log(\epsilon) + \mathrm{intercept}
\]
in the private regime, and 
\[
    \log(\|\widehat{\beta} - \beta_0\|^2) \approx -\log(n)+ \mathrm{intercept}
\]
in the non-private regime.   From \Cref{prop:fdp_upper}, we expect 
\[
    \log(\sup_t |\widehat{\Lambda}_0(t) - \Lambda_0(t)|) \approx - \log(n) - \log(\epsilon) + \log(\|\widehat{\beta} - \beta_0\|)+ \mathrm{intercept}
\]
in the private regime, and 
\[
    \log(\sup_t |\widehat{\Lambda}_0(t) - \Lambda_0(t)|) \approx -\log(n) + \log(\|\widehat{\beta} - \beta_0\|)+ \mathrm{intercept}
\]
in the non-private regime. To disentangle the effects of $\beta$ estimation on estimating $\Lambda_0(\cdot)$, we also show the results from executing \Cref{alg:FDP-Breslow} with no covariates in \Cref{fig:CDP_log_samples}(C) and \ref{fig:CDP_log_epsilon}(C). 

\begin{figure}[htbp]
  \centering
\includegraphics[width=0.95\textwidth]{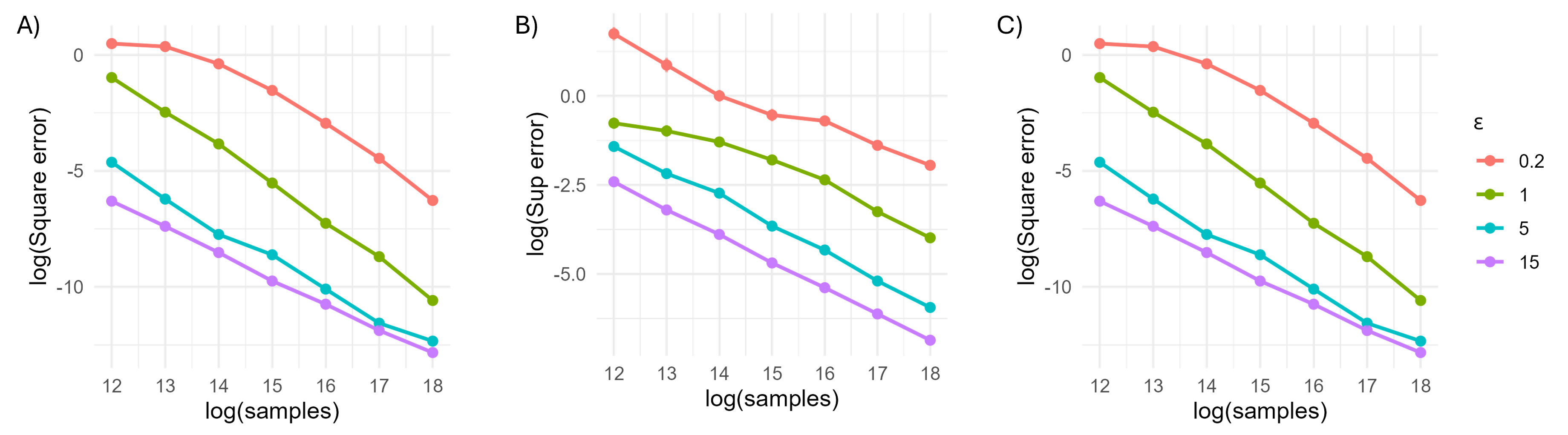}
  \caption{Plots of $\log_2(\|\widehat{\beta} - \beta_0\|^2)$ (panel A), $\log_2(\sup_t |\widehat{\Lambda}_0(t) - \Lambda_0(t)|)$ with covariates  (panel B) and  $\log_2(\sup_t |\widehat{\Lambda}_0(t) - \Lambda_0(t)|)$ without covariates (panel C), against $\log_2(n)$, for $n \in \{2^{12}, 2^{13}, \dots, 2^{18}\}$ and $\epsilon \in \{0.2, 1, 5, 15\}.$}   
  \label{fig:CDP_log_samples}
\end{figure}

\begin{figure}[htbp]
  \centering
\includegraphics[width=0.95\textwidth]{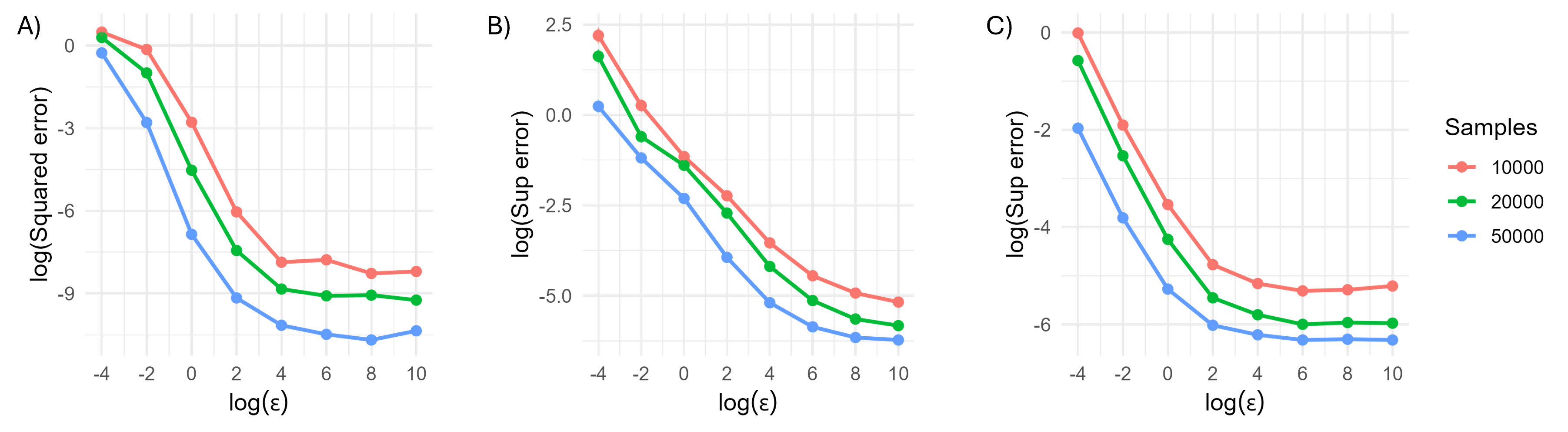}
    \caption{Plots of $\log_2(\|\widehat{\beta} - \beta_0\|^2)$ (panel A), $\log_2(\sup_t |\widehat{\Lambda}_0(t) - \Lambda_0(t)|)$ with covariates  (panel B) and  $\log_2(\sup_t |\widehat{\Lambda}_0(t) - \Lambda_0(t)|)$ without covariates (panel C), against $\log_2(\epsilon)$, for $\epsilon \in \{2^{-4}, 2^{-3}, \dots, 2^{10}\}$ and $n \in \{10000, 20000, 50000\}$.} 
  \label{fig:CDP_log_epsilon}
\end{figure}

\subsection{Additional details for the real data experiments}
\label{app:real_data}
For all server settings in our real data experiments, we set the tuning parameters of the interactive FDP-Cox algorithm (\Cref{alg:FDP-SGD-interactive}) as $C_Z=\sqrt{5}$, $C_\beta = 3$, step-size as 0.5 and the number of iterations as 300. We set $C_\beta=3$ because the coefficients $\beta^*$ from fitting a Cox regression model to the 5 categorical covariates has $\|\beta^*\|_2=2.63$; in practice these tuning parameters should be selected in a data-driven way under DP constraints. We fix $\delta = 0.001$ and compare the effects of both the privacy budget $\epsilon \in \{3, 6, 9, 12, 15\}$ and the number of servers on the estimation accuracy. For each configuration, we run 200 repetitions of the following steps: 
\begin{enumerate}
    \item Randomly split the observations into 3 groups, and use 1, 2, or 3 of the groups depending on the number of servers, or all of the data for the CDP simulations.
    \item Obtain $\widehat{\beta}$ from the interactive FDP-Cox algorithm (\Cref{alg:FDP-SGD-interactive}).
    \item Obtain a federated DP estimator of the at-risk probability $\hat{p}$ for time $3500$. 
    \item Obtain $\widehat{\Lambda}_0$ from the FDP-Breslow algorithm (\Cref{alg:FDP-Breslow}) using a truncation level of $0.9 \hat{p}$ and $\widehat{\beta}$. 
\end{enumerate}
The central DP experiments follow similar steps but without the splitting. We note that our choice of the truncation level for these experiments is different to that in our proof of convergence for FDP Breslow, which has an additional absolute constant factor of $\exp(-C_Z \|\widehat{\beta}\|_2)$ for convenience of the proof. For real data applications with fixed sample sizes, the choice of truncation and other tuning parameters such as the number of gradient descent iterations should be a problem-dependent trade-off between non-private estimator convergence and privacy noise scale.

\section{Additional discussions} \label{sec-additional-discussions}

In this section, we provide additional discussions on the extensions mentioned in the conclusion.

\medskip
\noindent \textbf{Heterogeneous distributions.}  Our current theory is developed under the homogeneous distribution assumption, which allows us to isolate the statistical effect of privacy from the additional complications introduced by cross-server heterogeneity. Extending the framework to heterogeneous server-specific distributions is an important direction for future work.

For the Cox regression model, there are two components of the distribution - the cumulative hazard function $\Lambda_0(\cdot)$ and the regression coefficients $\beta_0$.  If different servers possess the same~$\beta_0$ but different $\Lambda_0(\cdot)$, and if the goal is to estimate $\beta_0$, then the same procedure will lead to the same rates, up to constants, under the same regularity conditions on $\Lambda_0(\cdot)$.

If $\beta_0$ are different, then at a high level, we expect heterogeneity to introduce an additional bias term reflecting the discrepancy across the distributions.  Consider a transfer learning framework, where there is a target dataset along with multiple auxiliary datasets.  The goal is to estimate the regression coefficients $\beta_0$ in the target distribution, while the auxiliary datasets have regression coefficients $\beta_1, \ldots, \beta_S$, satisfying $\beta_0, \beta_1, \ldots, \beta_S \in B_{C_\beta}(0)$, $\max_{s \in [S]} \|\beta_s - \beta_0\|_2 \leq h$, and $\lambda_{\min}(G_s(\beta_s)) \asymp \lambda_{\max}(G_s(\beta_s)) \asymp 1/d$.

It follows from transfer learning literature, especially transfer learning under federated DP literature, including \cite{li2024federated} and \cite{auddy2024minimaxadaptivetransferlearning}, that the expected minimax rate would therefore be
\begin{equation}\label{eq-transfer-learning-conjecture}
    \inf_Q \inf_{\widehat{\beta}} \sup_P \mathbb{E}\left\{\big\|\widehat{\beta} - \beta_0\big\|_2^2\right\} \asymp_{\log} \min\left\{\frac{d^2}{n} + \frac{d^3}{n\epsilon^2},  \frac{d^2}{(S+1)n} + \frac{d^3}{(S+1)n^2\epsilon^2} + h^2\right\}.
\end{equation}
The rate in \eqref{eq-transfer-learning-conjecture} can be understood as the minimum of the target only rate and the transfer learning rate. The transfer learning rate further consists of the non-private rate, federated DP rate and the bias term.

The lower bound proof follows from our proof for the homogeneous distribution problem, along with the standard lower bound techniques for transfer learning used in \cite{li2024federated} and \cite{auddy2024minimaxadaptivetransferlearning}.  As for the upper bound, the additional technicality mainly roots in a more careful analysis on the smoothness and convexity of the partial log-likelihood function when deploying gradient descent algorithms.

\medskip
\noindent \textbf{Private inference.}  Uncertainty quantification under privacy constraints is a natural next step.  There has been a line of research on this topic in the literature, but we are not aware of any methods can be easily adapted to our setting.  

Firstly, the bootstrap asymptotic results in \cite{WangChengAwan2022DPBootstrap} rely on assumptions that do not directly match the Cox regression setting considered here, such as finite-support conditions and a parameter of interest that can be represented as a functional of a finite probability distribution.  For this reason, their framework does not appear to transfer directly to our problem.

More broadly, there are promising alternative directions, including covariance-based private inference methods \citep[e.g.][]{avella2023noisyopt} and subsampling or resampling approaches \citep[e.g.][]{chadha2024resampling, WangChengAwan2022DPBootstrap} developed under central DP.  Adapting these ideas to federated DP Cox regression, however, would likely require new techniques, particularly due to the use of cross-server information and the more complicated form of the Hessian matrix.

We therefore view private uncertainty quantification for federated Cox regression as an important and interesting direction for future work.

\section{Technical details of Section~\ref{sec-beta}}
\label{app-beta}
\Cref{app:beta_sensitivity} provides the bound on the sensitivity of the gradient, which is used within the gradient descent algorithms for $\beta_0$ estimation (Algorithms \ref{alg:FDP-SGD}, \ref{alg:cdp_sgd}, and \ref{alg:FDP-SGD-interactive}). The convergence guarantees of these algorithms can be found in \Cref{app:Cox_upper_proof}. The proofs of the lower bounds are in \ref{app:Cox_lower_proof} -- these rely on bounding the traces of the Fisher information of the private transcripts in the Cox regression setting; the simpler central-DP setting can be found in \Cref{lemma:cdp_trace} and its extension to federated DP in \Cref{lemma:FDP_trace}.  \Cref{app:beta_auxiliary} contains the auxiliary results needed for these proofs (e.g.~convergence of $\dot{\ell}(\beta_0)$ and $\ddot{\ell}(\beta_0)$), which are not specific to DP and may be a useful compilation for other aspects of Cox regression. 

\subsection{Bounding the sensitivity of the gradient} \label{app:beta_sensitivity}

\begin{lemma} \label{lemma:grad_sensitivity2}
Let $\beta \in \mathbb{R}^d$ and suppose that the data $\{(T_i, \Delta_i, \{Z_i(t): \, t \in [0, 1]\})\}_{i \in [n]}$ satisfy \Cref{assump-1}\ref{assp:covariate_bound}. We can bound 
\begin{equation*}
    \mathrm{sens}_2(\dot{\ell}(\beta)) \leq \frac{4 C_Z}{n} + \frac{ \exp(2C_Z \|\beta\|_2)(2C_Z + C_Z^2) \log(n + 1)}{n},
\end{equation*}
where $\dot{\ell}$ is the score defined in \eqref{eq:cox_gradient} and the sensitivity $\mathrm{sens}_2(\cdot)$ defined in \Cref{def-sensitivity}.

Further considering other forms of the sensitivity, the following hold for two neighbouring datasets $D = \{(T_i, \Delta_i, \{Z_i(t): \, t \in [0, 1]\})\}_{i \in [n]}$ and $D' = \{(T_i', \Delta_i', \{Z'_i(t): \, t \in [0, 1]\})\}_{i \in [n]}$. 
\begin{itemize}[leftmargin=2cm]
    \item[Case 1:] When only the censoring indicators $\Delta_i$'s are privatised, $\mathrm{sens}_2(\dot{\ell}) \asymp 1/n$, where
    \[
        (T_i, Z_i(\cdot)) = (T_i', Z_i'(\cdot)), \quad i \in [n], \quad \mbox{and} \quad  \sum_{i=1}^n \mathbbm{1}\{\Delta_i \neq \Delta_i'\} \leq 1.
    \]
    \item [Case 2:] When only the covariate process $Z_i$'s are privatised, $\mathrm{sens}_2(\dot{\ell}) \asymp \log(n)/n$, where
    \[
        (T_i, \Delta_i) = (T_i', \Delta_i') \quad i \in [n], \quad \mbox{and} \quad \sum_{i=1}^n \mathbbm{1}\{Z_i(\cdot) \neq Z_i'(\cdot) \} \leq 1.
    \]
    \item [Case 3:] When only the observed times $T_i$'s are privatised, $\mathrm{sens}_2(\dot{\ell}) \asymp \log(n)/n$, where
    \[
        (\Delta_i, Z_i(\cdot)) = (\Delta_i', Z_i'(\cdot)), \quad i \in [n], \quad \mbox{and} \quad  \sum_{i=1}^n \mathbbm{1}\{T_i \neq T_i'\} \leq 1.
    \]    

    \item [Case 4:] When the triples $(T_i, \Delta_i, Z_i(\cdot))$'s are privatised, $\mathrm{sens}_2(\dot{\ell}) \asymp \log(n)/n$, where
    \[
        \sum_{i=1}^n \mathbbm{1}\{(T_i, \Delta_i, Z_i(\cdot)) \neq (T_i', \Delta_i', Z_i'(\cdot)) \} \leq 1.
    \]
\end{itemize}
\end{lemma}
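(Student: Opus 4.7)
The plan is to analyse a single-record change via the decomposition
\begin{equation*}
\dot{\ell}(\beta;D) - \dot{\ell}(\beta;D') = T_{\mathrm{dir}} + T_{\mathrm{ind}},
\end{equation*}
with
\begin{align*}
T_{\mathrm{dir}} &:= \frac{1}{n}\int\{Z_{i_0}(t)-\bar{Z}_D(t,\beta)\}\dint N_{i_0}(t) - \frac{1}{n}\int\{Z'_{i_0}(t)-\bar{Z}_{D'}(t,\beta)\}\dint N'_{i_0}(t), \\
T_{\mathrm{ind}} &:= \frac{1}{n}\sum_{j\neq i_0}\int\{\bar{Z}_{D'}(t,\beta)-\bar{Z}_D(t,\beta)\}\dint N_j(t),
\end{align*}
where $\bar{Z}_D$ denotes the weighted covariate average computed on the data $D$. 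Under \Cref{assump-1}\ref{assp:covariate_bound} and \ref{assp:coefficient_bound}, the direct term $T_{\mathrm{dir}}$ is always $O(1/n)$ since $\|Z\|_2$ and hence $\|\bar{Z}\|_2$ are bounded by $C_Z$, and $N_{i_0}$, $N'_{i_0}$ each carry at most one unit jump. The $\log(n)$ factor, when it appears, arises entirely from $T_{\mathrm{ind}}$.

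For the indirect term, I plan to show that at any time $t$, $\|\bar{Z}_D(t,\beta)-\bar{Z}_{D'}(t,\beta)\|_2 \lesssim \exp(2C_Z C_\beta)/|R_D(t)\cup R_{D'}(t)|$, where $R_D(t):=\{i:Y_i(t)=1\}$ is the at-risk set, since altering one summand in both the numerator and denominator of the ratio $S^{(1)}/S^{(0)}$ has effect of order $1/|R|$. Ordering the pooled event times $\tau_1<\cdots<\tau_m$ ($m\leq n$), the $k$-th pooled risk set has size at least $n-k+1$, so $\|T_{\mathrm{ind}}\|_2\lesssim (1/n)\sum_{k=1}^n(n-k+1)^{-1}\asymp\log(n)/n$. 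Case~1 is the exception: flipping only $\Delta_{i_0}$ leaves every $Z_i$, $Y_i$, and $N_j$ for $j\neq i_0$ unchanged, so $\bar{Z}_D=\bar{Z}_{D'}$ and $T_{\mathrm{ind}}=0$; only one unit jump of $N_{i_0}$ is toggled, giving $\|T_{\mathrm{dir}}\|_2\leq 2C_Z/n$. In Case~3, the jump of $N_{i_0}$ moves and $Y_{i_0}$ changes on an interval, but since $T_{\mathrm{ind}}$ is evaluated only at the pooled event times, the same harmonic-sum argument applies; Case~4 combines Cases~1--3 and inherits the $O(\log(n)/n)$ bound.

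For the matching lower bounds, I will exhibit explicit neighbouring datasets. In Case~1, choosing $Z_{i_0}(t)\equiv C_Z e_1$ and $Z_j(t)\equiv 0$ for $j\neq i_0$ makes $\|Z_{i_0}(T_{i_0})-\bar{Z}(T_{i_0},\beta)\|_2$ of constant order, so flipping $\Delta_{i_0}$ yields $\Theta(1/n)$ sensitivity. For Cases~2 and 4, I will take $Z_j\equiv 0$ for $j\neq i_0$ and an outlier $Z_{i_0}\equiv v$ versus $Z'_{i_0}\equiv v'$, with distinct event times ordered so that the $k$-th risk set has size exactly $n-k+1$; then $\|\bar{Z}_D(\tau_k,\beta)-\bar{Z}_{D'}(\tau_k,\beta)\|_2$ is of order $\|v-v'\|_2/(n-k+1)$ and summing over $k$ yields the harmonic lower bound $\Theta(\log(n)/n)$. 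For Case~3, a shift of $T_{i_0}$ from the smallest to the largest event time sweeps $Y_{i_0}$ through every intermediate risk set, producing the same harmonic sum.

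The main obstacle will be the pointwise sensitivity of $\bar{Z}$: near the end of the time horizon $S^{(0)}(t,\beta)$ can be as small as $\exp(-C_ZC_\beta)/n$, so a naive bound on $\|\bar{Z}_D-\bar{Z}_{D'}\|_2$ gives $O(1)$ and the indirect term would appear to be $\Theta(1)$. The key insight is that $T_{\mathrm{ind}}$ only integrates against the pooled counting process, so it samples $\bar{Z}$ precisely at the event times, whose risk sets shrink arithmetically and thereby produce the harmonic (rather than divergent) sum. A secondary check is that the lower-bound constructions respect \Cref{assump-1}\ref{assp:covariate_Lipschitz}, which is achieved by interpolating piecewise-constant covariate profiles to make them $L_Z$-Lipschitz without changing the leading order of the sensitivity.
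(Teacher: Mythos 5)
Your proposal is correct and follows essentially the same route as the paper's proof: the paper splits the score difference into the changed individual's own integrand (your $T_{\mathrm{dir}}$, bounded by $O(C_Z/n)$) and the perturbation of $\bar{Z}$ integrated against the remaining counting processes (your $T_{\mathrm{ind}}$, controlled by the $O(1/|\text{risk set}|)$ perturbation of $S^{(1)}/S^{(0)}$ evaluated only at the ordered event times, yielding the harmonic sum $\sum_k (n-k+1)^{-1}\asymp\log n$), with Case 1 special because $\bar Z$ depends only on $(T_i, Z_i)$ so $T_{\mathrm{ind}}=0$, and with matching lower bounds from the same outlier-covariate / shifted-time constructions. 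The only point to make explicit in a full write-up is that the Case 3 lower bound needs the heterogeneous (outlier) covariate configuration so that moving $Y_{i_0}$ through successive risk sets shifts $\bar Z$ by $\Theta(1/|R|)$ in a fixed direction without cancellation, which is exactly what the paper does.
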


\begin{proof}
From \Cref{def-sensitivity} of the sensitivity and the definition of the score function $\dot{\ell}(\cdot)$ in \eqref{eq:cox_gradient}, we have that $\mathrm{sens}_2(\dot{\ell}) = \sup_{D \sim D'} \|\dot{\ell}(\beta; D) - \dot{\ell}(\beta; D')\|_2$.  Without loss of generality, we consider $D = \{(T_i, \Delta_i, Z_i(\cdot))\}_{i \in [n]}$ and $D' = (T_1', \Delta'_1, Z'_1(\cdot)) \cup \{(T_i, \Delta_i, Z_i(\cdot))\}_{i \in [n] \setminus \{1\}}$.  We adopt the counting process notation for data in~$D$ and $D'$ without further definitions.  The rest of the proof proceeds by upper and lower bounding the sensitivity respectively.

\medskip
\noindent \textbf{Upper bounds.}  Due to the triangle inequality, it holds that 
\begin{align}
    & \|\dot{\ell}(\beta; D) - \dot{\ell}(\beta; D') \|_2 = \|(I) + (II) + (III) + (IV)\|_2 \nonumber \\
    \leq & \|(I)\|_2 + \|(II)\|_2 + \|(III)\|_2 + \|(IV)\|_2,  \label{eq-sens-lemma-proof-upper-decomp} 
\end{align}
    where
    \begin{align}
    (I) & := \frac{1}{n} \int_0^1 Z_1(s) \dint N_1(s) - \frac{1}{n} \int_0^1 Z_1'(s) \dint N_1'(s), \label{eq-sens-lemma-proof-upper-decomp-1}\\
    (II) & := \frac{1}{n} \sum_{k = 2}^n  \int_0^1 \bigg\{\frac{\sum_{j = 2}^n Y_j(s) Z_j(s) e^{\beta^\top Z_j(s)}}{Y_1(s) e^{Z_1(s)^\top \beta} + \sum_{j =2}^n Y_j(s) e^{Z_j(s)^\top \beta}} \nonumber \\
    & \hspace{3cm} - \frac{\sum_{j = 2}^n Y_j(s) Z_j(s) e^{\beta^\top Z_j(s)}}{Y_1'(s) e^{\beta^\top Z'_1(s)} + \sum_{j = 2}^n Y_j(s) e^{Z_j(s)^\top \beta}}\bigg\} \dint N_k(s) , \label{eq-sens-lemma-proof-upper-decomp-2} \\ 
    (III) & := \frac{1}{n} \sum_{k = 2}^n  \int_0^1 \bigg\{\frac{Y_1(s) Z_1(s) e^{\beta^\top Z_1(s)}}{Y_1(s) e^{Z_1(s)^\top \beta} + \sum_{j = 2}^n Y_j(s) e^{Z_j(s)^\top \beta}} \nonumber \\
    & \hspace{3cm} - \frac{Y_1'(s) Z_1'(s) e^{\beta^\top Z_1(s)'}}{Y_1'(s) e^{\beta^\top Z'_1(s)} + \sum_{j = 2}^n  Y_j(s) e^{Z_j(s)^\top \beta}} \bigg\}\dint N_k(s) \label{eq-sens-lemma-proof-upper-decomp-3} 
\end{align}
and
\begin{align}
    (IV) & := \frac{1}{n} \int_0^1 \frac{Y_1(s) Z_1(s) e^{\beta^\top Z_1(s)} + \sum_{j = 2}^n Y_j(s) Z_j(s) e^{\beta^\top Z_j(s)}}{Y_1(s) e^{Z_1(s)^\top \beta} + \sum_{j = 2}^n Y_j(s) e^{Z_j(s)^\top \beta}} \dint N_1(s) \nonumber \\
    & \hspace{3cm}-\int_0^1 \frac{Y_1'(s) Z_1'(s) e^{\beta^\top Z_1'(s)} + \sum_{j = 2}^n Y_j(s) Z_j(s) e^{\beta^\top Z_j(s)}}{Y_1'(s) e^{\beta^\top Z'_1(s)} + \sum_{j =2}^n Y_j(s) e^{Z_j(s)^\top \beta}} \dint N_1'(s).  \label{eq-sens-lemma-proof-upper-decomp-4}
\end{align}

We now discuss these four terms separately.  For the terms $(I)$ and $(IV)$, it directly follows from \Cref{assump-1}\ref{assp:covariate_bound} that 
\begin{equation} \label{eq-sens-lemma-proof-upper-decomp-1-1}
    \max\{\|(I)\|_2, \, \|(IV)\|_2\} < 2C_Z/n.
\end{equation}

As for term $(II)$, we have that
\begin{align} \label{eq-sens-lemma-proof-ii}
    \|(II)\|_2 & = \frac{1}{n} \sum_{k = 2}^n \bigg\|\int_0^1 \frac{\sum_{j = 2}^n Y_j(s)Z_j(s) e^{\beta^{\top} Z_j(s)} \left\{Y_1(s)e^{\beta^{\top} Z_1(s)} - Y_1'(s) e^{\beta^{\top} Z_1'(s)}\right\}}{Y_1(s) e^{Z_1(s)^\top \beta} + \sum_{j =2}^n Y_j(s) e^{Z_j(s)^\top \beta} } \nonumber \\
    & \hspace{2cm} \times \frac{1}{Y_1'(s) e^{Z_1'(s)^\top \beta} + \sum_{j =2}^n Y_j(s) e^{Z_j(s)^\top \beta}} \dint N_k(s)\bigg\|_2 \nonumber \\
    & \stackrel{(a)}{\leq} \frac{1}{n} \sum_{k = 2}^n \int_0^1 \frac{\|\sum_{j =2}^n Y_j(s) Z_j(s) e^{\beta^\top Z_j(s)}\|_2 \, |Y_1(s)e^{\beta^\top Z_1(s)} - Y_1'(s)e^{\beta^\top Z'_1(s)}|}{|\sum_{j =2}^n Y_j(s) e^{\beta^\top Z_j(s)}|^2} \dint N_k(s) \nonumber \\
    & \stackrel{(b)}{\leq} \frac{ \exp(2C_ZC_\beta) C_Z}{n} \sum_{k = 2}^n \int_0^1 \frac{1}{\sum_{j = 2}^n Y_j(s)} \dint N_k(s) \nonumber \\
    & \stackrel{(c)}{\leq} \frac{ \exp(2C_Z C_\beta)C_Z}{n} \sum_{k=1}^n \frac{1}{n-k+1} \leq \frac{\exp(2C_Z C_\beta)C_Z \log(n + 1)}{n},
\end{align}
where (a) follows from the observation that $Y_i(t) \exp(\beta^\top Z_i(t)) \geq 0$ for all $t \in [0, 1]$ and $i \in [n]$, and (b) follows from the boundedness assumptions specified in Assumptions~\ref{assump-1}\ref{assp:covariate_bound} and \ref{assp:coefficient_bound}.  As for (c), we note that for $k \in [n]$ such that $\Delta_k = 0$, i.e.~a censored subject, it holds that
\[
    \int_0^1 \frac{1}{\sum_{j = 2}^n Y_j(s)} \dint N_k(s) = 0. 
\]
For all $k, l \in [n]$ such that $\Delta_k = \Delta_l = 1$, it holds that 
\[
    \int_0^1 \frac{1}{\sum_{j = 2}^n Y_j(s)} \dint N_k(s), \, \int_0^1 \frac{1}{\sum_{j = 2}^n Y_j(s)} \dint N_l(s) \in \{1/m, \, m \in [n]\}  
\]
and
\[
    \int_0^1 \frac{1}{\sum_{j = 2}^n Y_j(s)} \dint N_k(s) \neq \int_0^1 \frac{1}{\sum_{j = 2}^n Y_j(s)} \dint N_l(s)
\]
for any $k \neq l$. 

As for term $(III)$, it follows from similar arguments that \begin{align}\label {eq-sens-lemma-proof-iii}
    \|(III)\|_2&\leq \frac{1}{n} \sum_{k=2}^n  \int \frac{\|\sum_{j=2}^n Y_j(s) Z_j(s) e^{\beta^\top Z_j(s)}\|_2 \, \|Y_1(s)Z_1(s)e^{\beta^\top Z_1(s)}\|}{|\sum_{j=2}^n Y_j(s) e^{\beta^\top Z_j(s)}|^2} \dint N_k(s) \nonumber \\
    + \frac{1}{n}& \sum_{k=2}^n  \int \frac{ Y_1(s)Y_1'(s)e^{\beta^\top(Z_1(s) + Z_1'(s))}\|(Z_1(s)  - Z_1'(s))\|_2}{(Y_1(s)e^{Z_1(s)^\top \beta}+\sum_{j=2}^n Y_j(s) e^{\beta^\top Z_j(s)})(Y_1'(s)e^{Z_1'(s)^\top \beta} + \sum_{j =2}^n Y_j(s) e^{\beta^\top Z_j(s)})} \dint N_k(s) \nonumber \\
    &\leq \frac{ \exp(2C_ZC_\beta) C_Z^2}{n} \sum_{k=2}^n \int \frac{1}{\sum_{j=2}^n Y_j(s)} \dint N_k(s) + \frac{C_Z \exp(2C_Z C_\beta)}{n}  \sum_{k=2}^n \int \frac{1}{\sum_{j=2}^n Y_j(s)} \dint N_k(s) \nonumber \\
    & \leq \frac{\exp(2 C_Z C_\beta) (C_Z + C_Z^2) \log (n+1)}{n}.
\end{align}

Combining \eqref{eq-sens-lemma-proof-upper-decomp}, \eqref{eq-sens-lemma-proof-upper-decomp-1}, \eqref{eq-sens-lemma-proof-upper-decomp-2}, \eqref{eq-sens-lemma-proof-upper-decomp-3}, \eqref{eq-sens-lemma-proof-upper-decomp-4}, and their upper bounds \eqref{eq-sens-lemma-proof-upper-decomp-1-1}, \eqref{eq-sens-lemma-proof-ii} and \eqref{eq-sens-lemma-proof-iii}, we are now ready to discuss upper bounds for different cases.

In Case 1, since only $\Delta$ is privatised, $(II) = (III) = 0$ and 
    \[
        \mathrm{sens}_2(\dot{\ell}) \leq \|(I)\|_2 + \|(IV)\|_2 \leq 4C_Z/n.
    \]
In Cases 2, 3 and 4, we have the upper bounds 
    \[
        \mathrm{sens}_2(\dot{\ell}) \leq \frac{4C_Z}{n} + \frac{2 \exp(2C_Z \|\beta\|_2)C_Z \log(n + 1)}{n} + \frac{\exp(2 C_Z \|\beta\|_2) C_Z^2 \log (n+1)}{n}.
    \]

\medskip
\noindent \textbf{Lower bounds.}  To show that the sensitivities are attainable, we construct specific datasets $D$ and $D'$ for each of the four cases.  Denote the ordered statistics by $T_{(1)} < \cdots < T_{(n)}$ and correspondingly $\Delta_{(i)}$ and $Z_{(i)}$, $i \in [n]$. 

For Case 1, letting $\Delta_1 = \Delta_{(1)} := 1$ and $\Delta_1' = \Delta'_{(1)} := 0$, we have that 
\[
    (I) = \frac{Z_1(T_1)}{n} \quad \mbox{and} \quad (IV) = \frac{\sum_{j = 1}^n Y_j(T_1) Z_j(T_1) e^{\beta^\top Z_j(T_1)}}{n \sum_{j = 1}^n Y_j(T_1) e^{Z_j(T_1)^\top \beta}}.
\]
When the $Z_j(T_1)$'s are all identical with $\|Z_1(T_1)\|_2 = 1$, we have that $\mathrm{sens}_2(\dot{\ell}) = \|(I) + (IV)\|_2 = 2/n$.

For Case 2, consider setting all $\Delta_i = 1$ for all $i \in [n]$, and $Z_i$ and $Z_i'$ to be identical with $Z_1(s) = (C_Z, 0, \dots, 0)$ for all $s \in [0, 1]$, but change $Z_{(n)}$ to $- Z_{(n)}'$. For $\beta = (-C_\beta, 0, \ldots, 0)$, we have that
\begin{align*}
    \|(II)\|_2 & = \bigg\|\frac{Z_1(0)}{n}\sum_{k = 2}^n\int_0^1 \left\{\frac{\sum_{j = 2}^n Y_j(s)}{\sum_{j = 1}^n Y_j(s)} - \frac{\sum_{j = 2}^n Y_j(s)}{Y_1(s) e^{-2\beta^{\top}Z_1(s)} + \sum_{j = 2}^n Y_j(s)}\right\} \dint N_k(s) \bigg\|_2 \\
    & = \bigg\|\frac{Z_1(0)}{n}\sum_{k = 1}^{n-1} \left\{\frac{n-k}{n-(k-1)} - \frac{n-k}{e^{-2\beta^{\top}Z_1(0)} + n-k}\right\}\bigg\|_2 \\
    & \gtrsim \frac{\|Z_1(0)\|_2}{n} \sum_{k = 1}^{n-1} \frac{n-k}{(n-k+1)(e^{-2\beta^{\top}Z_1(0)}+n-k)} \geq C\log(n)/n,
\end{align*}
where $C > 0$ is an absolute constant.  Similar calculations lead to the conclusion that $\mathrm{sens}_2(\dot{\ell}) \geq C'\log(n)/n$, with $C' > 0$ being an absolute constant.

In Cases 3 and 4, set all $Z_i$'s to be identical as in Case 2, and set $\Delta_i := 1$ for all $i \in [n]$.  Let $(T_{(i)}, \Delta_{(i)}, Z_{(i)}) = (T'_{(i+1)}, \Delta'_{(i+1)}, Z'_{(i+1)})$, for all $i \in [n-1]$, and change $T_{(n)}$ to $T_{(1)}' = T_{(1)}/2$.  It follows from similar calculations to the above that $\mathrm{sens}_2(\dot{\ell}) \geq C''\log(n)/n$, with $C'' > 0$ being an absolute constant.
\end{proof}

\subsection{Proofs of upper bounds}
\label{app:Cox_upper_proof}
\begin{proof}[Proof of \Cref{prop:cdp_sgd}]
\ \\
\noindent \textbf{Privacy guarantees.} Plugging the sensitivity level obtained from \Cref{lemma:grad_sensitivity2} into the privacy guarantees in \Cref{lemma:Gaussian_composition}, it holds that \Cref{alg:cdp_sgd} satisfies $(\epsilon, \delta)$-CDP.

\medskip
\noindent \textbf{Estimation error.}
Let $\beta^* := \argmin_{\beta \in B_{C_\beta}(0)} \{\ell(\beta)\}$ and define $\mathcal{E}$ to be the event 
\begin{equation*}
\mathcal{E} := \left\{\left\|\ddot{\ell}(\beta_0) + \mathbb{E} \int_0^{1} G(s, \beta_0) \dint \Lambda_0(s) \right\|_2\ \leq \frac{c \log(dn)^2}{n^{1/2}} \right\},
\end{equation*}
where $c$ is the absolute constant from \Cref{lemma:hessian_convergence}, which states that $\mathbb{P}(\mathcal{E}) \gtrsim 1-1/n$. Due to the projection step in \Cref{alg:cdp_sgd}, we have that 
\begin{equation}
    \mathbb{E}[\|\beta^{(K)} - \beta^*\|_2\mid \mathcal{E}^c] \mathbb{P}(\mathcal{E}^c) \leq 1/n.
    \label{pf:cdpsgd_lte}
\end{equation}

With a large enough sample size such that $c \log(dn)^2n^{-1/2} < \rho_-/d$, on the event $\mathcal{E}$, we have that $\ell$ is $\mu$-strongly convex and $L$-smooth in the ball $B_{R_\beta}(0)$.  Due to \Cref{assump-1}\ref{assp:eigenvalues} and Lemma~\ref{lemma:3-2_huang}, we can bound $\mu \geq \exp(-8C_ZC_\beta) \rho_-/(2d)$, and due to \Cref{lemma:hessian_upper}, we can bound $L \leq 4C_Z^2$.  We condition on $\mathcal{E}$ for the next part of the proof.

We have that
\begin{align*}
    \|\beta^{(k+1)} - \beta^*\|^2_2 &:= \left\|\Pi_{C_\beta} \left(\beta^{(k)} +\eta \left[ \dot{\ell}(\beta^{(k)}) + W^{(k)}\right]  \right) - \beta^* \right\|^2_2\\
    &\leq  \left\|\beta^{(k)} + \eta \left( \dot{\ell}(\beta^{(k)} ) +W^{(k)} \right) -\beta^* \right\|^2_2 \\
    &=\left\|\beta^{(k)} - \beta^* \right\|^2_2  + 2\eta \left\langle \dot{\ell}(\beta^{(k)}) - W^{(k)}, \beta^{(k)} - \beta^* \right \rangle + \eta^2 \left\|\dot{\ell}(\beta^{(k)}) + W^{(k)} \right\|^2_2.
\end{align*}

Since $\ell$ is $\mu$-strongly convex, we have that 
\begin{equation*}
    \langle \dot{\ell}(\beta^{(k)}), \beta^{(k)} - \beta^* \rangle \leq \ell(\beta^{(k)}) - \ell(\beta^*) - \frac{\mu}{2} \|\beta^{(k)} - \beta^*\|^2_2.
\end{equation*}
Since $\ell$ is $L$-smooth, it holds that 
\begin{equation*}
    \|\dot{\ell}(\beta^{(k)})\|_2^2 \leq 2L(\ell(\beta^*) - \ell(\beta^{(k)})).
\end{equation*}

Therefore, setting $\eta = 1/(2L)$, we have that 
\begin{equation*}
2 \eta \langle \dot{\ell}(\beta^{(k)}), \beta^{(k)} - \beta^*\rangle + \eta^2\|\dot{\ell}(\beta^{(k)})\|^2_2 \leq 0.
\end{equation*}
Since each $W^{(k)}$ are independent and mean zero, this implies that 
\begin{equation*}
    \mathbb{E} \left[\|\beta^{(k+1)} - \beta^*\|^2_2 \mid \beta^{(k)} \right] \lesssim \left(1 - \frac{\mu}{2L} \right) \|\beta^{(k)} - \beta^*\|^2_2 +  \frac{1}{L^2} \frac{ d\log^2(n_s+1)}{n_s^2} \frac{ \log(1/\delta_s)/\epsilon + 1}{\epsilon/K},
\end{equation*}
so by recursion we have
\begin{align*}
    \mathbb{E}[\|\beta^{(K)} - \beta^*\|^2] &\lesssim \left(1 - \frac{\mu}{2L} \right)^K + \frac{Kd \log(n)^2 \log(1/\delta)}{L^2n^2 \epsilon^2} \sum_{k=0}^{K-1} \left(1 - \frac{\mu}{2L} \right)^k \\
    &\lesssim_{\log} \left(1 - \frac{\mu}{2L} \right)^K + \frac{Kd}{L \mu n^2 \epsilon^2}.
\end{align*} 

Setting $K \asymp \log(d^2/n) / \log(1 - \mu/2L) \asymp 2L\log(n/d^2)/\mu$, the first term is of order $d^2/n$ which is the non-private rate. Combining with $\eqref{pf:cdpsgd_lte}$, we have 
\begin{align*}
    \mathbb{E} \|\beta^{(K)} - \beta^*\|^2_2 &= \mathbb{E}\left[\|\beta^{(K)} - \beta^*\|_2^2 \mid \mathcal{E}^c \right] \mathbb{P}(\mathcal{E}^c) + \mathbb{E} \left[\|\beta^{(K)} - \beta^*\|_2^2 \mid \mathcal{E}\right] \mathbb{P}(\mathcal{E}) \\
    &\lesssim \frac{1}{n} + \frac{d^2}{n} + \frac{d^3 \log(n)^3 \log(1/\delta)}{n^2 \epsilon^2}.
\end{align*}
By the triangle inequality, we have 
\begin{align*}
    \mathbb{E}\|\beta^{(K)} - \beta_0\|^2_2 &\leq 2 \mathbb{E} \|\beta^{(K)} - \beta^*\|^2_2 + 2 \mathbb{E} \|\beta^* - \beta_0\|^2_2 \\
    &\lesssim_{\log} \frac{d^2}{n} + \frac{d^3}{n^2 \epsilon^2}
\end{align*}
where the second inequality is due to \Cref{lemma:MLE_convergence}. 
\end{proof}

\begin{proof}[Proof of \Cref{prop:fdp_upper}]
\ \\
\noindent \textbf{Privacy guarantees.} Plugging the sensitivity level obtained from \Cref{lemma:grad_sensitivity2} into the privacy guarantees in \Cref{lemma:gaussian_mechanism}, it holds that \Cref{alg:FDP-SGD} satisfies $(\{\epsilon_s, \delta_s\}_{s \in [S]}, K)$-FDP.

\medskip
\noindent \textbf{Estimation error.} To show the convergence result, we consider the event 
\begin{equation}
    \mathcal{E} := \left\{ \left\|\sum_{s=1}^S v_s 
 \ddot{\ell}(\beta_0, D_s^{(k)}) + G(\beta_0) \right\|_2\leq \frac{\rho_-}{2d}, \quad k = 1, \dots, K \right\}.
 \label{event:FDP_hessian_pd}
\end{equation}
By Lemma \ref{lemma:distributed_hessian} and a union bound argument, we have that 
\begin{align*}
    & \mathbb{E}[\|\beta^{(K)} -\beta_0 \|_2 \mid \mathcal{E}^c]\mathbb{P}(\mathcal{E}^c) \\
    \lesssim & K \left(d \exp \left( \frac{-(\rho_-/8)^2}{32 \sum_{s=1}^S v_s^2 / (n_s^{2/3}/K)}\right) + \frac{d^2}{\sum_{s=1}^S \min(n_s/K, (n_s/K)^2 \epsilon_s^2/d)} \right).
\end{align*}
For the remainder of the proof, we condition on the event $\mathcal{E}$. Under $\mathcal{E}$, by Lemma \ref{lemma:3-2_huang} and Weyl's inequality, within the ball $B_{C_\beta}(0)$ we may bound the strong convexity constant of the sampled likelihood functions by $\mu := \exp(-8C_Z C_\beta) \rho_- / (2d)$, and the smoothness by $L := \exp(8C_Z C_\beta) 3\rho_+/(2d)$.

We have that 
\begin{align*}
    & \|\beta^{(k+1)} - \beta_0\|_2^2 \leq \left\|\beta^{(k)} - \eta\sum_{s=1}^S v_s(G^{(k)}_s + W^{(k)}_s) - \beta_0\right\|_2^2 \\
    = & \left\|\beta^{(k)} - \beta_0 \right\|_2^2  - 2\eta \left\langle \sum_{s=1}^S v_s(G_s^{(k)} + W_s^{(k)}), \beta^{(k)} - \beta_0 \right \rangle + \eta^2 \left\|\sum_{s=1}^S v_s(G_s^{(k)} + W_s^{(k)}) \right\|^2_2,
\end{align*}
where $G_s^{(k)} := \dot{\ell}(\beta^{(k)}; D_s^{(k)})$. We can bound
\begin{align*}
    \mathbb{E} \left[ \left\|\sum_{s=1}^S v_s G_s^{(k)} \right\|_2^2  \,\Bigg\vert \, \beta^{(k)} \right] &\leq  2\mathbb{E}\left[ \sum_{s=1}^S v_s^2 \left\|\dot{\ell}(\beta^{(k)}; D_s^{(k)}) - \dot{\ell}(\beta_0; D_s^{(k)}) \right\|_2^2 + \left\|\sum_{s=1}^S v_s \dot{\ell}(\beta_0; D_s^{(k)}) \right\|_2^2 \right] \\
    &\leq 2L^2 \|\beta^{(k)} - \beta_0\|_2^2 + 2 \mathbb{E} \left[ \left\|\sum_{s=1}^S v_s \dot{\ell}(\beta_0; D_s^{k)}) \right\|_2^2 \right].
\end{align*}
By independence and that scores evaluated at the true parameter value have mean zero, we have  
    \begin{align*}
        \mathbb{E}\left[\left\|\sum_{s=1}^S v_s \dot{\ell}_s(\beta_0) \right\|_2^2 \right] &= \sum_{s=1}^S v_s^2 \mathbb{E} \left[\left\|\dot{\ell_s}(\beta_0)\right\|_2^2 \right] = \sum_{s=1}^S v_s^2 \left(\int_0^\infty 13 \exp(-cu^2b_s) \dint u \right)^2\\
        &\lesssim \frac{\sum_{s=1}^s \min(b_s, (b_s \epsilon_s)^2/d)}{\left(\sum_{s=1}^s \min(b_s, (b_s \epsilon_s)^2/d) \right)^2} 
    \end{align*} 
    where the second equality is from Lemma \ref{lemma:true_grad_convergence}. 
Since the noise terms are mean zero, we have 
\begin{equation*}
    \mathbb{E}\left[ \left\|\sum_{s=1}^S v_s W_s^{(k)}\right\|_2^2 \right] \lesssim \sum_{s=1}^S v_s^2 \frac{d \log(1.25/\delta_s) \log(b_s)^2}{b_s^2 \epsilon_s^2} \lesssim \frac{\max_{s \in [S]} \{ \log(1/\delta_s) \log(b_s)^2\}}{\sum_{s=1}^S \min(b_s, b_s^2 \epsilon_s^2/d)}.
\end{equation*}
Putting these together, we have 
\begin{equation*}
    \mathbb{E}[ \|\beta^{(k+1)} - \beta_0\|_2^2 \mid \beta^{(k)}] \lesssim (1 - \eta \mu)\|\beta^{(k)} - \beta_0\|_2  + \eta^2\frac{\max_{s \in [S]} \{ \log(1/\delta_s) \log(b_s)^2\}}{\sum_{s=1}^S \min(b_s, b_s^2 \epsilon_s^2/d)},
\end{equation*}
provided that $\eta < 1/L$. By recursion, we have 
\begin{align*}
 \mathbb{E}\|\beta^{(K)} - \beta_0\|^2_2 &\lesssim \left(1-\frac{c_1 \eta}{d}\right)^K \|\beta_0\|_2^2  + \eta^2\frac{\max_{s \in [S]} \{ \log(1/\delta_s) \log(n_s/K)^2\}}{\sum_{s=1}^S \min(b_s, b_s^2 \epsilon_s^2/d)}\sum_{k=1}^{K-1}\left( 1 - \frac{c_1\eta}{2d} \right)^k\\
 &\lesssim \left(1-\frac{c_1 \eta}{d}\right)^K \|\beta_0\|_2^2  + \frac{d \eta}{c_1} \frac{K \max_{s \in [S]} \{ \log(1/\delta_s) \log(n_s/K)^2\}}{\sum_{s=1}^S \min(n_s, n_s^2 \epsilon_s^2/(Kd))}.
\end{align*}

Setting the number of iterations to be  
\begin{equation*}
K =\frac{\log(d^2/\sum_{s=1}^Sn_s)}{\log(1-\eta\mu)} \asymp \frac{1}{\eta\mu} \log \left( \sum_{s=1}^S n_s / d^2 \right),
\end{equation*}
the first term is the same order as the non-private rate. Therefore, we have a matching upper bound (up to poly-logarithmic factors) if we pick $\eta \asymp d$.
\end{proof}

\subsection{Proofs of lower bounds}
\label{app:Cox_lower_proof}

In the proofs of the lower bounds results, our construction uses time-independent covariates, so we simplify the notation from $Z(\cdot)$ to $Z$.

\begin{proposition} Let $\mathcal{P}$ denote the set of distribution satisfying Assumptions \ref{assump-1} and \ref{assp:baseline}. For any $K \in \mathbb{N}$, let $\mathcal{Q}_K$ be the set of algorithms satisfying the $(\{(\epsilon_s, \delta_s)\}_{s=1}^S, K)$-FDP constraint. Then if $\delta_s \log (1/\delta_s) \lesssim \epsilon_s^2/d$ for all $s \in [S]$, 
    \begin{equation*}
        \inf_{K \in \mathbb{N}} \inf_{Q \in \mathcal{M}_{K, \epsilon, \delta}} \inf_{\widehat{\beta}}\sup_{P \in \mathcal{P}} \mathbb{E}_{Q, P_{\beta_0}}\|\widehat{\beta} - \beta_0\|_2 \gtrsim \frac{d^2}{\sum_{s=1}^S \min(n_s, n_s^2\epsilon_s^2/d)}.
    \end{equation*}
    \label{prop:FDP_lower_bound}
\end{proposition}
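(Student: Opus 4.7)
The plan is to establish the claimed lower bound as the maximum of two separate lower bounds: the classical non-private rate $d^2/\sum_s n_s$ and the federated-DP cost $d^3/\sum_s n_s^2 \epsilon_s^2$. To see that this maximum matches the stated rate, observe that
\[
\frac{d^2}{\sum_{s=1}^S \min(n_s,\, n_s^2\epsilon_s^2/d)} \asymp \max\left\{\frac{d^2}{\sum_{s=1}^S n_s},\, \frac{d^3}{\sum_{s=1}^S n_s^2\epsilon_s^2}\right\},
\]
so it suffices to prove each piece separately. Throughout, I would specialise the construction to time-independent covariates $Z \in \mathbb{R}^d$ with a simple baseline hazard $\lambda_0 \equiv \lambda$ (e.g.\ $\lambda=1$) and independent exponential censoring, which clearly falls inside $\mathcal{P}$ under Assumptions~\ref{assump-1} and \ref{assp:baseline}\ref{assp:baseline-weaker} once $C_Z$ and $C_\beta$ are chosen appropriately. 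A key point is that restricting to such sub-models makes the supremum over $\mathcal{P}$ at least as large as the analysis over this parametric sub-family, and within the sub-family the assumption on the eigenvalues in \Cref{assump-1}\ref{assp:eigenvalues} can be verified directly from the design distribution of $Z$ by choosing $Z$ so that $\mathbb{E}[ZZ^\top]\asymp I_d/d$.

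For the non-private part $d^2/\sum_s n_s$, I would apply the multivariate van Trees inequality (as in \citealp{gill1995applications}) with a smooth compactly-supported prior $\pi$ on a small ball $B_r(\beta^*)\subset B_{C_\beta}(0)$. Since any $Q\in\mathcal{Q}$ is a valid (possibly randomised) estimator by post-processing, van Trees gives a lower bound in terms of the sum of Fisher informations at the servers, which in turn is controlled by the population Hessian \eqref{eq:population_hessian}. Under \Cref{assump-1}\ref{assp:eigenvalues} this Fisher information is $\preceq \rho_+ I_d/d$ per observation, yielding a lower bound of order $d^2/\sum_s n_s$ after summing over the $S$ servers. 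This part uses privacy only through the data-processing inequality and is essentially the folklore non-private rate.

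For the private part $d^3/\sum_s n_s^2\epsilon_s^2$ I would adapt the score-attack machinery of \cite{cai2024optimal} to the federated DP setting with $K=1$ — it suffices to prove the lower bound for non-interactive mechanisms, since $\inf_K$ only makes the infimum smaller and the construction will in fact produce independent single-round transcripts. The score attack proceeds coordinate-by-coordinate: for each $j\in[d]$ and each server $s$, one considers two neighbouring datasets differing in one observation $(T_{s,i},\Delta_{s,i},Z_{s,i})$, and upper bounds the correlation between the $j$-th coordinate of any private transcript and the per-observation score $\partial_{\beta_j}\log f_{\beta}(T_{s,i},\Delta_{s,i}\mid Z_{s,i})$ by a quantity of order $n_s\epsilon_s\cdot(\text{sensitivity})\cdot\sqrt{\mathrm{Var}(\text{score})}$. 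Combining this with a standard lower bound on the trace of the inverse Fisher information for a rescaled prior on $\beta$, and then aggregating across servers via the per-server central-DP constraint, yields a lower bound of the form $d^3/\sum_s n_s^2\epsilon_s^2$. The condition $\delta_s\log(1/\delta_s)\lesssim \epsilon_s^2/d$ is what absorbs the approximate-DP slack into the pure-DP score attack bound, exactly as in \cite{cai2024optimal}.

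The main obstacle I anticipate is the semi-parametric nature of the Cox likelihood: unlike the parametric regression settings in \cite{cai2024optimal}, the natural log-likelihood involves the at-risk process $\sum_j Y_j(t)\exp(\beta^\top Z_j(t))$, which couples the observations and makes per-observation score bounds less immediate. I would sidestep this by working with the full likelihood of the chosen parametric sub-family (exponential lifetimes with constant $\lambda_0$ and exponential censoring), where $\widetilde T_{s,i}\mid Z_{s,i}\sim \mathrm{Exp}(\exp(\beta^\top Z_{s,i}))$ has a tractable product density, so the score and Fisher information decouple across $i$ and the sensitivity of the per-observation score is $O(C_Z)$ uniformly. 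This reduction preserves the $1/d$ scaling of \Cref{assump-1}\ref{assp:eigenvalues} by choosing $Z_{s,i}$ uniform on a rescaled set, and it lets the score attack go through verbatim. Combining the two lower bounds gives the result.
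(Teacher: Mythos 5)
There is a genuine gap at the very first step: the claimed equivalence
\[
\frac{d^2}{\sum_{s=1}^S \min(n_s,\, n_s^2\epsilon_s^2/d)} \asymp \max\left\{\frac{d^2}{\sum_{s=1}^S n_s},\, \frac{d^3}{\sum_{s=1}^S n_s^2\epsilon_s^2}\right\}
\]
is false in the heterogeneous setting. Since $\sum_s \min(a_s,b_s) \leq \min(\sum_s a_s, \sum_s b_s)$ always, the left-hand side is always \emph{at least} the right-hand side, but the gap can be arbitrarily large: take $d=1$, $S=2$, with server $1$ having $n_1 = 10^6$, $\epsilon_1$ close to $1$ (so its effective size is $n_1 = 10^6$) and server $2$ having $n_2 = 10^{10}$, $\epsilon_2 = 10^{-8}$ (so its effective size is $n_2^2\epsilon_2^2 = 10^4$). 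Then $\sum_s\min(\cdot) \approx 10^6$, while $\min(\sum_s n_s, \sum_s n_s^2\epsilon_s^2) = \min(10^{10}, 10^{12}) = 10^{10}$. Proving the two lower bounds $d^2/\sum_s n_s$ and $d^3/\sum_s n_s^2\epsilon_s^2$ separately therefore only yields a lower bound that is four orders of magnitude weaker than the target in this example, so your decomposition cannot establish the proposition. The maximum-of-two-rates reading is valid only in the homogeneous case discussed after \Cref{thm-fdp-cox}.

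The fix — and what the paper actually does — is to run a \emph{single} van Trees argument and put the minimum inside the per-server sum: one bounds $\mathrm{Tr}(I_R(\beta)) \lesssim \sum_{s=1}^S \min(n_s,\, n_s^2\epsilon_s^2/d)$, where for each server $s$ one has \emph{both} the data-processing bound $\mathrm{Tr}(I_{R_s}(\beta)) \lesssim n_s$ (from \eqref{pf:non_private_trace}) \emph{and} the score-attack bound $\mathrm{Tr}(I_{R_s}(\beta)) \lesssim n_s^2\epsilon_s^2/d$, and takes the smaller of the two server by server (this is \Cref{lemma:FDP_trace}, summing over the $K$ rounds via the chain rule for Fisher information and $\sum_k (b_s^{(k)})^2 \leq n_s^2$). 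Plugging this single trace bound into the van Trees denominator gives the stated rate directly. Your remaining ingredients — the parametric sub-family with time-independent uniform covariates, constant baseline hazard, tractable product likelihood, verification of the $1/d$ eigenvalue scaling, the truncated Gaussian prior, and the score attack with the $\delta_s\log(1/\delta_s)\lesssim\epsilon_s^2/d$ condition absorbing the approximate-DP slack — all match the paper's construction and would go through once the decomposition is repaired in this way. (One further small caution: your reduction to $K=1$ by saying "$\inf_K$ only makes the infimum smaller" is not by itself a proof for general interactive $K$; the paper handles general $K$ inside \Cref{lemma:FDP_trace} using the conditional score attack and the independence of the batches, which is where the constraint structure of \Cref{def:FDP} is genuinely used.)
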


\begin{proof}[Proof of Proposition \ref{prop:FDP_lower_bound} and Corollary \ref{prop:CDP_lowerbound}]
We first construct a class of distributions $\mathcal{P}' \subset \mathcal{P}$ as follows. We assume that $Z(t) = Z$, with co-ordinates $Z_j \overset{\mathrm{i.i.d.}}{\sim}\mathrm{Uniform}(-C_Z'/\sqrt{d}, C_Z'/\sqrt{d})$ for $j \in [d]$ for some small enough $0 < C_Z'\leq C_Z$; that each $\beta_j$ satisfies $|\beta_j| < C_\beta' / \sqrt{d}$ for some small enough $0 < C_\beta' \leq C_{\beta}$; that the baseline hazard is $\lambda_0(t) = 1$ for all $t$; and that the censoring distribution has support contained within $(1, \infty)$. 

Fix any $0<\tau<1$. Due to the boundedness assumptions,  we may apply Fubini's theorem to interchange the order of integration:  
\begin{equation}
    \mathbb{E}_{(T, \Delta, Z)}\left[ \int_0^{\tau} G_1(s, \beta) \dint s\right] = \int_0^\tau \mathbb{E}_{(T, \Delta, Z)}\left[Y(s) \exp(\beta^\top Z)(Z - \mu(s, \beta))^{\otimes 2} \right]\dint s. 
    \label{pf:population_hessian}
\end{equation}

It then suffices to obtain bounds on the eigenvalues of 
\begin{equation}
    \mathbb{E}\left[h(Z, t) \left(Z - \frac{\mathbb{E}[h(Z, t)Z]}{\mathbb{E}[h(Z, t)]} \right)^{\otimes 2} \right] \succeq 0, \quad h(z, t) := \exp(\beta^\top z - t\exp(\beta^\top z)), \, t \in [0, 1]. 
    \label{pf:weighted_cov}
\end{equation}

We have that 
\begin{equation}
    \mathbb{E}\left[h(Z,t) \left(Z - \frac{\mathbb{E}[h(Z, t)Z]}{\mathbb{E}[h(Z, t)]} \right)^{\otimes 2} \right] = \mathbb{E}[h(Z, t)Z^{\otimes 2}] - \frac{\mathbb{E}[h(Z, t)Z]^{\otimes 2}}{\mathbb{E}[h(Z, t)]},
    \label{pf:Weyl's_ineq}
\end{equation}
and since $c_- := \exp(-C_\beta' C_Z' - \exp(C_\beta' C_Z')) \le h(z, t) \le \exp(C_\beta' C_Z') =: c_+$, that 
\begin{equation*}
    \frac{(C'_Z)^2 c_-}{d}I_d\preceq \mathbb{E}[h(Z, t)Z^{\otimes 2}] \preceq \frac{(C_Z')^2c_+}{d}I_d.
\end{equation*}

Let $[x]_j$ denote the $j$-th entry, of a vector $x \in \mathbb{R}^d$. We have for $j \in [d]$, 
\begin{align*}
    \mathbb{E}[h(Z)[Z]_j] &= h'(0) [\beta]_j \mathbb{E}\left[[Z]_j^2\right] + \frac{1}{6} h'''(\xi) \mathbb{E}\left[[\beta]_j [Z]_j^2 \sum_{k=1}^d [\beta]_k^2 [Z]_k^2 \right], \quad \xi \in (0, C_\beta' C_Z')\\
    &\leq C C_\beta' (1 + (C'_\beta)^2) (C_Z'/d^{3/2} + (C'_Z)^3/d^{5/2})
\end{align*}
for some absolute constant $C$, which implies that $\|\mathbb{E}[h(Z, t)Z]\|_2^2 \lesssim 1/d^2$.  By \eqref{pf:Weyl's_ineq} and Weyl's inequality, we therefore have that for any distribution $P \in \mathcal{P}'$, the eigenvalues of \eqref{pf:population_hessian} are of order $1/d$.  

In addition, for any $P \in \mathcal{P'}$, the Fisher information for $\beta_0$ is given by
\begin{align*}
    &\mathbb{E}_{(\tilde{T}, Z)}\left[-\frac{\partial^2}{\partial \beta \partial \beta^\top} \log f_{\tilde{T} \mid Z}(\tilde{T}; \beta_0)^{\mathbbm{1}\{\tilde{T} \leq 1\}} S_{\tilde{T} \mid Z}(1; \beta_0)^{\mathbbm{1}\{\tilde{T} > 1\}} \right] \\
    &\quad= \mathbb{E}_Z\left[ \mathbb{E}_{\tilde{T} \mid \tilde{T} \leq 1}\left\{-\frac{\partial^2}{\partial \beta \partial  \beta^\top} \log f_{\tilde{T}\mid Z}(\tilde{T}; \beta_0)\right\} \mathbb{P}_{\beta_0} (\tilde{T}\leq 1 \mid Z) \right] \\
    &\quad \quad+ \mathbb{E}_Z\left[ \mathbb{E}_{\tilde{T} \mid \tilde{T}> 1}\left\{-\frac{\partial^2}{\partial \beta \partial  \beta^\top} \log S_{\tilde{T}\mid Z}(\tilde{T}; \beta_0)\right\} \mathbb{P}_{\beta_0} (\tilde{T}> 1 \mid Z) \right] \\
    &\quad= \mathbb{E}_Z\left[ \mathbb{E}_{\tilde{T} \mid \tilde{T} \leq 1, Z}\left\{Z^{\otimes 2} \tilde{T} \exp(\beta_0^\top Z)\right\} \left(1 - \exp(-\exp(\beta_0^\top Z)) \right) + Z^{\otimes 2} \exp(\beta_0^\top Z) \exp(-\exp(\beta_0^\top Z)) \right] 
\end{align*}
which is a diagonal matrix with entries of order $1/d$. This implies that 
\begin{equation}
\mathrm{Tr}(I_{(T, \Delta, Z)}(\beta)) \lesssim 1. 
\label{pf:non_private_trace}
\end{equation}

From the van Trees inequality \citep{gill1995applications}, we have 
\begin{equation*}
    \int \mathbb{E}_{P_\beta, Q} \|\widehat{\beta}(R) - \beta\|_2^2 \pi(\beta) \dint \beta \geq  \frac{d^2}{\int \mathrm{Tr}(I_R(\beta)) \pi(\beta) \dint \beta + J(\pi)} \geq \frac{d^2}{\sup_\beta \mathrm{Tr}(I_R(\beta)) + J(\pi)} 
    \label{eq:van_Trees}
\end{equation*}
where $R$ is a random output from any $(\epsilon, \delta)$-DP mechanism,  $I_R(\beta)$ is the Fisher information for $\beta$ based on $R$, and $\pi(\beta)$ is a prior distribution on $\beta$ with support on a subset of $B_{C_\beta}(0)$ where the $\beta_j$'s are independently distributed and $J(\pi)$ is defined as
\begin{equation*}
J(\pi) := \sum_{j=1}^d \int\frac{( \pi_j'(\beta_j))^2}{\pi_j(\beta_j)} \dint \beta_j.
\end{equation*}
If we choose $\pi := \otimes_{j=1}^d\pi_j(\beta_j)$ to be $\beta_j$ i.i.d.~from $\mathcal{N}(0, 1)$ truncated to $(-C_\beta/\sqrt{d}, C_\beta/\sqrt{d})$ for $j \in [d]$, then  
\begin{equation*}
    \pi_j(\beta_j) = \frac{\exp(-\beta_j^2/2) \mathbbm{1}\{|\beta_j| \leq \sqrt{C_\beta / d}\}}{\sqrt{2\pi}(\Phi(\sqrt{C_\beta / d}) - \Phi(-\sqrt{C_\beta / d}))} 
\end{equation*}
where $\Phi(\cdot)$ denotes the standard normal cumulative distribution function. This implies that $\frac{\pi_j'(\beta_j)}{\pi_j(\beta_j)} = -\beta_j$ and therefore that $J(\pi) = \sum_{j=1}^d \mathrm{Var}(\beta_j) \lesssim 1$. 
Using either \Cref{lemma:cdp_trace} (for CDP mechanisms) or \Cref{lemma:FDP_trace} (for FDP mechanisms) to bound 
\begin{equation*}
    \sup_\beta \mathrm{Tr}(I_R(\beta)) \leq \sum_{s=1}^S \min(n_s, n_s^2 \epsilon_s^2/d)
\end{equation*}
finishes the proof.
\end{proof}

\begin{lemma}
\label{lemma:cdp_trace}
Suppose that we have $n$ observations of $(T_i, \Delta_i, Z_i) \overset{\mathrm{i.i.d.}}{\sim}P \in \mathcal{P'}$, where $\mathcal{P'}$ is as constructed in the proof of \Cref{prop:FDP_lower_bound}. If $R$ is the random output of a mechanism that satisfies $(\epsilon, \delta)$-CDP, then
    \begin{equation*}
        \mathrm{Tr}(I_R(\beta)) \lesssim \min(n, n^2 \epsilon^2 / d).
    \end{equation*}    
\end{lemma}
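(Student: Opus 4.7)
The plan is to obtain the two bounds $\mathrm{Tr}(I_R(\beta)) \lesssim n$ and $\mathrm{Tr}(I_R(\beta)) \lesssim n^2 \epsilon^2/d$ separately and then take their minimum.

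For the non-private bound, since $R$ is a (possibly randomised) function of the data $X = \{(T_i, \Delta_i, Z_i)\}_{i=1}^n$, the Fisher information data-processing inequality gives $I_R(\beta) \preceq I_X(\beta)$. By independence $I_X(\beta) = n\, I_{(T, \Delta, Z)}(\beta)$, and the trace of the per-sample Fisher information is already bounded by an absolute constant in \eqref{pf:non_private_trace}. Hence $\mathrm{Tr}(I_R(\beta)) \leq n \,\mathrm{Tr}(I_{(T,\Delta,Z)}(\beta)) \lesssim n$.

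For the private bound I would import the score-attack machinery of \cite{cai2023score} (see also \cite{cai2024optimal}). For each coordinate $j \in [d]$, define the score-attack statistic $\mathcal{A}_j(X) := (R_j - \beta_j)\, S_j(X; \beta)$, where $S_j(X;\beta) := \partial_{\beta_j} \log f(X;\beta)$ is the $j$-th coordinate of the full-sample score. Differentiation under the integral sign gives $\mathbb{E}_\beta[\mathcal{A}_j(X)] = \partial_{\beta_j} \mathbb{E}_\beta[R_j]$, and one side bounds it by Cauchy--Schwarz: $\mathbb{E}_\beta[\mathcal{A}_j(X)] \leq \sqrt{I_{R,jj}(\beta)\, I_{X,jj}(\beta)}$. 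On the other side, the $(\epsilon, \delta)$-CDP constraint bounds the difference $|\mathbb{E}_\beta[\mathcal{A}_j(X)] - \mathbb{E}_\beta[\mathcal{A}_j(X')]|$ for a pair of neighbouring datasets by $(e^\epsilon -1) B_j + \delta \cdot \mathrm{diam}_j$, where $B_j$ is a Lipschitz bound on the single-sample contribution to $S_j$ and $\mathrm{diam}_j$ controls the range of $R_j$. Telescoping across the $n$ samples and combining the two sides yields $I_{R,jj}(\beta) \lesssim n^2 \epsilon^2 B_j^2$ once the assumption $\delta \log(1/\delta) \lesssim \epsilon^2/d$ absorbs the $\delta$-term.

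The final step is to compute $B_j$ on the constructed class $\mathcal{P}'$ in the proof of \Cref{prop:FDP_lower_bound}. Since $|Z_j| \leq C_Z'/\sqrt{d}$ and $\|\beta\|_2 \leq C_\beta'$, the single-sample log-likelihood contribution is (up to censoring) $\Delta \beta^\top Z - T\exp(\beta^\top Z)$, whose partial derivative in $\beta_j$ equals $Z_j(\Delta - T\exp(\beta^\top Z))$ with magnitude $O(1/\sqrt{d})$. Therefore $B_j^2 \lesssim 1/d$, and summing the coordinate-wise inequality over $j \in [d]$ yields $\mathrm{Tr}(I_R(\beta)) \lesssim n^2 \epsilon^2 /d$. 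The main obstacle is bookkeeping the $1/d$ factor: it enters solely through the $1/\sqrt{d}$ rescaling built into $\mathcal{P}'$, so one must track carefully how this propagates through the Lipschitz bound on the single-sample score and how the $\delta$-term is absorbed by the hypothesis $\delta \log(1/\delta) \lesssim \epsilon^2/d$; a minor secondary issue is justifying the differentiation-under-integral identity, which is clean here thanks to the uniform boundedness of $Z$ and $\beta$ on $\mathcal{P}'$.
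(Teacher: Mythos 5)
Your non-private bound matches the paper's (data-processing inequality for Fisher information plus the per-sample trace bound \eqref{pf:non_private_trace}). The private bound, however, has two genuine gaps.

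First, the attack statistic $\mathcal{A}_j(X) = (R_j-\beta_j)\,S_j(X;\beta)$ is the classical score attack for lower-bounding the \emph{estimation risk} of an estimator $R\in\mathbb{R}^d$; it does not control the \emph{Fisher information of the transcript}, which is what the lemma asserts and what the van Trees step in \Cref{prop:FDP_lower_bound} consumes. Your Cauchy--Schwarz step $\mathbb{E}_\beta[\mathcal{A}_j(X)]\le\sqrt{I_{R,jj}(\beta)\,I_{X,jj}(\beta)}$ is not correct: Cauchy--Schwarz gives $\sqrt{\mathrm{Var}(R_j)\,I_{X,jj}(\beta)}$, and $\mathrm{Var}(R_j)$ is not $I_{R,jj}(\beta)$. (Moreover $R$ here is an arbitrary transcript, not necessarily a point of $\mathbb{R}^d$, so $R_j-\beta_j$ need not even be defined.) The paper instead uses the identity $\nabla_\beta\log f_\beta(r)=\mathbb{E}[S_\beta(D)\mid R=r]$ (equation \eqref{pf:transcipt_to_score}), whence $I_R(\beta)=\mathbb{E}\bigl[\mathbb{E}[S_\beta(D)\mid R]^{\otimes 2}\bigr]$ and $\mathrm{Tr}(I_R(\beta))=\sum_{i=1}^n\mathbb{E}\langle S_\beta(D_i),\mathbb{E}[S_\beta(D)\mid R]\rangle$; the DP perturbation argument is run on these inner products $G_i$ against their decoupled copies $\breve G_i$, with the $\delta$-term handled by a truncation level $W\asymp \log(1/\delta)\,C_S^2 n$ and a Chernoff tail bound rather than a bounded-range ``diameter'' term (the score is bounded but $G_i$ involves the random quantity $\mathbb{E}[S_\beta(D)\mid R]$, so the tail integral must be controlled explicitly).

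Second, even granting a corrected coordinate-wise version, your dimensional bookkeeping loses a factor of $d$: a per-coordinate bound $I_{R,jj}(\beta)\lesssim n^2\epsilon^2 B_j^2\lesssim n^2\epsilon^2/d$ sums over $j\in[d]$ to $n^2\epsilon^2$, not to $n^2\epsilon^2/d$. The $1/d$ in the lemma comes from a \emph{vectorized self-bounding} inequality: $\mathbb{E}|\breve G_i|\le\sqrt{\lambda_{\max}(I_{\breve D_i}(\beta))}\,\sqrt{\mathbb{E}\|\mathbb{E}[S_\beta(D)\mid R]\|_2^2}\asymp\sqrt{\mathrm{Tr}(I_R(\beta))/d}$, so that summing over $i$ yields $\mathrm{Tr}(I_R(\beta))\lesssim n\epsilon\sqrt{\mathrm{Tr}(I_R(\beta))/d}$ and hence $\mathrm{Tr}(I_R(\beta))\lesssim n^2\epsilon^2/d$. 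The aggregation over coordinates must happen inside the Cauchy--Schwarz step, exploiting that \emph{all} eigenvalues of the per-sample Fisher information are of order $1/d$ on $\mathcal{P}'$, not after summing $d$ separate scalar inequalities.
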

\begin{proof} 
This proof is adapted from the proofs of Theorem 4.1 and Lemma 4.2 of \cite{cai2024optimal}. The first term in the minimum can be obtained by the data processing inequality for Fisher information and \eqref{pf:non_private_trace}. The second term arises by considering
\begin{align}
    \mathbb{E}[S_\beta(D) \mid R=r] &= \int_{(\mathbb{R}^+, \{0, 1\}, \mathbb{R}^d)^n} \left( \frac{\partial}{\partial \beta} \log f_\beta(D) \right) f_\beta(D \mid R=r)\dint D \notag \\
    &= \int_{(\mathbb{R}^+, \{0, 1\}, \mathbb{R}^d)^n} \frac{\frac{\partial}{\partial \beta} f_\beta(D)}{f_\beta(D)} \frac{f_\beta(D, r)}{f_\beta(R=r)} \dint D=\frac{\partial}{\partial \beta} \log f_\beta(r) 
    \label{pf:transcipt_to_score}
\end{align}
which implies $I_R(\beta) = \mathbb{E}[\mathbb{E}[S_\beta(D\mid R)]^{\otimes2}]$. 

By the linearity of expectation, we have that 
\begin{align*}
    \mathrm{Tr}(\mathbb{E}[\mathbb{E}[S_\beta(D\mid R)]^{\otimes2}]) &= \mathbb{E}[\mathrm{Tr}(\mathbb{E}[S_\beta(D\mid R)]^{\otimes2})]
    = \mathbb{E}\left[\|\mathbb{E}[S_\beta(D \mid R)]\|_2^2\right] \\
    &= \sum_{i=1}^n \mathbb{E}[\langle S_\beta(D_i), \mathbb{E}[S_\beta(D \mid R)] \rangle] =: \sum_{i=1}^n \mathbb{E} G_i
\end{align*}
where the third equality is from the fact that for any random variables $A$ and $B$, the quantities $A - \mathbb{E}[A\mid B]$ and $\mathbb{E}[A \mid B]$ are uncorrelated. For $i \in [n]$, let $\breve{G}_i := \langle S_\beta(\breve{D}_i), \mathbb{E}[S_\beta(D \mid R)] \rangle$, where $\breve{D}_i$ is an independent copy of $D_i$ and denote $(\cdot)_+ = \max(0, \cdot)$ and $(\cdot)_- = -\min(0, \cdot)$. By the $(\epsilon, \delta)$-DP constraint, we have that for any $W > 0$, 
\begin{equation*}
    \mathbb{E}(G_i)_+ = \int_0^\infty\mathbb{P}((G_i)_+ \geq t) \dint t \leq \mathbb{E}(\breve{G}_i)_+ + C_\epsilon \epsilon \int_0^\infty \mathbb{P}((\breve{G}_i)_+ \geq w) \dint w + W\delta +  \int_W^\infty \mathbb{P}((G_i)_+ > w) \dint w
\end{equation*}
and similarly that 
\begin{equation*}
    \mathbb{E}(G_i)_- \geq \mathbb{E}(\breve{G}_i)_- - C_\epsilon \epsilon \int_0^\infty \mathbb{P}((\breve{G}_i)_+ \geq w) \dint w - W\delta -  \int_W^\infty \mathbb{P}((\breve{G}_i)_- \geq w) \dint w 
\end{equation*}
Putting the last two equations together, we have 
\begin{align}
    \mathbb{E}[G_i] &= \mathbb{E}[(G_i)_+ - (G_i)_-] \notag \\
    &\lesssim  \mathbb{E}[\breve{G}_i] +  \epsilon \mathbb{E}|\breve{G}_i| + 2W\delta +  \int_W^\infty \{\mathbb{P}((G_i)_+ \geq w) + \mathbb{P}((\breve{G}_i)_- \geq w)\} \dint w.
    \label{pf:score_attack-2}
\end{align}

By independence, we have $\mathbb{E}[\breve{G}_i] = 0$. For the second term of \eqref{pf:score_attack-2}, by Jensen's and the Cauchy--Schwarz inequality, we have
\begin{align*}
\mathbb{E}|\check{G}_i| &\leq \sqrt{ \mathbb{E} \left[ \mathbb{E}[S_\beta(D)\mid R]^\top \mathrm{Var}(S_\beta(\breve{D}_i)) \mathbb{E}[S_\beta(D)\mid R] \right] } \\
&\leq \sqrt{\lambda_{\max} \left(I_{\breve{D}_i}(\beta) \right)} \sqrt{ \mathbb{E} \left\|\mathbb{E}[S_\beta(D)\mid R] \right\|_2^2 }  \\
&\asymp \sqrt{1/d} \sqrt{ \mathrm{Tr}\left(I_R(\beta)\right)}
\end{align*}
where the last line follows by the construction of $\mathcal{P}'$ and $\eqref{pf:transcipt_to_score}$. 

To obtain the tail bound for the final term on the right-hand side of \eqref{pf:score_attack-2}, we use the boundedness assumptions to obtain   
\begin{align}
    \|S_\beta (T_{i}, \Delta_{i}, Z_{ i}))\|_2 &= \left\|Z_{i} \left(\Delta_{i} - \int_0^{T_{i}} \lambda_0(u) \exp(\beta^\top Z_{i}) \dint u\right) \right\|_2 \notag\\
    &\leq C_Z \left(1 + e^{C_\beta C_z}\Lambda_0(1) \right) =: C_S. 
    \label{eq:score_bound}
\end{align}

By \eqref{eq:score_bound}, we have that 
\begin{equation*}
    \mathbb{E}[\exp(\lambda |G_i|)] \leq \exp(\lambda C_S^2n)
\end{equation*}
and therefore by a Chernoff bound that 
\begin{equation*}
    \mathbb{P}(|G_i| \geq w) \leq \exp\left(1 - \frac{w}{C_S^2 n} \right).
    \label{eq:score_chernoff}
\end{equation*}
Applying similar arguments for a tail bound on $|(\breve{G}_i)_-|$ and taking $W = \log(1/\delta) C_S^2n$, we have 
\begin{equation*}
    \int_W^\infty \left\{ \mathbb{P}((G_i)_+ \geq w) + \mathbb{P}((G_i)_+ \geq w) \right\} \dint w \lesssim C_S^2 \delta n
\end{equation*}
By summing over $\mathbb{E}[G_i]$ over $i \in [n]$, we have
\begin{equation*}
    \mathrm{Tr}(I_R(\beta)) \lesssim n \left( \epsilon \sqrt{1/d} \sqrt{\mathrm{Tr}(I_R(\beta))} + \log(1/\delta)\delta n \right),
\end{equation*}
which implies that $\mathrm{Tr}(I_R(\beta)) \lesssim (n\epsilon)^2/d$ as long as $\delta \log (1/\delta) \lesssim \epsilon^2 /d$. 
\end{proof}

\begin{lemma}
\label{lemma:FDP_trace}
Suppose that we have observations $\{(T_{s,i}, \Delta_{s,i}, Z_{s,i})_{i=1}^{n_s}\}_{s=1}^S \overset{\mathrm{i.i.d.}}{\sim}P \in \mathcal{P'}$, where $\mathcal{P'}$ is as constructed in the proof of \Cref{prop:FDP_lower_bound}. If $R$ is the random output of a mechanism that satisfies $\{(\epsilon_s, \delta_s)\}_{s=1}^S$-FDP, we have that 
    \begin{equation*}
        \mathrm{Tr}(I_R(\beta)) \lesssim \sum_{s=1}^S \min(n_s, n_s^2 \epsilon_s^2/d).
    \end{equation*}    
\end{lemma}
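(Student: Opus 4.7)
The plan is to extend the single-server CDP score-attack bound of \Cref{lemma:cdp_trace} to the federated setting by first exploiting per-server independence in the non-interactive case, then reducing the general interactive case to a per-round application of the same argument.

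The key structural observation for the non-interactive ($K=1$) case is that the transcript decomposes as $R=(R^{(1)},\ldots,R^{(S)})$, where each $R^{(s)}$ is computed from $D^{(s)}:=\{(T_{s,i},\Delta_{s,i},Z_{s,i})\}_{i\in[n_s]}$ together with independent privacy noise. Combined with the mutual independence of $D^{(1)},\ldots,D^{(S)}$ under any $P\in\mathcal{P}'$, this yields the likelihood factorisation $p_\beta(R)=\prod_{s=1}^{S}p_\beta(R^{(s)})$, hence the score decomposes additively as $S_R(\beta)=\sum_{s}S_{R^{(s)}}(\beta)$ with $\{S_{R^{(s)}}(\beta)\}_s$ independent and mean zero. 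All cross-server terms in $\mathbb{E}[S_R S_R^\top]$ therefore vanish, giving $I_R(\beta)=\sum_s I_{R^{(s)}}(\beta)$ and hence $\mathrm{Tr}(I_R(\beta))=\sum_s\mathrm{Tr}(I_{R^{(s)}}(\beta))$. Now for each fixed $s$, the sub-transcript $R^{(s)}$ is an $(\epsilon_s,\delta_s)$-CDP release built from $n_s$ i.i.d.~observations from $P\in\mathcal{P}'$, so \Cref{lemma:cdp_trace} applies directly (its hypothesis $\delta_s\log(1/\delta_s)\lesssim\epsilon_s^2/d$ is part of the standing assumption) and yields $\mathrm{Tr}(I_{R^{(s)}}(\beta))\lesssim\min(n_s,\,n_s^2\epsilon_s^2/d)$. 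Summing over $s$ delivers the target bound.

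For a general $K$-round interactive mechanism, the servers' aggregate outputs $R^{(s)}=\{R_s^{(k)}\}_{k\in[K]}$ are coupled through the shared filtration $M^{(k-1)}$, which breaks the above factorisation. I would handle this with a round-by-round conditional argument: conditionally on $M^{(k-1)}$, the outputs $(R_s^{(k)})_{s\in[S]}$ are independent across servers and each is $(\epsilon_s,\delta_s)$-CDP with respect to that round's batch $D_s^{(k)}$ of size $b_s$. The non-interactive argument applied conditionally on $M^{(k-1)}$ gives a round-$k$ bound of order $\sum_s\min(b_s,b_s^2\epsilon_s^2/d)$. Combining the $K$ rounds via the orthogonal martingale decomposition $\mathbb{E}[S_\beta(D)\mid R]=\sum_k\bigl(\mathbb{E}[S_\beta(D)\mid M^{(k)}]-\mathbb{E}[S_\beta(D)\mid M^{(k-1)}]\bigr)$, whose increments have vanishing cross-expectations, yields a total bound of order $K\sum_s\min(b_s,b_s^2\epsilon_s^2/d)=\sum_s\min(n_s,\,n_s^2\epsilon_s^2/(Kd))\le\sum_s\min(n_s,\,n_s^2\epsilon_s^2/d)$ for every $K\ge 1$, so the stated lemma remains valid.

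The hard part will be the interactive extension: specifically, linking the orthogonal-increment control of $\mathrm{Tr}(I_R(\beta))$ back to the conditional per-round Fisher information in a way that remains sharp in $K$, and confirming that the per-round conditional CDP score attack can be carried out with the sensitivity/tail bounds matching those used in \Cref{lemma:cdp_trace} (these depend on the uniform bound $\|S_\beta(D)\|_2\lesssim b_s$ inside each round, which follows from the boundedness in \Cref{assump-1}\ref{assp:covariate_bound}--\ref{assp:coefficient_bound}). In contrast, the non-interactive core of the argument is essentially a direct application of \Cref{lemma:cdp_trace} once the factorisation is in place.
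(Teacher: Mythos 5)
Your proposal is correct and follows essentially the same route as the paper: the paper also decomposes $I_R(\beta)$ via the chain rule for Fisher information (your orthogonal martingale decomposition of $\mathbb{E}[S_\beta(D)\mid M^{(k)}]$ is exactly this), uses the independence of per-round batches to get the conditional identity $\mathbb{E}[S_\beta(D_s^{(k)})\mid R_s^{(k)},M^{(k-1)}]=\partial_\beta\log f(R_s^{(k)}\mid M^{(k-1)})$, and runs the conditional score attack of \Cref{lemma:cdp_trace} round by round. The only cosmetic difference is that the paper allows unequal batch sizes $b_s^{(k)}$ and closes the sum over rounds via $\sum_k(b_s^{(k)})^2\le n_s^2$ together with the data-processing bound for the non-private term, rather than assuming $b_s=n_s/K$.
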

\begin{proof}
    We use the arguments in the proof of Theorem 4 of  \cite{xue2024optimal}, where the assumption of independent batches of data allows us to follow similar steps to the proof of Lemma \ref{lemma:cdp_trace}. By the chain rule for Fisher information, we have 
\begin{equation}
    I_R(\beta) = \sum_{k=1}^K \sum_{s=1}^S I_{R_s^{(k)} \mid M^{(k-1)}}(\beta),
\end{equation}
where $M^{(k-1)} := \cup_{l=1}^{k-1} \cup_{s=1}^S R_s^{(l)}$. An analogous result to \eqref{pf:transcipt_to_score} may be obtained as  
\begin{align*}
    \mathbb{E}\left[S_\beta(D_{s}^{(k)} )\mid R^{(k)}_s = r, M^{(k-1)} \right] &= \int \frac{ \frac{\partial}{\partial \beta}f(x \mid M^{(k-1)})} {f(x  \mid M^{(k-1)})} \frac{f(x, R^{(k)} = r \mid M^{(k-1)})}{f(R^{(k)} = r \mid M^{(k-1)})} \dint x \\
    &= \frac{1}{f(R^{(k)}=r \mid M^{(k-1)})} \int f(r \mid x, M^{(k-1)}) \frac{\partial}{\partial \beta} f(x \mid M^{(k-1)}) \dint x\\
    &= \frac{\partial}{\partial \beta} \log f(R^{(k)} = r\mid M^{(k-1)}),
\end{align*}
where the first equality is due to the use of independent batches of data. Hence, we have 
\begin{equation*}
    I_{R^{(k)}_s \mid M^{(k-1)}}(\beta) = \mathbb{E} \left[\mathbb{E}_{R^{(k)}_s \mid M^{(k-1)}}  \left\{\mathbb{E}\left[ S_\beta(D_s^{(k)}) \mid R^{(k)}, M^{(k-1)}\right]^{\otimes 2} \right\} \right]. 
\end{equation*}
Defining $G_{s, i}^{(k)} := \left\langle S_\beta(D_{s, i}^{(k)}), \mathbb{E}[S_\beta(D_s^{(k)}) \mid R_s^{(k)}, M^{(k-1)}] \right\rangle$ and 
\[
\breve{G}_{s, i}^{(k)} := \left\langle S_\beta(\breve{D}_{s, i}^{(k)}), \mathbb{E}[S_\beta(D_s^{(k)}) \mid R_s^{(k)}, M^{(k-1)}]\right\rangle,
\]
where $\breve{D}_{s, i}^{(k)}$ is an independent copy of $\breve{D}_{s, i}^{(k)}$, we obtain the conditional version of \eqref{pf:score_attack} as 
\begin{equation*}
    \mathbb{E}[G_{s, i}^{(k)}] \lesssim \mathbb{E}[\breve{G}_{s, i}^{(k)}] +  \epsilon \mathbb{E}|\breve{G}_{s, i}^{(k)}| + 2W\delta +  \int_W^\infty \mathbb{P}((G_{s, i}^{(k)})_+ \geq w) + \mathbb{P}((\breve{G}_{s, i}^{(k)})_- \geq w) \dint w,
\end{equation*}
for any $W>0$; this can be shown using that for random variables $A, B, C$, the variables $A - \mathbb{E}[A \mid B, C]$ and $\mathbb{E}[A \mid B, C]$ are uncorrelated. By similar arguments and using the bound on $S_\beta(\cdot)$ from \eqref{eq:score_bound}, we have
\begin{equation*}
    \mathrm{Tr}(I_{R_s^{(k)} \mid M^{(k-1)}}(\beta)) \lesssim (\epsilon_s b_s^{(k)})^2/d, 
\end{equation*}
provided that $\delta_s \log (1/\delta_s) \lesssim \epsilon_s^2/d$, where $b_s^{(k)}$ is the number of observations used by server $s$ at iteration $k$. Since $\{b_s^{(k)}\}_{k=1}^K$ such that $b_1 + \dots + b_s^K \leq n_s$, we have that $\sum_{k=1}^K (b_s^{(k)})^2 \leq n_s^2$, which implies for all $s \in [S]$,
\begin{equation*}
    \sum_{k=1}^K \mathrm{Tr}(I_{R_s^{(k)} \mid M^{(k-1)}}(\beta)) \lesssim \min(n_s, n_s^2 \epsilon_s^2/d),
\end{equation*}
where the first term in the minimum is from the the data processing inequality for Fisher information and \eqref{pf:non_private_trace}.
\end{proof}

\subsection{Auxiliary results}\label{app:beta_auxiliary}
We start by presenting a lemma on the sum of the distributed Hessian, which is used in the high probability event in the proof of the uppper bound of \Cref{thm-fdp-cox}. The remaining lemmas in this subsection are for a single-server setting and are not specific to differential privacy. For these lemmas, we assume that the observations $\{T_{i}, \Delta_{i}, \{Z_{i}(t): \, t\in [0, T_{ i}]\}\}_{i = 1}^{n}$ are i.i.d.~from a distribution satisfying Assumptions \ref{assump-1} and \ref{assp:baseline}, and write $\dot{\ell}$  and $\ddot{\ell}$ for \eqref{eq:cox_gradient} and \eqref{eq:Cox_hessian} based on a single server with $n$ samples.

\begin{lemma}[Distributed Hessian convergence]
Suppose that the data $\{T_{s, i}, \Delta_{s, i}, \{Z_{s, i}(t): \, t \in [0, T_{s, i}]\}_{}\}_{s, i=1}^{S, n_s}$ are independent and identically distributed (i.i.d.)~from a distribution satisfying Assumptions~\ref{assump-1} and \ref{assp:baseline}. Let $b_s$ and $v_s$ be defined as in Algorithm \ref{alg:FDP-SGD} and let $\ell_s$ be the normalised log-likelihood \eqref{eq:cox_gradient} based on $b_s$. If we may assume that 
\begin{equation*}
     \frac{\sum_{s=1}^S\min\{1, b_s \epsilon_s^2/d\}}{\sum_{s=1}^S \min\{b_s, b_s^2 \epsilon_s^2 / d\}} \lesssim_{\log} \frac{1}{d} 
\end{equation*}
then we may bound 
\begin{equation*}
    \mathbb{P} \left( \lambda_{\min} \left(-\sum_{s=1}^S v_s\ddot\ell_s(\beta_0) \right)\leq  \frac{\rho_-}{2d} \right) \lesssim 2d \exp \left( \frac{-(\rho_-/8)^2}{32 \sum_{s=1}^S v_s^2 / (b_s^{2/3})}\right) + \frac{d^2}{\sum_{s=1}^S \min(b_s, b_s^2 \epsilon_s^2/d)}.
\end{equation*}
\label{lemma:distributed_hessian}
\end{lemma}
\begin{proof}
Recall that $G := \mathbb{E} [\int_0^1 G(\beta_0, s) \dint \Lambda_0(s)]$ is defined as a population version of $-\ddot{\ell}(\beta_0)$. We have from \Cref{lemma:hessian_convergence} that for some absolute constants $c_1, c_2 > 0$, 
\begin{equation}
\mathbb{P}\left(\|\ddot{\ell}_s(\beta_0) +G\|_2 \geq \frac{c_1 \log(b_sd)^2}{b_s^{1/2}} \right) \leq \frac{c_2}{b_s}.
\end{equation} 
Let $B_s := \mathbbm{1}\left\{\|\ddot{\ell}_s(\beta_0) +G\|_2 > c_1 \log(b_s)^2/b_s^{1/2} \right\}$; we will use this to separately control
\begin{align*}
\mathbb{P} \left( \left\|\sum_{s=1}^S v_s \ddot{\ell}_s(\beta_0) + G \right\|_2 \geq \frac{\rho_-}{2d} \right)
    &\leq \mathbb{P} \left( \left\|\sum_{s=1}^S v_s (\ddot{\ell}_s(\beta_0) +G) B_s \right\|_2 \geq \frac{\rho_-}{4d} \right) 
    \\ &\quad \quad + \mathbb{P} \left( \left\|\sum_{s=1}^S v_s (\ddot{\ell}_s(\beta_0) +G)(1-B_s) \right\|_2 \geq \frac{\rho_-}{4d} \right)\\ 
    &=: (I) + (II),
\end{align*}
where $v_s$ are the weights defined in Algorithm \ref{alg:FDP-SGD}. Since $\|\ddot{\ell}_s(\beta_0) + G\|_2 \leq 8C_Z^2$ and each $B_s \preceq$ Bernoulli($c_2/b_s$) independently, for the first term we have 
\begin{equation*}
    (I) \leq \mathbb{P}\left( \sum_{s=1}^S v_s B_s \geq \frac{\rho_-}{32C_Z^2d}\right), 
\end{equation*}
where we have that 
\begin{equation*}
    \mathbb{E}\left[\sum_{s=1}^S v_s B_s\right] \leq \frac{\sum_{s=1}^S c\min(1, b_s \epsilon_s^2/d)} {\sum_{s=1}^S \min(b_s, b_s^2 \epsilon_s^2 / d)} \leq \frac{cS}{\sum_{s=1}^S\min(b_s, b_s^2 \epsilon_s^2/d)} 
\end{equation*}
and 
\begin{equation*}
\mathrm{Var}\left( \sum_{s=1}^S v_sB_s \right) \leq \sum_{s=1}^S cv_s^2 / b_s \leq \frac{1}{\sum_{s=1}^S \min(b_s, b_s^2\epsilon_s^2/d)}.
\end{equation*}
If we may assume 
\begin{equation*}
    \frac{cS}{\sum_{s=1}^S \min(b_s, b_s^2 \epsilon_s^2 / d)} \leq \frac{\rho_-}{64C_Z^2d},
\end{equation*}
then by Chebyshev's inequality we have that 
\begin{equation*}
    (I) \leq \frac{(64C_Z^2 d)^2}{\rho_-^2 \sum_{s=1}^S \min(b_s, b_s^2 \epsilon_s^2/d)}
\end{equation*}

For the second term, letting $A_s := (-\ddot{\ell}_s(\beta_0) - G)(1 - B_s)$, we may bound
\begin{align*}
    (II) = \mathbb{P}\left( \left \|\sum_{s=1}^S v_s A_s \right \|_2 \geq \frac{\rho_-}{4d} \right) &\leq \mathbb{P} \left(\left\|\sum_{s=1}^S v_s A_s
    - \mathbb{E} \left[ \sum_{s=1}^S v_s A_s \right] \right\|_2  + \left\|\mathbb{E} \left[ \sum_{s=1}^S v_s A_s \right] \right\|_2 \geq \frac{\rho_-}{4d} \right)\\
    &\leq \mathbb{P} \left(\left\|\sum_{s=1}^S v_s A_s
    - \mathbb{E} \left[ \sum_{s=1}^S v_s A_s \right] \right\|_2 \geq \frac{\rho_-}{8d} \right),
\end{align*}
provided that 
\begin{equation*}
    \left\|\mathbb{E} \left[\sum_{s=1}^S v_s A_s \right] \right\|_2 < \frac{\rho_-}{8d}.
\end{equation*}
By \Cref{lemma:hessian_bias}, the left-hand side above can be bound by
\begin{align*}
     \left\|\mathbb{E} \left[\sum_{s=1}^S v_s (\ddot{\ell}_s(\beta_0) + G)(1-B_s)  \right] \right\|_2 &\leq \left\|\mathbb{E} \left[\sum_{s=1}^S v_s (\ddot{\ell}_s(\beta_0) + G) \right] \right\|_2 + 8C_Z^2\sum_{s=1}^S v_s \mathbb{P} \left( B_s = 1\right) \\
     &\lesssim \sum_{s=1}^S v_s/b_s.
\end{align*}
Since $A_s$ is self-adjoint and we have the bound $\|A_s - \mathbb{E}[A_s]\|_2 \leq 2v_s/(b_s^{1/3})$ a.s., by a matrix Hoeffding inequality \citep[e.g.~Theorem 1.3 in][]{tropp2012user} , we have 
\begin{equation}
    \mathbb{P} \left( \left\|\sum_{s=1}^S v_sA_s - \mathbb{E}[v_sA_s] \right\|_2 \geq \frac{\rho_-}{8d}\right) \leq 2d \exp \left( \frac{-(\rho_-/8)^2}{32 \sum_{s=1}^S v_s^2 / (b_s^{2/3})}\right).
\end{equation}
Applying Weyl's inequality concludes the proof. 
\end{proof}

\begin{lemma}[Lemma 3.2 in \citealp{huang2013oracle}]
\label{lemma:3-2_huang}
    Let $\dot{\ell}(\beta)$ and $\ddot{\ell}{(\beta)}$ be as defined in \eqref{eq:cox_gradient} and \eqref{eq:Cox_hessian} -- note that this is the negative of the notation in \cite{huang2013oracle}. For any $\beta, b \in \mathbb{R}^d$, it holds that 
    \begin{equation*}
    e^{-\eta_b}  b^\top \ddot{\ell}(\beta) b \geq b^\top \left[ \dot{\ell}(\beta + b) - \dot{\ell}(\beta) \right] \geq e^{\eta_b} b^\top \ddot{\ell}(\beta) b,
    \end{equation*}
where $\eta_b := \max_t \max_{i, j \in [n]} |b^\top \{Z_i(t) - Z_j(t)\}|$. Moreover,
\begin{equation*}
    -e^{-2\eta_b} \, \ddot{\ell}(\beta) \;\preceq\; -\ddot{\ell}(\beta + b) \;\preceq\; -e^{2\eta_b} \, \ddot{\ell}(\beta).
\end{equation*}
\end{lemma}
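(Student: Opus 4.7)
The plan is to reduce the lemma, which concerns the Cox partial score and Hessian, to a pointwise-in-$t$ statement about weighted covariances, via an exponential-tilting (cumulant generating function) argument applied inside the time integral.

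First, I would rewrite the quadratic form on the inside. Both $\dot{\ell}$ and $\ddot{\ell}$ have the structure $\frac{1}{n}\sum_i \int_0^1 (\cdot)\dint N_i(t)$, with the integrand depending on $\beta$ only through the weights $w_i(t,\beta):= Y_i(t)\exp(\beta^\top Z_i(t))/S^{(0)}(t,\beta)$. A direct calculation rewrites $b^\top[\dot{\ell}(\beta+b) - \dot{\ell}(\beta)]$ as (up to sign) $\frac{1}{n}\sum_i \int_0^1 b^\top[\bar{Z}(t,\beta+b) - \bar{Z}(t,\beta)]\dint N_i(t)$. To handle the bracket, I would introduce, for each $t$, the log-partition function $\psi_t(u) := \log \sum_i w_i(t,\beta)\exp(u^\top Z_i(t))$ based on the measure $\{w_i(t,\beta)\}_i$. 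Straightforward differentiation gives $\nabla\psi_t(u) = \bar{Z}(t,\beta+u)$ and $\nabla^2\psi_t(u) = V(t,\beta+u)$, so by the fundamental theorem of calculus,
\[ b^\top[\bar{Z}(t,\beta+b) - \bar{Z}(t,\beta)] = \int_0^1 b^\top V(t,\beta+sb) b \dint s. \]
Thus the first display of the lemma reduces to a pointwise-in-$(t,s)$ comparison $e^{-\eta_b} b^\top V(t,\beta) b \le b^\top V(t,\beta+sb) b \le e^{\eta_b} b^\top V(t,\beta) b$ for $s\in [0,1]$.

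Second, to establish this pointwise sandwich I would exploit that $V(t,\beta+u)$ is precisely the variance of the scalar $b^\top Z_i(t)$ under the exponentially tilted measure $w_i(t,\beta+u) = w_i(t,\beta)\exp(u^\top Z_i(t))/\sum_j w_j(t,\beta)\exp(u^\top Z_j(t))$. Since $\eta_b = \max_t \max_{i,j} |b^\top(Z_i(t)-Z_j(t))|$, any tilt by $sb$ with $s\in[0,1]$ changes weight ratios by at most a factor $e^{\eta_b}$; concretely, $w_i(t,\beta+sb)/w_i(t,\beta) \in [e^{-\eta_b},e^{\eta_b}]$. I would then invoke the variational identity $b^\top V(t,\beta+sb) b = \min_\mu \sum_i w_i(t,\beta+sb)(b^\top Z_i(t) - \mu)^2$, and plug in the suboptimal choice $\mu = b^\top \bar{Z}(t,\beta)$; combined with the weight-ratio bound this gives $b^\top V(t,\beta+sb)b \le e^{\eta_b} b^\top V(t,\beta) b$. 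The reverse inequality follows by swapping the roles of $\beta$ and $\beta+sb$.

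Third, integrating the pointwise sandwich in $s$ over $[0,1]$, then against $\dint N_i(t)$ and summing over $i$, delivers the first claim of the lemma. For the operator inequality $e^{-2\eta_b}\ddot{\ell}(\beta) \preceq \ddot{\ell}(\beta+b) \preceq e^{2\eta_b}\ddot{\ell}(\beta)$, I would replay the variational argument with an arbitrary direction $c$ in place of $b$ (only $b$ still plays the role of the tilt, while $c$ probes the quadratic form). The same weight-ratio bound yields $c^\top V(t,\beta+b) c$ within a factor $e^{\eta_b}$ of $c^\top V(t,\beta) c$ at every $t$; integrating delivers the PSD comparison (the factor $e^{2\eta_b}$ in the stated inequality appearing as a conservative slack). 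The main obstacle is essentially bookkeeping: one must track which of $\bar{Z}(t,\beta)$ or $\bar{Z}(t,\beta+sb)$ serves as the centring at each step, and verify that the weight-ratio bound flows in the correct direction when bounding the weighted quadratic forms. Once the exponential-tilting structure is made explicit and the variational identity for the weighted variance is invoked, the remainder is routine.
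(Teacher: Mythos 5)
Your proposal is correct, and it is essentially the standard argument: the paper itself does not prove this lemma but imports it verbatim as Lemma 3.2 of \citet{huang2013oracle}, so there is no in-paper proof to compare against. Your route --- writing $b^\top[\bar Z(t,\beta+b)-\bar Z(t,\beta)]=\int_0^1 b^\top V(t,\beta+sb)\,b\dint s$ via the log-partition function, bounding the tilted weights by $w_i(t,\beta+sb)/w_i(t,\beta)\in[e^{-s\eta_b},e^{s\eta_b}]$, and transferring the quadratic form through the variational identity $b^\top V b=\min_\mu\sum_i w_i(b^\top Z_i-\mu)^2$ with the suboptimal centre $b^\top\bar Z(t,\beta)$ --- is exactly how the cited result is established, and every step checks out: the weights sum to one on the at-risk set, $S^{(0)}>0$ at every jump of $\sum_i N_i$, and integrating the pointwise sandwich against the nonnegative measures $\dint N_i$ preserves both the scalar inequalities and the PSD ordering. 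Your observation that the direct argument yields $e^{\eta_b}$ rather than $e^{2\eta_b}$ for the operator comparison is also right; the stated constant is simply slack.

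One bookkeeping point worth making explicit rather than leaving as ``(up to sign)'': with the paper's literal definitions, $\ddot\ell$ in \eqref{eq:Cox_hessian} is the \emph{negative} Hessian of the log partial likelihood (it is positive semi-definite), whereas $\dot\ell(\beta+b)-\dot\ell(\beta)$ equals $-\tfrac1n\sum_i\int_0^1 b^\top[\bar Z(t,\beta+b)-\bar Z(t,\beta)]\dint N_i(t)$, which is nonpositive in the direction $b$. The displayed inequalities are therefore to be read with the sign convention of \citet{huang2013oracle}, where $\ell$ is the negative log partial likelihood; your proof is consistent with that convention once the sign is fixed at the outset.
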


\begin{lemma}[Convergence of the MLE]
Let $\widehat{\beta}_{\mathrm{MLE}}$ be a minimiser of the partial log-likelihood \eqref{eq:Cox_partial_likelihood} with $S=1$ from $n$ i.i.d.~observations drawn from a distribution satisfying Assumptions \ref{assump-1} and \ref{assp:baseline}. Then 
\begin{equation*} 
    \mathbb{E} \|\widehat{\beta}_\mathrm{MLE} - \beta_0\|_2^2 \lesssim d^2/n.
\end{equation*}
\label{lemma:MLE_convergence}
\end{lemma}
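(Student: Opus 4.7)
The argument will follow a standard $M$-estimator consistency analysis for the MLE, specialised to the $1/d$-scaled Hessian regime of \Cref{assump-1}\ref{assp:eigenvalues}. The idea is to control $\|\widehat{\beta}_{\mathrm{MLE}} - \beta_0\|_2$ by the ratio $\|\dot{\ell}(\beta_0)\|_2 / \mu$, where $\mu$ is the strong-convexity constant of $\ell$ on the relevant neighbourhood; \Cref{assump-1}\ref{assp:eigenvalues} delivers $\mu \asymp 1/d$ in expectation, while \Cref{assump-1}\ref{assp:coefficient_bound} confines both $\widehat{\beta}_{\mathrm{MLE}}$ and $\beta_0$ to a ball of fixed radius $C_\beta$, so a Hessian lower bound at $\beta_0$ will transfer to the whole segment between the two points up to a dimension-free constant factor.

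\textbf{Key steps.} First, I will introduce the event
\[
    \mathcal{E} := \left\{ \bigl\|\ddot{\ell}(\beta_0) - \mathbb{E}[\ddot{\ell}(\beta_0)]\bigr\|_2 \leq n^{-1/3} \right\},
\]
which by \Cref{lemma:hessian_convergence} (used in the proof of \Cref{prop:cdp_sgd}) satisfies $\mathbb{P}(\mathcal{E}^c) \lesssim 1/n$, and for $n$ large enough that $n^{-1/3} < \rho_-/(2d)$ forces $\lambda_{\min}(\ddot{\ell}(\beta_0)) \geq \rho_-/(2d)$ on $\mathcal{E}$. Second, I will invoke \Cref{lemma:3-2_huang} with the uniform bound $\eta_b \leq 2 C_Z C_\beta$ (from Assumptions~\ref{assump-1}\ref{assp:covariate_bound} and \ref{assp:coefficient_bound}) to extend the Hessian lower bound from $\beta_0$ to every point of $B_{C_\beta}(0)$, yielding strong convexity of $\ell$ throughout the ball with parameter $\mu := e^{-4 C_Z C_\beta}\rho_-/(2d) \asymp 1/d$. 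Third, because $\widehat{\beta}_{\mathrm{MLE}}$ optimises $\ell$ over $B_{C_\beta}(0) \ni \beta_0$, combining strong convexity with optimality gives
\[
    \tfrac{\mu}{2} \|\widehat{\beta}_{\mathrm{MLE}} - \beta_0\|_2^2 \leq \bigl|\langle \dot{\ell}(\beta_0),\, \widehat{\beta}_{\mathrm{MLE}} - \beta_0 \rangle\bigr| \leq \|\dot{\ell}(\beta_0)\|_2 \, \|\widehat{\beta}_{\mathrm{MLE}} - \beta_0\|_2,
\]
hence $\|\widehat{\beta}_{\mathrm{MLE}} - \beta_0\|_2^2 \lesssim d^2 \|\dot{\ell}(\beta_0)\|_2^2$ on $\mathcal{E}$. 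Fourth, the score $\dot{\ell}(\beta_0)$ is mean zero and, via the sub-Gaussian tail bound in \Cref{lemma:true_grad_convergence} (used in the proof of \Cref{prop:fdp_upper}), satisfies $\mathbb{E}\|\dot{\ell}(\beta_0)\|_2^2 \lesssim 1/n$, giving $\mathbb{E}[\|\widehat{\beta}_{\mathrm{MLE}} - \beta_0\|_2^2 \mathbbm{1}\{\mathcal{E}\}] \lesssim d^2/n$. Finally, on $\mathcal{E}^c$ the trivial diameter bound $\|\widehat{\beta}_{\mathrm{MLE}} - \beta_0\|_2 \leq 2 C_\beta$ together with $\mathbb{P}(\mathcal{E}^c) \lesssim 1/n$ contributes only $O(1/n)$, so summing the two regimes yields $\mathbb{E}\|\widehat{\beta}_{\mathrm{MLE}} - \beta_0\|_2^2 \lesssim d^2/n$.

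\textbf{Main obstacle.} The only delicate step is ensuring the strong-convexity parameter $\mu$ retains its $1/d$ scaling uniformly over the segment joining $\beta_0$ and $\widehat{\beta}_{\mathrm{MLE}}$ rather than just at $\beta_0$ itself. This is precisely what \Cref{lemma:3-2_huang} provides: transferring Hessian lower bounds across a ball of radius $2C_\beta$ costs only the constant exponential factor $e^{-4 C_Z C_\beta}$, which is independent of both $d$ and $n$ and therefore does not contaminate the target $d^2/n$ rate. All other ingredients -- Hessian concentration, Huang's transfer inequality, and the sub-Gaussian score bound -- are already established in the paper's appendix for the analysis of \Cref{alg:FDP-SGD}, so the proof amounts to a noise-free specialisation of the argument used in \Cref{prop:fdp_upper}.
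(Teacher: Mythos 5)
Your argument is correct in outline but takes a genuinely different route from the paper's. The paper localises $\widehat{\beta}_{\mathrm{MLE}}$ via a ray argument: writing $\tilde{\delta}$ for the unit vector from $\beta_0$ to $\widehat{\beta}_{\mathrm{MLE}}$, it combines the monotonicity of $x \mapsto \tilde{\delta}^\top[\dot{\ell}(\beta_0+x\tilde{\delta})-\dot{\ell}(\beta_0)]$ with \Cref{lemma:3-2_huang} applied along the segment, picking up a distance-dependent factor $e^{-C_Zx}$; it then analyses the function $x\mapsto C_Zxe^{-C_Zx}$ to show that on the event where $\|\dot{\ell}(\beta_0)\|_2/\lambda_{\min}(\ddot{\ell}(\beta_0))$ is small, the estimator must fall in the near branch, yielding $\|\widehat{\beta}_{\mathrm{MLE}}-\beta_0\|_2\lesssim \lambda_{\min}(\ddot{\ell}(\beta_0))^{-1}\|\dot{\ell}(\beta_0)\|_2$. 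You instead upgrade the Hessian lower bound at $\beta_0$ to uniform strong convexity on all of $B_{C_\beta}(0)$ (again via \Cref{lemma:3-2_huang}, now with a fixed radius so the exponential factor is a dimension-free constant) and then use the constrained first-order optimality condition. Your route is cleaner in one respect: your good event only concerns Hessian concentration, and the score enters purely through $\mathbb{E}\|\dot{\ell}(\beta_0)\|_2^2\lesssim 1/n$, whereas the paper must also put the score bound inside the event to make its localisation work. Both proofs share the implicit requirement $n^{-1/3}\lesssim \rho_-/d$.

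The one caveat is that your third step presupposes $\widehat{\beta}_{\mathrm{MLE}}$ is the minimiser of $-\ell$ \emph{constrained to} $B_{C_\beta}(0)$, so that the variational inequality $\langle -\dot{\ell}(\widehat{\beta}_{\mathrm{MLE}}),\beta_0-\widehat{\beta}_{\mathrm{MLE}}\rangle\geq 0$ is available and the segment to $\beta_0$ stays inside the region of uniform strong convexity. The lemma as stated does not impose this constraint, and for the unconstrained maximiser your strong-convexity bound degrades exponentially with the distance travelled, so you cannot a priori rule out a far-away minimiser; this is exactly the gap the paper's $C_Zxe^{-C_Zx}$ analysis is designed to close. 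Since the lemma is invoked in the proof of \Cref{prop:cdp_sgd} for $\beta^*:=\argmin_{\beta\in B_{C_\beta}(0)}\{-\ell(\beta)\}$, your reading is defensible for the intended application, but you should either state the constraint explicitly or add the localisation step for the unconstrained case.
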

\begin{proof}
Let $\xi := \widehat{\beta} - \beta_0$ and $\tilde{\xi} := \xi / \|\xi\|_2$. For all $x \in [0, \|\xi\|_2]$, we have 
\begin{equation}
    \tilde{\xi}^\top \left[-\dot{\ell}(\beta_0 + x\tilde{\xi}) + \dot{\ell}(\beta_0)\right] \leq \|\dot{\ell}(\beta_0)\|_2 
    \label{eq:MLE_conv_1}
\end{equation}
due to the Cauchy--Schwarz inequality and that the left-hand side of \eqref{eq:MLE_conv_1} is an increasing function of $x$ due to the convexity of $-\ell$. By Lemma \ref{lemma:3-2_huang} and \Cref{assump-1}\ref{assp:covariate_bound}, we have that
\begin{equation}
    x \tilde{\xi}^\top \left[-\dot{\ell}(\beta_0 + x\tilde{\xi}) + \dot{\ell}(\beta_0)\right] \geq -x^2 \exp(-C_Zx) \tilde{\xi}^\top \ddot{\ell}(\beta_0) \tilde{\xi} \geq x^2 \exp(-C_Zx) \,\lambda_{\min}\left( -\ddot{\ell}(\beta_0) \right).
    \label{eq:MLE_conv_2}
\end{equation}
Without loss of generality, suppose that $C_Z > 1$ and consider the function $f(x) = C_Zxe^{-C_Zx}$. We have that $f(x)$ is strictly increasing on $y \in [0, 1/C_Z]$ and strictly decreasing on $y \in [1/C_Z, \infty)$, and $f(x) > x$ for all $x < \ln(C_Z)/C_Z$. Since \eqref{eq:MLE_conv_1} and \eqref{eq:MLE_conv_2} hold for all $x \in (0, \|\xi\|_2]$, under the event 
\begin{equation*}
    \mathcal{E} := \left\{ \frac{\|\dot{\ell}(\beta_0)\|_2}{C_Z \lambda_{\min}(-\ddot{\ell}(\beta_0))} \leq \min \left(e^{-1}, \frac{\ln(C_z)}{C_Z} \right)  \right\},
\end{equation*}
we have that $\|\widehat{\beta} - \beta_0\|_2 \lesssim \lambda_{\min}(-\ddot{\ell}(\beta_0))^{-1}\|\dot\ell(\beta_0)\|_2$. 
By Lemma \ref{lemma:true_grad_convergence} and Lemma \ref{lemma:hessian_convergence}, we have that
\begin{align*}
    \mathbb{P}(\mathcal{E}^c) &\leq \mathbb{P} \left(\lambda_{\min}(-\ddot{\ell}(\beta_0)) \leq \frac{\rho_-}{2d} \right) + \mathbb{P}\left( \|\dot{\ell}(\beta_0)\|_2 \geq \min \left(e^{-1}C_Z, \ln(C_Z) \right) d/ (2\rho_-) \right) \\
    &\lesssim \frac{1}{n} + \exp(-cnd^2) 
\end{align*}
provided that $n$ is large enough, where $c>0$ is an absolute constant.  

Using a second upper bound on $\lambda_{\min}(\beta_0)$ given by \Cref{lemma:hessian_upper}, we have that
\begin{align*}
    \mathbb{E}\|\widehat{\beta}_{\mathrm{MLE}} - \beta_0\|_2^2 &= \mathbb{E} \left[ \|\widehat{\beta}_{\mathrm{MLE}} - \beta_0\|_2^2 \mid \mathcal{E}\right] \mathbb{P}(\mathcal{E}) + \mathbb{E} \left[ \|\widehat{\beta}_{\mathrm{MLE}} - \beta_0\|_2^2 \mid \mathcal{E}^c\right] \mathbb{P}(\mathcal{E}^c) \\
    &\lesssim_{\log} d^2 \, \mathbb{E}[\|\dot{\ell}_n(\beta_0)\|_2^2] + \mathbb{P}(\mathcal{E}^c) \\
    &\lesssim\frac{d^2}{n} + \frac{1}{n}. 
\end{align*}
\end{proof}

\begin{lemma}[Convergence of $\dot{\ell}(\beta_0)$] When $\dot{\ell}(\cdot)$ is evaluated at the true parameter value, we have that  
    \begin{equation*}
        \mathbb{P}\left( \|\dot{\ell}(\beta_0)\|_2 > u\right) \lesssim \exp ( -cnu^2)
    \end{equation*}
    for some absolute constant $c$. 
\label{lemma:true_grad_convergence}
\end{lemma}
\begin{proof}
Since the compensator of $N_i(t)$ is $\int_0^t Y_i(s) \lambda_0(s) \exp(\beta_0^\top Z_i(s)) \dint s$,  we have that 
    \begin{equation*}
        \dot{\ell}(\beta_0) = \frac{1}{n} \sum_{i=1}^n\int_0^{1} \{Z_i(t) - \bar{Z}(t, \beta_0)\} \dint N_i(t) = \frac{1}{n}\sum_{i=1}^n \int_0^{1}\{Z_i(t) - \bar{Z}(t)\} \dint M_i(t)
    \end{equation*}
    where each $M_i(t) := N_i(t) - \int_0^t Y_i(s) \exp(\beta_0^\top Z_i(s))\dint s$ is an independent (scalar) martingale. Each $\{Z_i(t) - \bar{Z}(t)\}_{t \in [0, 1]}$ is predictable process w.r.t.~the filtration $(\mathcal{F}_t := \sigma(\{N_i(t), Y_i(t), Z_i(t)\}_{i=1}^n)_{t \in [0, 1]}$, so we have that  
    \begin{equation}
        X(t) := \frac{1}{n}\sum_{i=1}^n \int_0^{t}\{Z_i(s) - \bar{Z}(s)\} \dint M_i(s)
    \end{equation}
    is a martingale w.r.t.~$(\mathcal{F}_t)_{t \in [0, 1]}$. Define the sequence of random variables $\{\tau_k\}_{k=1}^n$ as 
    $\tau_k := \inf_t(\min\{1, \sum_{i=1}^n N_i(t) \geq k\})$; 
    this is a sequence of stopping times w.r.t.~$(\mathcal{F}_{t})_{t \in [0, 1]}$. It follows that the stopped sequence $\{X(\tau_k)\}_{k=1}^n $ is a discrete martingale in $(\mathbb{R}^d, \|\cdot\|_2)$, with bounded difference 
    \begin{equation*}
        \|X(\tau_k) - X(\tau_{k-1})\|_2 \leq \frac{2C_Z}{n} \max \left\{|\Lambda_0(1) \exp(C_\beta C_Z) - 1|, \Lambda_0(1) \exp(C_\beta C_Z) \right\}.
    \end{equation*}
    We obtain the result by applying \Cref{lemma:smooth_banach_martingale}. 
\end{proof}

\begin{lemma} We can upper bound 
$\lambda_{\max}(\ddot{\ell}(\beta)) \leq 4C_Z^2$ for any $\beta$. 
\label{lemma:hessian_upper}
\end{lemma}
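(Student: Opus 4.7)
The plan is to exploit the structure of $\ddot{\ell}(\beta)$ as a positive integral of weighted covariance matrices. Recalling from \eqref{eq:Cox_hessian} and the subsequent display that
\[
    \ddot{\ell}(\beta) = \frac{1}{n} \sum_{i=1}^n \int_0^1 V(t, \beta) \dint N_i(t), \quad V(t, \beta) = \frac{S^{(2)}(t, \beta)}{S^{(0)}(t, \beta)} - \bar{Z}(t, \beta)^{\otimes 2},
\]
I would first rewrite $V(t, \beta)$ in its covariance form
\[
    V(t, \beta) = \sum_{j=1}^n w_j(t, \beta) \left\{Z_j(t) - \bar{Z}(t, \beta)\right\}^{\otimes 2}, \quad w_j(t, \beta) := \frac{Y_j(t) \exp\{\beta^\top Z_j(t)\}}{\sum_{k=1}^n Y_k(t) \exp\{\beta^\top Z_k(t)\}},
\]
noting $w_j \geq 0$ and $\sum_j w_j = 1$ (whenever the at-risk set is non-empty; otherwise $dN_i$ assigns no mass).

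The key step is the pointwise bound $V(t, \beta) \preceq 4C_Z^2 I_d$. This follows since for any unit vector $u \in \mathbb{R}^d$,
\[
    u^\top V(t, \beta) u = \sum_{j=1}^n w_j(t, \beta) \left[u^\top\{Z_j(t) - \bar{Z}(t, \beta)\}\right]^2 \leq \max_j \|Z_j(t) - \bar{Z}(t, \beta)\|_2^2 \leq (2C_Z)^2,
\]
where the last inequality uses \Cref{assump-1}\ref{assp:covariate_bound} together with the fact that $\bar{Z}(t, \beta)$ is a convex combination of the $Z_j(t)$'s and hence also has $\ell_2$-norm at most $C_Z$.

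Finally, since $V(t, \beta) \preceq 4C_Z^2 I_d$ and $dN_i(t)$ is a non-negative counting measure, integrating and summing gives
\[
    \ddot{\ell}(\beta) \preceq \frac{4C_Z^2}{n} \sum_{i=1}^n N_i(1) I_d \preceq 4C_Z^2 I_d,
\]
because each $N_i(1) \in \{0, 1\}$. Taking the largest eigenvalue of both sides yields the claim. No step here is genuinely hard; the only point requiring care is the bound $\|\bar{Z}(t, \beta)\|_2 \leq C_Z$, which is immediate from $\bar{Z}(t, \beta)$ being a convex combination of bounded vectors.
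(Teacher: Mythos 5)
Your proof is correct and follows essentially the same route as the paper: bound $V(t,\beta)$ pointwise by $4C_Z^2$ using its weighted-covariance representation (weights summing to one, $\|\bar{Z}(t,\beta)\|_2 \leq C_Z$ as a convex combination of the $Z_j(t)$'s), then integrate against the counting measures using $N_i(1)\leq 1$. Your write-up is merely more explicit about the final integration step, which the paper leaves implicit.
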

\begin{proof}
    Under \Cref{assump-1}\ref{assp:covariate_bound}, we have that for all $t>0$ and $\beta \in \mathbb{R}^d$, 
    From \eqref{eq:Cox_hessian}, it suffices to find a bound on $V_n(t, \beta)$ that holds for all values of $t, \beta$. Since $\bar{Z}$ is a weighted average, $\sup_{i, t} |Z_{i}(t)| \leq C_Z$ implies $|\bar{Z}(s)| \leq C_Z$. The weights in the summand of $V_n(t, \beta)$ sum to 1, so by the triangle inequality, we have $V_n(t, \beta) \leq 4C_Z^2$.
\end{proof}

\begin{lemma}[Hessian convergence]
For the Hessian of the log partial-likelihood evaluated at the true parameter value $\ddot{\ell}(\beta_0)$, it holds that
\begin{equation*}
    \mathbb{P} \left(\left\|\ddot{\ell}(\beta_0) +\mathbb{E} \int_0^{1} G_n(s, \beta_0)\,  \dint \Lambda_0(s) \right\|_2 > \frac{c \log(dn)^2}{n^{1/2}}\right) \lesssim \frac{1}{n},
\end{equation*}
where $c>0$ is an absolute constant that does not depend on $n$ or $d$, and
\begin{equation*}
G_n(s, \beta) := \frac{1}{n}\sum_{i=1}^n Y_i(s) \exp(Z_i(s)^\top\beta)[Z_i(s) - \mu(s, \beta)]^{\otimes 2} \,\, \mbox{and} \,\, \mu(s, \beta) := \frac{\mathbb{E}\{Z(s) Y(s) \exp(\beta^\top Z(s))\}}
{\mathbb{E}\{Y(s) \exp(\beta^\top Z(s))\}}.
\end{equation*}
\label{lemma:hessian_convergence}
\end{lemma}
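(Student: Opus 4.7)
I would begin with the Doob--Meyer decomposition $\dint N_i(t) = \dint M_i(t) + Y_i(t)\exp(\beta_0^\top Z_i(t))\lambda_0(t)\dint t$, which gives
\begin{equation*}
\ddot{\ell}(\beta_0) = \underbrace{\frac{1}{n}\sum_{i=1}^n \int_0^1 V(t,\beta_0)\dint M_i(t)}_{=:\,\mathcal{M}} + \int_0^1 V(t,\beta_0) S^{(0)}(t,\beta_0) \dint \Lambda_0(t).
\end{equation*}
Using the identity $V(t,\beta_0)\cdot S^{(0)}(t,\beta_0) = \Phi(S^{(0)}, S^{(1)}, S^{(2)})(t,\beta_0)$ with $\Phi(a,b,c) := c - bb^\top / a$, and writing $s^{(k)}(t,\beta) := \mathbb{E}[Z(t)^{\otimes k}Y(t)\exp(\beta^\top Z(t))]$, a short expansion of $(Z-\mu)^{\otimes 2}$ with $\mu = s^{(1)}/s^{(0)}$ shows that $\mathbb{E}[G_n(t,\beta_0)] = \Phi(s^{(0)}, s^{(1)}, s^{(2)})(t,\beta_0)$. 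The task then reduces to bounding $\mathcal{M}$ and the plug-in bias $\mathcal{B} := \int_0^1 \{\Phi(S^{(0)}, S^{(1)}, S^{(2)}) - \Phi(s^{(0)}, s^{(1)}, s^{(2)})\}(t,\beta_0)\dint \Lambda_0(t)$.

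\textbf{Martingale piece.} Since $\|V(t,\beta_0)\|_2 \leq 4C_Z^2$ by \Cref{lemma:hessian_upper} and $\int_0^1 \dint N_i(t)\leq 1$, a stopping-time/matrix-Azuma argument entirely analogous to the proof of \Cref{lemma:true_grad_convergence}, applied entrywise with a union bound over the $d^2$ entries, yields $\|\mathcal{M}\|_2 \lesssim n^{-1/2}$ with tail $\lesssim d\exp(-cnu^2)$; at the target deviation $u\asymp n^{-(1-\gamma)/2}$ the Hoeffding rate $nu^2$ becomes exactly $n^\gamma$, matching the exponent in the stated probability bound.

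\textbf{Bias piece.} The plan is (i) to establish the uniform concentration $\sup_{t\in[0,1]} \|S^{(k)}(t,\beta_0) - s^{(k)}(t,\beta_0)\|_2 \lesssim n^{-(1-\gamma)/2}$ for $k\in\{0,1,2\}$, and (ii) to truncate the denominator. For (i), a discretisation of $[0,1]$ on an $O(n^\alpha)$-point grid combined with pointwise Hoeffding for i.i.d.\ bounded means gives the deviation $n^{-(1-\gamma)/2}$ with tail $\exp(-cn^\gamma)$ at each grid point; in-between fluctuations of the $S^{(k)}$'s are controlled via the Lipschitz regularity of $Z$ (\Cref{assump-1}\ref{assp:covariate_Lipschitz}) together with the fact that each $Y_i(\cdot)$ has a single jump, so the union bound delivers the $n^\alpha d\exp(-cn^\gamma)$ prefactor. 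For (ii), restrict the integration to $\mathcal{T}_\gamma := \{t: s^{(0)}(t,\beta_0) \geq 2n^{-\gamma}\}$. On $\mathcal{T}_\gamma$ and on the good event for the $S^{(k)}$'s, both the empirical and population triples satisfy $\|b\|_2 \leq C_Z \cdot a$ (since $\|S^{(1)}(t)\|_2 \leq C_Z S^{(0)}(t)$ by definition, and similarly for $s$), so $\Phi$ is Lipschitz in $(a,b,c)$ with an $O(1)$ constant, and the uniform concentration translates into an $n^{-(1-\gamma)/2}$ bound on the $\mathcal{T}_\gamma$-contribution to $\mathcal{B}$. Off $\mathcal{T}_\gamma$, the crude bound $\|\Phi\|_2\leq 4C_Z^2$ together with the small $\Lambda_0$-measure of the excluded region yields the $n^{-\gamma}$ bias term.

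\textbf{Main obstacle.} The heart of the matter is that, under the weak \Cref{assp:baseline}\ref{assp:baseline-weaker}, $s^{(0)}(t,\beta_0)$ is only required to be positive, not bounded below on $[0,1]$, so $1/S^{(0)}$ is genuinely unstable near $t = 1$ and any attempt to Lipschitz-control $\Phi$ globally fails. The parameter $\gamma$ is precisely the tool for trading the bias from the excluded right-tail region ($n^{-\gamma}$) against the variance error on the retained region ($n^{-(1-\gamma)/2}$); the optimal balance $\gamma = 1/3$ recovers the $n^{-1/3}$ rate used in \Cref{prop:fdp_upper} and \Cref{prop:cdp_sgd}. The remaining $1/n^{\alpha-2}$ polynomial tail in the probability bound is most naturally produced by a polynomial-moment (e.g.~Burkholder--Davis--Gundy) control of $\mathcal{M}$ combined with Markov's inequality, which explains the free parameter $\alpha>2$.
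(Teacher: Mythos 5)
Your overall route is the same as the paper's: both proofs split $\ddot\ell(\beta_0)-\mathbb{E}\int_0^1 G_n\,\dint\Lambda_0$ into a martingale part and a ``replace empirical at-risk averages by population versions'' part, control the latter by a grid-plus-Lipschitz uniform concentration argument, and neutralise the degenerate denominator by excluding a right-tail region of at-risk probability below $n^{-\gamma}$ at the price of an $n^{-\gamma}$ bias (the paper truncates at $t_0=F_T^{-1}(1-n^{-\gamma})$ rather than at $\{s^{(0)}\ge 2n^{-\gamma}\}$, which is the same device up to constants). Your identity $\mathbb{E}[G_n]=\Phi(s^{(0)},s^{(1)},s^{(2)})$ is correct, and comparing $\Phi(S^{(k)})$ with $\Phi(s^{(k)})$ via Lipschitz continuity of $\Phi$ on the cone $\|b\|_2\le C_Z a$ is a slightly cleaner organisation than the paper's intermediate comparison of $V_nS^{(0)}$ with $G_n$ followed by a matrix Bernstein step.

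Two steps as written would not go through. First, the increments of the stopped martingale between consecutive event times are not almost surely $O(1/n)$: the jump part is, but the compensator part $\tfrac1n\sum_i\int_{\tau_{k-1}}^{\tau_k}Y_i e^{\beta_0^\top Z_i}\dint\Lambda_0$ is, conditionally on $\mathcal{F}_{\tau_{k-1}}$, only stochastically dominated by $e^{2C_\beta C_Z}X/n$ with $X\sim\mathrm{Exp}(1)$. A bounded-difference (Azuma) inequality is therefore unavailable; one needs a Bernstein-type martingale inequality with conditional moment control (\Cref{lemma:deLaPena}), which still delivers $\exp(-cn^{\gamma})$ at deviation $n^{-(1-\gamma)/2}$ since that deviation is $o(1)$. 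Second, a union bound over the $d^2$ entries controls $\max_{ij}|\mathcal{M}_{ij}|$, and passing from there to $\|\mathcal{M}\|_2$ costs a factor of $d$ in the deviation, so your $c_1$ would depend on $d$, which the lemma forbids. The paper sidesteps this by bounding the entire matrix integral by $4C_Z^2\sup_{t}|M(t)|$ with $M(t):=\tfrac1n\sum_iN_i(t)-\int_0^tS^{(0)}\dint\Lambda_0$ a scalar martingale (using \Cref{lemma:hessian_upper}); an operator-norm matrix-martingale inequality would also do. A final, harmless, point of bookkeeping: in the paper the $1/n^{\alpha-2}$ term is not a moment bound on $\mathcal{M}$ but the probability that two event times fall in the same cell of the $n^{\alpha}$-point quantile grid; this ``at most one jump of $\sum_iY_i$ per cell'' event is exactly what legitimises your in-between-grid-points Lipschitz control, so you need that event (and hence that term) in any case.
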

\begin{proof}
This proof follows some of the arguments in the proofs of Propositions~3 and 5 in \cite{yu2018cox}. Let $t_0 := F_T^{-1}(n^{-1/2})$, where $F_T^{-1}(\cdot)$ is the inverse CDF for the marginal observation time. By the triangle inequality, we have that 
\begin{align}
    & \left\|\ddot{\ell}(\beta_0) + \mathbb{E} \int_0^1 G_n(s, \beta_0)\,  d\Lambda_0(s) \right\|_2 \nonumber \\
    \leq & \left\|\frac{1}{n}\sum_{i=1}^n \int_0^{t_0} V_n(s, \beta_0) \dint N_i(s)- \int_0^{t_0} V_n(s, \beta_0) S^{(0)}(s, \beta_0)\, \dint \Lambda_0(s) \right\|_2 \nonumber \\
    & + \left\|\int_0^{t_0} V_n(s, \beta_0) S^{(0)}(s, \beta_0) \dint \Lambda_0(s) - \int_0^{t_0} G_n(s, \beta_0) \dint \Lambda_0(s) \right\|_2 \nonumber \\
    & + \left\|\int_0^{t_0} G_n(s, \beta_0) \dint \Lambda_0(s) - \mathbb{E} \int_0^{t_0} G_n(s, \beta_0)\dint \Lambda_0(s) \right\|_2 \nonumber \\
    & + \left\|\frac{1}{n} \sum_{i=1}^n \int_{t_0}^1 V_n(s, \beta_0) \dint N_i(s) - \mathbb{E} \int_{t_0}^1 G_n(s, \beta_0) \dint \Lambda_0(s) \right\|_2 \nonumber \\
    =: & (I) + (II) + (III) + (IV),
    \label{eq:hessian_triangle}
\end{align}
where 
\begin{equation*}
    V_n(s, \beta_0) := \frac{1}{n S^{(0)}(s, \beta)}\sum_{i=1}^{n} Y_{i}(s) \exp\{\beta^{\top} Z_{i}(s)\} \left\{Z_{i}(s) - \bar{Z}(s, \beta) \right\}^{\otimes 2}= \frac{S^{(2)}(s, \beta)}{S^{(0)}(s, \beta)} - \bar{Z}(s, \beta)^{\otimes 2}.
\end{equation*}

The rest of the proof is to give high probability upper bounds for the four terms in \eqref{eq:hessian_triangle} separately and consequently a high probability upper bound on $\left\|\ddot{\ell}(\beta_0) + \mathbb{E} \int_0^1 G_n(s, \beta_0)\,  d\Lambda_0(s) \right\|_2$.  We remark that this proof is under weaker conditions than \Cref{assp:baseline}; under \Cref{assp:baseline}, it suffices to consider the first three terms of \eqref{eq:hessian_triangle} but integrating from $0$ to $1$, and using that in e.g.~\eqref{eq:hessian_denominator}, we have the bound $\mathbb{E}[R_1] > \exp(-C_Z C_\beta)p_0$ in the bound for the second term.

\medskip
\noindent \textbf{Term (I).}  Note that
\begin{equation*}
    \left\|\frac{1}{n}\int_0^{t_0}\sum_{i=1}^nV_n(s, \beta_0) \dint N_i(s)- \int_0^{t_0} V_n(s, \beta_0) S^{(0)}(s, \beta_0)\, \dint \Lambda_0(s) \right\|_2 \leq 4C_Z^2 \sup_{t \in [0, t_0]} |M(t)|
\end{equation*}
where $M(t) := \frac{1}{n}\sum_{i=1}^n N_i(t) - \int_0
^t S^{(0)}(s, \beta_0) \dint \Lambda_0(s)$ is a martingale. 

Let $\tau_0:=0$, $\tau_j := \inf\{t \in [0, 1],\, \sum_{i = 1}^n N_i(t) = j\}$ be the $j$th observed jump time.

Since $t \mapsto \int_0^t  S^{(0)}(s, \beta_0) \dint \Lambda_0(s)$ is a non-decreasing function of $t$, we have that
\begin{equation*}
\sup_{t \in [0, 1]} |M(t)| \leq \sup_{j \in [n]} |M(\tau_j)| + \frac{1}{n}, \quad \text{where } \tau_j := \min \left\{1, \,\inf t: \sum_{i=1}^n N_i(t) = j \right\}.
\end{equation*}
Since the $\tau_j$'s are stopping times, by the optional sampling theorem, $\{M(\tau_j)\}_{j=1}^n$ is a discrete martingale. 

We will obtain a high probability bound for  $\sup_j |M(\tau_j)|$ using Lemma \ref{lemma:deLaPena}, which requires bounding $\mathbb{E}[|M(\tau_j) -M(\tau_{j-1})|^k \mid \mathcal{F}_{\tau_{j-1}}]$ for $k \in \mathbb{N}$. If $\tau_{j-1} = 1$, the difference is 0, otherwise for $\tau_{j-1} < 1$, we have that
\begin{align*}
    & |M(\tau_j) -M(\tau_{j-1})| \\
    = & \frac{1}{n} \left|1 - \sum_{i=1}^n \int_{\tau_{j-1}}^{\tau_j} Y_i(s) e^{\beta_0^\top Z_i(s)} \dint \Lambda_0(s) \right| \leq \frac{1}{n} \left(1 + \sum_{i=1}^n \int_{\tau_{j-1}}^{\tau_j} Y_i(s) e^{\beta_0^\top Z_i(s)} \dint \Lambda_0(s) \right). 
\end{align*}

Let $S_T(\cdot)$ denote the survival function $1-F_T(\cdot)$. For any $k$ such that $Y_k(\tau_{j-1}) = 1$, we have that  
\begin{align}
    \mathbb{P} \left(\int_{\tau_{j-1}}^{\tau_j} e^{\beta_0^\top Z_k(s)} \dint \Lambda_0(s) > x \biggm| \mathcal{F}_{\tau_{j-1}} \right) &= \mathbb{P} \left(-\log S_{\tilde{T}_j \mid \mathcal{F}_{j-1}} \left(\min_{l \in R_{\tau_{j-1}}} \tilde{T}_l \right) > x \bigm| \mathcal{F}_{\tau_{j-1}} \right) \notag \\
    &= \mathbb{P}\left(\min_{l \in R_j} \tilde{T}_l > S^{-1}_{\tilde{T}_j \mid \mathcal{F}_{j-1}}(e^{-x}) \biggm| \mathcal{F}_{\tau_{j-1}} \right) \notag \\
    &= \prod_{l \in R_j} \exp \left\{ - \int_{\tau_{j-1}}^{S^{-1}_{\tilde{T}_j \mid \mathcal{F}_{\tau_{j-1}}}(e^{-x})} 
 e^{\beta_0^\top Z_l(s)} \dint \Lambda_0(s) \right\} \notag \\
    &\leq \exp \left\{-(n-(j-1)) e^{-2 C_\beta C_Z} x \right\}.  \label{eq:hessian_exp}
\end{align}

Since $\sum_{i=1}^n Y_i(\tau_{j-1}) \leq n - (j-1)$, by \eqref{eq:hessian_exp} we have that $\sum_{i=1}^n \int_{\tau_{j-1}}^{\tau_j} Y_i(s) e^{\beta_0^\top Z_i(s)} \dint \Lambda_0(s) \mid \mathcal{F}_{\tau_{j-1}}$ is stochastically dominated by a scaled exponential random variable: 
\begin{equation*}
    \sum_{i=1}^n \int_{\tau_{j-1}}^{\tau_j} Y_i(s) e^{\beta_0^\top Z_i(s)} \dint \Lambda_0(s) \preceq e^{2 C_\beta C_Z}X_j, \quad X_j \overset{\textrm{i.i.d.}}{\sim} \text{Exp}(1),
\end{equation*}
so it follows that 
\begin{align*}    \mathbb{E}\left[|M(\tau_j) -M(\tau_{j-1})|^k \mid \mathcal{F}_{\tau_{j-1}}\right] &\leq n^{-k} \sum_{l=0}^k \binom{k}{l} \mathbb{E} \left[ \left(\sum_{i=1}^n \int_{\tau_{j-1}}^{\tau_j} e^{\beta_0^\top Z_i(s) \lambda} \lambda_0(s) ds \right)^l \bigg| \mathcal{F}_{\tau_{j-1}} \right] \\
&\leq \left(\frac{e^{2C_\beta C_Z}}{n} \right)^k k! e 
\end{align*}
as required. In particular, we have
\begin{equation*}
\mathbb{E}\left[|M(\tau_j) -M(\tau_{j-1})|^k \mid \mathcal{F}_{\tau_{j-1}} \right] \leq \frac{2e^{4 C_Z C_\beta + 1}}{n^2}
\end{equation*}
for $k>2$ and that $V_n \leq 2e^{4 C_Z C_\beta + 1}/n$. Therefore, by applying Lemma \ref{lemma:deLaPena} to the sequences $\{M(\tau_j)\}$ and $\{-M(\tau_j)\}$, for any $x>0$, it holds that 
\begin{equation*}
\mathbb{P} \left( \sup_{j} |M(\tau_j)| > \frac{1}{n} + x \right)   \leq  2 \exp \left(\frac{-cnx^2}{1+x} \right)
\end{equation*}
for some constant $c>0$ depending only on $C_\beta, C_Z$. This implies that 
\begin{align}
    & \mathbb{P} \bigg\{\left\|\frac{1}{n}\sum_{i=1}^n \int_0^{t_0} V_n(s, \beta_0) \dint N_i(s)- \int_0^{t_0} V_n(s, \beta_0) S^{(0)}(s, \beta_0)\, \dint \Lambda_0(s) \right\|_2 > 4C_Z^2 \left(x+ \frac{2}{n} \right) \bigg\} \notag\\
    &\quad \leq 2 \exp \left(\frac{-cnx^2}{1+x} \right). \label{eq:hessian_1}
\end{align}

\noindent \textbf{Term (II).} The second term on the right-hand side of \eqref{eq:hessian_triangle} can be rewritten as 
\begin{align*}
& \left\|\int_0^{t_0} V_n(s, \beta_0)\, S^{(0)}(s, \beta_0) \dint\Lambda_0(s) - \int_0^{t_0} G_n(s, \beta_0) \dint\Lambda_0(s)\right\|_2 \\
=  & \left\|\int_0^{t_0} \{\bar{Z}(s, \beta_0) - \mu(s, \beta_0)\}^{\otimes 2} S^{(0)}(t, \beta_0)\, \dint \Lambda_0(s) \right\|_2.
\end{align*}
Let $R_i := \exp(-C_ZC_\beta) Y_i(t_0)$ for $i \in [n]$, and let 
\begin{equation*}
    \xi(t) := S^{(0)}(t, \beta_0) \left(\bar{Z}(t, \beta_0) - \mu(t, \beta_0)\right) = \frac{1}{n}\sum_{i=1}^n Y_i(t) \exp(Z_i(t)^\top\beta_0)\left(Z_i(t) - \mu(t, \beta_0)\right).
\end{equation*}
Since $S^{(0)}(t, \beta_0) \geq \frac{1}{n}\sum_{i=1}^n R_i$ for all $t \in [0, t_0]$, we have that 
\begin{equation*}
    \int_0^{t_0} \{\bar{Z}(t, \beta_0) - \mu(t, \beta_0)\}^{\otimes 2} S^{(0)}(t, \beta_0) \dint \Lambda_0(t) \preceq \frac{\int_0^{t_0}\xi(t)^{\otimes 2} \dint\Lambda_0(t)}{\frac{1}{n}\sum_{i=1}^n R_i}.
\end{equation*}
To bound the denominator, consider the event $\mathcal{E}_1$ that $\{\frac{1}{n}\sum_{i=1}^n R_i > \mathbb{E}[R_1]/2\}$, where we have $\mathbb{E}[R_1] = \exp(-C_ZC_\beta)n^{-\gamma}$. Each $R_i \in [0, e^{C_Z C_\beta}]$, so by Hoeffding's inequality, 
\begin{equation}
\mathbb{P}\left(\frac{1}{n} \sum_{i=1}^n R_i < \mathbb{E}[R_1]/2 \right) \leq \exp \left( \frac{-n^{(1-2\gamma)}}{8\exp(2C_\beta
C_Z)}\right).
\label{eq:hessian_denominator}
\end{equation}
For the numerator, we can bound 
\begin{equation*}
    \left\|\int_0^{t_0} \xi(s)^{\otimes 2} \dint \Lambda_0(s) \right\|_2  \leq \Lambda_0(t_0) \sup_{s \in [0, t_0]} ||\xi(s)||^2_2
\end{equation*}
and use a covering argument to control $\sup_{s \in [0, t_0]} ||\xi(s)||^2_2$. Let $0=s_0<s_1 < \dots <s_{M-1} < s_M = t_0$ be a sequence to be constructed later. Then for any $x>0$,
\begin{align}
    & \mathbb{P}\left(\sup_{t \in [0, t_0]} \|\xi(t)\|_2 > x \right) \nonumber \\
    \leq & \mathbb{P} \left(\sup_{m \in [M]} \sup_{t \in (s_{m-1}, s_m]} \|\xi(t) - \xi(s_m)\|_2 > x/2 \right) + \mathbb{P}\left(\sup_{m \in [M]} \|\xi(s_m)\|_2 > x/2 \right).
    \label{eq:hessian_mean_vector_chain}
\end{align}
We start by constructing $\{s_m\}_{m=1}^M$ to bound the first term in \eqref{eq:hessian_mean_vector_chain}. Define $\{q_j\}_{j=1}^{n^3}$ as $q_j = F_{\tilde{T}}^{-1}(j/n^3)$, and let $\mathcal{E}_2 := \cap_{j=1}^{n^3} \{\int_{q_{j-1}}^{q_j}\sum_{i=1}^n \dint Y_i(t) \geq -1\}$. We have that 
\begin{equation}
    \mathbb{P}(\mathcal{E}_2^c) \leq  n^3 \left\{ 1 - \left(1 - \frac{1}{n^3}\right)^n - \frac{n}{n^3}\left(1 - \frac{1}{n^3}\right)^{n-1} \right\} \leq \frac{1}{n}, 
    \label{eq:hessian_prob_single}
\end{equation}
since $(1 + x)^r \geq 1 + rx$ for any $x>-1$ and $r \in \mathbb{N}$. 

By \Cref{assump-1}\ref{assp:covariate_Lipschitz}, $Z(t)$ is $L_Z$-Lipschitz. This implies that $\mu(t)$ is locally-Lipschitz, i.e. there exists $h_\mu, L_\mu \in \mathbb{R}^+$ such that for any $h \in [0, h_\mu]$, and $t \in [0, t_0]$, we have that $\|\mu(t+h) - \mu(t)\|_2 \leq L_\mu h$.  Set $L = \max(L_Z, L_\mu)$. 

In the case where $\sum_{i=1}^n Y_i(t+h) = \sum_{i=1}^n Y_i(t)$, we have 
\begin{align*}
    & \|\xi(t+h) - \xi(t)\|_2 \\
    \leq & \frac{1}{n} \sum_{i=1}^n \|Y_i(t+h) e^{\beta_0^\top Z_i(t+h)} (Z_i(t+h) - \mu(t+h, \beta_0)) - Y_i(t) e^{\beta_0^\top Z_i(t)} (Z_i(t) - \mu(t, \beta_0)) \|_2 \\
    \leq & \frac{1}{n} \sum_{i=1}^n \left\|(e^{\beta_0^\top Z_i(t+h)} - e^{\beta_0^\top Z_i(t)})(Z_i(t + h) - \mu(t+h)) \right\|_2 \\
    & \qquad + \frac{1}{n} \sum_{i=1}^n \left\|e^{\beta_0^\top Z_i(t)} (Z_i(t+h) - Z_i(t) - (\mu(t+h) - \mu(t)) \right\|_2 \\
    \leq & \exp(C_Z C_\beta) \left( (e^{Lh} - 1) \cdot2C_Z \left\{1 + \frac{\exp(C_ZC_\beta)}{\mathbb{E}[R_1]}\right\} + 2Lh \right). 
\end{align*}

In the case where there is exactly one $i^* \in [n]$ such that $Y_{i^*}(t+h) = 0$ but $Y_{i^*}(t) = 0$, there is the additional term in the bound of 
\begin{equation}
    \|\xi(t+h) - \xi(t)\|_2 \leq \exp(C_Z C_\beta) \left((e^{C_\beta Lh} - 1) \cdot2C_Z \left\{1 + \frac{\exp(2C_ZC_\beta)}{n^{-\gamma}}\right\} + 2Lh  + \frac{2C_z}{n} \right).
    \label{eq:hessian_lipschitz}
\end{equation}

Let $h_1(n)$ be such that it satisfies the equation
\begin{equation*}
    \exp(C_Z C_\beta) \left((e^{C_\beta L h} - 1) \cdot4C_Z \left\{1 + \frac{\exp(C_ZC_\beta)}{\mathbb{E}[R_1]}\right\} + 4Lh  + \frac{2C_z}{n} \right) \asymp  \frac{1}{n^{1/2}},
\end{equation*}
where we have $h_1(n) \asymp 1/(n^{1/2 + \gamma})$ and let $h_0 := \min(h_\mu, h_1(n), 1/(4L n^{1/2-2\gamma}))$. Take 
\begin{equation*}
S' = \{q_j : q_j \leq t_0\} \cup \{ah_0: a = 1, 2, \dots, \lfloor t_0/h_0\rfloor\} \cup \{t_0\},
\end{equation*}
and order as an increasing sequence with $M := |S'| < n^3 + \lceil t_0/h_0\rceil$ elements. Under the event $\mathcal{E}_2$, in each interval belonging to $\{(s_{m-1}, s_m]\}_{m=1}^M$ the process $\sum_{i=1}^n Y_i(t)$ jumps at most once. Therefore, by \eqref{eq:hessian_lipschitz}, for large enough $n$ we have that
\begin{equation}
    \mathbb{P}\left(\sup_{m \in [M]} \sup_{t \in (s_{m-1}, s_m]} \|\xi(t) - \xi(s_m)\|_2 > \frac{c}{n^{1/2}} \right) \leq 1/n.
    \label{eq:hessian_2_jumps}
\end{equation}

Next, we bound the second term in \eqref{eq:hessian_mean_vector_chain}. Since for all $i \in [n]$ it holds that
\begin{equation*}
\mathbb{E}\left[Y_i(t) e^{\beta_0^\top Z_i(t)} \left(Z_i(t) - \mu(t, \beta_0)\right) \right] = 0
\end{equation*}
and
\begin{equation*}
\left\|Y_i(t) e^{\beta_0^\top Z_i(t)} \left(Z_i(t) - \mu(t, \beta_0)\right) \right\|^2_2 \leq 4C_Z^2 e^{2 C_\beta C_Z},
\end{equation*}
by a Bernstein-type inequality  \cite[e.g.\,Theorem 6.1.1 in][]{Tropp2015} and a union bound argument, we have that 
\begin{align}
    \mathbb{P}\left(\sup_{m \in [M]} \|\xi(s_m)\|_2 > x/2 \right) &\leq (n^3 + \lceil t_0 / h_0 \rceil) \mathbb{P}(\|\xi(s_1)\|_2 > x/2) \notag \\
    &\leq  (n^3 + \lceil t_0 / h_0 \rceil)(d+1) \exp \left(\frac{-cnx^2}{1+x} \right).
    \label{eq:hessian_vector_bernstein}
\end{align}

By \eqref{eq:hessian_mean_vector_chain}, \eqref{eq:hessian_2_jumps} and \eqref{eq:hessian_vector_bernstein}, for large enough $n$, we have that 
\begin{equation}
    \mathbb{P}\left(\sup_{t \in [0, t_0]} \|\xi(t)\|_2^2 > \frac{c \log(dn)^2}{n} \right) \lesssim \frac{1}{n},
    \label{eq:sup_process}
\end{equation}
where $c>0$ is some absolute constant. 

Therefore, by \eqref{eq:hessian_denominator}, and \eqref{eq:sup_process}, we have for large enough $n$ that 
\begin{equation*}
   \mathbb{P} \left(\left\|\int_0^{t_0}  \left(V_n(s, \beta_0) S^{(0)}(s, \beta_0)  - G_n(s, \beta_0) \right) \dint\Lambda_0(s) \right\|_2  > \frac{c_1 \log(dn)^2}{n^{1- \gamma}} \right) \lesssim \frac{1}{n}. 
   \label{eq:hessian_2}
\end{equation*}

\noindent \textbf{Term (III).} $(III)$ is an average of i.i.d.~mean zero matrices with operator norm bounded by $4C_Z^2 e^{C_\beta C_Z}\Lambda_0(t_0)$, so 
by a matrix Bernstein inequality, there exists some absolute constant $c$ such that 
\begin{equation}
\mathbb{P}\left(\left\|\int_0^{t_0} G_n(s, \beta_0) \dint \Lambda_0(s) - \mathbb{E} \int_0^{t_0} G_n(s, \beta_0) \dint\Lambda_0(s) \right\|_2 > \frac{c\sqrt{\log(dn)}}{n^{1/2}} \right) \lesssim \frac{1}{n}.
\label{eq:hessian_3}
\end{equation}

\noindent \textbf{Term (IV).} We have that 
\begin{equation*}
    \left\|\mathbb{E} \int_{t_0}^1 G_n(s, \beta_0) \dint \Lambda_0(s) \right\|_2 \lesssim n^{-\gamma}. 
\end{equation*}
so applying this to a binomial tail bound, we have that 
\begin{equation*}
    \mathbb{P}\left( \left\|\frac{1}{n} \sum_{i=1}^n \int_{t_0}^1 V_n(s, \beta_0) \dint N_i(s)\right\|_2 \geq 8C_Z^2 n^{-\gamma} \log(n) \right) \leq \exp \left( -2cn^{1-2\gamma}(\log n)^2\right) \lesssim \frac{1}{n}. 
\end{equation*}
The result follows by taking $\gamma=1/2$.
\end{proof}

\begin{lemma}[Hessian bias]
Let $G := \mathbb{E} [\int_0^1 G(\beta_0, s) \dint \Lambda_0(s)]$. We have that 
\begin{equation*}
    \left\|\mathbb{E} \left[ \ddot{\ell}(\beta_0) + G \right] \right\|_2 \lesssim_{\log} \frac{1}{n}.
\end{equation*}
    \label{lemma:hessian_bias}
\end{lemma}
\begin{proof}
    We can write 
\begin{align*}
    \mathbb{E}[\ddot{\ell}(\beta_0)] &= \frac{-1}{n}\mathbb{E} \left[\sum_{i=1}^{n}  \int_0^1 \frac{\sum_{i=1}^{n} Y_{i}(t) \exp\{\beta_0^{\top} Z_{i}(t)\} \left\{Z_{i}(t) - \bar{Z}(t, \beta) \right\}^{\otimes 2}}{n S^{(0)}(t, \beta_0)}\dint N_{i}(t)  \right] \\
    &= \frac{-1}{n} \sum_{i=1}^{n} \mathbb{E} \left[\int_0^1 Y_{i}(t) \exp(\beta_0^\top Z_{i}(t)) \left\{Z_{i}(t) - \bar{Z}_s(t, \beta) \right\}^{\otimes 2} \dint \Lambda_0(t)  \right] \\
    &= \frac{-1}{n} \sum_{i=1}^{n} \mathbb{E} \left[\int_0^1 Y_{i}(t)\exp(\beta_0^\top Z_{i}(t)) \left\{Z_{i}(t) - \mu(t, \beta_0) \right\}^{\otimes 2} \dint \Lambda_0(t) \right]  \\
    & \quad+ \frac{1}{n} \sum_{i=1}^{n} \mathbb{E} \left[\int_0^1 Y_{i}(t)\exp(\beta_0^\top Z_{i}(t)) \left\{Z_{i}(t) - \mu(t, \beta_0) \right\} \left\{ \mu(t, \beta_0) - \bar{Z}(t, \beta_0) \right\}^\top \dint \Lambda_0(t) \right] \\
    & \quad+ \frac{1}{n} \sum_{i=1}^{n} \mathbb{E} \left[\int_0^1 Y_{i}(t)\exp(\beta_0^\top Z_{i}(t)) \left\{ \mu(t, \beta_0) - \bar{Z}(t, \beta_0) \right\}\left\{Z(t) - \mu(t, \beta_0) \right\}^\top \dint \Lambda_0(t) \right] \\
    & \quad- \frac{1}{n} \sum_{i=1}^{n} \mathbb{E} \left[\int_0^1 Y_{i}(t)\exp(\beta_0^\top Z_{i}(t)) \left\{\mu(t) - \bar{Z}(t, \beta_0)) \right\}^{\otimes 2} \dint \Lambda_0(t) \right] \\ 
    &= -G + B(\beta_0).
\end{align*}
Using \Cref{lemma:weighted_mean_convergence}, we may bound 
\begin{align*}
    \|B(\beta_0)\|_2 &\lesssim \mathbb{E} \left[\sup_{t \in [0, 1]} \|\mu(t) - \bar{Z}_s(t, \beta_0)\|_2^2 \right] \\
    &\leq  \int_0^{(2C_Z)^2} \mathbb{P} \left( \sup_{t \in [0, 1]} \|\bar{Z}(t) - \mu(t)\|_2^2 > x\right) \dint x   \\
    &\leq \left(\frac{c \log(n)}{\sqrt{n}} \right)^2 + 2 \int_{c \log(n)/\sqrt{n}}^{2C_Z} y \mathbb{P} \left( \sup_{t \in [0, 1]} \|\bar{Z}(t) - \mu(t)\|_2 > y \right) \dint y\\
    &\lesssim \frac{\log(n)^2}{n}.
\end{align*}
\end{proof}

\begin{lemma}
We have the following high probability bound between the process $\bar{Z}(t, \beta_0)$ its population version $\mu(t, \beta_0)$: 
\begin{equation*}
    \mathbb{P}\left( \sup_{t \in [0, 1]}\|\bar{Z}(t, \beta_0) - \mu(t, \beta_0)\|_2 > \frac{c_1\log(n)}{\sqrt{n}} \right) \lesssim \sqrt{n} \exp(-c_2 \log(n)^2).
\end{equation*}
\label{lemma:weighted_mean_convergence}
\end{lemma}
\begin{proof}
We can decompose 
\begin{align*}
    \|\bar{Z}(t, \beta_0) - \mu(t, \beta_0)\|_2 &= \left\|\frac{S^{(1)}(t, \beta_0)}{S^{(0)}(t, \beta_0)} - \frac{\mathbb{E}[S^{(1)}(t, \beta_0)]}{\mathbb{E}[S^{(0)}(t, \beta_0)]} \right\|_2\\
    &= \left\|\frac{S^{(1)}(t, \beta_0) - \mathbb{E}[S^{(1)}(t, \beta_0)]}{S^{(0)}(t, \beta_0)} - \frac{\mathbb{E}[S^{(1)}(t, \beta_0)]\{S^{(0)}(t, \beta_0) - \mathbb{E}[S^{(0)}(t, \beta_0)]\}}{S^{(0)}(t, \beta_0) \mathbb{E}[S^{(0)}(t, \beta_0)]} \right\|_2\\
    &\leq \left\|\frac{S^{(1)}(t, \beta_0) - \mathbb{E}[S^{(1)}(t, \beta_0)]}{S^{(0)}(t, \beta_0)} \right\|_2+ \left\|\frac{\mathbb{E}[S^{(1)}(t, \beta_0)]\{S^{(0)}(t, \beta_0) - \mathbb{E}[S^{(0)}(t, \beta_0)]\}}{S^{(0)}(t, \beta_0) \mathbb{E}[S^{(0)}(t, \beta_0)]} \right\|_2.
\end{align*}
We may construct nets to control the two processes $\{S^{(1)}(t, \beta_0) - \mathbb{E}[S^{(1)}(t, \beta_0)]\}_{t \in [0, 1]}$ and $\{S^{(0)}(t, \beta_0) - \mathbb{E}[S^{(0)}(t, \beta_0)]\}_{t \in [0, 1]}$. In particular, let $\{q_k\}_{k=1}^{K_q} \subset [0, 1]$ with $q_1=0$ and $q_{K_q}=1$ be an increasing sequence that satisfies 
\begin{equation*}
    \mathbb{P}(Y_i(q_{k+1}) \neq  Y_i(q_{k}) \mid Y_i(q_{k})=1) \leq \frac{1}{\sqrt{n}}
\end{equation*}
and let $\{l_k\}_{k=1}^{K_l}$ be an $(n^{-1/2})$-cover of $[0, 1]$. We take $\{\tau\}_{k=1}^K := \{q_k\}_{k=1}^{K_q} \cup \{r_k\}_{k=1}^{K_r}$, re-indexing by increasing order, which contains $K \asymp \sqrt{n}$ time-points. Let $\mathcal{E}$ be the event 
\begin{equation*}
    \mathcal{E} := \left\{ \sum_{i=1}^n \mathbbm{1}\{Y_i(\tau_{k+1}) \neq Y_i(\tau_{k}) \leq 2 \sqrt{n}, \, k \in [K-1] \right\}
\end{equation*}
By a union bound argument, we have that 
\begin{equation}
    \mathbb{P}(\mathcal{E}^c) \leq \sqrt{n} \exp(-\sqrt{n}/3).
    \label{eq:emp-jumps-union}
\end{equation}
Under $\mathcal{E}$, we have for any $s, t \in [\tau_{k-1}, \tau_{k}]$, $t>s$, that 
\begin{align*}
    |S^{(0)}(s, \beta_0) - S^{(0)}(t, \beta_0)| &\leq \left|\frac{1}{n} \sum_{i=1}^n \mathbbm{1} \{Y_i(t) \neq Y_i(s) \} \exp(\beta_0^\top Z_i(s)) \right| \\
    &\quad + \left| \frac{1}{n} \sum_{i=1}^n \mathbbm{1}\{Y_i(t) = Y_i(s)\} \exp(\beta_0^\top Z_i(t)) \left( 1 - \exp(\beta_0^\top \{Z_i(t) - Z_i(s)\}) \right) \right| \\
    &\leq \exp(C_Z C_\beta) \left\{ \frac{1}{n} \sum_{i=1}^n \mathbbm{1}\{Y_i(s) \neq Y_i(t)\}   + \|\beta_0\|_2 L_Z(t-s) \right\} \lesssim \frac{1}{\sqrt{n}}
\end{align*}
where $C_Z, C_\beta, L_Z$ are constants bounding the norm of the covariates, coefficients, and the Lipschitz constant for the covariate process from \Cref{assump-1}. Similarly, we have
\begin{align*}
    \left\|S^{(1)}(s, \beta_0) - S^{(1)}(t, \beta_0) \right\|_2 &\leq  \left\|\frac{1}{n} \sum_{i=1}^n \{Z_i(t) - Z_i(s)\} Y_i(t) \exp(\beta_0^\top Z_i(t)) \right\|_2\\
    &\quad + \left\|\frac{1}{n} \sum_{i=1}^n Z_i(s) \left\{Y_i(t) \exp(\beta_0^\top Z_i(t)) - Y_i(s) \exp(\beta_0^\top Z_i(s))  \right\} \right\|_2\\
    &\leq L_Z(t-s) \exp(C_Z C_\beta) \\
    &\quad +C_Z \exp(C_Z C_\beta) \left( \frac{1}{n} \sum_{i=1}^n \mathbbm{1}\{Y_i(s) \neq Y_i(t)\}  +  \|\beta_0\|_2 L_Z(t-s) \right) \\
    &\lesssim \frac{1}{\sqrt{n}}.
\end{align*}
For any fixed time $t \in [0, 1]$, we have by \Cref{assump-1}\ref{assp:coefficient_bound} and \Cref{lemma:smooth_banach_martingale} that  
\begin{equation*}
    \mathbb{P} \left( \left\|S^{(1)}(t, \beta_0) - \mathbb{E}[S^{(1)}(t, \beta_0)] \right\|_2> \frac{\log(n)}{\sqrt{n}} \right) \lesssim  \exp \left(-c (\log n)^2 \right)
\end{equation*}
and that 
\begin{equation*}
    \mathbb{P} \left( \left|S^{(0)}(t, \beta_0) - \mathbb{E}[S^{(0)}(t, \beta_0)] \right| > \frac{\log(n)}{\sqrt{n}} \right) \lesssim  \exp \left(-c (\log n)^2 \right).
\end{equation*}
By \Cref{assp:baseline}, we have that $\mathbb{E}[S^{(0)}(t, \beta_0)] > \exp(-C_ZC_\beta)p_0$. Therefore, by a union bound argument, there exist absolute constants $c_1, c_2>0$ such that
\begin{equation}
    \mathbb{P} \left( \max_{k \in K} \|\bar{Z}(\tau_k, \beta_0) - \mu(\tau_{k}, \beta_0)\|_2 \geq \frac{c_1\log(n)}{\sqrt{n}} \right) \lesssim  \sqrt{n} \exp(-c_2\log(n)^2).
    \label{eq:emp-timepoints}
\end{equation}
It follows from \eqref{eq:emp-jumps-union} and \eqref{eq:emp-timepoints} that 
\begin{align*}
    \mathbb{P} \left( \max_{t \in [0, 1]} \|\bar{Z}(t, \beta_0) - \mu(t, \beta_0)\|_2 \geq \frac{c\log(n)}{\sqrt{n}} \right) &\lesssim \sqrt{n} \exp(-c_2 \log(n)^2) \lesssim \frac{1}{n^2}. 
\end{align*}
\end{proof}

\section{Additional details of Section~\ref{subsec-gradient-sensitivity}}\label{app-sec-labelDP}
In \Cref{sec-linear-regression-label}, we consider a simpler linear regression example to try to understand the local, central, and federated versions of the label-DP constraint. We show the numerical improvement of using a label central-DP constraint in \Cref{alg:cdp_sgd} in \Cref{subsec-labelDP-sims}. The proofs can be found in \Cref{sec-label-proofs}, first for label central-DP then federated-DP. 

\subsection{Linear regression and remarks} \label{sec-linear-regression-label}

In this subsection, we discuss the linear regression under label DP constraints.  This simpler example helps separate what is specific to survival analysis from what is fundamentally driven by the privacy constraint itself.  In particular, it shows that the contrast between the local, central and federated settings is not unique to the Cox model, but reflects a broader structural feature of label DP problems.

To ease notation, we consider a homogeneous setup and have data $(X_{s, i}, Y_{s, i})_{s, i = 1}^{S, n}$ with 
\[ 
    Y_{s, i} = X_{s, i}^{\top} \beta_0 + \eta_{s, i}.
\]
Assume that $\{\eta_{s, i}\}$ is a collection of i.i.d.~standard Gaussian random variables and independent of the collection $\{X_{s, i}\}$.  The covariates $\{X_{s, i}\}$ are assumed to have mean 0, be mutually independent, and satisfy $\|X\|_2\leq 1$ a.s.~and that $\mathrm{Cov}(X)$ has eigenvalues all of order $O(1/d)$. We assume that the covariates $\{X_{s, i}\}$ are publicly known and the responses $\{Y_{s, i}\}$ are subject to local, central and federated $(\epsilon, \delta)$-DP.  

\medskip
\noindent \textbf{Label LDP.}  For \emph{pure} $(\delta = 0)$ label LDP, both the upper and lower bound results are adapted from Proposition 2 in \cite{duchi2013local}, where a fixed design linear regression with bounded noise was considered.  In this case, only the response, a univariate object, is privatised.  This results in no extra cost of the dimensionality, with
\[
    \inf_{Q \in \mathcal{Q}_{(\epsilon, 0), \mathrm{local}}} \inf_{\widehat{\beta}} \sup_{\mathcal{P}} \mathbb{E}\{\|\widehat{\beta} - \beta\|^2_2\} \asymp \frac{d^2}{n} + \frac{d^2}{n \epsilon^2}. 
\]

\medskip
\noindent \textbf{Label CDP.} In sharp contrast, the privacy cost under label central DP matches that of full central DP.  The \emph{approximate} $(\epsilon, \delta)$-label central DP, the minimax rate can be stated as
\begin{equation}\label{eq-cdp-label-minimax}
    \frac{d^2}{n} + \frac{d^3}{n^2 \epsilon^2} \lesssim \inf_{Q \in \mathcal{Q}_{(\epsilon, \delta), \mathrm{central}}} \inf_{\widehat{\beta}} \sup_{\mathcal{P}} \mathbb{E}\{\|\widehat{\beta} - \beta\|^2_2\} \lesssim \left\{\frac{d^2}{n} + \frac{d^3}{n^2 \epsilon^2}\right\} \mathrm{polylog}(n, d).
\end{equation}
Our lower bound holds under the additional assumption that $0 \leq \delta \lesssim \epsilon e^{-cd}$, with $c > 0$ being an absolute constant.  The proof of \eqref{eq-cdp-label-minimax} can be found in \Cref{sec-label-proofs}.  

We note that compared to the upper bound in \eqref{eq-cdp-label-minimax} for the linear regression problem, the upper bound for Cox regression in \Cref{prop:CDP_labelDP} involves an extra $\log(1/\delta)$ factor from using the Gaussian mechanism in the gradient descent procedure. The upper bound for linear regression is achieved through using Algorithm 2 of \cite{bun2019hypothesisselection}: this takes the form of using the exponential mechanism to select a hypothesis $h \in \mathcal{H}$.  The scoring function for each hypothesis is defined as the minimum number of points that need to be changed, so that the hypothesis is lost to some other hypothesis in a Scheff\'e comparison.  The scoring function therefore has sensitivity 1.   It is challenging to extend this approach to the Cox regression setting, due to the extra dependence in the partial log-likelihood function caused by the at-risk sets.

\medskip
\noindent \textbf{Label FDP.}  As for federated DP, we obtain the following results
\begin{align}\label{eq-fdp-lower-bound}
    & \frac{d^2}{Sn} + \frac{d^2}{Sn^2\epsilon^2} + \frac{d^3}{S^2n^2\epsilon^2} \lesssim \inf_{Q \in \mathcal{Q}_{(\epsilon, \delta), \mathrm{federated}}} \inf_{\widehat{\beta}} \sup_{\mathcal{P}} \mathbb{E}\{\|\widehat{\beta} - \beta\|^2_2\} \nonumber \\
    & \hspace{3cm} \lesssim \left\{\frac{d^2}{Sn} + \frac{d^3}{Sn^2\epsilon^2}\right\} \mathrm{polylog}(n, d, \delta),
\end{align}
where the upper bound is achieved by a batched DP-SGD algorithm, and the lower bound holds provided that $0 < \delta \lesssim \epsilon e^{-cd}$, with $c > 0$ being an absolute constant.

The first term in the lower bound in \eqref{eq-fdp-lower-bound} is the standard non-private rate, and the second term follows from a van-Trees argument with score attack arguments, see \Cref{prop:FDP_lower_bound} for the Cox regression version.  The third term is obtained by showing an $(\epsilon, \delta)$-CDP guarantee for the union of the distributed dataset, see \Cref{lemma:FDPtoCDP}.

The mismatch between the upper and the lower bounds in \eqref{eq-fdp-lower-bound} has two main sources.  Firstly, the upper bound is up to a poly-logarithimic factor in ($n, d, \delta$), whereas the lower bound holds under a rather stringent condition on $\delta$, which can be exponential in $d$.  Secondly, for general $S$ and in the regime that $d \geq S$, the gap is of a factor of $S$, otherwise $d$.  To the best of our knowledge, corresponding label federated DP results, even for linear regression, have not been established.  Closing this gap is an interesting direction for future work.

\subsection{Numerical results}
\label{subsec-labelDP-sims}
In \Cref{fig:labelDP_sims} we provide a comparison of numerical results from using a tighter sensitivity bound under label DP, compared to the full DP constraint.  We follow the same settings as in \Cref{subsec-sim-cox}, except for using the smaller magnitude of noise required for label CDP. 

\begin{figure}[htbp]
  \centering
\includegraphics[width=0.6\textwidth]{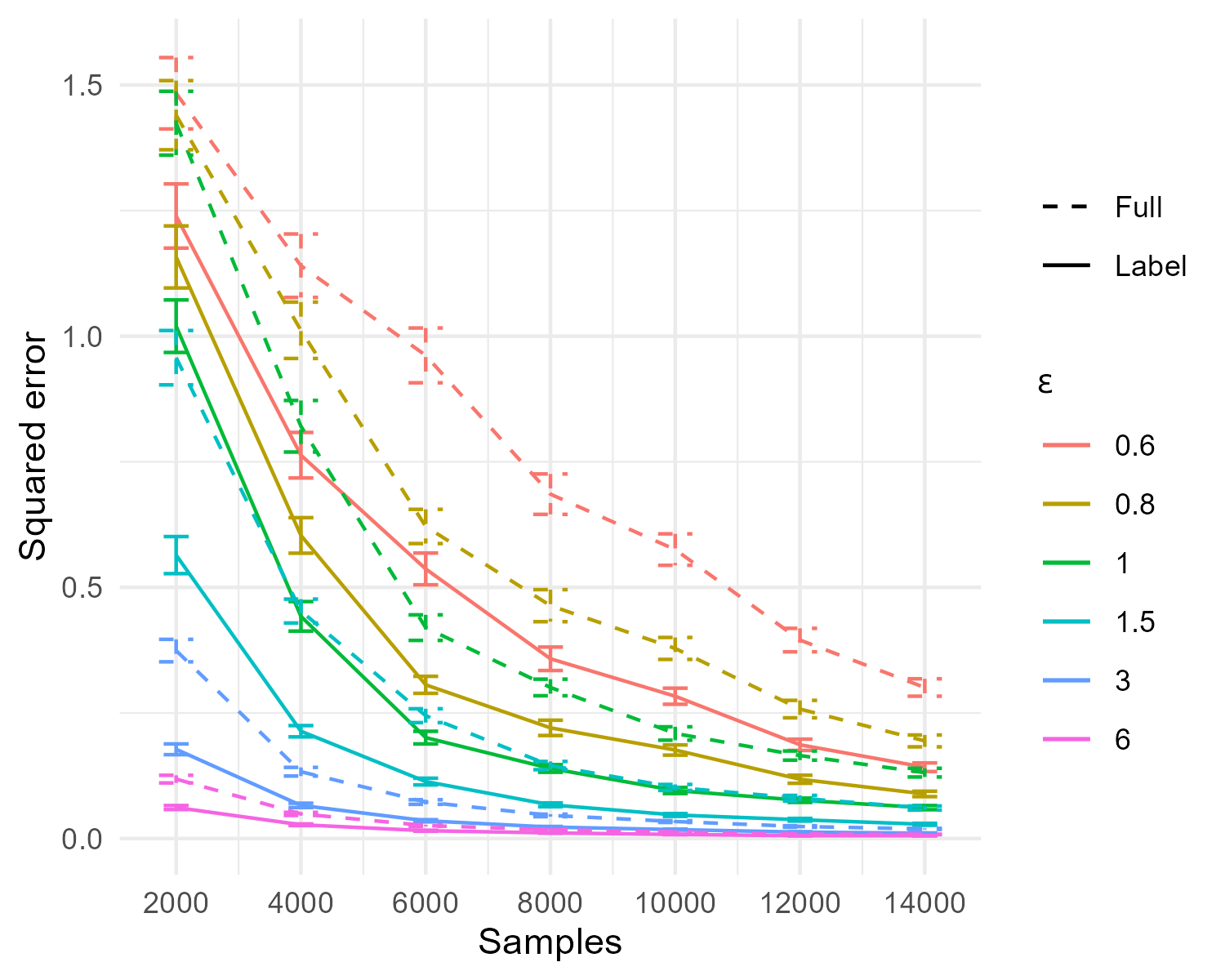}
  \caption{Comparisons of label DP (solid lines) and full DP (dashed lines) for $\beta_0$ estimation, with bars showing standard error from 200 repetitions. \label{fig:labelDP_sims}}
\end{figure}

With known covariates $\{Z_i\}_{i=1}^n$, we can replace the use of the covariate bound $C_Z$ that we use to bound the terms in the proof of \Cref{lemma:grad_sensitivity2}. We obtain the following linear-time computable bounds for each of the terms in \Cref{lemma:grad_sensitivity2}: 
\begin{equation*}
    \|(I)\|_2+  \|(IV)\|_2 \leq 3 \max_{i \in n} \|Z_i(t)\|_2/n, 
\end{equation*}
\begin{align*}
    \|(II)\|_2 \leq \frac{\log(n+1)}{n} \exp \left\{\max_{i \in [n]}  \beta^\top Z_i \right\} \exp \left\{\max_{i \in [n]}  -\beta^\top Z_i \right\} \max_{i \in [n]}\{\|Z_i\|_2\},
\end{align*}
and
\begin{align*}
    \|(III)\|_2 \leq \frac{\log(n+1)}{n} \max_{i\in [n]} \left\{ \|Z_i\|_2 \exp \left( \beta^\top Z_i \right) \right \} \exp \left\{\max_{i \in [n]}  -\beta^\top Z_i \right\}.
\end{align*}
In practice, upper bounding the sum of those terms by
\begin{equation*}
    \leq \frac{3 C_Z}{n} + \frac{2 C_Z \log(n+1) \exp(2\|\beta\|_2 C_Z)}{n}
\end{equation*}
would be reasonably tight as $n$ increases. If $C_Z^2 \leq 2 C_Z$, this would be less than a 50\% reduction compared to the fully private sensitivity bound from \Cref{lemma:grad_sensitivity2}, as reflected in \Cref{fig:labelDP_sims}. 

\subsection{Technical details} \label{sec-label-proofs}

In this subsection, we provide technical details regarding the label-DP framework studied in \Cref{subsec-gradient-sensitivity}.

\begin{proof}[Proof of the upper bound in \eqref{eq-cdp-label-minimax}] We can use Algorithm 2 from \cite{bun2019hypothesisselection}, which satisfies an $\epsilon$-CDP constraint, and therefore the label CDP constraint. Let $\{(X_i, Y_i)\}_{i=1}^n$ be i.i.d.~from $P_{\beta_0}$, where $P_{\beta_0}$ is the distribution such that $X \sim P_X$ and $Y\mid X=x \sim N(x^\top \beta, 1)$, with $P_X$ being some distribution satisfying the assumptions in \Cref{sec-linear-regression-label}. We thus have that
\begin{align*}
    D_{\mathrm{KL}}(P_{\beta_j}, P_{\beta_k}) &= \mathbb{E}_X \mathbb{E}_{Y \mid X, \beta_j} \left[ \log \frac{\exp(-\frac{1}{2}(Y-X^\top \beta_j)^2)}{\exp(-\frac{1}{2}(Y-X^\top\beta_k)^2)}\right] \\
    &= \mathbb{E}_X\mathbb{E}_{Y \mid X, \beta_j}\left[YX^\top (\beta_j - \beta_k) -\frac{1}{2}(X^\top \beta_j)^2 + \frac{1}{2}(X^\top \beta_k)^2 \right] \\
    &= \mathbb{E}_X\left[ \frac{1}{2} (X^\top (\beta_j - \beta_k))^2\right] \asymp \frac{\|\beta_j-\beta_k\|^2}{d}.
\end{align*}
Let $\mathcal{H}$ be a $\gamma$-covering of $B_{C_\beta}(0)$, where $\gamma^2 \asymp d^2/(\min\{n, n^2 \epsilon^2/d\})$ is the set of hypotheses / distributions used as the input of Algorithm 2 of \cite{bun2019hypothesisselection}, so that there exists some $\beta^* \in \mathcal{H}$ such that $D_{\mathrm{TV}}(P_{\beta_0}, P_{\beta^*})) \leq \sqrt{D_{\mathrm{KL}}(P_{\beta_j}, P_{\beta_k})/2} \asymp \gamma/\sqrt{d}$. By Lemma 3.4 of \cite{bun2019hypothesisselection}, with probability at least $1-1/n$, over the randomness of $\{(X_i, Y_i)\}_{i=1}^n$ and the algorithm, we have that the output $\widehat{\beta}$ satisfies 
    $\|\widehat{\beta} - \beta_0\|_2^2\lesssim \gamma^2 \log(n)^2$
    since $\log|\mathcal{H}| \lesssim d \log(n)$ \citep[e.g.][Lemma 5.7]{wainwright2019high}.
\end{proof}

\begin{proof}[Proof of the lower bound in \eqref{eq-cdp-label-minimax}]
The proof is an application of Fano's lemma \citep[e.g.~Lemma 3 in][]{yu1997assouad} subject to central DP constraints \citep[e.g.~Theorem 2 in][]{acharya2021differentially}.  

\medskip
\noindent \textbf{Step 1.~Construction.}  Let $\mathcal{G} \subset \{\pm1\}^d$ be a Varsharmov--Gilbert subset \citep[e.g.~Lemma 2.9][]{tsybakov2009introduction} construction, such that
\[
    |\mathcal{G}| \geq 2^{c_2 d} \quad \mbox{and} \quad d_{\mathrm{Ham}}(u, v) \geq c_1 d, \, u \neq v, \, u, v \in \mathcal{G},
\]
where $c_1, c_2 > 0$ are absolute constants.  Let $\mathcal{V}$ be
\[
    \mathcal{V} = \bigg\{\beta: \, \beta = \frac{r_{\beta}}{\sqrt{d}}v, \, v \in \mathcal{G}\bigg\},
\]
where 
\begin{equation} \label{eq-choice-C_beta_delta}
    r_{\beta} \asymp d^{3/2}/(n \epsilon) \quad \mbox{and} \quad \delta \leq C\epsilon e^{-cd},
\end{equation}
with $c, C > 0$ being absolute constants, and index the elements of $\mathcal{V}$ so that $\mathcal{V} = \{\beta_j, \, j \in [|\mathcal{V}|]\}$.

For the covariates $\{X_i\}_{i = 1}^n$, define its $j$th coordinate as $X_{ij} = \xi_{ij}/\sqrt{d}$, $j \in [d]$, where $\{\xi_{ij}\}_{i, j = 1}^{n, d}$ are independent Rademacher random variables.

\medskip
\noindent \textbf{Step 2.~Application of Fano's lemma.}  For any $\beta \in \mathcal{V}$, $X \in \{\pm 1/\sqrt{d}\}^{d}$, denote $P_{Y|X, \beta}$ as the distribution of $X\beta + \eta$, with $\eta \sim \mathcal{N}(0, 1)$.  It follows from the general reduction \citep[e.g.~Section 2.2 in][]{tsybakov2009introduction} that
\begin{equation} \label{eq-fano-1}
    \inf_{Q} \inf_{\widehat{\beta}} \sup_{\mathcal{P}} \mathbb{E} \left\{\|\widehat{\beta} - \beta\|^2_2\right\} \geq \inf_{Q} \inf_{\widehat{\beta}} \max_{\beta \in \mathcal{V}} \mathbb{E} \left\{\|\widehat{\beta} - \beta\|^2_2\right\} \geq \frac{c_1 r_{\beta}^2}{2 |\mathcal{V}|} \inf_Q \inf_{\widehat{\beta}} \sum_{\beta \in \mathcal{V}} \mathbb{E}_X \mathbb{P}_{Q, Y|X, \beta}\left\{\widehat{\beta} \neq \beta\right\}.
\end{equation}
    
We first consider fixing $X$. For $\beta_j, \beta_k \in \mathcal{V}$, $\beta_j \neq \beta_k$, let $P_{\beta_j, \beta_k}$ be the maximal coupling on each coordinate $(Y_i, Y'_i), \, i \in [n]$ where marginally we have $Y_i \sim N(X_i^\top\beta_j, 1)$ and $Y_i' \sim N(X_i^\top \beta_k, 1)$.  It then holds that 
\begin{align}
    & \mathbb{E}_{(Y, Y') \sim P_{\beta_j, \beta_k}}\left\{d_{\mathrm{Ham}}(Y, Y')\right\} = \sum_{i = 1}^n \mathbb{P}_{(Y, Y') \sim P_{\beta_j, \beta_k}} \left\{Y_i \neq Y_i'\right\} \nonumber \\
    = & \sum_{i = 1}^n \mathrm{TV} (\mathcal{N}(X_i^{\top}\beta_j, 1), \mathcal{N}(X_i^{\top}\beta_k, 1)) \leq \sum_{i = 1}^n |(\beta_j - \beta_k)^{\top} X_i| = D(X, \beta_j, \beta_k). \label{eq-dham-upper}
\end{align}
Define 
\[
    \mathcal{W} = \{(y, y') \in \mathbb{R}^n \times \mathbb{R}^n: \, d_{\mathrm{Ham}}(y, y') \leq 10 D(X, \beta_j, \beta_k)\}.
\]
For any given $X \in \{\pm d^{-1/2}\}^{n \times d}$ and any $j \in [|\mathcal{V}|]$, let
\begin{equation} \label{eq-gamma_j-def}
    \gamma_j(X) = \mathbb{P}_{Q, Y| X, \beta_j}\{\widehat{\beta} \neq \beta_j\}.
\end{equation}
For any $j, k \in [|\mathcal{V}|]$, $j \neq k$, we have that
\begin{align*}
    & 1 - \gamma_j(X) \\
    \stackrel{(a)}{=} & \int_{\mathcal{W}} \mathbb{P}_{Q, Y|X, \beta_j}\{\widehat{\beta} = \beta_j\} \,\mathrm{d}P_{(Y, Y') \sim P_{\beta_j, \beta_k}}(y, y') + \int_{\mathcal{W}^c} \mathbb{P}_{Q, Y|X, \beta_j}\{\widehat{\beta} = \beta_j\} \,\mathrm{d}P_{(Y, Y') \sim P_{\beta_j, \beta_k}}(y, y') \\
    \leq & \int_{\mathcal{W}} \mathbb{P}_{Q, Y|X, \beta_j}\{\widehat{\beta} = \beta_j\} \,\mathrm{d}P_{(Y, Y') \sim P_{\beta_j, \beta_k}}(y, y') + \mathbb{P}_{(Y, Y') \sim P_{\beta_j, \beta_k}}\{(y, y') \notin \mathcal{W}\} \\
    \stackrel{(b)}{\leq} & \int_{\mathcal{W}} \mathbb{P}_{Q, Y|X, \beta_j}\{\widehat{\beta} = \beta_j\} \,\mathrm{d}P_{(Y, Y') \sim P_{\beta_j, \beta_k}}(y, y') + \frac{\mathbb{E}_{(Y, Y') \sim P_{\beta_j, \beta_k}}\left\{d_{\mathrm{Ham}}(Y, Y')\right\}}{10 D(X, \beta_j, \beta_k)} \\
    \stackrel{(c)}{\leq}  & \int_{\mathcal{W}} \mathbb{P}_{Q, Y|X, \beta_j}\{\widehat{\beta} = \beta_j\} \,\mathrm{d}P_{(Y, Y') \sim P_{\beta_j, \beta_k}}(y, y') + 0.1,
\end{align*}
where $(a)$ follows by introducing an additional variable $Y'$ and integrating out $Y'$, $(b)$ follows from Markov's inequality, and $(c)$ is due to \eqref{eq-dham-upper}.  We then have that
\begin{equation} \label{eq-lower-bound-gamma_jX}
    \int_{\mathcal{W}} \mathbb{P}_{Q, Y|X, \beta_j}\{\widehat{\beta} = \beta_j\} \,\mathrm{d}P_{(Y, Y') \sim P_{\beta_j, \beta_k}}(y, y') \geq 0.9 - \gamma_j(X).
\end{equation}

It also holds that
\begin{align}
     \mathbb{P}_{Q, Y'|X, \beta_k}\{\widehat{\beta} = \beta_j\} &\geq \int_\mathcal{W} \mathbb{P}_{Q, Y'|X, \beta_k}\{\widehat{\beta} = \beta_j\} \,\mathrm{d} P_{(Y, Y') \sim P_{\beta_j, \beta_k}}(y, y')  \nonumber \\
    &\stackrel{(a)}{\geq} e^{-10\epsilon D(X, \beta_j, \beta_k)} \mathbb{P}_{Q, Y|X, \beta_j}\{\widehat{\beta} = \beta_j\} - \delta/\epsilon \nonumber \\
    &\stackrel{(b)}{\geq} e^{-10\epsilon D(X, \beta_j, \beta_k)} \{0.9 - \gamma_j(X)\} - \delta /\epsilon \label{eq-gamma_k-j}
\end{align}
where $(a)$ is due to the definition of $\mathcal{W}$ and the group property of DP from \Cref{lemma:group_privacy} and $(b)$ is from \eqref{eq-lower-bound-gamma_jX}.  Summing up \eqref{eq-gamma_k-j} for all $j \in [|\mathcal{V}|] \setminus \{k\}$ leads to
\begin{align*}
    \gamma_k(X) = \sum_{j \neq k} \mathbb{P}_{Q, Y|X, \beta_k}\{\widehat{\beta} = \beta_j\} \geq 0.9 \sum_{j \neq k} e^{-10\epsilon D(X, \beta_j, \beta_k)} - \sum_{j \neq k} e^{-10\epsilon D(X, \beta_j, \beta_k)}  \gamma_j(X) - (|\mathcal{V}|-1)\delta/\epsilon,
\end{align*}
and then summing up with respect to $k \in [|\mathcal{V}|]$ further leads to
\begin{align*}
     \sum_{k = 1}^{|\mathcal{V}|} \sum_{j \neq k}\left( 0.9 e^{-10 \epsilon D(X, \beta_j, \beta_k)} - \delta/\epsilon \right) &\leq \sum_{k = 1}^{|\mathcal{V}|}\gamma_k(X) \left\{1 + \sum_{j \neq k} e^{-10\epsilon D(X, \beta_j, \beta_k)}\right\}  \\
    &\leq \left\{1 + \max_{k \in [|\mathcal{V}|]} \sum_{j = 1}^{|\mathcal{V}|} e^{-10 \epsilon D(X, \beta_j, \beta_k)}\right\} \sum_{k = 1}^{|\mathcal{V}|}\gamma_k(X).
\end{align*}
We therefore have that
\begin{equation} \label{eq-sum-gamma_j-lower-bound}
    \sum_{k = 1}^{|\mathcal{V}|}\gamma_k(X) \geq \frac{0.9 \sum_{k = 1}^{|\mathcal{V}|} \sum_{j \neq k} e^{-10 \epsilon D(X, \beta_j, \beta_k)} - |\mathcal{V}|(|\mathcal{V}|-1) \delta/\epsilon}{1 + \max_{k \in [|\mathcal{V}|]} \sum_{j = 1}^{|\mathcal{V}|} e^{-10 \epsilon D(X, \beta_j, \beta_k)}}.
\end{equation}

Combining \eqref{eq-fano-1}, \eqref{eq-gamma_j-def} and \eqref{eq-sum-gamma_j-lower-bound} and the condition on $\delta$ in \eqref{eq-choice-C_beta_delta} leads to 
\begin{equation}\label{eq-fano-2}
    \inf_Q \inf_{\widehat{\beta}} \sup_{\mathcal{P}} \mathbb{E}\{\|\widehat{\beta} - \beta\|^2\} \geq \frac{c_1 r_{\beta}^2}{2|\mathcal{V}|} \left\{\frac{0.5 \sum_{k = 1}^{|\mathcal{V}|} \sum_{j \neq k} e^{-10 \epsilon D(X, \beta_j, \beta_k)}}{1 + \max_{k \in [|\mathcal{V}|]} \sum_{j = 1}^{|\mathcal{V}|} e^{-10 \epsilon D(X, \beta_j, \beta_k)}} - C\right\}.
\end{equation}
    
\medskip 
\noindent \textbf{Step 3.~Controlling the expected Hamming distances.}

In order to further lower bound \eqref{eq-fano-2}, we denote
\[
    Q_k(X) = \sum_{j \neq k} e^{-10 \epsilon D(X, \beta_j, \beta_ k)}, \quad k \in [|\mathcal{V}|], \quad \mbox{and} \quad S_Q(X) = \sum_{k \in [|\mathcal{V}|]} Q_k(X),
\]
so that \eqref{eq-fano-2} simplifies to
\begin{align} \label{eq-exp-hamm-1}
   & \inf_Q \inf_{\widehat{\beta}} \sup_{\mathcal{P}} \mathbb{E}\{\|\widehat{\beta} - \beta\|^2\} \geq \frac{0.5 c_1 r_{\beta}^2}{2|\mathcal{V}|} \inf_Q \inf_{\widehat{\beta}} \mathbb{E} \left\{\frac{S_Q(X)}{1 + \max_{k \in [|\mathcal{V}|]} Q_k(X)}\right\}.
\end{align}   

For threshold $t > 0$ specified in \eqref{eq-exp-hamm-5}, we have that
\[
    \frac{S_Q(X)}{1 + \max_k Q_k(X)} \geq \frac{S_Q(X)}{1 + t} \mathbbm{1}\{\max_k Q_k(X) \leq t\}.
\]
We then have that 
\begin{align}
    \mathbb{E}\left\{\frac{S_Q(X)}{1 + \max_k Q_k(X)}\right\} &\geq \frac{1}{1 + t} \mathbb{E}\{S_Q(X) \mathbbm{1}\{\max_k Q_k(X) \leq t\}\} \nonumber \\
    &= \frac{1}{1 + t} \left[\mathbb{E}\{S_Q(X)\} - \mathbb{E}\{S_Q(X) \mathbbm{1}\{\max_k Q_k(X) > t\}\}\right]. \label{eq-exp-hamm-2}
\end{align}
    
Denoting $\beta_j = r_{\beta}/\sqrt{d} \nu_j$, $j \in [|\mathcal{V}|]$, and $\xi_1 = (\xi_{11}, \ldots, \xi_{1d})^{\top}$ a vector of independent Rademacher random variables, it holds that
\begin{align}
    \mathbb{E}\{S_Q(X)\} &= \sum_{k \in [|\mathcal{V}|]} \sum_{j \neq k} \mathbb{E}\left\{e^{-10 \epsilon D(X, \beta_j, \beta_k)}\right\} \geq \sum_{k \in [|\mathcal{V}|]} \sum_{j \neq k} \exp\left\{-10 \epsilon n \frac{r_{\beta}}{d} \mathbb{E}\left[(\nu_j, \nu_k)^{\top} \xi_1\right]\right\} \nonumber \\
    &\geq \sum_{k \in [|\mathcal{V}|]} \sum_{j \neq k} \exp\left\{-10 \epsilon n /\sqrt{d} r_{\beta}\right\} \geq |\mathcal{V}|(|\mathcal{V}| - 1) \exp\left\{-10 \epsilon n /\sqrt{d} r_{\beta}\right\}, \label{eq-exp-hamm-3}
\end{align}
where the first inequality follows from Jensen's inequality and the construction in \textbf{Step 1}, and the second from \Cref{lem-upper-bound-rademacher-sum}.

To upper bound $\mathbb{E}\{S_Q(X) \mathbbm{1}\{\max_kQ_k(X) > t\}\}$, by the Cauchy--Schwarz inequality we have 
\begin{align*}
    \mathbb{E}\{S_Q(X) \mathbbm{1}\{\max_kQ_k(X) > t\}\} &\leq \sqrt{\mathbb{E}[\{S_Q(X)\}^2]} \sqrt{\mathbb{P}\left\{\max_k Q_k(X) > t\right\}} \\
    &\leq |\mathcal{V}|(|\mathcal{V}| - 1) \sqrt{\mathbb{P}\{\max_k Q_k(X) > t\}},
\end{align*}
where the second inequality is achieved by bounding $e^{-10\epsilon D(X, \beta_j, \beta_k)} \leq 1$.

In addition, define $D_{jk}(X) = e^{-10 \epsilon D(X, \beta_j, \beta_k)}$ and $\tau = (10\epsilon)^{-1} \log\{(|\mathcal{V}|-1)/t\}$.  Since 
\[
    (|\mathcal{V}|-1) e^{-10 \epsilon \tau} = t,
\]
it holds that
\[
    \left\{\max_k Q_k(X) > t\right\} \subset \cup_{j \neq k} \{D_{jk} < \tau\}.
\]
This means that 
\[
    \mathbb{P}\{\max_k Q_k(X) > t\} \leq |\mathcal{V}|(|\mathcal{V}| - 1) \max_{j \neq k} \mathbb{P}\{D_{jk}(X) < \tau\}.
\]
We then have that
\begin{equation} \label{eq-exp-hamm-4}
    \mathbb{E}\{S_Q(X) \mathbbm{1}\{\max_kQ_k(X) > t\}\} \leq |\mathcal{V}|^2 (|\mathcal{V}| - 1) \sqrt{\max_{j \neq k} \mathbb{P} \{D_{jk}(X) < \tau\}} \leq |\mathcal{V}|^2 (|\mathcal{V}| - 1) \exp\{-cn/(4d)\},
\end{equation}
with the choice of $t$ satisfying 
\begin{equation} \label{eq-exp-hamm-5}
    \log(t) = \log(|\mathcal{V}|) - \frac{5 n \epsilon r_{\beta}}{\sqrt{d}},
\end{equation}
following \Cref{lem-upper-bound-rademacher-sum} with $\zeta = 1/2$.  Note that we have also used the fact that $\log(|\mathcal{V}|) \asymp d$.

Combining \eqref{eq-exp-hamm-1}, \eqref{eq-exp-hamm-2}, \eqref{eq-exp-hamm-3}, \eqref{eq-exp-hamm-4} and \eqref{eq-exp-hamm-5}, we have that
\begin{align} \label{eq-label-CDP-intermediate-state-proof}
    & \inf_Q \inf_{\widehat{\beta}} \sup_{\mathcal{P}} \mathbb{E}\{\|\widehat{\beta} - \beta\|^2\} \geq \frac{0.5 c_1 r_{\beta}^2}{2|\mathcal{V}|} \inf_Q \inf_{\widehat{\beta}} \frac{|\mathcal{V}|(|\mathcal{V}| - 1) \exp\left\{-10 \epsilon n /\sqrt{d} C_{\beta}\right\}}{1 + |\mathcal{V}|^2 (|\mathcal{V}| - 1) \exp\{-cn/(4d)\}}.  
\end{align}

With the condition $C'd < \sqrt{n}$, and by choosing $r_{\beta} \asymp d^{3/2}/(n \epsilon)$, we have that
\[
    |\mathcal{V}|(|\mathcal{V}| - 1) \exp\left\{-10 \epsilon n /\sqrt{d} C_{\beta}\right\} \geq 2 |\mathcal{V}|^2 (|\mathcal{V}| - 1) \exp\{-cn/(4d)\}
\]
and 
\[
    |\mathcal{V}|^2 (|\mathcal{V}| - 1) \exp\{-cn/(4d)\} \lesssim 1.
\]
Using the above two facts to further lower bound \eqref{eq-label-CDP-intermediate-state-proof} leads to
\[
    \inf_Q \inf_{\widehat{\beta}} \sup_{\mathcal{P}} \mathbb{E}\{\|\widehat{\beta} - \beta\|^2\} \gtrsim \frac{d^3}{n^2 \epsilon^2}.
\] 
We therefore conclude the proof.
\end{proof}

\begin{lemma}\label{lem-upper-bound-rademacher-sum}
Let $\mathcal{G} \subset \{\pm1\}^d$ be a Varsharmov--Gilbert subset \citep[e.g.~Lemma 2.9 of][]{tsybakov2009introduction} construction, such that  
\[
    |\mathcal{G}| \geq 2^{c_2 d} \quad \mbox{and} \quad d_{\mathrm{Ham}}(u, v) \geq c_1 d, \, u \neq v, \, u, v \in \mathcal{G},
\]
where $c_1, c_2 > 0$ are absolute constants.  Let $\{\xi_i\}_{i = 1}^n \subset \{\pm 1\}^d$ be a collection of independent random vectors, each with independent coordinates being Rademacher random variables.  It holds that
    \begin{equation} \label{eq-lem-state-1}
        \max_{\nu_j, \nu_k \in \mathcal{G}, \, \nu_j \neq \nu_k} \mathbb{E}_{\xi}\left[|(\nu_j - \nu_k)^{\top} \xi_1|\right] \leq 2\sqrt{d}
    \end{equation}
    and for any $\zeta \in (0, 1)$, 
    \begin{equation}\label{eq-lem-state-2}
        \max_{\nu_j, \nu_k \in \mathcal{G}, \, \nu_j \neq \nu_k} \mathbb{P}_{\xi} \left[\sum_{i = 1}^n|(\nu_j - \nu_k)^{\top} \xi_i| \leq (1 - \zeta) n \sqrt{d}\right] \leq \exp(-c\zeta^2 n/d).
    \end{equation}
\end{lemma}

\begin{proof}[Proof of \Cref{lem-upper-bound-rademacher-sum}] For \textbf{statement \eqref{eq-lem-state-1}}, we note that for any $\nu_j, \nu_k \in \mathcal{G}$, $\nu_j \neq \nu_k$, 
\[
    \mathbb{E}_{\xi} [|(\nu_j - \nu_k)^{\top} \xi|] \leq \sqrt{\mathbb{E}[\{(\nu_j - \nu_k)^{\top} \xi\}]^2} = \|\nu_j - \nu_k\|_2,
\]
where the equality holds due to the fact that the entries of $\xi$ are i.i.d.~Rademacher random variables.

Since the coordinates of $\nu_j - \nu_k$ are in $\{0, \pm 2\}$, it holds that
\[
    \|\nu_j - \nu_k\|_2^2 = 4 d_{\mathrm{Ham}}(\nu_j, \nu_k) \leq 4d.
\]
We therefore have that
\[
    \max_{\substack{j \neq k}{j, k \in [|\mathcal{V}|]}} \mathbb{E}_{\xi}\left[|(\nu_j - \nu_k)^{\top} \xi_1|\right] \leq 2\sqrt{d}.
\]

\medskip 
\noindent As for \textbf{statement \eqref{eq-lem-state-2}}, we proceed with the following steps.

\textbf{Step 1.} For any fixed $j \neq k$, define $Z_i^{(jk)} = |(\nu_j - \nu_k)^{\top}\xi_i|$.  Since each coordinate of $\xi_i$ is independent Rademacher and $\nu_j - \nu_k \in \{\pm 2, 0\}^d$, we have that
\[
    Z_i^{(jk)} = 2 \left|\sum_{\ell \in S_{jk}} \xi_{i\ell}\right|, \quad \mbox{with } S_{jk} = \left\{\ell: \, \mathbbm{1}\{\nu_{j\ell \neq \nu_{k\ell}} \} \right\} \subset [d].
\]
We therefore have that $\{Z_i^{(jk)}\}_{i \in [n]}$ is a collection independent random variables with $0 \leq Z_i^{(jk)} \leq 2d_{\mathrm{Ham}}(\nu_j, \nu_k)$.

For any fixed $j \neq k$, let $d_{jk} = d_{\mathrm{Ham}}(\nu_j, \nu_k)$.  We then have that
\begin{equation} \label{eq-sum-hoeffding-lower-bound}
    \mathbb{E}\big\{Z^{(jk)}_1\big\} = 2 \mathbb{E}\left\{\left|\sum_{\ell = 1}^{d_{jk}} \xi_{1\ell}\right|\right\} \geq c_1 \sqrt{d_{jk}} \geq c \sqrt{d},
\end{equation}
where the inequality is due to \Cref{lem-lower-bound-exp-abs-sum-rad} and the construction of the Varshamov--Gilbert set.  

\medskip
\noindent \textbf{Step 2.} Since $Z_i$'s are independent and bounded, it follows from Hoeffding's inequality \citep[e.g.~Theorem 2.2.6 in][]{vershynin2018high} that, for any $\zeta \in (0, 1)$ that
\begin{align*}
    & \mathbb{P}\left\{\sum_{i = 1}^n Z_i^{(jk)} \leq (1 - \zeta) n c\sqrt{d}  \right\} \stackrel{(a)}{\leq} \mathbb{P}\left\{\sum_{i = 1}^n Z_i^{(jk)} \leq (1 - \zeta) n \mathbb{E}\big\{Z^{(jk)}_1\big\}  \right\} \\
    = & \mathbb{P}\left\{\sum_{i = 1}^n Z_i^{(jk)} - n \mathbb{E}\big\{Z^{(jk)}_1\big\} \leq \zeta n \mathbb{E}\{Z^{(jk)}_1\}\right\} \stackrel{(b)}{\leq} \exp\left\{-\frac{\zeta^2 n^2 \{\mathbb{E}(Z^{(jk)}_1)\}^2}{4n d_{\mathrm{Ham}}^2(\nu_j, \nu_k)}\right\} \\
    \leq & \exp\left\{-\frac{c\zeta^2 n^2d}{nd^2}\right\} = \exp(-\zeta^2 n/d).
\end{align*}
where $(a)$ is due to \eqref{eq-sum-hoeffding-lower-bound} and $(b)$ is due to Hoeffding's inequality.  We therefore complete the proof.

\end{proof}

\begin{lemma} \label{lem-lower-bound-exp-abs-sum-rad}
For any $m \in \mathbb{Z}_+$, let $\{\xi_i\}_{i \in [m]}$ be a collection of independent Rademacher random variables.  It holds that
\[
    \mathbb{E}\left\{\left|\sum_{i = 1}^m \xi_i\right|\right\} \geq c\sqrt{m}.
\]
\end{lemma}

\begin{proof}[Proof of \Cref{lem-lower-bound-exp-abs-sum-rad}]
Let $S_m = \sum_{i = 1}^m \xi_m$.  We have that
\[
    \mathbb{E}\{S_m^2\} = \mathbb{E}\left\{\sum_{i = 1}^m \xi_i^2 + 2 \sum_{1 \leq i < j \leq m} \xi_i \xi_j\right\} = m
\]
and 
\[
    \mathbb{E}\{S_m^4\} = \mathbb{E}\left\{\left(\sum_{i = 1}^m \xi_i\right)^4\right\} = 3m^2 - 2m.
\]

It follows from the Paley--Zygmund inequality \citep[e.g.][]{petrov2007lower} that for any $\theta \in (0, 1)$, we have
\[
    \mathbb{P}\{S_m^2 \geq \theta \mathbb{E}(S_m^2)\} \geq (1 - \theta)^2 \frac{\left(\mathbb{E}\{S_m^2\}\right)^2}{\mathbb{E}\{S_m^4\}} \geq (1 - \theta)^2 \frac{m^2}{3m^2 - 2m} \geq (1 - \theta)^2/3,
\]
and therefore that 
\[
    \mathbb{P}\{|S_m| \geq \sqrt{\theta m}\} \geq (1 - \theta)^2/3.
\]

It then follows from Markov's inequality that
\[
    \mathbb{E}\{|S_m|\} \geq \sqrt{\theta m} \mathbb{P}\{|S_m| \geq \sqrt{\theta m}\} \geq \sqrt{\theta}(1-\theta)^2/3 \sqrt{m} = c\sqrt{m}.
\]
\end{proof}

\begin{lemma} Let $M$ be an $(\epsilon, \delta)$-central DP privacy mechanism, and let  $D := \sum_{i=1}^n \mathbbm{1}\{x_i \neq x_i'\}$. We have that 
    \begin{equation*}
    M(R \in A \mid X=x) \geq e^{-D\epsilon} M(R \in A \mid X = x') - \delta/\epsilon.
\end{equation*}
\label{lemma:group_privacy}
\end{lemma}
\begin{proof} [Proof of \Cref{lemma:group_privacy}]
    We first show by induction that for all $D \in [n]$, we have that 
    \begin{equation}
        M(R \in A \mid X=x) \leq e^{D \epsilon} M(R \in A \mid X=x') + \delta \frac{e^{D\epsilon} - 1}{e^\epsilon - 1}.
        \label{eq:group_privacy}
    \end{equation}
The base case $D=1$ follows from the definition of $(\epsilon, \delta)$-CDP. For the inductive step, let $x''$ be any dataset that differs in $D-1$ entries to $x$ and differ in 1 entry to $x'$. We then have that 
\begin{align*}
    M(R \in A \mid X=x) &\leq e^{(D-1)\epsilon} M(R \in A \mid X=x'') + \delta \sum_{i=0}^{D-2} e^{i \epsilon} \\
    &\leq e^{(D-1)\epsilon} \left\{ e^\epsilon M(R \in A \mid X=x') + \delta\right\} + \delta \sum_{i=0}^{D-2} e^{i \epsilon}.
\end{align*}
The claim follows by rearranging \eqref{eq:group_privacy}, dividing through by $e^{-D\epsilon}$, and bounding $e^\epsilon - 1 \geq \epsilon$. 
\end{proof}

\begin{proof}[Proof of \Cref{prop:CDP_labelDP}]
For the Cox regression setting, we can follow the same arguments as in the proof of the lower bound in \eqref{eq-cdp-label-minimax}, but obtain the bound in \eqref{eq-dham-upper} via \Cref{lemma:Cox_coupling}.
\end{proof}

\begin{lemma} \label{lemma:Cox_coupling} Let $C_Z, r_\beta >0$ be constants satisfying $C_Z r_\beta < 1/2$. For any $Z_1, \dots, Z_n$ such that $\|Z_i\|_2 \leq C_Z$ for all $i$, and $\beta_j, \beta_k$ such that $\|\beta_j\|_2, \|\beta_k\|_2 \leq r_\beta$, there exists a coupling $P_{\beta_j, \beta_k}$ such that 
\begin{equation*}
    \mathbb{E}_{(\{(T_i, \Delta_i)\}_{i=1}^n , \{(T_i', \Delta_i')\}_{i=1}^n) \sim P_{\beta_j, \beta_k}} \left[\sum_{i=1}^n \mathbbm{1}\{(T_i, \Delta_i) \neq (T_i', \Delta_i')\} \right] \leq \sum_{i=1}^n |(\beta_j-\beta_k)^\top Z_i|
\end{equation*}
and marginally, we have $(T_i, \Delta_i) \sim Q_{\beta_j}(Z_i)$ and $(T_i', \Delta_i') \sim Q_{\beta_k}(Z_i)$; here, $Q_\beta(Z)$ is the distribution that generates $(T, \Delta)$ as where $\tilde{T}_i \sim \mathrm{Exp}(\exp(\beta^\top Z))$ and $C_i \sim \mathrm{Exp}(1)$, and $(T_i, \Delta_i) := (\min(\tilde{T}_i, C_i), \mathbbm{1}\{\tilde{T}_i \leq C_i\})$. 
\end{lemma}

\begin{proof}[Proof of \Cref{lemma:Cox_coupling}]
    For each $i \in [n]$, take $P^{(i)}_{\beta_j, \beta_k}$ to be the maximal coupling for the distributions $\mathrm{Exp}(\exp(\beta_j^\top Z_i))$ and $\mathrm{Exp}(\exp(\beta_k^\top Z_i))$. With the maximal coupling, we have that 
    \begin{equation*}
    \mathbb{P}_{(\tilde{T}_i, \tilde{T}_i') \sim P^{(i)}_{\beta_j, \beta_k}}(\tilde{T}_i \neq \tilde{T}_i) \leq D_{\mathrm{TV}}\left(\mathrm{Exp}(\exp(\beta_j^\top Z_i)), \mathrm{Exp}(\exp(\beta_k^\top Z_i)) \right).
    \end{equation*}
    Let $C_i \sim \mathrm{Exp}(1)$ and set $C_i'=C_i$. We then have that $\mathbb{P}((T_i, \Delta_i) \neq (T_i', \Delta_i')) \leq \mathbb{P}(\tilde{T_i} \neq \tilde{T}_i')$, so it remains to bound the TV distance. For two exponential distributions, we have that 
    \begin{align*}        D_{\mathrm{KL}}\left(\mathrm{Exp}(\exp(\beta_j^\top Z_i)), \mathrm{Exp}(\exp(\beta_k^\top Z_i)) \right) &=  \beta_j^\top Z_i - \beta_k^\top Z_i + \frac{\exp(\beta_k^\top Z_i)}{\exp(\beta_j^\top Z_i)} - 1\\
    &\leq |(\beta_k - \beta_j)^\top Z_i|^2
    \end{align*}
where we used the bound $\exp(x)-1-x \le x^2$ for $x\leq 1$. We can then bound the total variation distance by Pinsker's inequality.  
\end{proof}

To consider the extension to label-FDP, we start with a formal definition. 
\begin{definition}[Label federated differential privacy, label-FDP] \label{def:label-FDP} 
For $S \in \mathbb{N}_+$, let $\epsilon_s > 0$ and $\delta_s \geq 0$, $s \in [S]$, be privacy parameters.  For $K \in \mathbb{N}_+$, we say that a privacy mechanism $Q := \{Q_s^{(k)}\}_{s, k=1}^{S, K}$ satisfies $(\{\epsilon_s, \delta_s\}_{s=1}^S, K)$-FDP, if for any $s \in [S]$ and $k \in [K]$, the data $R_s^{(k)}$ shared by the server $s$ satisfies $(\epsilon_s, \delta_s)$-label CDP, i.e.
\[
    Q_s^{(k)} \left(R_s^{(k)} \in A_s^{(k)} \mid M^{(k-1)}, (X_s^{(k)}, Y_s^{(k)}) \right) \leq e^{\epsilon_s}Q_s^{(k)} \left(R_s^{(k)} \in A_s^{(k)} \mid M^{(k-1)}, (X_s^{(k)}, (Y_s^{(k)})') \right) + \delta_s,
\]
for all possible $X_s^{(k)}$ and any measurable set $A_s^{(k)}$, $M^{(k-1)} := \cup_{l=1}^{k-1} \cup_{s=1}^S R_s^{(l)}$, and pairs of dataset labels $Y_s^{(k)} \sim (Y_s^{(k)})'$, where for each $s \in [S]$, $\cup_{k=1}^K (X_s^{(k)}, Y_s^{(k)})$ forms a partition of the dataset at server $s$.  
\end{definition}

In \Cref{prop:beta_labelDP-lower}, we give a lower bound for $(\{(\epsilon_s, \delta_s)_{s=1}^S\}, K)$-label FDP mechanisms.

\begin{proposition} \label{prop:beta_labelDP-lower}
Let $\mathcal{P}$ denote the set of distribution satisfying Assumptions \ref{assump-1} and \ref{assp:baseline}. For any $K \in \mathbb{N}_+$, let $\mathcal{Q}_K$ be the set of algorithms satisfying the $(\{(\epsilon_s, \delta_s)\}_{s=1}^S, K)$-label-FDP constraint.  Then there exists absolute constants $c, C>0$ such that if $\delta_s \log (1/\delta_s) \lesssim \epsilon_s^2/d$, we have that
    \begin{equation}
        \inf_{K \in \mathbb{N}} \inf_{Q \in \mathcal{Q}_{K}} \inf_{\widehat{\beta}}\sup_{\mathcal{P}} \mathbb{E}_{Q, P_\beta}\|\widehat{\beta} - \beta\|_2 \gtrsim \max \left\{\frac{d^2}{\sum_{s=1}^S \min(n_s, n_s^2\epsilon_s^2)}, \frac{d^3\mathbbm{1}\{\max_s {\delta}_s \leq Ce^{-cd}\}}{(\sum_{s=1}^S n_s)^2 (\max_{s \in S} \epsilon_s)^2} \right\}.
        \label{eq:label-FDP}
    \end{equation}    
\end{proposition}

\begin{proof}
    The first term in \eqref{eq:label-FDP} is due to \Cref{lemma:label-vanTrees}. We may use \Cref{lemma:FDPtoCDP} to obtain a $(\max_s \epsilon_s, \max_s \delta_s)$-label CDP guarantee for the union of the distributed datasets, so  \Cref{prop:CDP_labelDP} leads to the second term in \eqref{eq:label-FDP}.
\end{proof}

\begin{lemma}
        \label{lemma:label-vanTrees}
Let $\mathcal{P}$ denote the set of distribution satisfying Assumptions \ref{assump-1} and \ref{assp:baseline}. For any $K \in \mathbb{N}_+$, let $\mathcal{Q}_K$ be the set of algorithms satisfying the $(\{(\epsilon_s, \delta_s)\}_{s=1}^S, K)$-label-FDP constraint.  Then if $\delta_s \log (1/\delta_s) \lesssim \epsilon_s^2/d$ for all $s \in [S]$, we have that
    \begin{equation*}
        \inf_{K \in \mathbb{N}} \inf_{Q \in \mathcal{Q}_{K}} \inf_{\widehat{\beta}}\sup_{\mathcal{P}} \mathbb{E}_{Q, P_\beta}\|\widehat{\beta} - \beta\|_2 \gtrsim \frac{d^2}{\sum_{s=1}^S \min(n_s, n_s^2\epsilon_s^2)}.
        \end{equation*}
\end{lemma}
\begin{proof}[Proof sketch.] 
We will show how some steps of the proof of \Cref{lemma:cdp_trace} can be adapted to the label-DP constraint. The notation used in the proof is for general public information $X$ and labels $Y$ such that the marginal distribution of $X$ does not depend on the parameter of interest; the survival setting corresponds to $X=\{\{Z_i(t):t \in [0, T_i]\}\}_{i=1}^n$ and $Y=\{(T_i, \Delta_i)\}_{i=1}^n$. . 

Instead of \eqref{pf:transcipt_to_score}, we consider 
\begin{align*}
    \mathbb{E}[S_\beta(X, Y) \mid X=x, R=r] &= \int_{\mathcal{Y}^n} \left( \frac{\partial}{\partial \beta} \log f_\beta(x, y) \right) f_\beta(y \mid X=x, R=r)\dint y \\
    &= \int_{\mathcal{Y}^n} \frac{f'_\beta(x, y)}{f_\beta(x, y)} f_\beta(y \mid X=x, R=r) \dint y\\
    &= \int_{\mathcal{Y}^n} \frac{f'_\beta(x, y)}{f_\beta(x, y)} \frac{f_\beta(y, x, r)}{f_\beta(x, r)} \dint y \\
    &= \frac{1}{f_\beta(x, r)} \int_{\mathcal{Y}^n} \frac{\partial}{\partial \beta} \left\{ f_\beta(x, y) f(r \mid x, y) \right\}\dint y = \frac{f'_\beta(x, r)}{f_\beta(x, r)} 
    \label{pf:transcipt_to_score}
\end{align*}
which implies that $I_{X, R}(\beta) = \mathbb{E}[\mathbb{E}[S_\beta(X, Y)\mid X, R]^{\otimes2}]$. 

By the linearity of expectation, we have that 
\begin{align*}
    \mathrm{Tr}(\mathbb{E}[\mathbb{E}[S_\beta(X, Y)\mid X, R)]^{\otimes2}]) &= \mathbb{E}[\mathrm{Tr}(\mathbb{E}[S_\beta(X, Y)\mid X, R)]^{\otimes2})]
    = \mathbb{E}\left[\|\mathbb{E}[S_\beta(X, Y) \mid X, R)]\|_2^2\right] \\
    &= \sum_{i=1}^n \mathbb{E}[\langle S_\beta(X_i, Y_i), \mathbb{E}[S_\beta(X, Y) \mid X, R)] \rangle] =: \sum_{i=1}^n \mathbb{E} G_i
\end{align*}
where the third equality is from the fact that for any random variables $A$ and $B$, the quantities $A - \mathbb{E}[A\mid B]$ and $\mathbb{E}[A \mid B]$ are uncorrelated, and that the $(X_i, Y_i)$'s are i.i.d.. For $i \in [n]$, let $\breve{G}_i := \langle S_\beta(X_i, \breve{Y}_i), \mathbb{E}[S_\beta(X, Y) \mid X,  R)] \rangle$, where $\breve{Y}_i$ is an independent copy of $Y_i \mid X_i$. By the $(\epsilon, \delta)$-label DP constraint, we have that for any $W > 0$, 
\begin{equation*}
    \mathbb{E}(G_i)_+ = \int_0^\infty\mathbb{P}((G_i)_+ \geq t) \dint t \leq \mathbb{E}(\breve{G}_i)_+ + C_\epsilon \epsilon \int_0^\infty \mathbb{P}((\breve{G}_i)_+ \geq w) \dint w + W\delta +  \int_W^\infty \mathbb{P}((G_i)_+ > w) \dint w
\end{equation*}
and similarly that 
\begin{equation*}
    \mathbb{E}(G_i)_- \geq \mathbb{E}(\breve{G}_i)_- - C_\epsilon \epsilon \int_0^\infty \mathbb{P}((\breve{G}_i)_+ \geq w) \dint w - W\delta -  \int_W^\infty \mathbb{P}((\breve{G}_i)_- \geq w) \dint w. 
\end{equation*}
Putting the last two equations together, we have 
\begin{align}
    \mathbb{E}[G_i] &= \mathbb{E}[(G_i)_+ - (G_i)_-] \notag \\
    &\lesssim  \mathbb{E}[\breve{G}_i] +  \epsilon \mathbb{E}|\breve{G}_i| + 2W\delta +  \int_W^\infty \{\mathbb{P}((G_i)_+ \geq w) + \mathbb{P}((\breve{G}_i)_- \geq w)\} \dint w.
    \label{pf:score_attack}
\end{align}
For the first term, we have 
\begin{align*}
    \mathbb{E}[\breve{G}_i] &= \mathbb{E}\left\langle S_\beta(X_i, \breve{Y}_i), \mathbb{E}[S_\beta(X, Y) \mid X, R] \right\rangle \\
    &=  \mathbb{E} \left[ \mathbb{E} \left[ \left\langle S_\beta(X_i, \breve{Y}_i), \mathbb{E}[S_\beta(X, Y) \mid X, R] \right\rangle \mid X, R \right] \right]\\
    &=  \mathbb{E} \left[ \left\langle \mathbb{E}[S_\beta(X_i, \breve{Y}_i) \mid X_i], \mathbb{E}[S_\beta(X, Y) \mid X, R] \right\rangle  \right] \\
    &=  \mathbb{E} \left[ \left\langle S_\beta(X_i), \mathbb{E}[S_\beta(X, Y) \mid X, R] \right\rangle \right]
\end{align*}
where third equality holds because $\breve{Y_i} \perp R \mid X_i$ and the fourth equality is due to
\begin{align*}
\mathbb{E}[S_\beta(X_i, \breve{Y}_i) \mid X_i=x] &= \int_\mathcal{Y} f_\beta(y \mid x) \frac{\partial}{\partial \beta} \left[ \log f_\beta(y \mid x) + \log f_\beta(x) \right]  \dint y \\
&= \int_{\mathcal{Y}} f_\beta'(y \mid x) \dint y + \int_{\mathcal{Y}} f_\beta(y \mid x) \frac{f'_\beta(x)}{f_\beta(x)} \dint y = \frac{f_\beta'(x)}{f_\beta(x)}. 
\end{align*}
For $\mathbb{E}[\breve{G}_i] =0$, we require that $S_\beta(X_i)$ and $\mathbb{E}[S_\beta(X, Y) \mid X, R]$ are uncorrelated. This is satisfied if the marginal distribution of $X$ does not depend on $\beta$. 

For the second term of \eqref{pf:score_attack}, by Jensen's and the Cauchy--Schwarz inequality, we have
\begin{align*}
    \mathbb{E}|\breve{G}_i| &= \mathbb{E} \left[ \mathbb{E}[|\breve{G}_i| \mid X, R] \right]\\
& \leq \mathbb{E} \left[ \sqrt{ \mathbb{E}[S_\beta(X, Y)\mid X, R]^\top \mathrm{Var}(S_\beta(X_i, \breve{Y}_i) \mid X_i) \mathbb{E}[S_\beta(X, Y)\mid X, R]} \right]\\
&\leq \mathbb{E} \left[ \lambda_{\max} \left( \mathrm{Var}\left(S_\beta(X_i, \breve{Y}_i) \mid X_i \right) \right)^{1/2} \|\mathbb{E}[S_\beta(X, Y) \mid X, R] \|_2 \right]\\
&\leq \sqrt{\mathbb{E} \lambda_{\max} \left( \mathrm{Var}\left(S_\beta(X_i, \breve{Y}_i) \mid X_i \right) \right)} \sqrt{ \mathbb{E} \|\mathbb{E} [S_\beta(X, Y) \mid X, R]\|_2^2}\\
&\lesssim \sqrt{ \mathbb{E} \|\mathbb{E} [S_\beta(X, Y) \mid X, R]\|_2^2}.
\end{align*}
where the last line follows by the assumption of bounded covariates. The rest of the proof for the first term in \eqref{eq:label-FDP} can proceed in a similar way to the proof of \Cref{lemma:cdp_trace}.
\end{proof}

\begin{lemma}Suppose that a mechanism $Q$ satisfies $(\{(\epsilon_s, \delta_s)\}_{s=1}^S , K)$-label FDP for the distributed datasets $D_1, \cdots, D_S$. Then $Q$ is $(\max_s \epsilon_s, \max_s \delta_s)$-label CDP for $D := \cup_{s=1}^S D_s$. 
\label{lemma:FDPtoCDP}
\end{lemma}
\begin{proof}
    This is due to the parallel composition property of DP; we provide a proof below for completeness. 

    Suppose that $D_{s^*}^{(k^*)} := (X_{s^*}^{(k^*)}, Y_{s^*}^{(k^*)})$ and $D_{s^*}^{'(k^*)}:= (X_{s^*}^{'(k^*)}, Y_{s^*}^{'(k^*)})$ are such that $X_{s^*}^{(k^*)}=X_{s^*}^{'(k^*)}$ and $Y_{s^*}^{(k^*)}=Y_{s^*}^{'(k^*)}$ are different in exactly one entry. Further suppose that for all other $s \in [S], k \in [K]$, we have  $D_s^{(k)} = D_s^{'(k)}$. Let $\cup_{s=1}^S \cup_{k=1}^K R_s^{(k)}$ be the sequence of transcripts and let $A:= \otimes_{s=1, k=1}^{S, K} A_s^{(k)}$ be any measurable set of the transcript space. Since each data point is used in at most one mechanism, we have that 
    \begin{align*}
        &\mathbb{P}\left(\otimes_{s, k=1}^{S, K} R_s^{(k)} \in A \mid D\right) \\&= \prod_{s=1}^S \prod_{k=1}^K \mathbb{P} \left(R_s^{(k)} \in A_s^{(k)} \mid D_{s}^{(k)}, \otimes_{t=1}^{s-1} \otimes_{l=1}^K R_t^{(l)} \in \otimes_{t=1}^{s-1} \otimes_{l=1}^K A_t^{(l)}\right) \\
        &=B_{s^*, k^*} \mathbb{P} \left( R_{s^*}^{(k^*)} \in A_{s^*}^{(k^*)} \mid D_{s^*}^{(k^*)}, \otimes_{t=1}^{s^*-1} \otimes_{l=1}^K R_t ^{(l)} \in \otimes_{t=1}^{s^*-1} \otimes_{l=1}^K A_t^{(l)}  \right) A_{s^*, k^*}
        \\
        &\leq B_{s^*, k^*} \left(e^{\epsilon_{s^*}} \mathbb{P} \left( R_{s^*}^{(k^*)} \in A_{s^*}^{(k^*)} \mid D_{s^*}^{'(k^*)}, \otimes_{t=1}^{s^*-1} \otimes_{l=1}^K R_t ^{(l)} \in \otimes_{t=1}^{s^*-1} \otimes_{l=1}^K A_t^{(l)}  \right)+ \delta_{s^*} \right) A_{s^*, k^*} \\
        &\leq \exp\left(\max_{s \in [S]} \epsilon_s \right) \mathbb{P}\left(\otimes_{s, k=1}^{S, K} R_s^{(k)} \in A \mid D' \right) + \max_{s \in [S]} \delta_s
    \end{align*}
where 
\begin{align*}
    B_{s^*, k^*} &:=  \left(\prod_{s=1}^{s^*-1}  \prod_{k=1}^K \mathbb{P} \left(R_s^{(k)} \in A_s^{(k)} \mid D_{s}^{(k)}, \otimes_{t=1}^{s-1} \otimes_{l=1}^K R_t^{(l)} \in \otimes_{t=1}^{s-1} \otimes_{l=1}^K A_t^{(l)}\right) \right) \\ 
    & \hspace{2cm}\times \prod_{k\neq k^*} \mathbb{P} \left(R_{s^*}^{(k)} \in A_{s^*}^{(k)} \mid D_{s^*}^{(k)}, \otimes_{t=1}^{s^*-1} \otimes_{l=1}^K R_t^{(l)} \in \otimes_{t=1}^{s^*-1} \otimes_{l=1}^K A_t^{(l)}\right) 
\end{align*}
and 
\begin{equation*}
    A_{s^*, k^*} := \prod_{s=s^*+1}^S \prod_{k=1}^K  \mathbb{P} \left(R_s^{(k)} \in A_s^{(k)} \mid D_{s}^{(k)}, \otimes_{t=1}^{s-1} \otimes_{l=1}^K R_t^{(l)} \in \otimes_{t=1}^{s-1} \otimes_{l=1}^K A_t^{(l)}\right).
\end{equation*}

The first inequality is from integrating over $\otimes_{t=1}^{s^*-1} \otimes _{k=1}^K A_t^{(l)}$ and that $\otimes_{t=1}^{s^*-1} \otimes_{l=1}^K R_t ^{(l)} \perp D_{s^*}^{(k^*)}$: 
\begin{align*}
    &\mathbb{P} \left( R_{s^*}^{(k^*)} \in A_{s^*}^{(k^*)} \mid D_{s^*}^{(k^*)}, \otimes_{t=1}^{s^*-1} \otimes_{l=1}^K R_t ^{(l)} \in \otimes_{t=1}^{s^*-1} \otimes_{l=1}^K A_t^{(l)}  \right) \\
    &\quad= \frac{\int_{\otimes_{t=1}^{s^*-1} \otimes_{l=1}^K A_t^{(l)}} \mathbb{P}(R_{s^*}^{(k^*)} \in A_{s^*}^{(k^*)} \mid D_{s^*}^{(k^*)}, \otimes_{t=1}^{s^*-1} \otimes_{l=1}^K R_t ^{(l)} = r) f(r \mid D_{s^*}^{(k^*)}) \dint r}{\int_{\otimes_{t=1}^{s^*-1} \otimes_{l=1}^K A_t^{(l)}} f(r \mid D_{s^*}^{(k^*)}) \dint r} \\
    & \leq \frac{\int_{\otimes_{t=1}^{s^*-1} \otimes_{l=1}^K A_t^{(l)}} \left(e^{\epsilon_s^*}\mathbb{P}(R_{s^*}^{(k^*)} \in A_{s^*}^{(k^*)} \mid D_{s^*}^{(k^*)}, \otimes_{t=1}^{s^*-1} \otimes_{l=1}^K R_t ^{(l)} = r)  + \delta_{s^*} \right) f(r \mid D_{s^*}^{'(k^*)}) \dint r}{\int_{\otimes_{t=1}^{s^*-1} \otimes_{l=1}^K A_t^{(l)}} f(r \mid D_{s^*}^{'(k^*)}) \dint r} \\
    &= e^{\epsilon_{s^*}} \mathbb{P} \left( R_{s^*}^{(k^*)} \in A_{s^*}^{(k^*)} \mid D_{s^*}^{'(k^*)}, \otimes_{t=1}^{s^*-1} \otimes_{l=1}^K R_t ^{(l)} \in \otimes_{t=1}^{s^*-1} \otimes_{l=1}^K A_t^{(l)}  \right)+ \delta_{s^*}.
\end{align*}
\end{proof}

\section{Technical details of Section~\ref{sec-cumulative-hazard}}
\label{app-cumulative-hazard}
We first provide an estimator for the at-risk probability $\hat{p}$ that can be used within \Cref{prop:hazard_upper} in \Cref{app-hazard-atrisk}. The proofs for \Cref{prop:hazard_upper} and \Cref{prop-optimality-survival} can be found in Appendices \ref{subsec-proof-hzard-upper} and \ref{app-proof-prop-optimality-survival} respectively. \Cref{subsec-lambda-no-covs} contains a lower bound for cumulative hazard estimation in a covariate-less setting. 
We consider other loss functions, and lower bounds for sequentially interactive mechanisms in \Cref{app:lambda_equiv_optimality}.

\subsection{Estimating the at-risk probability}
\label{app-hazard-atrisk}
\Cref{alg:FDP-probabilities} can be used to estimate the at-risk probability $\hat{p}$ in order to select the level of truncation in \Cref{alg:FDP-Breslow}. Its convergence rate is provided in \Cref{lemma:at_risk_convergence}.

\begin{algorithm}
\caption{FDP-probabilities}
\label{alg:FDP-probabilities}
\textbf{Input}: Datasets $\{T_{s, i}\}_{s, i = 1}^{S, n_s}$, privacy parameters $\{\epsilon_s, \delta_s\}_{s=1}^S$.
\begin{algorithmic}[1]
\For{$s = 1, \ldots, S$}
\State Generate independently $W_s \sim \mathcal{N}\left(0,  2 \log(1.25/\delta_s)(n_s \epsilon_s)^{-2} \right)$.
\State Set $\hat{p}_s := (n_s)^{-1} \sum_{i=1}^{n_s} \mathbbm{1}\{T_{s, i} \geq 1\}  + W_s$.
\EndFor
\end{algorithmic}
\textbf{Output: } $\hat{p} := \sum_{s=1}^S n_s \hat{p_s} / (\sum_{u=1}^S n_s)$. 
\end{algorithm}

\begin{lemma}
\label{lemma:at_risk_convergence}
    Suppose that $\{T_{s, i}\}_{s, i=1}^{S, n_s}$ are i.i.d.~and $\mathbb{P}(T_{1, 1} \geq 1) = p_0$. Let $\hat{p}$ be the output of \Cref{alg:FDP-probabilities}. Then $\hat{p}$ is $(\{\epsilon_s, \delta_s\}_{s \in [S]}, 1)$-FDP, and, for any $\gamma > 0$, 
    \begin{equation*}
    \mathbb{P}\left(\hat{p} > (1+\gamma)p_0 \right) \lesssim \exp \left(-\frac{\gamma}{2}\sum_{s=1}^S n_s \right) +  \exp \left(- \frac{\gamma^2}{8} \left(\sum_{s=1}^S n_s \right)^{-2} \sum_{s=1}^S \frac{2 \log(1.25/\delta_s)}{ \epsilon_s^2} \right). 
    \end{equation*}
\end{lemma}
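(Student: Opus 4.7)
The plan is to split the proof into two independent parts: the federated-DP guarantee, which follows from the Gaussian mechanism applied server-by-server, and the upper tail bound, which is obtained by decomposing $\hat p - p_0$ into a Bernoulli empirical deviation plus an independent Gaussian noise term and then applying a multiplicative Chernoff bound together with a sub-Gaussian tail inequality.

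For the privacy guarantee, I would first note that the clean statistic $n_s^{-1}\sum_{i=1}^{n_s} \mathbbm{1}\{T_{s,i} \geq 1\}$ has $\ell_2$-sensitivity exactly $1/n_s$, since changing one observation shifts the count by at most one. Plugging this sensitivity into the standard Gaussian mechanism calibration (e.g.~Theorem~3.22 of \citealp{dwork2014algorithmic}), the variance $2\log(1.25/\delta_s)(n_s\epsilon_s)^{-2}$ used in Algorithm~\ref{alg:FDP-probabilities} is precisely what is required to make each $\hat p_s$ an $(\epsilon_s, \delta_s)$-CDP release. The final estimator $\hat p = \sum_{s=1}^S (n_s/N)\, \hat p_s$, where $N := \sum_{s=1}^S n_s$, is a deterministic post-processing of the $\hat p_s$'s using only the (public) sample sizes, and each server communicates a single privatised statistic with no cross-server interaction, so the combined mechanism satisfies $(\{\epsilon_s, \delta_s\}_{s \in [S]}, 1)$-FDP in the sense of Definition~\ref{def:FDP}.

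For the tail bound, I would decompose
\begin{equation*}
\hat p - p_0 \;=\; \underbrace{\frac{1}{N}\sum_{s=1}^S \sum_{i=1}^{n_s}\bigl(\mathbbm{1}\{T_{s,i}\geq 1\} - p_0\bigr)}_{=:\,U} \;+\; \underbrace{\sum_{s=1}^S \frac{n_s}{N}\, W_s}_{=:\,Z},
\end{equation*}
where $U$ is the centred average of $N$ i.i.d.~Bernoulli$(p_0)$ indicators, and $Z$ is an independent centred Gaussian with variance $\sigma^2 := N^{-2}\sum_{s=1}^S 2\log(1.25/\delta_s)/\epsilon_s^2$. Writing $\{\hat p > (1+\gamma)p_0\} \subseteq \{U > \gamma p_0/2\} \cup \{Z > \gamma p_0/2\}$ and union-bounding, I would control the first event by a multiplicative Chernoff bound for i.i.d.~Bernoulli sums (yielding a rate of order $\exp(-c\gamma N)$ in the moderate-to-large $\gamma$ regime), and the second by the sub-Gaussian tail $\mathbb{P}(Z > t) \leq \exp(-t^2/(2\sigma^2))$ applied with $t = \gamma p_0/2$.

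The only real choice is how to split the deviation $\gamma p_0$ between the Bernoulli and Gaussian contributions (here symmetrically, absorbing $p_0$ into absolute constants), so that the two resulting exponents recombine into the two summands displayed in the statement. There is no deeper obstacle: once the decomposition $\hat p = U + p_0 + Z$ is in place, both terms are classical, which is consistent with the result being stated as an auxiliary lemma used to feed $\hat p$ as an input to \Cref{alg:FDP-Breslow}.
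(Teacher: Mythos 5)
Your proposal is correct and follows essentially the same route as the paper: a union bound splitting the deviation $\gamma p_0$ between the Bernoulli empirical average and the weighted Gaussian noise, a concentration inequality for the former (the paper invokes Hoeffding where you use a multiplicative Chernoff bound, an immaterial difference once constants are absorbed), and a Gaussian tail bound for the latter, whose variance is exactly the $\sigma^2$ you compute. Your explicit sensitivity-plus-post-processing argument for the FDP claim is the standard one the paper relies on implicitly, so there is nothing to add.
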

\begin{proof}
By a union bound argument, 
\begin{equation}    
    \mathbb{P}\left(\hat{p} > (1+\gamma)p_0 \right) \leq \mathbb{P} \left(\frac{\sum_{s=1}^S \sum_{i=1}^{n_s} \mathbbm{1}\{T_{s, i} \geq 1\}}{\sum_{s=1}^S n_s} > (1 + \gamma/2) p_0  \right) + \mathbb{P} \left( \frac{\sum_{s=1}^S n_s W_s}{\sum_{s=1}^S n_s} > \frac{\gamma p_0}{2} \right). 
    \label{eq:at_risk_convergence}
\end{equation}
Since $\mathbbm{1}\{T_{s, i} > 1\}$ are i.i.d.~Bernoulli($p_0$), by Hoeffding's inequality we have  
\begin{equation*}
    \mathbb{P} \left(\frac{\sum_{s=1}^S \sum_{i=1}^{n_s} \mathbbm{1}\{T_{s, i} \geq 1\}}{\sum_{s=1}^S n_s} > (1 + \gamma/2) p_0  \right) \leq \exp \left(-\frac{\gamma}{2}\sum_{s=1}^S n_s \right).
\end{equation*}

We also have that 
\begin{equation*}
    \label{pf:at_risk_noise}
    \frac{\sum_{s=1}^S n_s W_s}{\sum_{s=1}^S n_s} \overset{d}{=} \mathcal{N}\left(0, \left(\sum_{s=1}^S n_s \right)^{-2} \sum_{s=1}^S \frac{2 \log(1.25/\delta_s)}{ \epsilon_s^2} \right),
\end{equation*}
so the second term in the lemma is obtained by a Gaussian tail bound. 
\end{proof}
\Cref{lemma:at_risk_convergence} implies that the output of \Cref{alg:FDP-probabilities} satisfies the condition for $\hat{p}$ needed in \Cref{prop:hazard_upper} if the second term on the right-hand side of \eqref{eq:at_risk_convergence} satisfies that
\[
    \mathbb{P} \left( \frac{\sum_{s=1}^S n_s W_s}{\sum_{s=1}^S n_s} > \frac{\gamma p_0}{2} \right) \lesssim \exp\left(-c \max_{s \in [S]} n_s\right). 
\]

\subsection{Proof of Theorem~\ref{prop:hazard_upper}} \label{subsec-proof-hzard-upper}

\noindent \textbf{Privacy guarantees.} We first show that the output of \Cref{alg:FDP-Breslow} satisfies $(\{\epsilon_s, \delta_s\}_{s \in [S]}, 1)$-FDP. Each (non-privatised) level $l \in [h]$ of a binary tree from a single server's data is given by 
\begin{equation*}
    b_l(\{T_i, \Delta_i, Z_i\}_{i=1}^n ) := \left(\int_{0}^{1/2^l} \frac{\sum_{i=2}^{n}\dint  N_{i}(t)}{n \max(c, S^{(0)}(t, \widehat{\beta}))}, \ldots, \int_{(2^l-1)/2^l}^{1} \frac{\sum_{i=2}^{n}\dint  N_{i}(t)}{n \max(c, S^{(0)}(t, \widehat{\beta}))} \right). 
\end{equation*}
Without loss of generality, let $D := \{(T_i, \Delta_i, Z_i(\cdot))\}_{i \in [n]}$ and 
\[
    D' := (T_1', \Delta'_1, Z'_1(\cdot)) \cup \{(T_i, \Delta_i, Z_i(\cdot))\}_{i \in [n] \setminus \{1\}},
\]
with corresponding $S^{(0)}(t, \widehat{\beta})$ and $S'^{(0)}(t, \widehat{\beta})$. We have that 
\begin{align}
    \|b_l(D)-b_l(D')\|_2^2 &= \sum_{m=1}^{2^l} \Bigg(\int_{(m-1)/2^l}^{m/2^l} \frac{\sum_{i=2}^{n}\dint  N_{i}(t)}{n \max(c, S^{(0)}(t, \widehat{\beta}))} - \frac{\sum_{i=2}^n \dint N_i(t)}{n \max(c, S'^{(0)}(t, \widehat{\beta}))} \notag\\
    & \hspace{3cm} + \int_{(m-1)/2^l}^{m/2^l} \frac{\dint N_1(t)}{n \max(c, S^{(0)}(t, \widehat{\beta}))} - \frac{\dint N_1'(t)}{n \max(c, S'^{(0)}(t, \widehat{\beta}))} \Bigg)^2 \notag\\
    &\quad \leq \sum_{m=1}^{2^l} \left( \frac{\sum_{i=2}^n \mathbbm{1} \{ (m-1)/2^l \leq T_i \leq m/2^l, \Delta_i = 1  \}}{(cn)^2}\right)^2 \notag\\
    &\hspace{3cm} +\left(\frac{\Delta_1}{n \max(c, S^{(0)}(T_1, \widehat{\beta}))} - \frac{\Delta_1'}{n \max(c, S'^{(0)}(T_1', \widehat{\beta}))} \right)^2 \notag \\
    & \hspace{3cm} + \frac{2}{cn} \left(\frac{\Delta_1}{n \max(c, S^{(0)}(T_1, \widehat{\beta}))} - \frac{\Delta_1'}{n \max(c, S'^{(0)}(T_1', \widehat{\beta}))} \right) \notag \\
    &\quad \leq \frac{n^2}{(cn)^4} + \frac{3}{(cn)^2}.
    \label{pf:breslow_sensitivity}
\end{align}

Since \eqref{pf:breslow_sensitivity} holds for all $D$ and $D'$ that differ in at most one entry, \eqref{pf:breslow_sensitivity} is an upper bound on $\{\mathrm{sens_2}(b_l)\}^2$ for all $l \in [h]$. By \Cref{lemma:Gaussian_composition}, releasing $\{(x_{s, l, m})_{m=1}^{2^l}\}_{l=1}^h$ is therefore $(\epsilon_s, \delta_s)$-CDP for $\{T_{s, i}, \Delta_{s, i}, Z_{s, i}\}_{i=1}^{n_s}$. 

\medskip
\noindent \textbf{Estimation error.}
We now consider the $\ell_\infty$ convergence of $\widehat{\Lambda}$ on  $[0, 1]$. Using that $\Lambda_0(t)$ is non-negative for all $t \in [0, 1]$ and the triangle inequality, we have 
\begin{align}
 \mathbb{E}\left[\sup_{\tau \in [0, 1]} \left| \widehat{\Lambda}(\tau) - \Lambda_0(\tau) \right| \right] &\leq \sum_{s=1}^S v_s\mathbb{E} \left[ \sup_{\tau \in [0, 1]} \left| \sum_{i=1}^{n_s} \int_0^\tau \frac{\dint N_{s, i}(t)}{n_s \max(c, S^{(0)}_s(t, \widehat{\beta}))} - \frac{\dint N_{s, i}(t)}{n_s S^{(0)}_s(t, \widehat{\beta})} \right|\right] \notag\\
 & \quad + \sum_{s=1}^S v_s \mathbb{E} \left[ \sup_{\tau \in [0, 1]} \left| \sum_{i=1}^n \int_0^\tau \frac{\dint N_{s, i}(t)}{n_s S_s^{(0)}(t, \widehat{\beta})} - \frac{\dint N_{s, i}(t)}{n_s S_s^{(0)}(t, \beta_0)}\right| \right]\notag \\
   & \quad + \mathbb{E} \left[ \sup_{\tau \in [0, 1]} \left| \sum_{s=1}^S v_s  \sum_{i=1}^n \int_0^\tau \frac{\dint N_{s, i}(t)}{n_s S^{(0)}_s(t, \beta_0)} -  \Lambda_0(\tau) \right| \right] \notag \\
 & \quad + h \,\mathbb{E} \left[\max_{l \in [h], \,m \in [2^l]} \left| \sum_{s=1}^S v_s W_{s, l,m} \right| \right] \notag \\
 & \quad+ \sup_{\tau, \tau' \in [0, 1]:\, |\tau - \tau'| \leq 1/2^h} \left| \Lambda_0(\tau) - \Lambda_0(\tau') \right| \notag \\
  &=: (I) + (II) + (III) + (IV) + (V).  
\label{pf:na_main}
  \end{align}
The terms in \eqref{pf:na_main} correspond to errors due to the truncation, $\widehat{\beta}$ estimation, process variance (non-private rate), noise added for privacy, and the discretisation respectively -- we will obtain bounds on each of these terms. 

\noindent \textbf{Term (I).} We have that
\begin{align}
    \mathbb{E} &\left[ \sup_{\tau \in [0, 1]} \left| \sum_{i=1}^{n_s} \int_0^\tau \frac{\dint N_{s, i}(t)}{n_s \max(c, S^{(0)}_s(t, \widehat{\beta}))} - \frac{\dint N_{s, i}(t)}{n_s S^{(0)}_s(t, \widehat{\beta})} \right|\right] \notag\\
    &\lesssim \log(n_s) \,\mathbb{P}\left( \frac{1}{n_s}\sum_{i=1}^{n_s} Y_{s, i}(1) \leq 0.9 \hat{p} \right) \notag\\
    &= \log(n_s) \mathbb{E}_{\hat{p}} \left[ \mathbb{P}_{Y_{s, 1}, \dots, Y_{s, n_s}}\left( \frac{1}{n}\sum_{i=1}^{n_s} Y_{s, i}(1) \leq 0.9 \hat{p} \,\Big\vert\, \hat{p} \right)\right] \notag\\ 
    &\leq \log(n_s) \left[\mathbb{P} \left( \hat{p} > \frac{19 p_0}{18} \right) + \mathbb{P} \left(\frac{1}{n_s}\sum_{i=1}^{n_s} Y_{s, i}(1) \leq \frac{19 p_0}{20} \right) \right] \notag\\
     &\leq \log(n_s) \left[\mathbb{P} \left( \hat{p} > \frac{19 p_0}{18} \right) + \exp(-cn_s) \right] 
     \label{pf:breslow_truncation}
\end{align}
where the first inequality is from bounding the magnitude of the non-private Breslow estimator. Therefore, if $\mathbb{P} \left( \hat{p} > \frac{19}{18}p_0 \right) \lesssim \exp(-cn_s)$ for all $s \in [S]$, then 
\begin{equation*}
    (I) \lesssim \sum_{s=1}^S v_s \log(n_s)\exp(-cn_s). 
\end{equation*}
\medskip
\noindent \textbf{Term (II).} For all $t \in [0, 1]$, it holds that  
\begin{align*}
    \left|\frac{1}{n_sS^{(0)}_s(t, \widehat{\beta})} - \frac{1}{n_sS^{(0)}_s(t, \beta_0)} \right| & = \frac{|\sum_{i=1}^n Y_i(t) \exp(\widehat{\beta}^\top Z_i(t))\{ \exp((\beta_0-\widehat{\beta})^\top Z_i(t))-1\}|}{n_s^2 S_s^{(0)}(t, \widehat{\beta}) \,S_s^{(0)}(t, \beta_0) } \\
    & \leq \frac{2C_Z \|\widehat{\beta} - \beta_0\|}{n_sS_s^{(0)}(t, \beta_0)}
\end{align*}
where the inequality holds when $C_Z \|\widehat{\beta} - \beta_0\|_2 < 1$.

Since $\widehat{\beta}$ is an independent estimator, we have 
\begin{equation*}
    (II) \lesssim \mathbb{E}[\|\widehat{\beta} - \beta_0\|_2] \sum_{s=1}^S v_s \mathbb{E}\left[\int_0^1 \sum_{i=1}^n \frac{\dint N_{s, i}(t)}{n_s S_s^{(0)}(t, \beta_0)}    \right] \lesssim \mathbb{E}[\|\widehat{\beta} - \beta_0\|_2] \sum_{s=1}^S v_s \log(n_s). 
\end{equation*}
i.e.~$(II)$ converges at the same rate as $\widehat{\beta}$ (up to a log factor). 
\medskip

\noindent \textbf{Term (III).}  Define 
\begin{equation*}
    \dint M_{s, i}(t) := \dint N_{s, i}(t) - Y_{s, i}(t) \exp(\beta_0^\top Z_{s, i}(t)) \dint \Lambda_0(t), 
\end{equation*}
and for any martingale $M(t)$, let $[M](t)$ denote its quadratic variation. We have that
\begin{align*}
    (III) &= \mathbb{E} \left\{ \sup_{\tau \in [0, 1]} \left| \sum_{s=1}^S v_s \sum_{i=1}^{n_s} \int_0^\tau \frac{\dint M_{s, i}(t)}{n_sS^{(0)}_s(t, \beta_0)} \right| \right\} \\
    &\lesssim \mathbb{E} \left\{ \left[ \sum_{s=1}^S v_s \sum_{i=1}^{n_s} \int \frac{\dint M_{s, i}(t)}{n_sS^{(0)}_s(t, \beta_0)} \right](1) \right\}^{1/2} \\
    & = \mathbb{E} \left\{ \left[\sum_{s=1}^S v_s \sum_{i=1}^n \int \frac{\mathbbm{1}\{\sum_{i=1}^n Y_{s, i}(t) > 0\} \dint M_{s, i}(t)}{n_sS^{(0)}_s(t, \beta_0)} \right](1) \right\}^{1/2} \\
    &= \mathbb{E} \left\{ \sum_{s=1}^S v_s^2 \sum_{i=1}^{n_s} \int_0^1 \frac{\dint N_{s, i}(t)}{(n_sS^{(0)}_s(t, \beta_0))^2} \right\}^{1/2}.
\end{align*} 
where the first inequality is due to the Burkholder--Davis--Gundy inequality \citep[e.g. Theorem 1.1 of][]{marinelli2013BDG} and Jensen's inequality, and the second equality is due to the independence of the estimators and that each $\mathbbm{1}\{\sum_{i=1}^{n_s} Y_{s, i}(t) > 0\}/n_sS^{(0)}_s(t, \beta_0)$ is predictable and bounded. We may further bound
\begin{equation*}
    \mathbb{E} \left\{ \sum_{i=1}^{n_s}  \int_0^1 \frac{\dint N_{s, i}(t)}{(n_sS^{(0)}_s(t, \beta_0))^2}\right\} \lesssim \frac{1}{n_s} + \log(n_s) \mathbb{P}\left( \frac{1}{n_s}\sum_{i=1}^{n_s} Y_{s, i}(1) \leq p_0/2\right) \
\end{equation*}
by considering the events of whether $(n_s)^{-1}\sum_{i=1}^{n_s} Y_{s, i}(1) > p_0/2$ or not. Following the arguments leading to \eqref{pf:breslow_truncation}, we therefore have  
\begin{equation*}
    (III) \lesssim \left\{\sum_{s=1}^S v_s^2 \left(\frac{1}{n_s} + \log(n_s)\exp(-cn_s) \right) \right\}^{1/2} \lesssim \frac{1}{(\sum_{s=1}^S \min(n_s, n_s^2 \epsilon_s^2))^{1/2}}. 
\end{equation*}

\noindent \textbf{Term (IV).}  Since there are $2^{h+1}$ nodes in the binary tree, each with i.i.d.~noise added, 
 \begin{align*}
 (IV) &= h \left[\log(2^{h+1})\mathrm{Var}\left(\sum_{s=1}^S v_sW_{s, 1, 1} \right) \right]^{1/2} \\
 &\lesssim \log\left(\sum_{s=1}^S n_s \right)^2 \left[\sum_{s=1}^S \frac{v_s^2 \log(1/\delta_s)}{n_s^2 \epsilon_s^2} \right]^{1/2} \leq \frac{\log\left(\sum_{s=1}^S n_s \right)^2 \sum_{s=1}^S \log(1/\delta_s)}{(\sum_{s=1}^S \min(n_s, n_s^2 \epsilon_s^2))^{1/2}},
 \end{align*}
 where the first inequality is because $2 \log(1/\delta_s)/\epsilon + 1 \asymp \log(1/\delta_s)/\epsilon$. 

 \medskip
\noindent \textbf{Term (V).}  Under the assumption of $\lambda_0(t) < C_\lambda$, we can bound $(V) \leq C_\lambda /2^h$.

\subsection{A minimax lower bound on the cumulative hazard function in covariate-less models}
\label{subsec-lambda-no-covs}

We first state a minimax lower bound for cumulative hazard estimation in \Cref{prop:hazard_lower}.  The statement holds for the Cox model with general covariates, and is optimal for the covariate-less setting.

\begin{proposition} \label{prop:hazard_lower}
    Let $\mathcal{H}$ be the set of cumulative hazard functions $\Lambda_0 = \int_0^1 \lambda_0(t)\, \mathrm{d}t$ that satisfy \Cref{assp:baseline}. In addition we assume that there exists an absolute constant $C_{\lambda}$ such that $\lambda_0(t) < C_\lambda < \infty$.  Fix privacy parameters $\{\epsilon_s, \delta_s\}_{s=1}^S$ such that $\sum_{s=1}^S n_s \delta_s \mathbbm{1} \left\{\epsilon_s^2 \leq 1/n_s \right\} \lesssim \sqrt{\sum_{s=1}^S \min(n_s, n_s^2 \epsilon_s^2)}$ for all $s \in [S]$. Let $\mathcal{Q}$ be the class of $(\{(\epsilon_s, \delta_s)\}_{s \in [S]}, 1)$-FDP estimators. It then holds that
    \begin{equation*}
        \inf_{Q \in \mathcal{Q}} \inf_{\widehat{\Lambda}} \sup_{\Lambda_0 \in \mathcal{H}} \mathbb{E} \left[ \sup_{t \in [0, 1]} \left|\widehat{\Lambda}(t) - \Lambda_0(t) \right| \right] \gtrsim \frac{1}{\sqrt{\sum_{s=1}^S \min(n_s, n_s^2 \epsilon_s^2)}}.
    \end{equation*}
\end{proposition}

The proof proceeds via the Le Cam lemma \citep[e.g.][]{yu1997assouad} combined with data processing inequalities.  The data processing inequalities upper bound the total variation distance between two privatised distributions by that of the non-privatised counterparts. The key ingredients are: 1) introducing a pair of distributions close to the original ones to convert approximate DP constraints to the pure versions \citep[e.g.][]{karwa2017finitesampledifferentiallyprivate}, and 2) using the coupling method \citep[e.g.][]{karwa2017finitesampledifferentiallyprivate, acharya2021differentially, lalanne2023statistical} to provide a central DP version of the total variation data processing inequalities.  The extension to the FDP setting follows a similar technique to \cite{cai2024optimal}, where they provide a minimax lower bound on the pointwise risk of estimating a function subject to federated DP without censoring. Our setting presents two unique features: the presence of censoring and the focus on the cumulative hazard functions. 


\begin{remark}[Sequentially interactive mechanisms]
While \Cref{prop:hazard_lower} considers non-interactive mechanisms $(K=1)$, in \Cref{prop:hazard_lower-int}, we show a lower bound of $\left(\min \left\{\sum_{s=1}^S n_s^2 \epsilon_s^2, \sum_{s=1}^S n_s \right\} \right)^{-1/2}$, for the set of mechanisms that are ~$(\{(\epsilon_s, \delta_s)\}_{s\in [S]}, K)$-FDP where server $s \in [S]$ uses $n_s/K$ observations at each iteration, over all choices of $K \in \mathbb{N}_+$. For the homogeneous setting, this would be the same rate as in \Cref{prop:hazard_lower}.
\end{remark}

\begin{proof}[Proof of \Cref{prop:hazard_lower}]
\
\noindent \textbf{Distributions construction.}  We first construct two distributions $P_1$ and $P_2$ for the triple.  Let $\beta_0 = 0$ for both distributions and consider identical censoring distributions.  For the distributions of the event times $\widetilde{T}$, we let them be exponential distributions $\mathrm{Exp}(C_{\lambda})$ and $\mathrm{Exp}(C_{\lambda} - h)$, with $C_{\lambda} > 0$ being an absolute constant and $h > 0$ to be specified.

\medskip
\noindent \textbf{Data processing inequalities.} Since $\beta_0 = 0$ and the censoring distributions are identical, we have that
\[
    D_{\mathrm{KL}}(P_1, P_2) = D_\mathrm{KL}(\mathrm{Exp}(C_\lambda), \mathrm{Exp}(C_\lambda-h)) = \log(C_\lambda) - \log(C_\lambda-h) + \frac{C_\lambda-h}{C_\lambda} - 1\leq h^2
\]
and
\begin{align*}
    D_{\mathrm{TV}}(P_1, P_2) & = D_\mathrm{TV}(\mathrm{Exp}(C_\lambda), \mathrm{Exp}(C_\lambda-h)) = \frac{1}{2}\int_0^\infty \left| C_\lambda e^{-C_\lambda x} - (C_\lambda-h) e^{-(C_\lambda-h)x} \right| \dint x \\
    &\leq \frac{C_\lambda}{2}\int_0^\infty |e^{-C_\lambda x}(1 - e^{hx})| \dint x + h \int_0^\infty e^{-(C_\lambda-h)x} \dint x \asymp h.
\end{align*}

Let $R := (R_1, \dots R_S)$ be the set of private transcripts generated from the mechanisms $(Q_s)_{s \in [S]}$. By Lemma C.7 of \cite{cai2024optimal},  for any $\mathcal{S} \subseteq [S]$, it holds that  
\begin{equation*}
    D_\textrm{TV}(RP_1, RP_2) \lesssim  \sqrt{\sum_{s \in \mathcal{S}}(n_s \epsilon_s D_\mathrm{TV}(P_1, P_2) )^2 + \sum_{s \notin \mathcal{S}} n_s D_\mathrm{KL}(P_1, P_2) } + \sum_{s \in \mathcal{S}} n_s \delta_s D_\mathrm{TV}(P_1, P_2),
\end{equation*}
provided that $n_s \epsilon_s D_\mathrm{TV}(P_1, P_2) \lesssim 1$ for all $s \in \mathcal{S}$. We can choose
\begin{equation*}
    \mathcal{S} := \{s \in [S] : \epsilon_s^2 < 1/n_s\} \text{ and } h^2 \asymp \frac{1}{\sum_{s=1}^S \min(n_s, n_s^2 \epsilon_s^2)}.
\end{equation*}
Therefore under the assumption of 
    $\sum_{s=1}^S n_s \delta_s \mathbbm{1} \left\{\epsilon_s^2 \leq 1/n_s\right\} \lesssim \sqrt{\sum_{s=1}^S \min(n_s, n_s^2 \epsilon_s^2)}$,
we have that $D_\mathrm{TV}(RP_1, RP_2) \lesssim 1$.

\medskip
\noindent \textbf{Application of Le Cam's lemma.}
Since $\sup_{t \in [0, 1]} |\Lambda_1(t) -\Lambda_2(t)| = h$, applying Le Cam's two point method leads to the proposition:
\begin{align*}
    \inf_{Q \in \mathcal{Q}} \inf_{\widehat{\Lambda}} \sup_{\Lambda_0 \in \mathcal{H}} \mathbb{E} \left[ \sup_{t \in [0, 1]} | \widehat{\Lambda}(t) - \Lambda_0(t)| \right] & \geq \frac{1}{4} \sup_{t \in [0, 1]} \left| \Lambda_1(t) - \Lambda_2(t) \right|  \left(1 - D_{\mathrm{TV}}(RP_1, RP_2) \right) \gtrsim h.
\end{align*}
\end{proof}

\subsection{Proof of Theorem~\ref{prop-optimality-survival}} \label{app-proof-prop-optimality-survival}

\begin{proof}[Proof of \Cref{prop-optimality-survival}] 
\textbf{Lower bound:} it suffices to show that, for any $\beta_1 \neq \beta_2$, we have
\[
    \sup_{z: \|z\|_2 \leq C_Z} |S_{\beta_1}(1; z) - S_{\beta_2}(1; z)| \gtrsim \|\beta_1 - \beta_2\|_2.
\]

Construct $\Lambda_0(t) = \lambda t$ and define the function $f(x) = \exp(-\lambda e^x)$.  We then have that $S_{\beta} (1; z)= f(\beta^{\top} z)$ and the derivative of $f$ is given by 
\[
    f'(x) = \exp(-\lambda e^x) (-\lambda e^x).
\]
Since $\|\beta\|_2 \leq C_{\beta}$ and $\|z\|_2 \leq C_Z$, we consider the range $x \in [-C_{\beta}C_Z, C_{\beta} C_Z]$. For $x$ in this range we can bound 
\[
    |f'(x)| \geq \lambda e^{-C_{\beta}C_Z} \exp(-e^{C_{\beta}C_Z}).
\]

It follows from the mean value theorem that there exists $\tilde{x} \in [\beta_1, \beta_2]$ such that
\[
    f(\beta_1^{\top}z) - f(\beta_2^{\top}z) = f'(\tilde{x}) (\beta_1 - \beta_2)^{\top} z.
\]
Taking $z^* = (\beta_1 - \beta_2)/\|\beta_1 - \beta_2\|_2$ leads to
\[
   \sup_{z: \|z\|_2\leq C_Z} |S_{\beta_1}(1; z) - S_{\beta_2}(1; z)| \geq |S_{\beta_1}(1; z^*) - S_{\beta_2}(1; z^*)| \geq e^{-C_{\beta}C_Z} \exp(-e^{C_{\beta}C_Z}) \|\beta_1 - \beta_2\|_2.
\] 

\medskip
\textbf{Upper bound:} we may bound 
\begin{align}
\sup_{z \in B_{C_Z}(0)}&\mathbb{E}_{\widehat{\Lambda}_0, \widehat{\beta}} \left[ \sup_{t \in [0, 1]} \left| \exp \left\{-\widehat{\Lambda}_0(t) \exp(\widehat{\beta}^\top z) \right\} - \exp \left\{-\Lambda_0(t) \exp(\beta_0^\top z) \right\} \right|^2\right] \notag\\
    &\stackrel{(a)}{\leq} \sup_{z \in B_{C_Z}(0)}\mathbb{E}_{\widehat{\Lambda}_0, \widehat{\beta}} \left[ \sup_{t \in [0, 1]} \left| \widehat{\Lambda}_0(t) \exp(\widehat{\beta}^\top z) - \Lambda_0(t) \exp(\beta_0^\top z) \right|^2 \right] \notag\\
        &\leq \sup_{Z \in \mathbb{B}_{C_Z}(0)} 2\mathbb{E}  \left[ \sup_{t \in [0, 1]} \left| \{\widehat{\Lambda}_0(t) - \Lambda_0(t)\} \exp(\widehat{\beta}^\top Z)\right|^2 \right] \notag\\
        &\quad+ \sup_{Z \in \mathbb{B}_{C_\beta}(0)} 2\mathbb{E}\left[ \sup_{t \in [0, 1]} \left| \Lambda_0(t) \{ \exp(\widehat{\beta}^\top Z) -  \exp(\beta_0^\top Z)\}  \right|^2 \right] \notag\\
        &\stackrel{(b)}{\lesssim} \mathbb{E} \left[ \sup_{t \in [0, 1]} \left| \widehat{\Lambda}_0(t) - \Lambda_0(t) \right|^2\right] + \sup_{Z \in \mathbb{B}_{C_\beta}(0)}\mathbb{E}\left[  \left| (\beta_0 - \widehat{\beta})^\top Z  \right|^2 \right]\label{eq:sup_square}
    \end{align}
where $(a)$ is due to the mean value theorem and that $-\widehat{\Lambda}_0(t) \exp(\widehat{\beta}^\top Z(t)), -\Lambda_0(t) \exp(\beta_0^\top Z(t)) \leq 0$. The inequality $(b)$ is due  to the truncation steps in \Cref{alg:FDP-SGD} and that ensures $\widehat{\beta}$ is bounded, and the boundedness assumptions on $Z$, $\lambda_0(t)$ and $\beta_0$. 

It remains to bound the first term of \eqref{eq:sup_square}. By the Cauchy--Schwarz inequality, we may bound  
    \begin{align}
 \mathbb{E}\left[\sup_{\tau \in [0, 1]} \left| \widehat{\Lambda}(\tau) - \Lambda_0(\tau) \right|^2 \right] &\leq 5\,\mathbb{E} \left[ \sup_{\tau \in [0, 1]} \left| \sum_{s=1}^S v_s\sum_{i=1}^{n_s} \int_0^\tau \frac{\dint N_{s, i}(t)}{n_s \max(c, S^{(0)}_s(t, \widehat{\beta}))} - \frac{\dint N_{s, i}(t)}{n_s S^{(0)}_s(t, \widehat{\beta})} \right|^2\right] \notag\\
 & \quad + 5\,\mathbb{E} \left[ \sup_{\tau \in [0, 1]} \left|\sum_{s=1}^S v_s \sum_{i=1}^n \int_0^\tau \frac{\dint N_{s, i}(t)}{n_s S_s^{(0)}(t, \widehat{\beta})} - \frac{\dint N_{s, i}(t)}{n_s S_s^{(0)}(t, \beta_0)}\right|^2 \right]\notag \\
   & \quad + 5\,\mathbb{E} \left[ \sup_{\tau \in [0, 1]} \left| \sum_{s=1}^S v_s  \sum_{i=1}^n \int_0^\tau \frac{\dint N_{s, i}(t)}{n_s S^{(0)}_s(t, \beta_0)} -  \Lambda_0(\tau) \right|^2 \right] \notag \\
 & \quad  +5 \,\mathbb{E} \left[ \max_{\mathbf{m} \in M} \left| \sum_{l=1}^h \sum_{s=1}^S v_s W_{s, l,m_l} \right|^2 \right] \notag\\
 & \quad+ \sup_{\tau, \tau' \in [0, 1]:\, |\tau - \tau'| \leq 1/2^h} 5\,\left| \Lambda_0(\tau) - \Lambda_0(\tau') \right|^2 \notag \\
  &=: 5 \left\{(A) + (B) + (C) + (D) + (E) \right\}, 
\label{pf:na_main_sq}
  \end{align}
  where in the term $(D)$ we define the set $M:= \left\{\mathbf{m} := (m_1, \cdots, m_h): m_l \in [2^l] \right\}$. 
For term $(A)$, the random variable in the expectation is stochastically dominated by $Q$, where 
\begin{align*}
    Q := \left(\sum_{s=1}^S a_s B_s \right)^2, \quad a_s := Cv_s \log(n_s), \, B_s \sim \mathrm{Bernoulli}(\exp(-cn_s))
\end{align*}
with mutually independent $B_s$. We can therefore bound 
\begin{align*}
    (A) \leq \mathbb{E}[Q] &= \mathrm{Var} \left( \sum_{s=1}^S a_s B_s\right) + \left(\sum_{s=1}^S a_s \mathbb{E}[B_s] \right)^2 \\
    &\lesssim \sum_{s=1}^S v_s^2 \log(n_s)^2 \exp(-cn_s) + \left(\sum_{s=1}^S v_s \log(n_s) \exp(-cn_s)  \right)^2
\end{align*}
For Term $(B)$, we have by Cauchy--Schwarz that 
\begin{align*}
    (B) &\leq \mathbb{E} \left[ \sup_{\tau \in [0, 1]} \left( \sum_{s=1}^S v_s\right) \sum_{s=1}^S v_s \left(\sum_{i=1}^{n_s} \int_0^\tau \frac{\dint N_{s, i}(t)}{n_s S_s^{(0)}(t, \widehat{\beta})} - \frac{\dint N_{s, i}(t)}{n_s S_s^{(0)}(t, \beta_0)}  \right)^2 \right]\\
    &\leq \sum_{s=1}^S v_s \mathbb{E}\left[ \sup_{\tau \in [0, 1]}\int_0^\tau \left(\sum_{i=1}^n N_{s, i}(t) \right) \sum_{i=1}^{n_s} \int_0^\tau \left(\frac{1}{n_s S_s^{(0)}(t, \widehat{\beta})} - \frac{1}{n_s S_s^{(0)}(t, \beta_0)} \right)^2 \dint N_{s, i}(t) \right] \\
    &\leq \sum_{s=1}^S v_s \mathbb{E}\left[n_s \sum_{i=1}^{n_s} \int_0^1 \left(\frac{2C_Z \|\widehat{\beta} - \beta_0\|}{n_sS_s^{(0)}(t, \beta_0)}\right)^2 \dint N_{s, i}(t) \right]\\
    &=: \mathbb{E}[\|\widehat{\beta} - \beta_0\|^2]  \sum_{s=1}^S v_s \mathbb{E}\left[A_s\right] 
\end{align*}
Defining the event $\mathcal{E}_s := \{S_s^{(0)}(1, \beta_0) \geq \exp(-C_ZC\beta) p_0/2\}$, we have that 
\begin{align*}
    \mathbb{E}[A_s]&=\mathbb{E}[A_s | \mathcal{E}] \mathbb{P}(\mathcal{E}) + \mathbb{E}[A_s \mid \mathcal{E}^c] \mathbb{P}(\mathcal{E}^c)\\
    &\lesssim \log(n_s) + n_s \log(n_s) \exp(-cn_s). 
\end{align*}

For Term $(C)$ of \eqref{pf:na_main_sq}, we can use the Burkholder--Davis--Gundy inequality and the arguments leading to the bound for term $(III)$ in \eqref{pf:na_main} to obtain 
\begin{align*}
    (C) &= \mathbb{E} \left\{ \sup_{\tau \in [0, 1]} \left| \sum_{s=1}^S v_s \sum_{i=1}^{n_s} \int_0^\tau \frac{\dint M_{s, i}(t)}{n_sS^{(0)}_s(t, \beta_0)} \right|^2 \right\} \\
    &\lesssim \mathbb{E} \left\{ \left[ \sum_{s=1}^S v_s \sum_{i=1}^{n_s} \int \frac{\dint M_{s, i}(t)}{n_sS^{(0)}_s(t, \beta_0)} \right](1) \right\} \lesssim \frac{1}{\sum_{s=1}^S \min(n_s, n_s^2 \epsilon_s^2)}.
\end{align*} 
To bound term $(D)$, note that for all $\mathbf{m} \in M$, $X_{\mathbf{m}} := \sum_{l=1}^h \sum_{s=1}^S v_s W_{s, l,m_l}$ is distributed as $N(0, \sigma^2)$, where 

\begin{align*}
 \sigma^2 := \mathrm{Var}\left(\sum_{s=1}^S v_s W_s \right) \asymp_{\log} \sum_{s=1}^S\frac{\min(n_s, n_s^2 \epsilon_s^2)^2}{(\sum_{s=1}^S \min(n_s, n_s^2 \epsilon_s))^2} \frac{1}{n_s^2 \epsilon_s^2} \leq \frac{1}{\sum_{s=1}^S \min(n_s, n_s^2 \epsilon_s^2)}. 
\end{align*}
By a sub-exponential tail bound, we have that 
\begin{align*}
     \mathbb{P}\left(\max_{\mathbf{m} \in M} |X_\mathbf{m}^2 - \sigma^2|  >t \right) \leq 2|M| \exp\left(-c \min \left\{ \frac{t^2}{\sigma^4}, \frac{t}{\sigma^2} \right\}\right). 
\end{align*}
We can therefore bound the expectation by
\begin{align*}
    \mathbb{E} \left[ \max_{\mathbf{m} \in M} |X_\mathbf{m}^2 - \sigma^2| \right] &= \int_0^{\infty} \min \left\{1, 2|M| \exp\left(-c \min \left\{ \frac{t^2}{\sigma^4}, \frac{t}{\sigma^2} \right\}\right)  \right\} \dint t\\
    &\leq s + 2|M|\int_s^\infty \exp\left(-c \frac{t}{\sigma^2}\right) \dint t\\
    &\leq s + 2|M| \frac{s\sigma^2}{c} \exp\left(-\frac{cs}{\sigma^2} \right)
\end{align*}
where the second inequality holds provided that $s>\sigma^2$. Choosing $s \asymp \sigma^2 \log(2|M| \sigma^2)$, we have that 
\begin{align*}
\mathbb{E}\left[\max_{\textbf{m} \in M} |X_\textbf{m}^2 - \sigma^2|\right] &\leq \sigma^2 (\log |M|) \lesssim \sigma^2 \sum_{l=1}^h l \\
 &\lesssim_{\log} \frac{1}{\sum_{s=1}^S \min(n_s, n_s^2 \epsilon_s^2)}.
\end{align*}
We can therefore bound 
\begin{align*}   (D) := \mathbb{E}\left[\max_{\textbf{m} \in M} X_m^2 \right] \leq \mathbb{E} \left[ \max_{m \in M} |X_m^2 - \sigma^2| \right] + \sigma^2 \lesssim_{\log} \frac{1}{\sum_{s=1}^S \min(n_s, n_s^2 \epsilon_s^2)}.
\end{align*}
For the final term in \eqref{pf:na_main_sq}, we use that $\lambda_0(t) < C_\lambda$ to bound 
\begin{equation*}
(E) \leq C_\lambda^2/2^{2h} \lesssim \frac{1}{\sum_{s=1}^S \min(n_s, n_s^2\epsilon_s^2)}.
\end{equation*}

Summing the bounds for $(A), \dots, (E)$ in \eqref{pf:na_main_sq}, we have that
\begin{align*}
     \mathbb{E}\left[\sup_{\tau \in [0, 1]} \left| \widehat{\Lambda}(\tau) - \Lambda_0(\tau) \right|^2 \right] &\lesssim_{\log} \sum_{s=1}^S v_s^2 \log(n_s)^2 \exp(-cn_s) + \left(\sum_{s=1}^S v_s \log(n_s) \exp(-cn_s)  \right)^2 \\
     & \quad+  \mathbb{E}[\|\widehat{\beta} - \beta_0\|^2] + \frac{1}{\sum_{s=1}^S \min(n_s, n_s^2 \epsilon_s^2)}.
\end{align*}
\end{proof}

\subsection{Additional lemmas}
\label{app:lambda_equiv_optimality}
\begin{lemma}
 With $L(\cdot, \cdot)$ as any of the pointwise, $\ell_2$ or sup-norm loss on $[0, 1]$, under the same conditions as in \Cref{prop-optimality-survival}, we have the following.
\begin{itemize}    
\item Letting $\widehat{\Lambda}$ be the output of \Cref{alg:FDP-Breslow}, we have that  
    \begin{equation}
\mathbb{E}  \left[ L \left(\widehat{\Lambda}, \Lambda_0 \right) \right] \lesssim \mathbb{E} \left[\sup_{t \in [0, 1]} |\widehat{\Lambda}(t) - \Lambda_0(t)|\right].
\label{eq:equiv_loss}
\end{equation}
\item Letting $\widehat{S}( \cdot) := \exp \left(-\widehat{\Lambda}(\cdot) \right)$, we have that  
\begin{equation}
\mathbb{E}  \left[ L \left(\widehat{S}, \exp(-\Lambda_0) \right) \right] \lesssim \mathbb{E} \left[\sup_{t \in [0, 1]} |\widehat{\Lambda}(t) - \Lambda_0(t)|\right],
\label{eq:equiv_surv_loss}
\end{equation}
\item The following minimax lower bounds are equivalent 
\begin{align*}
\inf_{Q \in \mathcal{Q}} \inf_{\widehat{\Lambda}}  \sup_{\Lambda_0 \in \mathcal{H}} \mathbb{E}  \left[ L \left(\widehat{\Lambda}(\cdot), \Lambda_0(\cdot) \right) \right] \asymp & \inf_{Q \in \mathcal{Q}} \inf_{\widehat{\Lambda}}  \sup_{\Lambda_0 \in \mathcal{H}} \mathbb{E}  \left[ L \left(\widehat{S}(\cdot), \exp(-\Lambda_0(\cdot)) \right) \right] \\
\gtrsim & \frac{1}{\sqrt{\sum_{s=1}^S \min(n_s, n_s^2 \epsilon_s^2)}}.
    \end{align*}
\end{itemize}
\end{lemma}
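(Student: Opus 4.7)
The plan is to establish the three claims using three simple observations: (i) the sup-norm dominates both the $\ell_2$-norm and pointwise loss on $[0,1]$ deterministically; (ii) the map $x \mapsto \exp(-x)$ is locally Lipschitz on $[0, \infty)$, so survival-function errors are controlled by cumulative-hazard errors; and (iii) the two-point construction from the proof of \Cref{prop:hazard_lower} yields separation of matching order under all three losses, for both $\Lambda_0(\cdot)$ and $\exp(-\Lambda_0(\cdot))$.

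For the first claim \eqref{eq:equiv_loss}, the deterministic bounds $\|\widehat{\Lambda}-\Lambda_0\|_2 \leq \sup_{t \in [0,1]}|\widehat{\Lambda}(t)-\Lambda_0(t)|$ and $|\widehat{\Lambda}(t_0)-\Lambda_0(t_0)| \leq \sup_{t \in [0,1]}|\widehat{\Lambda}(t)-\Lambda_0(t)|$ give the result upon taking expectations. For the second claim \eqref{eq:equiv_surv_loss}, I would apply the mean value theorem together with $\Lambda_0(t) \geq 0$ to obtain the pointwise inequality $|\widehat{S}(t)-\exp(-\Lambda_0(t))| \leq |\widehat{\Lambda}(t)-\Lambda_0(t)|\exp(|\widehat{\Lambda}(t)-\Lambda_0(t)|)$, using that $\exp(-\min(\widehat{\Lambda}(t), \Lambda_0(t))) \leq \exp(|\widehat{\Lambda}(t) - \Lambda_0(t)|)$ when $\Lambda_0(t) \geq 0$. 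Setting $X := \sup_t|\widehat{\Lambda}(t)-\Lambda_0(t)|$, it then suffices to show $\mathbb{E}[X\exp(X)] \lesssim \mathbb{E}[X]$, which I would obtain by splitting on $\{X \leq 1\}$ (where the Lipschitz factor is at most $e$) and using sub-Gaussian concentration of $X$ around its mean on the complement, leveraging that $\mathbb{E}[X] \to 0$ by \Cref{prop:hazard_upper}.

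For the third claim on minimax equivalence, the upper bounds follow from the two preceding claims combined with \Cref{prop:hazard_upper}. For the lower bounds, I would recycle the two-point construction in the proof of \Cref{prop:hazard_lower}, namely $\Lambda_1(t) = C_\lambda t$ and $\Lambda_2(t) = (C_\lambda-h)t$ with $h \asymp 1/\sqrt{\sum_s \min(n_s, n_s^2 \epsilon_s^2)}$. This yields $|\Lambda_1(t) - \Lambda_2(t)| = ht$, delivering separation of order $h$ in pointwise (at any fixed $t \asymp 1$), $\ell_2$ (constant $1/\sqrt{3}$), and sup norms. For the survival functions, $|\exp(-\Lambda_1(t)) - \exp(-\Lambda_2(t))| = \exp(-C_\lambda t)(e^{ht}-1) \asymp h$ for $t \asymp 1$ and small $h$, with constants depending only on $C_\lambda$. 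The total-variation data-processing argument in the proof of \Cref{prop:hazard_lower} goes through unchanged, and Le Cam's two-point method delivers the lower bound $\asymp 1/\sqrt{\sum_s \min(n_s, n_s^2\epsilon_s^2)}$ in each case.

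The main obstacle is the inequality $\mathbb{E}[X \exp(X)] \lesssim \mathbb{E}[X]$ used in \eqref{eq:equiv_surv_loss}, since $\widehat{\Lambda}(t)$ is not constrained to be non-negative because of the Gaussian perturbations in \Cref{alg:FDP-Breslow}. Controlling the Orlicz-type moment of $X$ cleanly requires tracking the sub-Gaussian concentration of the noise summed across the $O(\log(\sum_s n_s))$ levels of the binary tree, together with the bias and stochastic terms in \eqref{eq:breslow_upper}, rather than relying on a purely worst-case Lipschitz estimate.
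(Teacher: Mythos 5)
Your proposal follows the paper's proof essentially step for step: sup-norm domination of the pointwise and $\ell_2$ losses for \eqref{eq:equiv_loss}, the mean value theorem for \eqref{eq:equiv_surv_loss}, and recycling the exponential two-point construction from the proof of \Cref{prop:hazard_lower} (whose separation is linear in $t$, hence of order $h$ simultaneously in all three losses and, via \eqref{pf:MVT_survival_function}, for the survival functions). The one place you go beyond the paper is in observing that the mean-value-theorem factor $\exp(-\xi)$ is bounded by $1$ only when both arguments are non-negative, which can fail for $\widehat{\Lambda}$ because of the Gaussian tree noise; the paper's proof silently transfers the bound $\exp(-\xi)\in[\exp(-C_\lambda),1]$ from the two hypotheses in $\mathcal{H}$ to the estimator, so your concern is warranted. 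Your Orlicz/sub-Gaussian tail argument would work (the truncated Breslow sums are deterministically bounded by $1/c$ and the noise at any $t$ is a sum of at most $h$ Gaussians), but a cleaner fix is to replace $\widehat{\Lambda}$ by $\max(\widehat{\Lambda},0)$, which is valid post-processing and can only decrease $|\widehat{\Lambda}(t)-\Lambda_0(t)|$ since $\Lambda_0\ge 0$, after which the worst-case Lipschitz constant is exactly $1$.
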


\begin{proof}
We first prove the last statement.  Recall that \Cref{prop:hazard_lower} was proven by Le Cam's two-point method with two exponential distributions with rate differing by $h$. Denoting their cumulative hazards as $\Lambda_1$ and $\Lambda_2$, we have for any $\tau \in [0, 1]$ that $\Lambda_1(\tau) - \Lambda_2(\tau) = \tau h = \tau \sup_{t \in [0, 1]} |\Lambda_1(t) - \Lambda_2(t)|$ and 
\begin{equation*}
    \left(\int_0^1  (\Lambda_1(t) - \Lambda_2(t))^2 \dint t\right)^{1/2} = \left( \int_0^1 (ht)^2 \dint t\right)^{1/2} = h/\sqrt{3}.
\end{equation*}

To see the equivalence to survival function estimation, by the mean value theorem, we have that for all $t \in [0, 1]$
\begin{equation}
        -\exp(-\xi) = \frac{\exp(-\Lambda_1(t)) - \exp(-\Lambda_2(t))}{\Lambda_1(t) - \Lambda_2(t)}
    \label{pf:MVT_survival_function}
\end{equation}
for some $\xi \in (\Lambda_1(t), \Lambda_2(t))$. By the boundedness condition $\lambda_0(t) < C_\lambda$, we have $\exp(-\xi) \in [\exp(-C_\lambda) , 1]$. 

For the first statement, note that for any two functions $f$ and $g$, their pointwise absolute difference is bounded above by their $\ell_\infty$ difference, and that $\sqrt{\int_0^\tau (f(t) - g(t))^2 \dint t} \leq \sqrt{\tau} \sup_{t \in [0, \tau]} |f(t) - g(t)|$ for $f$ and $g$ bounded and measurable. 

The second statement follows from taking expectations over a similar expression to \eqref{pf:MVT_survival_function} to bound the $\ell_\infty$ loss of $\widehat{S}$, and then applying the same arguments as for the first statement for the other losses.
\end{proof}

\begin{lemma} \label{prop:hazard_lower-int}
Fix privacy parameters $\{\epsilon_s, \delta_s\}_{s=1}^S$ such that $\delta_s \leq n_s \epsilon_s^2$ for all $s \in [S]$. Let $\mathcal{Q}$ be the set of privacy mechanisms defined as $Q \in \mathcal{Q}$ if and only if there is some $K \in \mathbb{N}_+$ such that $Q$ satisfies $(\{(\epsilon_s, \delta_s)\}_{s \in [S]}, K)$-FDP and at each iteration $k \in [K]$, server $s \in [S]$ uses $n_s/K$ observations.  It then holds that
    \begin{equation*}
        \inf_{Q \in \mathcal{Q}} \inf_{\widehat{\Lambda}}  \sup_{\Lambda_0} \mathbb{E} \left[ \sup_{t \in [0, 1]} \left|\widehat{\Lambda}(t) - \Lambda_0(t) \right| \right] \gtrsim \frac{1}{\sqrt{\min  \left\{\sum_{s=1}^S n_s, \sum_{s=1}^S n_s^2 \epsilon_s^2 \right\}}}.
    \end{equation*}
\end{lemma}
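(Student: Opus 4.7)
The plan is to repeat the two-point Le Cam argument used in \Cref{prop:hazard_lower}, while replacing the non-interactive data-processing inequality with a counterpart tailored to the class of $K$-round interactive mechanisms operating on fresh per-round batches of size $n_s/K$.

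\textbf{Setup.} I would recycle the distributional construction from \Cref{prop:hazard_lower}: take $\beta_0=0$, fix an arbitrary common censoring law, and let $P_1$ and $P_2$ be the event-time distributions $\mathrm{Exp}(C_\lambda)$ and $\mathrm{Exp}(C_\lambda-h)$ for a free parameter $h>0$ to be tuned. These two distributions satisfy \Cref{assp:baseline}\ref{assp:baseline-stronger}, induce cumulative hazards $\Lambda_1,\Lambda_2$ with $\sup_{t\in[0,1]}|\Lambda_1(t)-\Lambda_2(t)|=h$, and obey the divergence bounds $D_{\mathrm{KL}}(P_1,P_2)\lesssim h^2$ and $D_{\mathrm{TV}}(P_1,P_2)\lesssim h$ derived in that proof. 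By Le Cam's two-point lemma it then suffices to show that $D_{\mathrm{TV}}(QP_1,QP_2)$ stays bounded away from $1$ uniformly in $K\in\mathbb{N}_+$ and in $Q\in\mathcal{Q}$, once $h\asymp 1/\sqrt{\min\{\sum_s n_s,\sum_s n_s^2\epsilon_s^2\}}$.

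\textbf{Key data-processing step.} The central ingredient is an interactive analogue of Lemma~C.7 of \cite{cai2024optimal}: for $Q$ satisfying the $(\{(\epsilon_s,\delta_s)\}_{s\in[S]},K)$-FDP constraint with fresh per-round batches of size $n_s/K$, and for any $\mathcal{S}\subseteq[S]$ for which $n_s\epsilon_s D_{\mathrm{TV}}(P_1,P_2)\lesssim 1$ on $\mathcal{S}$, I would establish the $K$-independent bound
\[
D_{\mathrm{TV}}(QP_1,QP_2)\;\lesssim\;\sqrt{\sum_{s\in\mathcal{S}}(n_s\epsilon_s)^2 D_{\mathrm{TV}}(P_1,P_2)^2+\sum_{s\notin\mathcal{S}}n_s D_{\mathrm{KL}}(P_1,P_2)}+\sum_{s\in\mathcal{S}}n_s\delta_s D_{\mathrm{TV}}(P_1,P_2).
\]
Taking $\mathcal{S}=\{s:\epsilon_s^2<1/n_s\}$, inserting the divergence bounds above, and invoking the assumption $\delta_s\le n_s\epsilon_s^2$ reduces the right-hand side to a small absolute constant exactly when $h$ has the asserted order, and the two-point Le Cam inequality then yields the claim. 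The interactive lemma itself follows by the sample-level coupling/hybrid argument of \cite{cai2024optimal}: couple $P_1^{\otimes n_s}$ and $P_2^{\otimes n_s}$ coordinate-wise so that the number of differing samples at server $s$ is $\mathrm{Bin}(n_s,D_{\mathrm{TV}}(P_1,P_2))$, and then apply group privacy jointly to the differing coordinates. Because every sample participates in exactly one batch, a single server-level CDP hybrid step suffices, and the composition across rounds collapses to ordinary group privacy on the per-sample $(\epsilon_s,\delta_s)$-CDP guarantee.

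\textbf{Main obstacle.} The delicate point is precisely this $K$-independence. A naive per-round chain-rule application of the CDP TV bound contributes $(n_s/K)\epsilon_s D_{\mathrm{TV}}(P_1,P_2)$ per round, and quadratic tensorisation over the $K$ rounds then introduces an extraneous $\sqrt{K}$ factor, weakening the right-hand side to $\sqrt{\sum_s n_s^2\epsilon_s^2/K}\,D_{\mathrm{TV}}(P_1,P_2)$; the adversary could then choose $K$ large and defeat the bound. Circumventing this requires performing the coupling simultaneously across all rounds, so that the effective ``group size'' passed to group privacy equals the total number of differing samples at server $s$ rather than a per-round quantity multiplied by $K$. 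Once the sample-level coupling is carried out and this observation is made explicit, the remainder of the argument mirrors the proof of \Cref{prop:hazard_lower} verbatim, with the final tuning of $h$ producing the stated lower bound.
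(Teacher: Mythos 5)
Your overall skeleton (Le Cam two-point with the exponential pair, reducing to a $K$-uniform bound on $D_{\mathrm{TV}}(QP_1,QP_2)$) matches the paper, but your diagnosis of the ``main obstacle'' and your proposed fix are both off. The per-round chain rule does \emph{not} lose a $\sqrt{K}$ factor: the valid composition for total variation across adaptive rounds is the linear one,
\[
D_{\mathrm{TV}}\left(P_1^{M^{(K)}},P_2^{M^{(K)}}\right) \le \sum_{k=1}^K \mathbb{E}\left[D_{\mathrm{TV}}\left(P_1^{R^{(k)}\mid M^{(k-1)}},P_2^{R^{(k)}\mid M^{(k-1)}}\right)\right],
\]
and each round contributes $\bigl(\sum_s (n_s\epsilon_s/K)^2\bigr)^{1/2}D_{\mathrm{TV}}(P_1,P_2)=K^{-1}\bigl(\sum_s n_s^2\epsilon_s^2\bigr)^{1/2}D_{\mathrm{TV}}(P_1,P_2)$, where the square root is taken over the conditionally independent servers within a single round, not over rounds. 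Summing $K$ such terms gives exactly $\bigl(\sum_s n_s^2\epsilon_s^2\bigr)^{1/2}D_{\mathrm{TV}}(P_1,P_2)$, with the $K$'s cancelling; this is precisely the computation in the paper, so the step you dismiss as ``naive'' is in fact the proof. The ``quadratic tensorisation over the $K$ rounds'' you describe is not a valid operation for TV under adaptive composition, and in any case it would yield a \emph{smaller} upper bound on the TV distance, which would help rather than hurt the lower bound.

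Conversely, your proposed repair --- couple all $n_s$ samples at a server simultaneously and apply ``a single server-level CDP hybrid step'' --- glosses over the genuine difficulty. The per-round guarantee is only $(\epsilon_s,\delta_s)$-CDP \emph{conditionally on the past transcripts} $M^{(k-1)}$, and those transcripts are themselves functions of earlier batches of the very data being coupled; to run the Karwa--Vadhan recursion $q(h)\le e^{\epsilon}q(h-1)+\delta$ one must carry out the hybrid argument conditionally on $M^{(k-1)}$, round by round. Making this conditional coupling rigorous is the content of the paper's \Cref{lemma:cond_maxdiv} and is the part of the argument that actually requires new work; your sketch assumes it away. Finally, the ``interactive analogue of Lemma C.7'' you state, with the per-server minimum inside the square root, is stronger than what the lemma asserts (the minimum sits outside the two sums) and does not follow from either your sketch or the paper's argument: applying the non-private data-processing bound per round to the servers outside $\mathcal{S}$ costs a factor $\sqrt{K}$ after the linear sum over rounds, which is exactly why the paper treats the fully private and fully non-private regimes as two separate global bounds and takes the better one.
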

\begin{proof}[Proof] For $j \in \{0, 1\}$, let $P_j^{M^{(K)}}$ denote the push-forward of the collection of private transcripts when the data is generated from i.i.d~draws from $P_j$, and similarly $P_j^{R^{(k)} \mid M^{(k-1)}}$ for the $k$-th round mechanism conditional on the past transcripts $M^{(k-1)}$. 

By the data-processing inequality, we can upper bound 
\begin{equation*}
    D_{\mathrm{TV}}(P_0^{M^{(K)}}, P_1^{M^{(K)}}) \leq D_{\mathrm{TV}} \left(P_0^{\otimes \sum_{s=1}^S n_s}, P_1^{\otimes \sum_{s=1}^S n_s} \right) \leq  \left(\frac{D_{\mathrm{KL}}(P_0, P_1)}{2} \sum_{s=1}^S n_s \right)^{1/2}.
\end{equation*}
Therefore, by a similar approach to the proof of \Cref{prop:hazard_lower}, we obtain the non-private lower bound in the lemma. 

To account for the private rate, we consider a second bound
\begin{equation}
    D_\mathrm{TV}(P_0^{M^{(K)}}, P_1^{M^{(K)}}) \leq D_\mathrm{TV}(P_0^{M^{(K-1)}}, P_1^{M^{(K-1)}}) + \mathbb{E}_{M_0^{(K-1)}} \left[D_{\mathrm{TV}}\left(\mathbb{P}_0^{R^{(K)} \mid M^{(K-1)}}, \mathbb{P}_1^{R^{(K)} \mid M^{(K-1)}} \right) \right].
    \label{eq:TV_chainrule}
    \end{equation}

Fix any $k \in [K]$ and $M^{(k-1)}$, and let 
\begin{equation*}
\epsilon_s^{'(k)} := 6b_s^{(k)} \epsilon_s D_{TV}(P_0, P_1) \quad \mathrm{and} \quad  \delta_s^{'(k)} := \exp(\epsilon'^{(k)}_s)  b_{s}^{(k)} \delta_s D_{TV}(P_0, P_1)
\end{equation*}
where $b_{s}^{(k)}$ is the number of observations used by server $s \in [S]$ at iteration $k$.  \Cref{lemma:cond_maxdiv} and Lemma C.4 in \cite{cai2024optimal} imply that for all $s \in [S]$, there exists $\tilde{R}_{0, s}$ and $\tilde{R}_{1, s}$ such that 
\begin{equation*}
    D_{\mathrm{TV}} \left(P_0 ^{R_s^{(k)} \mid M^{(k-1)}}, \tilde{R}_{0, s} \right) \leq \delta_s'^{(k)}, \quad D_{\mathrm{TV}}\left(P_1 ^{R_s^{(k)} \mid M^{(k-1)}}, \tilde{R}_{1, s} \right) \leq \delta_s'^{(k)},
\end{equation*} 
and $D_\mathrm{KL}(\tilde{R}_{0, s}, \tilde{R}_{1, s}) \leq \epsilon_s'^{(k)} (\exp(\epsilon_s'^{(k)})-1)$. 

Denote $\tilde{R_j} := \otimes_{s=1}^S \tilde{R}_{j, s}$ for $j \in \{0, 1\}$. The next part of the proof follows similar arguments to the proof of Lemma C.7 in \cite{cai2024optimal}. Due to the independence between $(R_1^{(k)}, \dots, R_s^{(k)}) =: R^{(k)}$ conditional on $M^{(k-1)}$, by the triangle inequality, we have 
\begin{align}
    D_{\mathrm{TV}}\left(P_0^{R^{(k)} \mid M^{(k-1)}}, P_1^{R^{(k)} \mid M^{(k-1)}} \right) &\leq D_{\mathrm{TV}}\left(P_0^{R^{(k)} \mid M^{(k-1)}}, \tilde{R}_0 \right) \notag\\
    &\quad+ D_{\mathrm{TV}}(\tilde{R}_0, \tilde{R}_1) + D_\mathrm{TV}\left(\tilde{R}_1, P_1^{R^{(k)} \mid M^{(k-1)}}\right).
    \label{pf:tv_processing_triangle}
\end{align}

From the conditional independence, we have
\begin{equation*}
D_{\mathrm{TV}}\left(P_0^{R^{(k)} \mid M^{(k-1)}}, \tilde{R}_0 \right) \leq \sum_{s=1}^S D_{\mathrm{TV}} \left(P_0 ^{R_s^{(k)} \mid M^{(k-1)}} \right) \leq \sum_{s=1}^S \delta_s'^{(k)}
\end{equation*}
and likewise for the third term in \eqref{pf:tv_processing_triangle}. We also have that
\begin{equation*}
    D_{\mathrm{TV}}(\tilde{R}_0, \tilde{R}_1) \leq \left(\frac{1}{2}\sum_{s=1}^S D_\mathrm{KL}(\tilde{R}_{0, s}, \tilde{R}_{1, s}) \right)^{1/2} \leq  \left( \frac{1}{2}\sum_{s=1}^S \epsilon_s(e^{\epsilon_s}-1) \right)^{1/2}.
\end{equation*}
Substituting these bounds into \eqref{pf:tv_processing_triangle}, we have that 
\begin{equation*}
D_{\mathrm{TV}}\left(P_0^{R^{(k)} \mid M^{(k-1)}}, P_1^{R^{(k)} \mid M^{(k-1)}} \right) \leq \left( \frac{1}{2}\sum_{s=1}^S \epsilon_s'^{(k)} (\exp(\epsilon_s'^{(k)})-1) \right)^{1/2} + 2 \sum_{s=1}^S \delta_s'^{(k)}
\end{equation*}
for all $k \in [K]$ and $M^{(k-1)}$. Choosing $P_0$ and $P_1$ to be exponential distributions with rates that differ by 
    $h \asymp (\sum_{s=1}^S n_s^2 \epsilon_s^2)^{-1/2}$, by iterating \eqref{eq:TV_chainrule} and applying the bound above, we have 
    \begin{align*}
        D_\mathrm{TV}(P_0^{M^{(K)}}, P_1^{M^{(K)}}) &\lesssim K D_{\mathrm{TV}}(P_0, P_1)\left( \left(\sum_{s=1}^S (n_s/K)^2 \epsilon_s^2 \right)^{1/2} + \sum_{s=1}^S \delta_sn_s/K  \right) \lesssim 1. 
    \end{align*}
    The first inequality holds because $n_s \epsilon_s D_{TV}(P_0, P_1)/K \leq 1$ and the second inequality is due to the assumption $\delta_s \leq n_s \epsilon_s^2$ for all $s \in [S]$. Applying Le Cam's lemma leads to the private lower bound. 
\end{proof}

\begin{lemma} 
    \label{lemma:cond_maxdiv}
For all $s \in [S]$ and $M^{(k-1)}$, $k \in [K]$, we have 
        \begin{equation*}
    D_\infty^{\delta'_s} \left(\mathbb{P}_0^{R^{k}_s \mid M^{(k-1)}}, \mathbb{P}_1^{R^{k}_s \mid M^{(k-1)}} \right) \leq \epsilon'_s,\quad \epsilon'_s := 6n_s\epsilon_s D_{TV}(P_0, P_1),\, \delta'_s := e^{\epsilon'_s} n \delta_s D_{TV}(P_0, P_1).
    \end{equation*}
\end{lemma}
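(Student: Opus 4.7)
\textbf{Proof plan for Lemma~\ref{lemma:cond_maxdiv}.} The strategy is to transfer the $(\epsilon_s,\delta_s)$-CDP guarantee of the mechanism $Q_s^{(k)}(\cdot\mid M^{(k-1)},\cdot)$, which controls indistinguishability only between datasets differing in at most one record, to the two product distributions $P_0^{\otimes n_s}$ and $P_1^{\otimes n_s}$ that feed into the mechanism. The transfer is accomplished via the classical recipe of optimal coupling, group privacy, and a Chernoff-type tail bound on the Hamming distance between the coupled datasets.

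The plan is as follows. First I would fix $M^{(k-1)}$ (which appears only as a conditioning event and plays no active role) and write
\[
\pi_j(A) := \mathbb{P}_j\bigl(R_s^{(k)}\in A\,\big|\,M^{(k-1)}\bigr) = \mathbb{E}_{B\sim P_j^{\otimes n_s}}\bigl[Q_s^{(k)}(A\mid B,M^{(k-1)})\bigr],\qquad j\in\{0,1\}.
\]
Then I would take the optimal (maximal) coupling of $P_0$ and $P_1$ and couple $n_s$ i.i.d.\ copies coordinate-wise to obtain a joint law of $(B,B')$ with marginals $P_0^{\otimes n_s}$ and $P_1^{\otimes n_s}$, such that the coordinate-disagreement indicators are i.i.d.\ Bernoulli$\bigl(D_{\mathrm{TV}}(P_0,P_1)\bigr)$. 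Let $N=d_{\mathrm{Ham}}(B,B')\sim\mathrm{Bin}(n_s,D_{\mathrm{TV}}(P_0,P_1))$. By group privacy for $(\epsilon_s,\delta_s)$-CDP \citep[e.g.][]{dwork2014algorithmic}, conditional on any realisation of $(B,B')$ with $d_{\mathrm{Ham}}(B,B')=N$,
\[
Q_s^{(k)}(A\mid B,M^{(k-1)}) \leq e^{N\epsilon_s}\,Q_s^{(k)}(A\mid B',M^{(k-1)}) + N e^{(N-1)\epsilon_s}\delta_s.
\]

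Next I would take expectations over $(B,B')$ and split on the event $\mathcal{E}:=\{N\leq \tau\}$ for the threshold $\tau:=6 n_s D_{\mathrm{TV}}(P_0,P_1)$. On $\mathcal{E}$ we replace $e^{N\epsilon_s}$ by $e^{\tau\epsilon_s}=e^{\epsilon'_s}$, producing the desired $e^{\epsilon'_s}\pi_1(A)$ term. The complementary event is handled by the crude bound $Q_s^{(k)}(A\mid B,M^{(k-1)})\leq 1$, so that its total contribution is $\mathbb{P}(N>\tau)$, for which a binomial Chernoff bound gives an exponentially small estimate that is absorbed into $\delta'_s$. The expectation of the additive $N e^{(N-1)\epsilon_s}\delta_s$ term, restricted to $\mathcal{E}$, is bounded by $e^{\tau\epsilon_s}\,\delta_s\,\mathbb{E}[N] = e^{\epsilon'_s}\,n_s\,\delta_s\,D_{\mathrm{TV}}(P_0,P_1)$, which is exactly $\delta'_s$. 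Combining all the pieces yields the claimed inequality $\pi_0(A)\leq e^{\epsilon'_s}\pi_1(A)+\delta'_s$ for every measurable $A$, i.e.\ $D_{\infty}^{\delta'_s}(\pi_0,\pi_1)\leq \epsilon'_s$.

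The main obstacle I anticipate is the careful bookkeeping of constants so that everything fits cleanly into $\epsilon'_s=6n_s\epsilon_s D_{\mathrm{TV}}(P_0,P_1)$ and $\delta'_s=e^{\epsilon'_s}n_s\delta_s D_{\mathrm{TV}}(P_0,P_1)$. The factor of $6$ must be simultaneously large enough that the binomial tail $\mathbb{P}(N>\tau)$ is swallowed by $\delta'_s$ for all relevant regimes of $\mu:=n_s D_{\mathrm{TV}}(P_0,P_1)$ (whether $\mu\lesssim 1$ or $\mu\gg 1$), while remaining linear in $\mu\epsilon_s$ so that $\epsilon'_s$ scales correctly. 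In the regime $\mu\ll 1$ one instead uses $\mathbb{P}(N\geq k)\leq\binom{n_s}{k}D_{\mathrm{TV}}^k\leq (e\mu/k)^k$, and in the regime $\mu\gtrsim 1$ a multiplicative Chernoff bound gives $\mathbb{P}(N>6\mu)\leq e^{-c\mu}$; both estimates can be absorbed into the $e^{\epsilon'_s}\mu\delta_s$ slack by a conservative choice of the constant. Once these estimates are in place the proof reduces to arithmetic, and no survival-analysis-specific ideas are required.
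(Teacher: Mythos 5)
Your route -- coordinatewise maximal coupling, then group privacy at Hamming distance $N$, then truncation at $\tau=6 n_s D_{\mathrm{TV}}(P_0,P_1)$ -- is related to but not the same as the paper's, and the truncation step is where it breaks. After splitting on $\{N\le\tau\}$ you must absorb the tail $\mathbb{P}(N>\tau)$ into $\delta'_s$. But $\mathbb{P}(N>6\mu)$, with $\mu:=n_s D_{\mathrm{TV}}(P_0,P_1)=\mathbb{E}[N]$, is of order $e^{-c\mu}$ and is completely independent of $\delta_s$, whereas $\delta'_s=e^{\epsilon'_s}\mu\,\delta_s$ is proportional to $\delta_s$. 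No ``conservative choice of the constant $6$'' can fix this: for $\delta_s=0$ the claimed bound is a purely multiplicative guarantee that a truncation argument with a nonzero tail cannot deliver, and, more to the point, in the regime where this lemma is actually applied ($D_{\mathrm{TV}}(P_0,P_1)\asymp h$ with $n_s h=O(1)$, hence $\mu=O(1)$) the tail $\mathbb{P}(N>6\mu)$ is a fixed constant while $\delta_s$ is polynomially small in $n_s$; carrying such constants through the sum over servers and rounds would destroy the Le Cam argument downstream.

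The paper instead adapts Lemma 6.1 of Karwa and Vadhan: with the same coupling, define $q_j(h):=\mathbb{P}_j(E\mid M^{(k-1)},H=h)$, where $H\sim\mathrm{Bin}(n_s,D_{\mathrm{TV}}(P_0,P_1))$ counts the disagreeing coordinates. One shows the one-step bound $q_j(h)\le e^{\epsilon_s}q_j(h-1)+\delta_s$ within each $j$, and, crucially, $q_0(0)=q_1(0)$, since conditional on $H=0$ the two coupled datasets coincide. Chaining both $q_0$ and $q_1$ back to this common anchor and then averaging over $H$ replaces your truncation by bounds on $\mathbb{E}[e^{\pm H\epsilon_s}]$ and $\mathbb{E}[He^{H\epsilon_s}]$ via the binomial moment generating function, so the only additive error that survives is proportional to $\delta_s$; this is exactly what yields $\delta'_s=e^{\epsilon'_s}n_s\delta_s D_{\mathrm{TV}}(P_0,P_1)$ with no spurious $e^{-c\mu}$ term. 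Your group-privacy inequality and your treatment of the $Ne^{(N-1)\epsilon_s}\delta_s$ term are correct, and your remark that $M^{(k-1)}$ is inert is justified only because each round uses an independent batch (the paper makes this explicit via $X\perp M^{(k-1)}\mid H$); the missing idea is to exploit the exact agreement at $H=0$ rather than to truncate the binomial.
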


\begin{proof}[Proof sketch] The key idea of the proof is to modify Lemma 6.1 of \cite{karwa2017finitesampledifferentiallyprivate} to be for the private transcript from a server $s \in [S]$ on the $k$-th round, conditional on transcripts from the previous rounds. We will follow their notation in the rest of this proof.  

    We construct the same coupling between $P_0$ and $P_1$ as in \cite{karwa2017finitesampledifferentiallyprivate} and likewise define $H := \sum_{i=1}^n H_i$ at any server $s \in [S]$. However, we now consider, for all $j \in \{0, 1\}$, $k \in [K]$, for any fixed $M^{(k-1)}$ and event $E$, 
    \begin{equation*}
        q_{j, k}(h) := P_j(E \mid M^{(k-1)}, H=h) = \int Q_k(E \mid X=x, M^{(k-1)}) \dint P_j(X=x \mid M^{(k-1)}, H=h),
    \end{equation*}
    where $Q_k$ is the privacy mechanism used on the $k$-th round. We will show the analogous result that for $j \in \{0, 1\}, k \in [K]$  that $q_{j, k}(h) \leq e^\epsilon q_{j, k}(h-1) + \delta$ for all $h \in [n]$ and $q_{k, 0}(0) = q_{k, 1}(0)$. Following the notation in \cite{karwa2017finitesampledifferentiallyprivate}, for any $(H_1, \dots H_n)$ and $(H_1', \dots, H_n')$ that differ in one entry, corresponding to say $X_r \neq X_r'$, we have that 
        \begin{align*}
            q_{j, k}(h) &=\int_x Q_k(E \mid x, M^{(k-1)}) \dint P_j(X \mid M^{(k-1)}, H_1=h_1, \dots H_n = h_n)  \\
            &=\int_{x_I, x_r, x_J} Q_k(E \mid x_I, x_r, x_J, M^{(k-1)}) \dint P_F(X_I) \dint P_F(X_r) \dint P_C(X_J) \\
            &\leq \int_{x_I, x_J} \left ( e^\epsilon Q_k(E \mid x_I, x_r', x_J, M^{(k-1)}) + \delta \right) \dint P_F(X_I) \dint P_C(X_J) \\
            &= \int_{x_I, x_r', x_J} \left ( e^\epsilon Q_k(E \mid x_I, x_r', x_J, M^{(k-1)}) + \delta \right) \dint P_F(X_I) \dint P_C(X_r') \dint P_C(X_J) \\
            &= e^\epsilon \mathbb{P}_j(E \mid M^{(k-1)}, H_1 = h_1', \dots H_n = h_n') + \delta
        \end{align*}   
where the first equality is because $X$ is conditionally independent of $M^{(k-1)}$ given $H$ and the inequality in the third line holds for any $x_r'$ due to the FDP constraint in \Cref{def:FDP}. Taking expectations of both sides with respect to $(H_1, \dots, H_n)$ leads to 
\begin{equation*}
    P_j(E \mid M^{(k-1)}, H=h) \leq e^\epsilon P_j(E \mid M^{(k-1)}, H=h-1) + \delta
\end{equation*}
as required. Replacing this inequality into all the remaining arguments of their proof, we obtain
\begin{equation}
    P_0(E \mid M^{(k-1)}) \leq e^{\epsilon'} P_1(E \mid M^{(k-1)}) + \delta'
    \label{pf:karwa_lemma_6_1}
\end{equation}
where the probability is over the data generating distribution and the randomness of the $k$-th round privacy mechanism. Since \eqref{pf:karwa_lemma_6_1} holds for all events $E$, it implies the lemma. 
\end{proof}

\section{Additional background}
\label{app-background}

We collect some background definitions and results that are used throughout the Appendix.

\begin{lemma}[Composition property of DP, e.g.~Theorem 3.16 in \citealp{dwork2014algorithmic}] For $\epsilon_1, \epsilon_2 > 0$ and $\delta_1, \delta_2 \geq 0$, suppose that $M_1$ and $M_2$ are $(\epsilon_1, \delta_1)$-DP and $(\epsilon_2, \delta_2)$-DP mechanisms.  It then holds that $M_1 \circ M_2$, the composition of $M_1$ and $M_2$, is $(\epsilon_1 + \epsilon_2, \delta_1 + \delta_2)$-DP. 
\label{lemma:composition}
\end{lemma}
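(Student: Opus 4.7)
The plan is to reduce the composition result to a statement about the privacy loss random variable, and then apply a union-bound argument on a ``bad set'' of outputs where likelihood ratios are too large. The cleanest route uses the well-known equivalent characterisation of $(\epsilon,\delta)$-DP: a mechanism $M$ is $(\epsilon,\delta)$-DP if and only if for every neighbouring pair $D \sim D'$ there is a measurable set $B = B_{D,D'}$ with $\Pr[M(D) \in B] \le \delta$ such that on the complement the density ratio $p_{M(D)}(r)/p_{M(D')}(r)$ is bounded pointwise by $e^\epsilon$. I would briefly justify this by taking $B := \{r : p_{M(D)}(r) > e^\epsilon p_{M(D')}(r)\}$ and noting that $\Pr[M(D) \in B] \le \delta$ follows by applying \eqref{eq:DP_def} to the measurable set $A=B$, and conversely that $\Pr[M(D) \in A] = \Pr[M(D) \in A \cap B^c] + \Pr[M(D)\in A\cap B]\le e^\epsilon \Pr[M(D')\in A] + \delta$.

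Next, I would construct the bad set for the composition. Writing the adaptive version, where $M_2(\cdot,r_1)$ may depend on the output $r_1$ of $M_1$, let $B_1$ be the bad set for $M_1$ and let $B_2(r_1)$ be the bad set for the mechanism $M_2(\cdot,r_1)$ at neighbouring inputs $D \sim D'$. Set
\[
B := (B_1 \times \mathcal{R}_2) \cup \{(r_1,r_2) : r_1 \notin B_1,\; r_2 \in B_2(r_1)\}.
\]
A union bound together with Fubini and the tower property gives $\Pr[(M_1(D), M_2(D,M_1(D))) \in B] \le \delta_1 + \delta_2$. On the complement $B^c$, the joint density ratio factorises and is pointwise bounded by $e^{\epsilon_1+\epsilon_2}$. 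Hence for any measurable event $A$,
\[
\Pr[(M_1(D),M_2(D,M_1(D))) \in A] \le \Pr[(M_1(D),M_2(D,M_1(D)))\in A\cap B^c] + \delta_1+\delta_2,
\]
and bounding the first term by $e^{\epsilon_1+\epsilon_2}\Pr[(M_1(D'),M_2(D',M_1(D')))\in A]$ using the pointwise ratio bound on $B^c$ finishes the argument.

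The main obstacle will be the adaptive composition book-keeping: the bad set for $M_2$ varies with the realisation of $M_1$'s output, so measurability of $B$ in the joint space and the validity of applying Fubini to integrate the conditional probability $\Pr[M_2(D,r_1)\in B_2(r_1)\mid M_1(D)=r_1]\le \delta_2$ must be checked. In the non-adaptive case where $M_1$ and $M_2$ operate independently, the argument simplifies to a direct product-measure computation and the bad set is just the product of the individual bad sets; this case is a straightforward warm-up.
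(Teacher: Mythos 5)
The paper does not prove this lemma; it is cited verbatim as Theorem~3.16 of \citet{dwork2014algorithmic}, so your attempt can only be judged on its own terms. The overall architecture (reduce to a pointwise statement, compose, union-bound the failure events) is the right shape, but the characterisation you invoke in the first paragraph is false as stated, and your justification of it does not go through. Applying \eqref{eq:DP_def} to $A = B := \{r : p_{M(D)}(r) > e^\epsilon p_{M(D')}(r)\}$ yields $\Pr[M(D)\in B] \le e^\epsilon \Pr[M(D')\in B] + \delta$, which is \emph{not} $\Pr[M(D)\in B]\le\delta$; rearranging, what you actually get is the hockey-stick bound $\int_B \{p_{M(D)} - e^\epsilon p_{M(D')}\} \le \delta$. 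The stronger ``bad set of mass $\le\delta$'' property is genuinely not equivalent to $(\epsilon,\delta)$-DP: take $\epsilon = 0$, $p_{M(D)} \equiv 1$ on $[0,1]$ and $p_{M(D')} = 1-c$ on $[0,\tfrac12]$, $1+c$ on $(\tfrac12,1]$. This pair is $(0, c/2)$-indistinguishable, yet any set off which $p_{M(D)} \le p_{M(D')}$ must contain $[0,\tfrac12]$ and so has mass $\tfrac12 \gg c/2$ under $M(D)$. Since your composition argument rests entirely on extracting such sets $B_1$ and $B_2(r_1)$ from the hypotheses, the proof as written fails at its first step. (A secondary issue: if you try to patch this by working directly with the hockey-stick divergence and the factorisation $(ab - e^{\epsilon_1+\epsilon_2}a'b')_+ \le (a - e^{\epsilon_1}a')_+ b + e^{\epsilon_1}a'(b - e^{\epsilon_2}b')_+$, you only obtain $(\epsilon_1+\epsilon_2,\ \delta_1 + e^{\epsilon_1}\delta_2)$, not $\delta_1+\delta_2$.)

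The standard repair keeps your structure but replaces ``excise a bad set from $M(D)$'' with ``replace $M(D)$ by a nearby distribution'': $(\epsilon,\delta)$-indistinguishability of $Y$ from $Z$ is equivalent to the existence of $\widetilde Y$ with $D_{\mathrm{TV}}(Y,\widetilde Y)\le\delta$ whose density is pointwise bounded by $e^\epsilon$ times that of $Z$ (Lemma~3.17 in \citealp{dwork2014algorithmic}; the excess mass $(p_Y - e^\epsilon p_Z)_+$, of total weight $\le\delta$, is redistributed rather than deleted). Applying this to $M_1$ and, for each realisation $r_1$, to $M_2(\cdot, r_1)$, a maximal coupling at each stage gives $D_{\mathrm{TV}}\bigl(M(D), \widetilde M(D)\bigr)\le \delta_1+\delta_2$ for the composed modified mechanism, whose density is pointwise bounded by $e^{\epsilon_1+\epsilon_2}$ times that of $M(D')$; then $\Pr[M(D)\in A] \le \Pr[\widetilde M(D)\in A] + \delta_1+\delta_2 \le e^{\epsilon_1+\epsilon_2}\Pr[M(D')\in A] + \delta_1 + \delta_2$. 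Your adaptive bookkeeping and measurability concerns carry over to this version unchanged.
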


\begin{definition}[$\ell_2$-sensitivity] \label{def-sensitivity}
Let $f$ be a function $f:\mathcal{X}^n \rightarrow \mathbb{R}^d$. The $\ell_2$ sensitivity of a function is defined as 
\begin{equation}
    \mathrm{sens}_2(f) := \sup_{D \sim D'} \|f(D) - f(D')\|_2,
\end{equation}  
where $D\sim D'$ denotes neighbouring datasets: $\sum_{i=1}^n \mathbbm{1}\{D_i \neq D_i'\} = 1$.
\end{definition}

\begin{lemma}[Gaussian mechanism, Theorem 3.22 in \citealp{dwork2014algorithmic}]
Let $f$ be a function $f: \mathcal{X}^n \rightarrow \mathbb{R}^d$ such that $\mathrm{sens}_2(f)$ is finite. For any $\epsilon, \delta \in (0, 1)$, the mechanism
\begin{equation*}
    M_G(D) := f(D) + \frac{\sqrt{2 \log(1.25/\delta)}\,\mathrm{sens}_2(f)}{\epsilon}Z, \quad \mbox{with }Z \sim \mathcal{N}\left(0, I_d\right)
\end{equation*}
is $(\epsilon, \delta)$-DP. 
\label{lemma:gaussian_mechanism}
\end{lemma}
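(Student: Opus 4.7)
The plan is to prove the $(\epsilon,\delta)$-DP guarantee by direct analysis of the privacy loss random variable, exploiting that additive Gaussian perturbation renders it univariate Gaussian. Fix neighbouring datasets $D \sim D'$ and set $u := f(D') - f(D)$, so $\|u\|_2 \leq \Delta := \mathrm{sens}_2(f)$. With $\sigma := \sqrt{2\log(1.25/\delta)}\,\Delta/\epsilon$ and $\varphi_\sigma$ denoting the density of $\mathcal{N}(0,\sigma^2 I_d)$, a short log-ratio calculation at $Y = f(D) + \sigma Z$ yields
\[
    L(Y) := \log \frac{\varphi_\sigma(Y - f(D))}{\varphi_\sigma(Y - f(D'))} = \frac{\|\sigma Z + u\|_2^2 - \|\sigma Z\|_2^2}{2\sigma^2} = \frac{\langle Z, u\rangle}{\sigma} + \frac{\|u\|_2^2}{2\sigma^2},
\]
which under $Y \sim M_G(D)$ is distributed as $\mathcal{N}\bigl(\|u\|_2^2/(2\sigma^2),\,\|u\|_2^2/\sigma^2\bigr)$.

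Next I would invoke the standard sufficient condition that $(\epsilon,\delta)$-DP holds whenever $\mathbb{P}_{Y \sim M_G(D)}(L(Y) > \epsilon) \leq \delta$ for every neighbouring pair, because on the complementary event the pointwise ratio bound $\varphi_\sigma(Y - f(D))/\varphi_\sigma(Y - f(D')) \leq e^{\epsilon}$ holds, and integrating over any measurable $S$ recovers \eqref{eq:DP_def}. Standardising the Gaussian $L$ above rewrites the required tail bound as
\[
    \Phi^c\!\left(\frac{\epsilon\sigma}{\|u\|_2} - \frac{\|u\|_2}{2\sigma}\right) \leq \delta,
\]
where $\Phi^c$ is the standard normal survival function; the argument is decreasing in $\|u\|_2$, so the worst case is attained at $\|u\|_2 = \Delta$. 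Substituting $\sigma = \Delta c/\epsilon$ with $c := \sqrt{2\log(1.25/\delta)}$ then collapses the task into the scalar calibration inequality $\Phi^c(c - \epsilon/(2c)) \leq \delta$.

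The remaining work is to verify this calibration inequality uniformly over $(\epsilon,\delta) \in (0,1)^2$, which I expect to be the main technical obstacle. I would apply the Mills-ratio bound $\Phi^c(t) \leq (2\pi)^{-1/2} t^{-1}\exp(-t^2/2)$ for $t > 0$; expanding the square $(c - \epsilon/(2c))^2 = c^2 - \epsilon + \epsilon^2/(4c^2)$ exposes a clean $e^{-c^2/2} = \delta/1.25$ factor, and then $\epsilon < 1$ is used to control the residual $e^{\epsilon/2}$ and Mills prefactor. The constant $1.25$ is precisely the inflation chosen so that this comparison succeeds uniformly: a naive calibration $\sqrt{2\log(1/\delta)}$ would fail against the $(2\pi)^{-1/2}$ Mills prefactor combined with the cross-term shift, and the careful bookkeeping of these two lower-order contributions at the tail is the only non-routine part of the argument.
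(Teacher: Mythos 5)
The paper does not prove this lemma itself; it is quoted from \citet{dwork2014algorithmic} (Theorem 3.22), and your argument is exactly the privacy-loss-random-variable proof used there, so the route is the standard one. The reduction is sound: $L(Y)$ is indeed $\mathcal{N}(\|u\|_2^2/(2\sigma^2), \|u\|_2^2/\sigma^2)$ under $Y \sim M_G(D)$ (note $Y - f(D') = \sigma Z - u$, not $\sigma Z + u$, but the sign does not affect the law of $L$); the tail condition $\mathbb{P}(L > \epsilon) \leq \delta$ does imply $(\epsilon,\delta)$-DP by the splitting argument you give; and the monotonicity in $\|u\|_2$ correctly collapses everything to the scalar inequality $\Phi^c\left(c - \epsilon/(2c)\right) \leq \delta$ with $c = \sqrt{2\log(1.25/\delta)}$.

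The gap is in the final calibration. You propose to verify that inequality \emph{uniformly} over $(\epsilon,\delta) \in (0,1)^2$ via the Mills-ratio bound, with ``$\epsilon < 1$ used to control the \dots Mills prefactor.'' That cannot work as stated: the Mills bound needs $t := c - \epsilon/(2c) > 0$, and its prefactor $(\sqrt{2\pi}\,t)^{-1}$ is only controlled if $t$ is bounded away from zero, which requires a \emph{lower} bound on $c$, i.e.\ an \emph{upper} bound on $\delta$ --- a condition on $\delta$, not on $\epsilon$. As $\delta \uparrow 1$ one has $c \downarrow \sqrt{2\log 1.25} \approx 0.668$, and for $\epsilon$ near $1$ the argument $t$ is then negative ($t \approx -0.08$), so the chain $\Phi^c(t) \leq (\sqrt{2\pi}\,t)^{-1}(\delta/1.25)e^{\epsilon/2} \leq \delta$ breaks down. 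The inequality itself remains true, but the proof needs a case split on $\delta$: for $\delta < 1/2$ one has $c > \sqrt{2\log 2.5} > 1.35$, hence $t > 0.98$, and your Mills computation then gives $\Phi^c(t) \leq (\sqrt{2\pi}\cdot 0.98)^{-1} e^{1/2}\,\delta/1.25 < 0.54\,\delta < \delta$; for $\delta \geq 1/2$ the bound is essentially trivial, since $t \geq 0$ (so $\Phi^c(t) \leq 1/2 \leq \delta$) whenever $\delta \leq 1.25 e^{-\epsilon/4}$, and in the remaining sliver $\delta > 1.25e^{-1/4} \approx 0.97$ one still has $t \geq -0.09$ and hence $\Phi^c(t) < 0.54 < \delta$. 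With that repair the argument is complete and matches the cited source (whose own published proof is, incidentally, loose in exactly this large-$\delta$ regime).
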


\begin{definition}[R\'enyi divergence] For two probability distributions $P$ and $Q$ defined over $\mathcal{R}$, the R\'enyi divergence of order $\alpha>1$ is defined as 
\begin{equation*}
    D_\alpha(P \|Q) := \frac{1}{\alpha-1} \log \mathbb{E}_{x \sim Q} \left\{\frac{p(x)}{q(x)} \right\}^\alpha, 
\end{equation*}
where $p(\cdot)$ and $q(\cdot)$ denote the density functions of $P$ and $Q$, respectively.
\end{definition}

\begin{definition}[R\'enyi differential privacy, RDP, \citealp{mironov2017renyi}] 
    For $\alpha > 1$ and $\epsilon > 0$, a randomised  mechanism $M: \mathcal{X}^n \rightarrow \mathcal{R}$ is an $(\alpha, \epsilon)$-RDP mechanism, if it satisfies that 
\begin{equation*}
    D_\alpha(M(D)\|M(D')) \leq \epsilon,
\end{equation*}
for any $D, D' \in \mathcal{X}^n$, such that  $\sum_{i=1}^n \mathbbm{1}\{D_i \neq D_i'\} = 1$.
\end{definition}

\begin{lemma}[Gaussian mechanism, Corollary 3 in \citealp{mironov2017renyi}]
Let $f : \mathcal{X}^n \to \mathbb{R}^d$ be a function with $\ell_2$-sensitivity  $\mathrm{sens}_2(f)$. Then for any $\alpha > 1$, the Gaussian mechanism defined by $M(x) = f(x) + \mathcal{N}(0, \sigma^2 I_d)$ satisfies $(\alpha, \alpha \,\mathrm{sens}_2(f)^2/(2\sigma^2))$-RDP.
\label{lemma:Gaussian_RDP}
\end{lemma}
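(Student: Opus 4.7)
Fix any neighbouring datasets $D \sim D'$ and set $\mu := f(D)$, $\mu' := f(D')$, $\Delta := \|\mu - \mu'\|_2 \leq \mathrm{sens}_2(f)$. The outputs $M(D)$ and $M(D')$ are multivariate Gaussians $P := \mathcal{N}(\mu, \sigma^2 I_d)$ and $Q := \mathcal{N}(\mu', \sigma^2 I_d)$ with identical covariance. By the definition of RDP, it suffices to show that
\[
D_\alpha(P \| Q) \;\leq\; \frac{\alpha \Delta^2}{2\sigma^2}.
\]

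The plan is to reduce the multivariate divergence calculation to a one-dimensional one using the rotational symmetry of the isotropic Gaussian, and then to evaluate a single Gaussian integral. First, I would apply an orthogonal change of variables aligning the first coordinate axis with $(\mu - \mu')/\Delta$; since the isotropic Gaussian is invariant under rotations and only the direction $\mu - \mu'$ enters, $D_\alpha(P\|Q)$ equals the Rényi divergence between $\mathcal{N}(\Delta, \sigma^2)$ and $\mathcal{N}(0, \sigma^2)$ on $\mathbb{R}$ (the other $d-1$ marginals contribute identical factors that cancel). Next, I would expand the definition
\[
D_\alpha(P \| Q) \;=\; \frac{1}{\alpha - 1} \log \int_{\mathbb{R}} \frac{p(x)^\alpha}{q(x)^{\alpha - 1}} \,\mathrm{d}x,
\]
substitute the one-dimensional Gaussian densities $p$ and $q$, and combine the exponents. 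A direct completion of the square in the exponent yields
\[
\frac{p(x)^\alpha}{q(x)^{\alpha-1}} \;=\; \frac{1}{\sqrt{2\pi}\,\sigma}\exp\!\left\{-\frac{(x - \alpha\Delta)^2}{2\sigma^2} + \frac{\alpha(\alpha-1)\Delta^2}{2\sigma^2}\right\},
\]
after which the remaining integral over $x$ is exactly the normalising constant of $\mathcal{N}(\alpha\Delta, \sigma^2)$, equal to $1$. Taking logarithms and dividing by $\alpha - 1$ leaves $D_\alpha(P\|Q) = \alpha \Delta^2/(2\sigma^2)$, and the claim follows upon bounding $\Delta \leq \mathrm{sens}_2(f)$ and taking the supremum over neighbouring $D \sim D'$.

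The only genuinely non-routine step is verifying the completion of the square carefully: one must track that the quadratic-in-$x$ coefficient $\alpha - (\alpha - 1) = 1$ stays positive (so the integral converges for every $\alpha > 1$) and that the linear and constant terms combine to produce precisely $\alpha(\alpha-1)\Delta^2/(2\sigma^2)$ after completing the square. I expect this calculation to be the main obstacle, although it is purely algebraic; once done, the multiplicative factor $1/(\alpha-1)$ in the Rényi definition converts $\alpha(\alpha-1)\Delta^2/(2\sigma^2)$ into the advertised $\alpha\Delta^2/(2\sigma^2)$. No concentration or tail arguments are required, and the result holds as an equality rather than a mere upper bound, with the worst case attained when $\|\mu - \mu'\|_2 = \mathrm{sens}_2(f)$.
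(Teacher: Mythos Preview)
Your proof is correct and is essentially the standard argument for this result (indeed, it is the computation behind Corollary~3 of \cite{mironov2017renyi}). The paper itself does not supply a proof of this lemma; it is stated in the background appendix as a cited result from \cite{mironov2017renyi}, so there is no in-paper proof to compare against beyond the citation.
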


\begin{lemma}[Proposition 1 in \citealp{mironov2017renyi}] Suppose that $M_1$ is $(\alpha, \epsilon_1)$-RDP and $M_2$ is $(\alpha, \epsilon_2)$-RDP. Then the composition of $M_1$ and $M_2$ is $(\alpha, \epsilon_1 + \epsilon_2)$-RDP. In particular, this holds even if $M_2$ is chosen adaptively based on the output from $M_1$.
\label{lemma:composition_RDP}
\end{lemma}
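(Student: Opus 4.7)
The plan is to work directly from the definition of Rényi divergence and use a conditioning argument based on the tower property of conditional expectation. Fix neighboring datasets $D \sim D' \in \mathcal{X}^n$ and write $p_i(y_i \mid y_{<i})$, $q_i(y_i \mid y_{<i})$ for the conditional densities of the output of $M_i$ under $D$ and $D'$ respectively. Since the composition allows $M_2$ to depend on the output of $M_1$, the joint densities factor as $p(y_1, y_2) = p_1(y_1)\, p_2(y_2 \mid y_1)$ and analogously for $q$.

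First I would expand the definition to write
\begin{equation*}
\exp\bigl\{(\alpha - 1)\, D_\alpha(p \,\|\, q)\bigr\} = \mathbb{E}_{(y_1, y_2) \sim q}\!\left[\left(\frac{p_1(y_1)}{q_1(y_1)}\right)^{\!\alpha} \left(\frac{p_2(y_2\mid y_1)}{q_2(y_2 \mid y_1)}\right)^{\!\alpha}\right],
\end{equation*}
and apply the tower property by conditioning on $y_1$. For each fixed $y_1$, the inner conditional expectation is exactly $\exp\{(\alpha-1)\, D_\alpha(p_2(\cdot\mid y_1) \,\|\, q_2(\cdot\mid y_1))\}$, which by the $(\alpha, \epsilon_2)$-RDP guarantee for the mechanism $M_2(\cdot, y_1)$ is bounded by $\exp\{(\alpha - 1)\epsilon_2\}$ uniformly in $y_1$.

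Next, I would pull this $y_1$-free bound out of the outer expectation and invoke the $(\alpha, \epsilon_1)$-RDP assumption on $M_1$ to obtain $\mathbb{E}_{y_1 \sim q_1}[(p_1(y_1)/q_1(y_1))^\alpha] \leq \exp\{(\alpha - 1)\epsilon_1\}$. Chaining the two bounds, then taking logarithms and dividing by $\alpha - 1$, yields $D_\alpha(p \,\|\, q) \leq \epsilon_1 + \epsilon_2$, which is the claim.

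The main obstacle — though conceptually routine — is properly handling the adaptive nature of the composition: we must interpret the RDP guarantee for $M_2$ as holding pointwise in its auxiliary input $y_1$, so that the bound $\exp\{(\alpha-1)\epsilon_2\}$ is a constant (independent of $y_1$) that can be extracted from the outer integral. This is what allows the argument to avoid any independence assumption between $M_1$ and $M_2$; a measurability check on the family $\{M_2(\cdot, y_1)\}_{y_1}$ suffices, and the factorisation of the Radon–Nikodym derivative into marginal and conditional pieces is the only structural ingredient needed.
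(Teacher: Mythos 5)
Your proof is correct and is essentially the standard argument for adaptive RDP composition: factor the joint density into marginal and conditional pieces, bound the inner conditional Rényi integral uniformly over the first mechanism's output, and then bound the outer integral. The paper does not reprove this lemma but cites Proposition~1 of \citet{mironov2017renyi}, whose proof is exactly the conditioning argument you give, including the key point that the $(\alpha,\epsilon_2)$-RDP guarantee for $M_2$ must hold pointwise in the auxiliary input $y_1$.
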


\begin{lemma}[Proposition 3 in \citealp{mironov2017renyi}]
If a mechanism $M$ satisfies $(\alpha, \epsilon)$-RDP for some $\alpha > 1$, then for any $\delta > 0$, it also satisfies $(\epsilon + \log(1/\delta)/(\alpha - 1), \delta)$-CDP.
\label{lemma:RDP_to_CDP}
\end{lemma}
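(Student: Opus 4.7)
The plan is to carry out the standard privacy-loss-random-variable (PLRV) argument that converts a moment bound on the likelihood ratio (which is what R\'enyi DP encodes) into an $(\epsilon', \delta)$-CDP guarantee. Fix any pair of neighbouring datasets $D \sim D'$, write $P := M(D)$ and $Q := M(D')$ for the output distributions, and let $p, q$ denote their densities with respect to a common dominating measure. Define the privacy-loss random variable $L(x) := \log(p(x)/q(x))$ and, for a threshold $t > 0$ to be chosen, decompose any measurable set $A \subseteq \mathcal{R}$ as
\[
A = \underbrace{(A \cap \{x : L(x) \leq t\})}_{=: A_1} \,\cup\, \underbrace{(A \cap \{x : L(x) > t\})}_{=: A_2}.
\]

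On $A_1$ I would use the pointwise bound $p(x) \leq e^{t} q(x)$ directly, which gives $P(A_1) \leq e^{t} Q(A_1) \leq e^{t} Q(A)$. So the task reduces to showing $P(A_2) \leq \delta$, and then choosing $t$ optimally. To bound $P(A_2)$, I would translate the $(\alpha, \epsilon)$-RDP hypothesis into
\[
\mathbb{E}_{X \sim Q}\!\left[\left(\tfrac{p(X)}{q(X)}\right)^{\alpha}\right] \leq e^{(\alpha-1)\epsilon},
\]
which, by a change of measure, is equivalent to $\mathbb{E}_{X \sim P}[\exp((\alpha-1) L(X))] \leq e^{(\alpha-1)\epsilon}$. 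Then a Markov (Chernoff) inequality applied to the nonnegative random variable $\exp((\alpha-1) L(X))$ gives
\[
P(L > t) \leq \exp\bigl(-(\alpha-1)(t-\epsilon)\bigr).
\]

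Choosing $t := \epsilon + \log(1/\delta)/(\alpha-1)$ makes the right-hand side exactly $\delta$, so $P(A_2) \leq P(L > t) \leq \delta$. Combining, $P(A) \leq e^{t} Q(A) + \delta$, which is the desired $(\epsilon + \log(1/\delta)/(\alpha-1), \delta)$-CDP inequality; since $D \sim D'$ and $A$ were arbitrary, the conclusion follows.

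The whole argument is short and there is no real obstacle: the only two substantive moves are (i) rewriting the R\'enyi moment bound as an MGF bound on the privacy loss under $P$ (a one-line change of measure), and (ii) the standard $\{L \leq t\}/\{L > t\}$ split. The minor technicality worth flagging is the measure-theoretic handling of the set $\{q = 0\}$: on this set one must argue that $P$ assigns zero mass (otherwise the R\'enyi divergence is infinite and the hypothesis is vacuous), so the pointwise ratio bound on $A_1$ is well-defined almost everywhere.
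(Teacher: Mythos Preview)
Your argument is correct and is exactly the standard privacy-loss random variable proof given in \cite{mironov2017renyi}; the paper itself does not supply a proof of this lemma but cites it directly as Proposition~3 of that reference, so there is nothing to compare against.
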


\begin{lemma}[Post-processing property of DP, e.g.~Proposition 2.1 in \citealp{dwork2014algorithmic}] For any measurable function $g: \, \mathcal{R} \rightarrow \mathcal{Y}$, if $M:\, \mathcal{X}^n \rightarrow \mathcal{R}$ is an $(\epsilon, \delta)$-DP mechanism, then $g \circ M$ is also $(\epsilon, \delta)$-DP. 
\label{lemma:post_processing}
\end{lemma}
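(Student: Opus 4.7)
The plan is a direct unwinding of the DP definition via measurable preimages. I would fix an arbitrary pair of neighbouring datasets $D \sim D'$ and an arbitrary measurable set $B \in \sigma(\mathcal{Y})$, and aim to show
\[
    \mathbb{P}\big(g(M(D)) \in B\big) \leq e^\epsilon\,\mathbb{P}\big(g(M(D')) \in B\big) + \delta,
\]
with the inequality holding uniformly in the choice of $B$ and of $(D, D')$, which is exactly the $(\epsilon,\delta)$-CDP condition for $g \circ M$ as in \Cref{def-cdp}.

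The key observation is that since $g:\mathcal{R}\to\mathcal{Y}$ is measurable, the preimage $A := g^{-1}(B)$ lies in $\sigma(\mathcal{R})$. Rewriting the event via the tautology $\{g(M(D)) \in B\} = \{M(D) \in A\}$ converts a probability on the output space $\mathcal{Y}$ into one on $\mathcal{R}$. Applying the $(\epsilon,\delta)$-CDP property of $M$ (\Cref{def-cdp}) to this particular measurable set $A$ yields
\[
    \mathbb{P}(M(D) \in A) \leq e^\epsilon\,\mathbb{P}(M(D') \in A) + \delta,
\]
and re-expressing each side in terms of $g \circ M$ gives the desired inequality. Since $B$ and $(D, D')$ were arbitrary, this completes the argument.

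There is essentially no obstacle in the deterministic measurable post-processing case: the proof reduces to a one-line preimage identity plus one application of the definition. The only place where a small amount of extra care would be needed is the extension to randomised post-processing, which is not claimed by the lemma as stated but is the commonly used form. For that extension I would represent $g(r)$ as $\tilde g(r, U)$ for a measurable $\tilde g$ and an independent auxiliary random variable $U$, condition on $U$, apply the preimage argument to $\tilde g(\cdot, U)^{-1}(B)$ pointwise in $U$, and then integrate over $U$; independence of $U$ from both $M(D)$ and $M(D')$ ensures the pointwise DP inequality survives the expectation. As stated, however, the lemma concerns only the deterministic measurable case, for which the preimage argument above suffices.
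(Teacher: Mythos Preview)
Your argument is correct and is the standard preimage proof of post-processing. Note that the paper does not actually give its own proof of this lemma: it is listed as a background result with a citation to \cite{dwork2014algorithmic}, so there is nothing to compare against beyond observing that your proof matches the textbook argument.
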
 

\begin{lemma}[Composition of Gaussian mechanisms]
\label{lemma:Gaussian_composition}
For $K \in \mathbb{Z}_+$, consider a composition of mechanisms $M_1, \dots, M_K$, where  
\[
    M_1(x) = g_1 \left( f_1(x) + \mathcal{N}\left(0, \frac{(2 \log(1/\delta)/\epsilon + 1)\,\mathrm{sens}_2(f_k)^2}{\epsilon/K} I_d \right) \right),
\]
\begin{equation*}
    M_k(x) = g_k \left( f_k(x, M_1(x), \dots , M_{k-1}(x)) + \mathcal{N}\left(0, \frac{(2 \log(1/\delta)/\epsilon + 1)\,\mathrm{sens}_2(f_k)^2}{\epsilon/K} I_d \right) \right), \quad k \in [K] \setminus \{1\}, 
\end{equation*}
and $\{f_k(\cdot)\}_{k \in [K]}$ and $\{g_k(\cdot)\}_{k \in [K]}$ are deterministic function, with $f_k: \mathbb{R}^d \rightarrow \mathbb{R}^d$ and $g_k: \mathbb{R}^d \rightarrow \mathbb{R}^d$ for all $k \in [K]$..  The output of $M_K(x)$ then satisfies $(\epsilon, \delta)$-CDP. 
\end{lemma}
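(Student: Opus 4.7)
The plan is to leverage R\'enyi differential privacy (RDP) for tight adaptive composition and then convert back to $(\epsilon,\delta)$-CDP, exactly as the noise-calibration formula in the statement suggests. By the post-processing property of DP (Lemma \ref{lemma:post_processing}), the deterministic maps $g_k$ do not affect the privacy budget, so it suffices to analyze the composition of the noisy evaluations $\tilde{f}_k := f_k(\cdot, M_1(x), \dots, M_{k-1}(x)) + Z_k$. Conditionally on the previously released transcripts, each $\tilde f_k$ is a Gaussian mechanism applied to a deterministic function with $\ell_2$-sensitivity $\mathrm{sens}_2(f_k)$, which is the standard adaptive-composition setup for RDP.

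First I would apply Lemma \ref{lemma:Gaussian_RDP} to $\tilde f_k$ with noise variance $\sigma_k^2 = (2\log(1/\delta)/\epsilon + 1)\,\mathrm{sens}_2(f_k)^2 / (\epsilon/K)$, obtaining that $\tilde f_k$ is $(\alpha,\rho)$-RDP with
\[
\rho \;=\; \frac{\alpha\,\mathrm{sens}_2(f_k)^2}{2\sigma_k^2} \;=\; \frac{\alpha\,\epsilon^2}{2K\bigl(2\log(1/\delta)+\epsilon\bigr)}
\]
for every $\alpha>1$. Next, adaptive RDP composition (Lemma \ref{lemma:composition_RDP}) yields that the full $K$-fold composition is $(\alpha, K\rho)$-RDP with
\[
K\rho \;=\; \frac{\alpha\,\epsilon^2}{2\bigl(2\log(1/\delta)+\epsilon\bigr)}.
\]
Finally, converting to CDP via Lemma \ref{lemma:RDP_to_CDP} shows the composition is $(\epsilon',\delta)$-CDP with
\[
\epsilon'(\alpha) \;=\; \frac{\alpha\,\epsilon^2}{2\bigl(2\log(1/\delta)+\epsilon\bigr)} \;+\; \frac{\log(1/\delta)}{\alpha-1}.
\]

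It remains to check that some choice of $\alpha$ gives $\epsilon'(\alpha)\le\epsilon$, which is where the mildly delicate algebra lives; this is the one step that requires genuine care beyond plugging into lemmas. Minimizing over $\alpha>1$ (set the derivative to zero) gives the optimal $\alpha-1 = \sqrt{2\log(1/\delta)\bigl(2\log(1/\delta)+\epsilon\bigr)/\epsilon^2}$, at which
\[
\epsilon'_{\min} \;=\; \frac{\epsilon^2}{2\bigl(2\log(1/\delta)+\epsilon\bigr)} \;+\; \epsilon\sqrt{\frac{2\log(1/\delta)}{2\log(1/\delta)+\epsilon}}.
\]
Writing $v := \epsilon/(2\log(1/\delta)+\epsilon) \in (0,1]$, the required inequality $\epsilon'_{\min}\le\epsilon$ becomes $v/2 + \sqrt{1-v}\le 1$, equivalently $\sqrt{1-v}\le 1-v/2$; squaring both sides reduces this to $0\le v^2/4$, which is immediate. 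This closes the proof. The main obstacle, as flagged above, is not any one step but just confirming that the specific scaling $(2\log(1/\delta)/\epsilon+1)$ chosen in the statement (rather than the looser $2\log(1.25/\delta)/\epsilon^2$ from the single-shot Gaussian mechanism) is exactly what makes the optimized RDP$\to$CDP conversion land at $\epsilon$.
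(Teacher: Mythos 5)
Your proof is correct and follows essentially the same route as the paper: Gaussian-mechanism RDP per step, adaptive RDP composition, and the RDP-to-CDP conversion, with post-processing absorbing the $g_k$. The only difference is cosmetic — the paper plugs in the specific order $\alpha = 2\log(1/\delta)/\epsilon + 1$, at which both the composed RDP budget and the conversion penalty equal $\epsilon/2$ exactly, whereas you optimise over $\alpha$ and verify the minimum is at most $\epsilon$; both computations are valid.
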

\begin{proof}
We have by Lemmas \ref{lemma:Gaussian_RDP}, \ref{lemma:composition_RDP} and \ref{lemma:post_processing} that $M_K$ is $(2\log(1/\delta)/\epsilon + 1, \epsilon/2)$-RDP, and therefore by \Cref{lemma:RDP_to_CDP} that $M_K$ is $(\epsilon, \delta)$-DP.
\end{proof}

\begin{definition}[$\delta$-approximate max-divergence] For any $\delta\geq 0$, the $\delta$-approximate max-divergence is defined as 
\begin{equation*}
        D_\infty^{\delta}(Y, Z) := \sup_{S \subset \mathrm{Supp}(Y): \, \mathbb{P}(Y \in S) > \delta} \log \left\{\frac{\mathbb{P}(Y \in S) - \delta}{\mathbb{P}(Z \in S)} \right\}.
    \end{equation*}
\end{definition}

\begin{lemma}[Theorem 1.2A in \citealp{delapena1999martingale}] Let $\{d_i, \mathcal{F}_i\}_{i \in \mathbb{Z}_+}$ be a martingale difference sequence with $\mathbb{E}[d_i \mid \mathcal{F}_{i-1}] = 0$ and $\mathbb{E}[d_i^2 \mid \mathcal{F}_{i-1}] =: \sigma_j^2$, where $\mathcal{F}_i := \sigma\{d_j, \, j \leq i\}$ and $\mathcal{F}_0 := \sigma\{\emptyset\}$.  Let $V_i^2 := \sum_{j=1}^i \sigma_j^2$, for all $i \in \mathbb{Z}_+$. If there exists $0 < c < \infty$ such that for all $i \in \mathbb{Z}_+$ and $k>2$, $\mathbb{E}[|d_i|^k \mid \mathcal{F}_{i-1}] \leq (k!/2)\sigma_i^2 c^{k-2}$ a.e., then for all $x, y > 0$, it holds that 
\begin{equation}
    \mathbb{P}\left(\left\{\exists n: \sum_{i=1}^n d_i \geq x, V_n^2 \leq y \right\} \right) \leq \exp \left\{\frac{-x^2}{2(y+cx)} \right\}. 
\end{equation}
\label{lemma:deLaPena}
\end{lemma}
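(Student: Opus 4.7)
The plan is to prove this Bernstein-type maximal inequality for martingales via the classical exponential supermartingale plus optimal stopping argument, then optimise the free parameter. The approach is standard in the martingale concentration literature and the cited reference \citep{delapena1999martingale} follows essentially this route.

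First I would verify that the Bernstein moment hypothesis yields a conditional MGF bound. Expanding $e^{\lambda d_i}$ and using $\mathbb{E}[d_i \mid \mathcal{F}_{i-1}] = 0$, the moment assumption $\mathbb{E}[|d_i|^k \mid \mathcal{F}_{i-1}] \leq (k!/2)\sigma_i^2 c^{k-2}$ gives, for $0 < \lambda < 1/c$,
\begin{equation*}
\mathbb{E}[e^{\lambda d_i} \mid \mathcal{F}_{i-1}] \leq 1 + \frac{\lambda^2 \sigma_i^2}{2}\sum_{k=0}^{\infty}(\lambda c)^k = 1 + \frac{\lambda^2 \sigma_i^2}{2(1 - \lambda c)} \leq \exp\left\{\frac{\lambda^2 \sigma_i^2}{2(1 - \lambda c)}\right\}.
\end{equation*}
Consequently, writing $S_n := \sum_{i=1}^n d_i$, the process
\begin{equation*}
Z_n(\lambda) := \exp\left\{\lambda S_n - \frac{\lambda^2 V_n^2}{2(1 - \lambda c)}\right\}
\end{equation*}
is a non-negative $(\mathcal{F}_n)$-supermartingale with $\mathbb{E}[Z_0(\lambda)] = 1$.

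Next I would apply optional stopping. Define the stopping time $\tau := \inf\{n : S_n \geq x \text{ and } V_n^2 \leq y\}$, with the convention $\inf \emptyset = \infty$. On the event $E := \{\exists n: S_n \geq x, V_n^2 \leq y\}$ we have $\tau < \infty$, $S_\tau \geq x$, and $V_\tau^2 \leq y$, so that
\begin{equation*}
Z_\tau(\lambda) \geq \exp\left\{\lambda x - \frac{\lambda^2 y}{2(1-\lambda c)}\right\} \mathbbm{1}_E.
\end{equation*}
Since $Z_{\tau \wedge n}(\lambda)$ is a non-negative supermartingale with expectation at most $1$, Fatou's lemma applied to $n \to \infty$ yields $\mathbb{E}[Z_\tau(\lambda) \mathbbm{1}_{\{\tau<\infty\}}] \leq 1$. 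Combining with the display above and Markov's inequality gives
\begin{equation*}
\mathbb{P}(E) \leq \exp\left\{-\lambda x + \frac{\lambda^2 y}{2(1-\lambda c)}\right\}, \qquad 0 < \lambda < 1/c.
\end{equation*}

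Finally, I would optimise over $\lambda$. Choosing $\lambda^* := x/(y + cx) \in (0, 1/c)$, a direct computation gives $1 - \lambda^* c = y/(y+cx)$, so $\lambda^{*2} y / (2(1-\lambda^* c)) = x^2/(2(y+cx))$ and $\lambda^* x = x^2/(y+cx)$. Substituting produces exactly the exponent $-x^2/(2(y+cx))$ claimed. The only subtle point — and the main obstacle if one insists on writing a fully self-contained argument — is justifying the optional stopping step when $\tau$ may be infinite; the non-negativity of $Z_n(\lambda)$ together with Fatou's lemma (or equivalently working with $Z_{\tau \wedge n}(\lambda)$ and taking limits) handles this cleanly, avoiding any need for uniform integrability.
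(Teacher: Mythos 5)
Your proof is correct. The paper does not prove this lemma at all — it is imported verbatim as Theorem 1.2A of \citet{delapena1999martingale} in the background appendix — and your exponential-supermartingale argument (conditional MGF bound from the Bernstein moment condition, the predictable compensator $\lambda^2 V_n^2/(2(1-\lambda c))$, optional stopping at $\tau$ with Fatou to handle $\tau=\infty$, and the optimal choice $\lambda^* = x/(y+cx)$) is essentially the standard proof given in that reference. All the computations check out, including the measurability of $V_n^2$ with respect to $\mathcal{F}_{n-1}$ needed for the supermartingale property and the verification that $\lambda^* < 1/c$.
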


\begin{lemma}[Corollary of Theorem 1.5 in \citealp{naor2012banach}]
\label{lemma:smooth_banach_martingale} 
Let $\{M_k\}_{k=1}^\infty$ be a martingale in $\mathbb{R}^d$, and let $\{a_k\}_{k=1}^\infty$ be a sequence of positive integers such that $\|M_{k+1} - M_k\|_2 \leq a_k$ for all $k \in \mathbb{N}$. Then for every $u > 0$ and $k \in \mathbb{N}$ we have
\begin{equation*}
\mathbb{P}(\| M_{k+1} - M_1 \|_2 > u ) \leq e^{2.5} \cdot \exp\left( - \frac{c u^2}{a_1^2 + \cdots + a_k^2} \right).
\end{equation*}
for some absolute constant $c$. 
\end{lemma}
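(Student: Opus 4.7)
The plan is to derive this statement directly as a specialization of Theorem 1.5 of \cite{naor2012banach}, which provides a Gaussian-type deviation inequality for a martingale $(M_k)$ valued in any uniformly smooth Banach space, with the denominator of the exponent scaled by the modulus of uniform smoothness of the underlying norm. The central observation I will exploit is that $\mathbb{R}^d$ equipped with the Euclidean norm is a Hilbert space, and therefore has the best (quadratic) modulus of uniform smoothness among all Banach spaces, with a constant that is independent of the dimension $d$. Concretely, one has $\rho_{\mathbb{R}^d}(\tau) := \sup_{\|x\|=\|y\|=1}\{(\|x+\tau y\|+\|x-\tau y\|)/2 - 1\} \leq \tau^2/2$, uniformly in $d$.

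The first step will be to verify that the martingale differences $d_k := M_{k+1} - M_k$ are bounded in norm by $a_k$, which is exactly the hypothesis. Then I would plug the pair $(\{d_k\}, \rho_{\mathbb{R}^d})$ directly into the conclusion of Naor--Peres--Schramm--Sheffield's Theorem 1.5, producing a bound of the form $\mathbb{P}(\|M_{k+1} - M_1\| > u) \leq A \exp(-c u^2/\sum_{j=1}^{k} a_j^2)$ for absolute positive constants $A, c$, in which neither constant depends on $d$ because the smoothness constant of Euclidean space does not. Tracking the constants through the specialization is what pins down the stated prefactor $e^{2.5}$; any other fixed constant would be equally adequate for the downstream applications in the paper.

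As an alternative self-contained route, essentially equivalent in the Hilbert setting, I would use the smooth function $x \mapsto \|x\|^2$ on $\mathbb{R}^d$. The identity $\|M_j + d_j\|^2 = \|M_j\|^2 + 2\langle M_j, d_j\rangle + \|d_j\|^2$ together with $\mathbb{E}[d_j \mid \mathcal{F}_{j-1}] = 0$ and $\|d_j\| \leq a_j$ allows the exponential moment $\mathbb{E}\exp(\lambda \|M_{k+1}-M_1\|^2 / (2\sum_j a_j^2))$ to be controlled recursively, after which a Markov/Chernoff argument converts this exponential moment bound into the claimed Gaussian-type tail bound. This is essentially the Hilbert-space specialisation of Pinelis's classical inequality for martingales with vector-valued differences.

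The only real obstacle is bookkeeping, namely pinning down that $c$ is genuinely an absolute constant (independent of both $d$ and $k$), which is immediate from the dimension-free quadratic modulus of smoothness in Hilbert space. No part of the argument uses $\|d_k\|$ beyond the almost-sure bound $\|d_k\| \leq a_k$, so this corollary is robust to the replacement of the boundedness hypothesis by any pair of almost-sure upper bounds of the same form.
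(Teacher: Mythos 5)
Your primary route is exactly the paper's proof: the lemma is obtained by specializing Theorem 1.5 of \citet{naor2012banach} to $(\mathbb{R}^d, \|\cdot\|_2)$, using the dimension-free bound $\rho(\tau) \leq \tau^2/2$ on the modulus of uniform smoothness, which is all the paper does. Your proposal is correct, and the additional self-contained Pinelis-style argument is a valid but unnecessary alternative.
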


\begin{proof}The lemma follows from combining Theorem 1.5 of \cite{naor2012banach} with the fact that $(\mathbb{R}^d, \| \cdot\|_2)$ is a Banach space with uniform modulus of smoothness of power type 2 equal to 1/2, i.e. $\rho(\tau) \leq \tau^2/2$ for all $\tau>0$, where
    \begin{equation*}
     \rho(\tau) := \sup \left\{ \frac{\|x + \tau y\|_2 + \|x - \tau y\|_2}{2} - 1: x, y  \in \mathbb{R}^d, \, \|x\|_2 = \|y\|_2 = 1\right\}.
\end{equation*}
\end{proof}

\end{document}